\documentclass[11pt,reqno]{amsart}
\usepackage{geometry} 

\usepackage{amsmath,amssymb,amsthm,mathrsfs,appendix}
\usepackage{xcolor}
\usepackage{graphicx}
\usepackage{tikz}
\usepackage{tikz-3dplot}
\usetikzlibrary{positioning}
\usepackage{pgfplots}
\usepackage{subcaption}
\usepackage{adjustbox}

\usetikzlibrary{arrows.meta}

\usepackage{marvosym}
\usepackage{cancel}

\usepackage{latexsym}
\usepackage[nobysame]{amsrefs}[11pts]
\usepackage{bm}
\usepackage{enumerate}
\usepackage{indentfirst}
\usepackage[colorlinks,
            linkcolor=black,
            anchorcolor=black,
            citecolor=black
            ]{hyperref}

\newcommand{\into}{\hookrightarrow}

\newcommand{\ds}{\mathrm{d}s}
\newcommand{\dt}{\mathrm{d}t}

\newcommand\norm[1]{\left\|#1\right\|}

\DeclareMathOperator{\dive}{div}

\newcommand{\II}{\mathbb{I}}

\newcommand{\NN}{\mathbb{N}}

\newcommand{\RR}{\mathbb{R}}
\renewcommand{\SS}{\mathbb{S}}
\newcommand{\TT}{\mathbb{T}}

\newcommand{\ZZ}{\mathbb{Z}}

\newcommand{\cB}{\mathcal{B}}

\newcommand{\cD}{\mathcal{D}}
\newcommand{\cE}{\mathcal{E}}
\newcommand{\cF}{\mathcal{F}}
\newcommand{\cG}{\mathcal{G}}
\newcommand{\cH}{\mathcal{H}}

\newcommand{\cJ}{\mathcal{J}}

\newcommand{\cT}{\mathcal{T}}

\newcommand{\cV}{\mathcal{V}}

\newcommand{\cX}{\mathcal{X}}
\newcommand{\cY}{\mathcal{Y}}
\newcommand{\cZ}{\mathcal{Z}}

\newcommand{\wv}{\widehat{U}}

\newcommand{\weta}{\widehat{\eta}}
\newcommand{\sD}{\mathscr{D}}

\newtheorem{Definition}{Definition}
\newtheorem{Theorem}{Theorem}[section]
\newtheorem{Lemma}{Lemma}[section]
\newtheorem{Proposition}{Proposition}
\newtheorem{Remark}{Remark}

\numberwithin{Remark}{section}
\numberwithin{Proposition}{section}
\numberwithin{Definition}{section}
\numberwithin{Lemma}{section}
\numberwithin{equation}{section}
\numberwithin{Theorem}{section}

\allowdisplaybreaks[4]
\pgfplotsset{compat=1.18}

\begin{document}
\title[Viscous Gaseous Stars]{
On a Local Existence Theorem for  the Evolution Equation of Viscous Gaseous Stars in a Physical Vacuum}
\date{\today}

\author{Demin Wang}
\address[Demin Wang]{School of Mathematical Sciences, Shanghai Jiao Tong University, Shanghai 200240, P. R. China} \email{\tt de-min-wang@sjtu.edu.cn}

\author{Jiawen Zhang}
\address[Jiawen Zhang]{School of Mathematical Sciences, Shanghai Jiao Tong University, Shanghai 200240, P. R. China} \email{\tt zhangjiawen317@sjtu.edu.cn}

\author{Shengguo Zhu }
\address[Shengguo Zhu]{School of Mathematical Sciences, CMA--Shanghai, and MOE--LSC, Shanghai Jiao Tong University, Shanghai 200240, P. R. China.} \email{\tt  zhushengguo@sjtu.edu.cn}

\begin{abstract}
This paper focuses on the free boundary problem of the three-dimensional compressible Navier--Stokes--Poisson equations with degenerate viscosities for self-gravitating viscous gaseous stars. For spherically symmetric barotropic motion, we establish the local well-posedness of classical solutions. The solutions obtained here are smooth all the way up to the moving boundary and capture the physical vacuum boundary behavior of the Lane-Emden star configuration for all adiabatic exponents $\gamma>\frac{4}{3}$.

\end{abstract}

\date{\today}
\subjclass[2020]{35A01, 35A09, 35R35, 35B65, 35Q30, 76N06, 76N10.}
\keywords{Viscous gaseous stars, Compressible Navier--Stokes--Poisson equations,  Spherically symmetric motion, Free boundary problem, Physical vacuum, Classical solutions, Local well-posedness}

\maketitle

\tableofcontents

\section{Introduction}\label{section1}
The motion of a self-gravitating viscous gaseous star in the universe can be described
 by the following 
 vacuum free boundary problem (\textbf{VFBP}) of the three-dimensional (3-D)  compressible Navier--Stokes--Poisson equations (\textbf{CNSP}):
\begin{equation}\label{eq:1.1-vfbp}
\begin{cases}
\rho_t+\dive(\rho \boldsymbol{u})=0 &\text{in }\Omega(t),\\[2pt]
(\rho \boldsymbol{u})_t+\dive(\rho \boldsymbol{u}\otimes \boldsymbol{u})+\nabla P+\rho\nabla\Psi =\dive \TT&\text{in }\Omega(t),\\[2pt]
\rho>0&\text{in }\Omega(t),\\[2pt]
\rho=0&\text{on }\partial\Omega(t),\\[2pt]
\cV(\partial\Omega(t))=\boldsymbol{u}\cdot\boldsymbol{n}(t)&\text{on }\partial\Omega(t),\\[2pt]
(\rho,\boldsymbol{u})|_{t=0}=(\rho_0,\boldsymbol{u}_0) &\text{in }\Omega:=\Omega(0).
\end{cases}
\end{equation}
Here, $t\geq 0$ is the time, $\boldsymbol{x}=(x_1,x_2,x_3)^{\top}\in \mathbb{R}^3$  is the Eulerian spatial coordinate, the open and bounded subset  $\Omega(t)\subset \mathbb{R}^3$ denotes the changing volume occupied by the fluid and $
\Omega(0)=B_1=\{\boldsymbol{x}: \,|\boldsymbol{x}| <1\}$,
$\partial \Omega(t)$ denotes the moving vacuum boundary,  $\cV(\partial \Omega(t))$ denotes the normal velocity of $\partial \Omega(t)$, and $ \boldsymbol{n}(t) $ denotes the exterior unit normal vector to $\partial \Omega(t)$. 
Moreover, $\rho\geq 0$ denotes the mass density of the fluid, $\boldsymbol{u}=(u_1,u_2, u_3)^\top$ $\in \mathbb{R}^3$   the Eulerian velocity field, and $P$  the pressure. 
For polytropic fluids, the constitutive relation is given by  
\begin{equation}\label{pressureform}
P=A\rho^{\gamma},
\end{equation}
where $A>0$ is  the entropy  constant and  $\gamma>1$ is the adiabatic exponent.
$\Psi$ denotes the gravitational potential as
\begin{equation}\label{gravitational potential explicit expression}
    \Psi(t,\boldsymbol{x})=
\displaystyle -G\int_{\Omega(t)}\frac{\rho(t,\boldsymbol{\hat{x}})}{|\boldsymbol{x}-\boldsymbol{\hat{x}}|}\,\mathrm{d}\boldsymbol{\hat{x}}
\end{equation}
with the gravitational constant $G>0$. Since
$\rho> 0$ in $\Omega(t)$ and 
$\rho\equiv 0$ in $\mathbb{R}^3 \setminus \Omega(t)$, $\Psi$ satisfies 
\begin{equation}\label{gravitational potential}
\Delta \Psi=4\pi G\rho \quad \text{in $\mathbb{R}^3$},\qquad\quad \lim_{|\boldsymbol{x}|\rightarrow \infty} \Psi(t,\boldsymbol{x})=0.
\end{equation}
In particular, $\Psi(t,\boldsymbol{x})$ is defined in the whole space $\mathbb{R}^3$ for each $t>0$.
$\mathbb{T}$ denotes the viscous stress tensor as: 
\begin{equation}\label{eq:1.1t}
\mathbb{T}=2\mu(\rho)D(\boldsymbol{u})+\lambda(\rho)\dive\boldsymbol{u}\,\mathbb{I}_3,
\end{equation} 
where $D(\boldsymbol{u})=\frac{1}{2}(\nabla \boldsymbol{u}+(\nabla \boldsymbol{u})^\top)$ is the deformation tensor, $\mathbb{I}_3$ is the $3\times 3$ identity matrix,
\begin{equation}\label{fandan}
\mu(\rho)=a_1 \rho^\delta,\qquad \lambda(\rho)=a_2\rho^\delta,
\end{equation}
for some  constant $\delta> 0$, $\mu(\rho)$ is the shear viscosity coefficient, $\lambda(\rho)+\frac{2}{3}\mu(\rho)$ is the bulk viscosity coefficient,  $a_1$ and $a_2$ are both constants satisfying
\begin{equation}\label{kelaoxiusi}
a_1>0,\qquad\quad 2a_1+3 a_2\geq 0.
\end{equation} 
Equation $\eqref{eq:1.1-vfbp}_3$ asserts that there is no vacuum inside the fluid, $\eqref{eq:1.1-vfbp}_4$ is the vacuum boundary condition stating that $\rho$ vanishes along the moving  boundary $\partial \Omega(t)$, $\eqref{eq:1.1-vfbp}_5$ is the kinetic boundary condition requiring that the moving boundary travels with the normal velocity of the fluid, and $\eqref{eq:1.1-vfbp}_6$ provides the initial conditions for the density, velocity, and domain. We refer the reader to \cites{Chan, cox} for more details of the related background on \textbf{CNSP}.

With the local sound speed given by $c=\sqrt{P'(\rho)}$ and $c_0=c|_{t=0}$,  satisfaction of the condition 
\begin{equation}\label{PVcondition}
-\infty<\frac{\partial c^2_0}{\partial\boldsymbol{n}}<0 \qquad \text{on $\partial \Omega$}
\end{equation}
defines a physical vacuum boundary (see \cites{LiuTP,taiping2,  coutand3,Jang-M2}), which is a condition necessary for the gas particles on the boundary to accelerate.
 In fact,  this notion of physical vacuum can be realized by the stationary solutions of the compressible Euler--Poisson equations or \textbf{CNSP} for gaseous stars, \textit{i.e.}, the Lane--Emden solution (see \cites{Chan, Lin,Makino}).
The aim of this  paper is to establish the local-in-time well-posedness of classical solutions with  physical vacuum  to  \textbf{VFBP} \eqref{eq:1.1-vfbp} of the 3-D \textbf{CNSP} with degenerate  viscosities in the spherically symmetric  motion.

It is worth noting that, for barotropic flows, since $\rho_0>0$ in $\Omega$, the physical vacuum boundary condition \eqref{PVcondition} implies that
 $\rho_0^{\gamma-1}$  tends to zero  like the distance function $\text{dist} (x,\partial \Omega)$ near $\partial \Omega$, which leads to a  strong  degeneracy in the time evolution of hydrodynamic equations. Then  it  is intricate to provide an effective propagation  mechanism for the  regularity of $\boldsymbol{u}$ near the vacuum,
which makes the study on the existence  of classical solutions to the corresponding free boundary problem very difficult  for both the inviscid and viscous  flows.

For the {\rm\textbf{VFBP}} of the isentropic compressible Euler equations (\eqref{eq:1.1-vfbp} with $a_1=a_2=G=0$), some significant advances on the well-posedness of smooth solutions satisfying \eqref{PVcondition} have been obtained. The local existence theory was developed by Coutand--Shkoller \cites{coutand1,coutand3} and Jang--Masmoudi \cites{Jang-M1,Jang-M2}, respectively. Recently,  Jang--Hadžić \cite{Jang-Hadzic} constructed unique global solutions when $\gamma \in (1,\frac{5}{3}]$, and the initial data lie sufficiently close to the expanding compactly supported affine motions constructed by Sideris \cite{sideris}, and they satisfy \eqref{PVcondition}. Later,  by a different approach, Shkoller--Sideris \cite{sideris2}  proved the stability of affine flows and global existence of classical solutions for all $\gamma>1$. On the other hand, for the \textbf{VFBP} of 3-D isentropic  compressible Euler--Poisson equations, the local existence  of classical solutions  was established by Gu--Lei \cite{GL1}, and the global existence  of classical solutions was obtained by Had\v{z}i\'{c}--Jang \cite{Jang-Hadzic2}, which stay close to Sideris affine solutions of the Euler equations. Some other important developments can be found in \cites{
coutand2, LinZeng,Oli1, Rei,LiuTP,taiping2,Yan,Strauss} and the references therein.

For the corresponding \textbf{VFBP} of viscous compressible flows in a physical vacuum (as defined  in \eqref{PVcondition}), under the assumption that the viscosity coefficients obey a power law of $\rho$ ({\it i.e.}, $\rho^\delta$ with some exponent $\delta>0$ in \eqref{fandan}), a series of important progress on the well-posedness of strong or classical solutions has been achieved. 
By taking the effect of gravity force into account,   the global existence of the \text{1-D} strong solution of the barotropic compressible Navier--Stokes equations with small data was proved by Ou--Zeng \cite{OZ}. Later, under a proper smallness assumption,  Luo--Xin--Zeng \cite{LXZ3}  established the global existence of strong solutions satisfying \eqref{PVcondition}  of the \text{3-D} spherically symmetric  \textbf{CNSP}.  
It is worth pointing out that the solutions obtained in \cites{OZ,LXZ3} are just strong ones, which are not smooth all the way up to the moving boundary. To make sure that the solution is smooth all the way up to the moving boundary, by assuming that $\rho_0\in H^5(I)$ and $u_0$ belongs to  a weighted $H^6(I)$ space, Li--Wang--Xin \cites{LWX} established the local well-posedness of classical solutions satisfying \eqref{PVcondition} to one-dimensional (\text{1-D}) viscous Saint--Venant system, and then extended this theory to the two-dimensional (2-D) problem in \cites{LWX2} under the assumption that $\rho_0\in H^7$ and $u_0$ belongs to a weighted $H^8$ space.
Based on some new observations on the crossing derivatives estimates,  when  $\rho_0^{\gamma-1}\in H^{3}$  ($\beta\in (\frac{1}{3},1]$) and vanishes on the moving boundary as the distance function,  Xin--Zhang--Zhu \cite{ZJW}  established the global existence of classical solutions with large data to the \text{1-D} viscous Saint--Venant system. Recently, in Chen--Zhang--Zhu \cite{CZZ2}, the global well-posedness of classical solutions in a physical vacuum to  {\rm\textbf{VFBP}} of the viscous Saint--Venant system with large data has been established in the spherically symmetric motion.
On the other hand, if the  viscosity coefficients are all constants, \textit{i.e.}, $\delta=0$ in \eqref{fandan}, Jang \cite{Jang} established the local well-posedness of strong 
solutions to  \textbf{CNSP} in the spherically symmetric and barotropic motion.
Some other important developments can be found in \cites{Makino,Makino2} and the references therein.

It is worth noting that the corresponding problems will become considerably easier when $\rho_0$ possesses a higher decay rate near the vacuum, {\it i.e.}, $\rho_0^{\gamma-1}\sim (\mathrm{dist}(y,\Gamma))^k$ with $k=2,3,\cdots$. For example, the initial sound-speed profile $c_0=\sqrt{A\gamma}\rho_0^{(\gamma-1)/2}\in H^3(\mathbb{R}^3)$ with $k\geq 4$ can be used to establish the local well-posedness of classical solutions for the Cauchy problem of the isentropic compressible Euler equations by the framework introduced by  Makino--Ukai--Kawashima \cite{tms1}, and also for the  {\rm\textbf{CNS}} by  Geng--Li--Zhu \cite{GLZ}. However, if $\rho^{\gamma-1}$ decays to zero like  $ (\mathrm{dist}(y,\Gamma))^k$ with $k=2,3,\cdots$,  the gas cannot accelerate into the vacuum region.

\subsection{Lagrangian Reformulation in Spherical Coordinates}\label{sec-1.2} 
In this paper,  we will establish the local well-posedness of the   spherically symmetric (classical) solutions of \textbf{VFBP} \eqref{eq:1.1-vfbp}, taking the following form 
\begin{equation}\label{ss-ass}
(\rho,\boldsymbol{u})(t,\boldsymbol{x}) = (\rho(t,|\boldsymbol{x}|), u(t,|\boldsymbol{x}|)\frac{\boldsymbol{x}}{|\boldsymbol{x}|}),
\end{equation}
with the initial data:
\begin{equation}\label{eq:IC}
(\rho,\boldsymbol{u})(0,\boldsymbol{x}) =(\rho_0,\boldsymbol{u}_0)(\boldsymbol{x})= (\rho_0(|\boldsymbol{x}|), u_0(|\boldsymbol{x}|)\frac{\boldsymbol{x}}{|\boldsymbol{x}|}).
\end{equation}
Our results hold for $(\delta,\gamma)$ satisfying
\begin{equation}\label{gammadelta}
\delta\in (0,1),\qquad \gamma\in (\frac{4}{3},\infty),
\end{equation}
without any restriction on the size of the initial data.  The  initial density  $\rho_0$ we consider satisfies the following condition:
\begin{equation}\label{distanceeuler} 
\rho_0^{\gamma-1}(\boldsymbol{x})\in H^3(\Omega), \quad  K_1(1-|\boldsymbol{x}|) \leq \rho_0^{\gamma-1}\leq K_2(1-|\boldsymbol{x}|) \qquad \text{for all $\boldsymbol{x}\in \overline\Omega$},
\end{equation}
for  some  constants $K_2>K_1>0$. It is worth noting that  \eqref{distanceeuler} implies that $\rho_0$  satisfies the  physical vacuum boundary condition for spherically symmetric flows, {\it i.e.},
\begin{equation}\label{PVconditionr}
\rho^{\gamma-1}_0\sim 1-|\boldsymbol{x}| \qquad \text{as $|\boldsymbol{x}|$ near the vacuum boundary $|\boldsymbol{x}|=1$}.
\end{equation}

Since we focus on the  spherically symmetric flow, we first reformulate problem \eqref{eq:1.1-vfbp} into the following form in $I(t)=[0,R(t))$ as the radial projection of the moving domain $\Omega(t)$ 
with $R(0)=1$ and $I=[0,1)$:
\begin{equation}\label{shallow-SSR-euler}
\begin{cases}
\displaystyle 
\rho_t+u\rho_x+ \rho \big(u_x+\frac{2 u}{x}\big)=0&\text{in } I(t),\\[1pt]
\displaystyle
\rho u_t+\rho u u_x+P_x+\rho \Psi_x =(2a_1+a_2)\Big(\rho^\delta\big(u_x+\frac{2 u}{x}\big)\Big)_x-4 a_1 \frac{(\rho^\delta)_x u}{x}&\text{in } I(t),\\[2pt]
\rho>0&\text{in } I(t),\\[2pt]
\rho=0&\text{on } \{x=R(t)\},\\[2pt]
R'(t)=u(t,R(t))&\text{on } \{x=R(t)\},\\[2pt]
(\rho,u)|_{t=0}=(\rho_0,u_0)&\text{in } I(0):=I, 
\end{cases}
\end{equation}
where $x=|\boldsymbol{x}|$ and $\Psi_x$ defined in the whole space is given by 
\begin{equation}\label{psi_x express int}
 \Psi_x(t,x)=
\begin{cases}
\displaystyle\frac{4\pi G }{x^2}\int_0^x\rho(t,\hat{x})\hat{x}^2\,\mathrm{d}\hat{x} \qquad &\text{for } x\in \bar{I}(t),\\[8pt]
\displaystyle
\frac{4\pi G}{x^2}\int_0^{R(t)}\rho(t,\hat{x})\hat{x}^2\,\mathrm{d}\hat{x} \qquad &\text{for } x\in [0,\infty)\backslash\bar{I}(t).
\end{cases}
\end{equation}
Moreover, $u|_{x=0}=0$, which can be derived from the continuity of $\boldsymbol{x}\mapsto \boldsymbol{u}(t,\boldsymbol{x})$ at $\boldsymbol{x}=\boldsymbol{0}$.

Second, problem \eqref{shallow-SSR-euler}, formulated in Eulerian coordinates on the moving interval $I(t)$, can be transformed to a problem on the fixed interval $I$ by introducing the Lagrangian coordinates. To this end, denote by $x=\eta(t,r)$ the position of the fluid particle $x\in I(t)$ at $t\geq 0$ so that
\begin{equation}\label{flowmap-r-la}
\eta_t(t,r)=u(t,\eta(t,r)),\qquad \eta(0,r)=r,
\end{equation}
and $(t,r)$ are the Lagrangian coordinates. Moreover,  $\eta(t,0)=0$ for $t\in [0,T]$ due to  $u(t,0)=0$.
Then, by introducing the Lagrangian density, velocity, and gravitational potential,
\begin{equation}\label{varrho-U}
\varrho(t,r)= \rho(t, \eta(t,r)),\qquad U(t,r)= u(t, \eta(t,r)),\qquad \Phi(t,r)=\Psi(t,\eta(t,r)),
\end{equation}
problem \eqref{shallow-SSR-euler} can be written in the following initial-boundary value problem ({\rm\bf IBVP}) in the fixed domain $I$ in Lagrangian coordinates $(t,r)$:
\begin{equation}\label{eq:VFBP-La}
\begin{cases}
\displaystyle \varrho_t + \varrho\big(\frac{U_r}{\eta_r}+\frac{2U}{\eta}\big)=0 & \text{in } (0, T] \times I,\\[2pt]
\displaystyle \eta_r \varrho U_t +A  (\varrho^{\gamma})_r+\varrho\Phi_r=(2a_1+a_2)\Big(\varrho^\delta\big(\frac{U_r}{\eta_r}+ \frac{2U}{\eta}\big)\Big)_r - 4a_1 \frac{(\varrho^\delta)_r U}{\eta}& \text{in }(0, T] \times I,\\[2pt]
\eta_t = U & \text{in } (0, T] \times I,\\[2pt]
\varrho>0 & \text{in } (0, T] \times I,\\[2pt]
\varrho|_{r=1}=0 & \text{on } (0, T],\\[2pt] 
(\varrho, U, \eta)(0,r)= (\rho_0(r), u_0(r),r) & \text{for $r\in I$},
\end{cases} 
\end{equation}
where $\delta\in (0,1)$, $\gamma\in (\frac{4}{3},\infty)$, and $\Phi_r$ is given by
\begin{equation}\label{phi_r express int 1}
\Phi_r=
\begin{cases}
\displaystyle\frac{4\pi G \eta_r}{\eta^2}\int_0^r\varrho(t,\hat{r})\eta^2\eta_r\,\mathrm{d}\hat{r} &\text{for } r\in \bar{I},\\[6pt]
\displaystyle
\frac{4\pi G \eta_r}{\eta^2}\int_0^1\varrho(t,\hat{r})\eta^2\eta_r\,\mathrm{d}\hat{r} &\text{for } r\in [0,\infty)\backslash\bar{I}.
\end{cases}
\end{equation}
It follows from the facts that  $u|_{x=0}=0$ and $\eta|_{r=0}=0$ that $U(t,0)= u(t, \eta(t,0))=0$.

Moreover, it follows from Lemma \ref{lemma-initial} in Appendix \ref{appb} that condition \eqref{distanceeuler}, which is initially satisfied by $\rho_0(\boldsymbol{x})$ in 3-D Eulerian coordinates, can be rewritten in spherical coordinates as a condition satisfied by $\rho_0(r)$: for some  constants $K_2>K_1>0$,
\begin{equation}\label{distance-la}
\begin{aligned}
&r\big(\rho_0^{\gamma-1},(\rho_0^{\gamma-1})_r,(\rho_0^{\gamma-1})_{rr},\frac{(\rho_0^{\gamma-1})_r}{r},(\rho_0^{\gamma-1})_{rrr},(\frac{(\rho_0^{\gamma-1})_r}{r})_r\big)\in L^2(I),\\
&K_1(1-r) \leq \rho_0^{\gamma-1}(r)\leq K_2(1-r) \qquad \text{for all $r\in I$}.
\end{aligned}
\end{equation}
The corresponding study of \eqref{eq:VFBP-La} is very difficult, since the structure of the momentum equation $\eqref{eq:VFBP-La}_2$ is full of nonlinearity, degeneracy, and singularity.  Thus, new ideas are required on the study of the  well-posedness  of smooth solutions to overcome  these difficulties.

\subsection{Notations and Conventions} Before stating the main theorem, we first list the notations and conventions used throughout this paper.
\subsubsection{Notations on coordinates and operators}
\begin{itemize}
\item  $\boldsymbol{y}\in \Omega:=\{\boldsymbol{y}:\,|\boldsymbol{y}|<1\}$ denotes the 3-D Lagrangian spatial coordinates (see Appendix B);
 $I:=[0,1)$; and $r=|\boldsymbol{y}|\in I$ denotes the radial coordinate.
\smallskip
\item For any function $f(t,r)$,
\begin{align*}
\partial_t\partial_r^l f=f_{t\underbrace{\text{\tiny$r\cdots r$}}_{\text{$l$-times}}},\quad
D_\eta f:=\frac{f_r}{\eta_r},\quad D_\eta^k f:=D_\eta(D_\eta^{k-1} f) \quad\text{for $k\in \NN^*$ and $k\geq 2$.}
\end{align*}
Moreover, to simplify the notations, we define the operator $\mathscr{D}_\eta$ as{\rm:}
\qquad \begin{align*}
\mathscr{D}_\eta f&:=(D_\eta f,\frac{f}{\eta}),\qquad \mathscr{D}_\eta^2f:=(D_\eta^2 f,D_\eta(\frac{f}{\eta})),\\
\mathscr{D}_\eta^3f&:=(D_\eta^3 f,\frac{D_\eta^2 f}{\eta},D_\eta^2(\frac{f}{\eta}),\frac{1}{\eta}D_\eta(\frac{f}{\eta})),\\
\mathscr{D}_\eta^4f&:=(D_\eta^4 f,D_\eta(\frac{D_\eta^2 f}{\eta}),D_\eta^3(\frac{f}{\eta}),D_\eta(\frac{1}{\eta}D_\eta(\frac{f}{\eta}))).
\end{align*}
$\mathscr{D}_r^k$ ($k=1,\cdots,4$) is defined in the same way as $\mathscr{D}_\eta^k$ ($k=1,\cdots,4$), except with $\eta(r)$ in place of $r$.
Notice from Lemma \ref{lemma-initial}  in Appendix \ref{appb} that the operator $\mathscr{D}_\eta$ can be formally seen as a representative of the gradient $\nabla$ in the radial Lagrangian coordinate. Besides, the following useful properties will be used in later analysis:
\begin{equation*}
\begin{aligned}
&D_\eta(\mathscr{D}_\eta f)=\mathscr{D}_\eta^2 f,\qquad D_\eta(\mathscr{D}_\eta^3 f) =\mathscr{D}_\eta^4 f,\\
&|D_\eta f|^2+\Big|\frac{f}{\eta}\Big|^2=|\mathscr{D}_\eta f|^2,\qquad |D_\eta^2 f|^2+\Big|D_\eta(\frac{f}{\eta})\Big|^2=|\mathscr{D}_\eta^2 f|^2,\\
&|D_\eta(\mathscr{D}_\eta^2 f)|^2+\Big|\frac{\mathscr{D}_\eta^2 f}{\eta}\Big|^2=|\mathscr{D}_\eta^3 f|^2,\qquad |D_\eta^2(\mathscr{D}_\eta^2 f)|^2+\Big|D_\eta\big(\frac{\mathscr{D}_\eta^2 f}{\eta}\big)\Big|^2=|\mathscr{D}_\eta^4 f|^2.
\end{aligned}
\end{equation*} 
\end{itemize}

\subsubsection{Notations on function spaces}
\begin{itemize}

\item For any function space $X(I)$ appearing in this paper, unless otherwise specified, the following notations are used{\rm :}
\begin{align*}
&X=X(I),\quad X^*\text{ --- the dual space of $X$},\\ 
&X^*([0,T];Y^*)\text{ --- the dual space of $X([0,T];Y)$},\\[4pt]
&|f|_p=\|f\|_{L^p},\quad \|f\|_{k,p}=\|f\|_{W^{k,p}},\quad \|f\|_k=\|f\|_{H^k},\\[4pt]
&L^p_{\mathrm{loc}}:=\big\{f:\, f\in L^p(K) \,\, \text{for any open interval $K$ such that $\bar K\subset I\backslash \{0\}$}\big\},\\[4pt]
&H^k_{\mathrm{loc}}:=\big\{f: \, \partial_r^jf\in L^2_{\mathrm{loc}} \,\, \text{for any $0\leq j\leq k$}\big\},\qquad \|f\|_{X_t(Y)}=\|f\|_{X([0,T];Y(I))}.
\end{align*}
\item Unless otherwise specified, the following definitions of weighted function spaces are used: let $J\subset I$ and let $0\leq \mathrm{w}=\mathrm{w}(r)$ be some function on $J$,
\begin{equation*}
\begin{aligned}
&H^{k}_\mathrm{w}(J):=\big\{f:\, \sqrt{\mathrm{w}}\partial_r^j f\in L^2(J) \,\,\text{for $0\leq j\leq k$}\big\}, \qquad H^{-k}_{\mathrm{w}}(J):=(H^{k}_{\mathrm{w}}(J))^*,\\[-2pt]
&L^2_\mathrm{w}(J)=H^{0}_\mathrm{w}(J),\quad \|f\|_{L^2_\mathrm{w}(J)}=\|\sqrt{\mathrm{w}} f\|_{L^2(J)},\quad \|f\|_{H^k_\mathrm{w}(J)}=\sum_{j=0}^k \|\partial_r^j f\|_{L^2_\mathrm{w}(J)}.
\end{aligned}
\end{equation*}
In particular, if $J=I$, then 
\begin{equation*}
\begin{aligned}
&\qquad \ \ \ \ H^{k}_\mathrm{w}=H^{k}_\mathrm{w}(I), \quad H^{-k}_\mathrm{w}=H^{-k}_\mathrm{w}(I), \quad L^2_\mathrm{w}=L^2_\mathrm{w}(I),\quad
|f|_{2,\mathrm{w}}=\|f\|_{L^2_\mathrm{w}},\quad \|f\|_{k,\mathrm{w}}=\|f\|_{H^k_\mathrm{w}}.
\end{aligned}
\end{equation*}

\smallskip
\item 
We denote by $\cH^1_{\mathrm{w}}(J)$ the space of all functions $f$ satisfying $(f,f_r,\frac{f}{r})\in L^2_\mathrm{w}(J)$ for some interval $J=(0,a)$ with $a\in (0,1)$:
\begin{equation*}
\cH^1_{\mathrm{w}}(J):=\big\{f:\,\|f\|_{\cH^1_{\mathrm{w}}(J)}<\infty\big\}\,\,\,\,
\text{with} \,\,\,\, \|f\|_{\cH^1_{\mathrm{w}}(J)}^2:=\int_J \mathrm{w}\Big(f^2+f_r^2+\frac{f^2}{r^2}\Big)\,\mathrm{d}r,
\end{equation*}
where $\mathrm{w}=\mathrm{w}(r)\ge 0$ is a weight function on $J$, and we let $\cH^{-1}_{\mathrm{w}}(J):=(\cH^1_{\mathrm{w}}(J))^*$.

\smallskip
\item For any function space $X$ and functions $(\varphi,g_1,\cdots\!,g_k)$,
\begin{equation*}
\|\varphi(g_1,\cdots\!,g_k)\|_{X}:=\sum_{i=1}^k\|\varphi g_i\|_X,\qquad|\varphi(g_1,\cdots\!,g_k)|:=\sum_{i=1}^k|\varphi g_i|.
\end{equation*}
\item Denote by $\langle\cdot,\cdot\rangle_{X^*\times X}$ the duality pairing between the space  $X$ and its dual space $X^*$:
\begin{align*}
\left<F,f\right>_{X^*\times X}:=F(f) \ \ \text{for } F\in X^*, \ \ f\in X.
\end{align*}
If $F \in X^*$ is identified with a $L^1_{\mathrm{loc}}$-function (still denoted by $F$) in the canonical way and, for a given $f \in X$, the product $Ff\in L^1$, then the duality pairing reduces to
\begin{equation*}
\left<F,f\right>_{X^*\times X}=\langle F,f\rangle:=\int_0^1 Ff\,\mathrm{d}r.
\end{equation*}
$\left< F,f \right>_{X_t^*(Y^*)\times X_t(Y)}$ denotes the pairing between $X([0,T];Y)$ and $X^*([0,T];Y^*)$. 
\end{itemize}

\subsubsection{Other notations}\label{othernotation}
\begin{itemize}
\item $B_a:=\{\boldsymbol{y}: \,|\boldsymbol{y}|<a\}$ denotes the ball centered at the origin with radius $a$.
\item $\delta_{ij}$ denotes the Kronecker symbol with indices $(i,j)$: $\delta_{ij}= 1$ if $i=j$, $\delta_{ij}=0$ if $i\neq j$.
\item For any $n\times n$ real matrix $\mathcal{M}$, $\mathcal{M}_{ij}$ denotes its $(i,j)$-th entry. Moreover, $\mathrm{SO}(n)$  denotes the set of all $n\times n$ real orthogonal matrices $\mathcal{O}$ such that $\det \mathcal{O}=1$, where $\det \mathcal{O}$ is the determinant of $\mathcal{O}$.
\item $\zeta_{a}=\zeta_{a}(r)\in C^\infty[0,1]$ ($a\in (0,1)$) denotes a cut-off function satisfying
\begin{equation*}
\zeta_{a}\in [0,1],\qquad (\zeta_{a})_r\leq 0, \qquad  \zeta_{a}=1 \ \ \text{on $[0,a]$},\qquad \zeta_{a}=0 \ \ \text{on $\big[\frac{1+3a}{4},1\big]$},
\end{equation*}
and $\zeta_{a}^\sharp=\zeta_{a}^\sharp(r):=1-\zeta_{a}(r)$. In particular, if $a=\frac{1}{2}$, define \begin{equation*}
\zeta=\zeta(r):=\zeta_{\frac{1}{2}}(r),\qquad \zeta^\sharp=\zeta^\sharp(r):=1-\zeta(r).
\end{equation*}
\item $\chi_{a}=\chi_{a}(r)$ denotes the characteristic function on $[0,a]$ $(a\in (0,1))$, {\it i.e.}, $\chi_{a}=1$ on $[0,a]$ and $\chi_{a}=0$ on $(a,1]$, and $\chi_{a}^\sharp=1-\chi_{a}$. 
In particular, if $a=\frac{1}{2}$, define 
\begin{equation*}
\chi=\chi(r):=\chi_{\frac{1}{2}}(r),\qquad \chi^\sharp=\chi^\sharp(r):=1-\chi(r).
\end{equation*}

\end{itemize}

\subsection{Main Results}
This section is devoted to stating our main results. First, $\eqref{eq:VFBP-La}_1$ and $\eqref{eq:VFBP-La}_3$ imply that
\begin{equation}\label{eq:eta}
\varrho(t,r) = \frac{r^2\rho_0(r)}{\eta^2\eta_r}.
\end{equation}
Then  \eqref{eq:VFBP-La}, along with \eqref{eq:eta}, can be written as the following  \textbf{IBVP} for $(U, \eta)$ in $[0,T]\times I$:
\begin{equation}\label{eq:VFBP-La-eta}
\begin{cases}
\displaystyle \varrho U_t +A D_\eta(\varrho^{\gamma})+ 4\pi G\frac{\varrho}{\eta^2}\int_0^r\hat r^2\rho_0\,\mathrm{d}\hat{r}\\[1pt]
\displaystyle\quad =(2a_1+a_2) D_\eta\Big(\varrho^{\delta}\big(D_\eta U+ \frac{2U}{\eta}\big)\Big) - 4a_1 D_\eta(\varrho^{\delta}) \frac{U}{\eta},\\[1pt]
\eta_t = U,\\[1pt]
(U, \eta)(0,r)= (u_0(r), r)\qquad  \text{for $r\in I$}.
\end{cases}
\end{equation}

Second, we define classical solutions of problem \eqref{eq:VFBP-La-eta} as follows:
\begin{Definition}\label{definition-lag}
Let $T>0$. A vector function $(U,\eta)(t,r)$ is called 
a classical solution of {\rm\bf IBVP} \eqref{eq:VFBP-La-eta} 
in $[0,T]\times \bar I$ if the following properties hold{\rm:}
\begin{enumerate}
\item[{\rm (i)}] $(U,\eta)(t,r)$ satisfies equations 
$\eqref{eq:VFBP-La-eta}_1${\rm--}$\eqref{eq:VFBP-La-eta}_2$ pointwise in $(0,T]\times \bar I$, and  takes the initial data $\eqref{eq:VFBP-La-eta}_3$  continuously{\rm ;}
\item[{\rm (ii)}] $\eta_r(t,r)$ and $\frac{\eta}{r}(t,r)$ are strictly positive in $[0,T]\times \bar I${\rm:}
\begin{equation*}
\inf_{[0,T]\times \bar I} \ \eta_r >0, \qquad \inf_{[0,T]\times \bar I} \ \frac{\eta}{r}>0;
\end{equation*}
\item[{\rm (iii)}] $(U,\eta)(t,r)$ satisfies the following regularity properties{\rm:}
\begin{equation*}
\begin{aligned}
&\big(U,U_r,\frac{U}{r}\big)\in C([0,T];C(\bar I)),\qquad \big(U_{rr},(\frac{U}{r})_r,U_t\big)\in C((0,T];C(\bar I)),\\
&\big(\eta,\eta_r,\frac{\eta}{r}\big)\in C^1([0,T];C(\bar I)),\qquad \,\big(\eta_{rr},(\frac{\eta}{r})_r\big)\in
C^1((0,T];C(\bar I)).
\end{aligned}
\end{equation*}
\end{enumerate}
\end{Definition}

Next, to clearly state our main results, we need to define the following nonlinear weighted energy functional and related parameters:
\begin{itemize}
\item We fix a universal parameter $\varepsilon_0>0$ throughout the paper satisfying 
\begin{equation}\label{varepsilon0}
0<\varepsilon_0 < \min\Big\{\frac{3-\frac{1}{\gamma-1}}{2},\,\frac{1-\delta}{\gamma-1},\,\frac{1}{100}\Big\}.
\end{equation}

\smallskip
\item The total energy:
\begin{equation}\label{E-1}
\cE(t,f)=\cE_{\mathrm{in}}(t,f)+\cE_{\mathrm{ex}}(t,f),
\end{equation}
where 
\begin{equation}\label{E-2}
\begin{aligned}
\qquad \, \mathcal{E}_{\mathrm{in}}(t,f)&:=\|\zeta r(f,\sD_\eta f,f_t,\sD_\eta f_t,\sD_\eta^2 f,\sD_\eta^3 f)(t)\|_{L^2(I)}^2,\\
\qquad  \, \mathcal{E}_{\mathrm{ex}}(t,f)&:=\big\|\rho_0^\frac{1}{2}(f,f_t)(t)\big\|_{L^2(\frac{1}{2},1)}^2+\big\|\rho_0^\frac{\delta}{2}(D_\eta f,D_\eta f_t)(t)\big\|_{L^2(\frac{1}{2},1)}^2\\
&\quad +\big\|\rho_0^{(\frac{3}{2}-\varepsilon_0)(\gamma-1)}(D_\eta^2 f,D_\eta^3 f)(t)\big\|_{L^2(\frac{1}{2},1)}^2,
\end{aligned}
\end{equation}
and $\zeta=\zeta(r)\in C^\infty[0,1]$ denotes a decreasing cut-off function satisfying
\begin{equation}\label{zeta}
\zeta\in [0,1],\qquad \zeta(r)=1 \ \ \text{for  $r\in \big[0,\frac{1}{2}\big]$},\qquad \zeta(r)=0 \ \ \text{for $r\in \big[\frac{5}{8},1\big]$}.
\end{equation}

\smallskip
\item The total dissipation:
\begin{equation}\label{D-1}
\cD(t,f)=\cD_{\mathrm{in}}(t,f)+\cD_{\mathrm{ex}}(t,f),
\end{equation}
where 
\begin{equation}\label{D-2}
\begin{aligned}
\qquad\cD_{\mathrm{in}}(t,f)&:=\|\zeta r(f_{tt},\sD_\eta^2 f_{t},\sD_\eta^4f)(t)\|_{L^2(I)}^2,\\
\qquad\cD_{\mathrm{ex}}(t,f)&:=\big\|\rho_0^\frac{1}{2}f_{tt}(t)\big\|_{L^2(\frac{1}{2},1)}^2+\big\|\rho_0^{(\frac{3}{2}-\varepsilon_0)(\gamma-1)}(D_\eta^2 f_t,D_\eta^4 f)(t)\big\|_{L^2(\frac{1}{2},1)}^2.
\end{aligned}
\end{equation}
\end{itemize}

Based on the choice of $(\cE,\cD)(t,U)$ in the above, we are now ready to state the main result.
\begin{Theorem}\label{local-Theorem1.1} 
Let $A>0$,  $G>0$, $a_1>0$, $2a_1+3a_2>0$, and \eqref{gammadelta} hold.
If  $\rho_0$ satisfies \eqref{distance-la} and $u_0(r)$ satisfies
\begin{equation}\label{a2-lo}
\cE(0,U)<\infty,
\end{equation}
then  there exists $T_*>0$, which depends only on $(a_1,a_2,A,\delta,\gamma,\varepsilon_0,\rho_0,u_0,K_1,K_2,G)$, such that {\rm\textbf{IBVP}} \eqref{eq:VFBP-La-eta} admits a unique classical solution $(U,\eta)(t,r)$ in $[0,T_*]\times \bar I $ satisfying  
\begin{align}
&\cE(t,U)+t \cD(t,U)\in L^\infty(0,T_*),\qquad  \cD (t,U)\in L^1(0,T_*),\label{b1-lo}\\
& (\eta_r,\frac{\eta}{r})(t,r)\in \big[\frac{1}{2},\frac{3}{2}\big] \qquad \quad \ \ \,\text{for $(t,r)\in [0,T_*]\times \bar I$}.\label{b1-lo2}
\end{align}
Moreover, such a classical solution admits the following boundary conditions{\rm:}
\begin{equation}\label{N111}
U|_{r=0}=\big(D_\eta U+\frac{2a_2}{2a_1+a_2}\frac{U}{\eta}\big)\Big|_{r=1}=0\qquad \text{on $(0,T_*]$}.
\end{equation}
\end{Theorem}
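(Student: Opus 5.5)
We will prove Theorem \ref{local-Theorem1.1} by a linearization and fixed-point scheme, driven by uniform a priori estimates for the energy--dissipation pair $(\cE,\cD)(t,U)$.

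\textbf{Step 1: linearized problem.} Given an approximate flow map $\bar\eta$ with $(\bar\eta_r,\bar\eta/r)\in[\frac12,\frac32]$ and $\bar\eta_t=\bar U$ in the class defined by \eqref{b1-lo}, set $\bar\varrho=r^2\rho_0/(\bar\eta^2\bar\eta_r)$ as in \eqref{eq:eta}. We then consider the linear degenerate parabolic equation for $U$:
\[
\bar\varrho U_t-(2a_1+a_2)D_{\bar\eta}\big(\bar\varrho^{\delta}(D_{\bar\eta}U+\tfrac{2U}{\bar\eta})\big)+4a_1D_{\bar\eta}(\bar\varrho^\delta)\tfrac{U}{\bar\eta}=-AD_{\bar\eta}(\bar\varrho^\gamma)-4\pi G\tfrac{\bar\varrho}{\bar\eta^2}\int_0^r\hat r^2\rho_0\,\mathrm d\hat r .
\]
Multiplying by $\bar\eta^2\bar\eta_r$ puts it in divergence form with weight $\rho_0 r^2$ in time and weight $\sim r^2\rho_0^\delta$ in the elliptic part. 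We obtain a weak solution by Galerkin approximation, or by regularizing $\rho_0\to\rho_0+\epsilon$ and passing to the limit. The natural boundary condition \eqref{N111} at $r=1$ arises automatically, since the flux $\bar\varrho^\delta(\cdots)$ is the variational conormal term and the coercivity comes from $a_1>0$ and $2a_1+3a_2>0$. The condition $U|_{r=0}=0$ is encoded by the $U/\eta$ term.

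\textbf{Step 2: higher-order weighted estimates for the linear problem.} This is the core step.
\begin{itemize}
\item Basic estimate: test with $U$ and with $U_t$.
\item Time derivative: differentiate in $t$ and test with $U_t$ and $U_{tt}$. This controls $\rho_0^{1/2}U_t$, $\rho_0^{\delta/2}D_\eta U_t$ and, with the weight $t$, $\rho_0^{1/2}U_{tt}$.
\item Spatial regularity by elliptic recovery. In the interior region (cut-off $\zeta$) the weight is nondegenerate, and we treat the equation as a 3-D elliptic system via $\mathscr D_\eta$, invoking the equivalence in Lemma \ref{lemma-initial} to handle the coordinate singularity at $r=0$. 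Near $r=1$ we solve the equation as a degenerate ODE for $D_\eta U$: writing $\bar\varrho^\delta(D_\eta U+\frac{2a_2}{2a_1+a_2}\frac U\eta)$ and integrating from $r=1$ gives $\rho_0^{\delta}$-weighted bounds for $D_\eta^2U$, then $D_\eta^3U$ and $D_\eta^4U$, through Hardy-type inequalities with weights $(1-r)^\alpha$.
\item Choice of weight. The loss $(\frac32-\varepsilon_0)(\gamma-1)$ is dictated by the pressure term $D_\eta(\bar\varrho^\gamma)$, since $\rho_0^{\gamma-1}\in H^3$ with $\rho_0^{\gamma-1}\sim 1-r$. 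The constraints $\gamma>\frac43$ and \eqref{varepsilon0} ensure the Hardy exponents are admissible and that $\rho_0^{(\frac32-\varepsilon_0)(\gamma-1)-\delta}$ is integrable in the needed sense.
\end{itemize}

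\textbf{Step 3: nonlinear a priori bound and iteration.} Closing the estimates of Step 2 gives an inequality of the form
\[
\cE(t,U)+\int_0^t\cD\le C\big(\cE(0)\big)+C t^{\theta}P\big(\sup\bar\cE,\textstyle\int\bar\cD\big).
\]
Smallness of $t$ enters through time integration. The bound $\eta_r,\eta/r\in[\frac12,\frac32]$ follows from $\eta_r=1+\int_0^tU_r$ together with the embedding of $\cE$ into $C^1$ for $U$, which is a weighted Sobolev embedding using the weight exponent less than $\frac32(\gamma-1)$. Hence the map $\bar U\mapsto U$ preserves a ball for $T_*$ small. Contraction is proved in the lower norm $\|\rho_0^{1/2}\cdot\|_{L^\infty L^2}+\|\rho_0^{\delta/2}\mathscr D_\eta\cdot\|_{L^2L^2}$ by differencing two iterates. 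The limit is a solution, and weak lower semicontinuity gives \eqref{b1-lo}. Uniqueness follows by the same difference estimate. Classical regularity (Definition \ref{definition-lag}(iii)) and \eqref{N111} follow from the bounds on $\cE$ and $t\cD$ via one-dimensional embeddings, which is why the $C((0,T];\cdot)$ statements carry the weight $t$.

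\textbf{Main obstacle.} We expect the main difficulty to be the boundary (exterior) part of Step 2: obtaining $\rho_0^{(\frac32-\varepsilon_0)(\gamma-1)}D_\eta^4U\in L^2$ from the degenerate equation. This requires commuting $D_\eta$ with the degenerate viscous operator and controlling cross terms such as $D_\eta^k(\bar\varrho^\delta)\,D_\eta^{4-k}U$, which carry negative powers of $\rho_0$ near $r=1$. Our first attempt will be to rewrite the equation as
\[
D_\eta\Big(D_\eta U+\tfrac{2U}{\eta}\Big)+\delta\frac{D_\eta\bar\varrho}{\bar\varrho}\Big(D_\eta U+\tfrac{2U}{\eta}\Big)=\bar\varrho^{1-\delta}U_t+\dots,
\]
so that the singular coefficient $D_\eta\bar\varrho/\bar\varrho\sim(1-r)^{-1}$ produces a first-order ODE with an integrating factor. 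We would then apply weighted Hardy inequalities term by term.
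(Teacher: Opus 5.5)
Your architecture matches the paper's. You linearize around $\bar\eta$, construct weak solutions by Galerkin, and get $U_t,U_{tt}$ bounds by time differentiation. You then recover spatial regularity elliptically: in the interior via the 3-D/div--curl equivalence, and in the exterior by integrating from $r=1$ as in \eqref{old3...136}. Finally you close a Picard iteration that contracts in $\sup_t|(r^2\rho_0)^{\frac12}\cdot|_2^2+\int_0^t|(r^2\rho_0^\delta)^{\frac12}\sD_r\cdot|_2^2$.

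The gap is in the step you flag as the main obstacle, and your proposed remedy would not work as written. First, the singular coefficient multiplies $V:=D_\eta U+\frac{2a_2}{2a_1+a_2}\frac{U}{\eta}$, not $D_\eta U+\frac{2U}{\eta}$. With your grouping, a term $\frac{4a_1\delta}{2a_1+a_2}\frac{D_\eta\bar\varrho}{\bar\varrho}\frac{U}{\eta}\sim(1-r)^{-1}U$ is hidden in your ``$\dots$''. More seriously, the integrating factor $\bar\varrho^\delta$ for the first-order equation in $V$ only carries the exterior estimates up to $D_\eta^3U$ (cf.\ \eqref{4015}--\eqref{3369}). To reach $\rho_0^{(\gamma-1)(\frac32-\varepsilon_0)}D_\eta^4U$ you must apply $D_\eta^2$. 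Differentiating the singular coefficient then generates terms of size $\varphi^{-2}D_\eta V$ and $\varphi^{-3}V$, where $\varphi:=\rho_0^{\gamma-1}$. In general $D_\eta V$ and $V/\varphi$ do not vanish at $r=1$. So after multiplying by $\varphi^{\frac32-\varepsilon_0}$ these terms behave like $\varphi^{-\frac12-\varepsilon_0}$, which is not in $L^2$, and Hardy ``term by term'' fails. Only their combination is controllable, and that combination is exactly what shifts the effective coefficient of the top-order singular term from $\frac{\delta}{\gamma-1}$ to $\frac{\delta}{\gamma-1}+2$.

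The missing idea is to multiply the equation by $\bar\varrho^{\gamma-1}$ \emph{before} applying $D_{\bar\eta}^2$, so the singular coefficient is never differentiated. This gives \eqref{xxxx-l'}: $(D_{\bar\eta}^3U)_r+(\frac{\delta}{\gamma-1}+2)\frac{(\rho_0^{\gamma-1})_r}{\rho_0^{\gamma-1}}D_{\bar\eta}^3U=\sum_{i=15}^{18}J_i$. Every remaining term there carries at most one inverse power of $\bar\varrho^{\gamma-1}$ and is bounded by already established weighted bounds such as \eqref{4016} and \eqref{9997***}. One then applies the cross-derivative estimate, Lemma \ref{prop2.1}, with $f=D_{\bar\eta}^3U$, $c=\frac{\delta}{\gamma-1}+2$ and $b=\frac32-\varepsilon_0\in(\frac12,\frac{c+1}{2}]$.

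Your integrating-factor-plus-Hardy idea is essentially a proof of that lemma, once it is used at this level with the shifted $c$. It also needs a qualitative a priori membership $f\in H^1_{\varphi^{2q}}$ with $q\le\frac{c+1}{2}$, to rule out the homogeneous solution $\varphi^{-c}$. For the linearized problem the paper has to establish this separately (see \eqref{refine-3}).

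A smaller point concerns the boundary condition. The variational conormal condition only gives $\rho_0^\delta V|_{r=1}=0$. That fixes the integration constant in \eqref{old3...136}, but it does not give \eqref{N111}. The unweighted condition comes from evaluating the equation multiplied by $\bar\varrho^{\gamma-\delta-1}$ at $r=1$. This uses continuity of $\sD_r^2U$ up to the boundary and $D_{\bar\eta}(\bar\varrho^{\gamma-1})|_{r=1}\neq0$.
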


In fact, {\rm Theorem \ref{local-Theorem1.1}} can be extended to  more general $\eta_0(r)$.
\begin{Theorem}\label{remk31}
If we assume that $\eta(0,r)=\eta_0(r)$ with general initial map $\eta_0$ satisfying 
\begin{align*}
((\eta_0)_r,\frac{\eta_0}{r})(r)\in [\delta_*,\delta^*] \ \text{for $r\in \bar I$} ,\quad (\zeta r  \sD_r^k \eta_0,\rho_0^\frac{1}{2} \eta_0,\rho_0^\frac{\delta}{2}(\eta_0)_r,\chi^\sharp\rho_0^{(\gamma-1)(\frac{3}{2}-\varepsilon_0)} \partial_{r}^l \eta_0) \in L^2,
\end{align*}
for integers $k\in [0,4]$, $l\in [2,4]$,
and some constants  $\delta^*>\delta_*>0$, then {\rm Theorem \ref{local-Theorem1.1}} still holds. In this case, \eqref{b1-lo2} is replaced by
\begin{equation*}
(\eta_r,\frac{\eta}{r})(t,r)\in \big[\frac{\delta_*}{2},\frac{3\delta^*}{2}\big] \qquad \quad \ \ \,\text{for $(t,r)\in [0,T_*]\times \bar I$},
\end{equation*}
and $T_*>0$ depends only on $(\delta_*,\delta^*,A,a_1,a_2,\delta,\gamma,\varepsilon_0,\rho_0,u_0,\eta_0,K_1,K_2,G)$. 
\end{Theorem}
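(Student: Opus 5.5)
The natural route is to repeat the argument for Theorem \ref{local-Theorem1.1} with the identity initial map replaced by $\eta_0$, checking at each step that only the quantitative properties of $\eta$ listed in the hypothesis are actually used. Two structural facts make this possible.

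First, from $\eqref{eq:VFBP-La}_1$ and $\eta_t=U$ we now get
\begin{equation*}
\varrho=\frac{\eta_0^2(\eta_0)_r\rho_0}{\eta^2\eta_r}.
\end{equation*}
So the role of $\rho_0$ is played by $\tilde\rho_0:=\rho_0\,\eta_0^2(\eta_0)_r/r^2$. Because $((\eta_0)_r,\eta_0/r)\in[\delta_*,\delta^*]$, the ratio $\tilde\rho_0/\rho_0$ is bounded above and below. Hence $\tilde\rho_0^{\gamma-1}$ satisfies the two-sided distance bound in \eqref{distance-la}, with constants $K_1,K_2$ modified by powers of $\delta_*,\delta^*$.

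Second, the energy $\cE$ and dissipation $\cD$ are written in terms of $D_\eta$ and $\sD_\eta$, which are intrinsic to the current map $\eta$. The weights $\rho_0^{\cdot}$, $\zeta r$ and $\chi^\sharp$ are all comparable for the two choices of initial data.

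The plan is as follows.

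\textbf{Step 1: initial bounds.} I would check that the initial energy is finite for the new data. Expanding $\sD_\eta^k u_0$ at $t=0$ in terms of $\partial_r$-derivatives of $u_0$ and $\eta_0$ produces products of derivatives of $\eta_0$ up to order $4$. These are controlled near the origin by $\zeta r\sD_r^k\eta_0\in L^2$ and near $r=1$ by $\chi^\sharp\rho_0^{(\gamma-1)(\frac32-\varepsilon_0)}\partial_r^l\eta_0\in L^2$, using the weighted Sobolev and Hardy inequalities already used for the original theorem. The same inequalities yield $L^\infty$ control of $(\eta_0)_{rr}$ and of $D_\eta^2\eta_0$ away from the boundary, which is what the lower-order commutator terms require. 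I would also rewrite $\eqref{eq:VFBP-La-eta}_1$ with the gravity term $4\pi G\frac{\varrho}{\eta^2}\int_0^r\eta_0^2(\eta_0)_r\rho_0\,\mathrm{d}\hat r$. This has the same structure as before, with an integrand that is bounded by the data.

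\textbf{Step 2: a priori estimates on a short interval.} I would rerun the energy estimates under the bootstrap hypothesis $(\eta_r,\eta/r)\in[\delta_*/2,3\delta^*/2]$, replacing $[1/2,3/2]$. Every place where the original proof used $\eta_r\sim1$ and $\eta/r\sim1$ now picks up constants depending on $\delta_*,\delta^*$. Using $\eta=\eta_0+\int_0^tU$, we have
\begin{equation*}
|\eta_r-(\eta_0)_r|+|\eta/r-\eta_0/r|\le C\int_0^t\|(U_r,U/r)\|_{L^\infty}\,\mathrm{d}s\le Ct^{1/2}\Big(\int_0^t\cD\,\mathrm{d}s\Big)^{1/2}.
\end{equation*}
This closes the bootstrap for $T_*$ small, with $T_*$ now depending on $\delta_*,\delta^*$ and the norms of $\eta_0$ as well.

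\textbf{Step 3: existence and uniqueness.} These follow from the same linearization and iteration (or Galerkin) scheme and contraction argument, together with the boundary conditions \eqref{N111}, verbatim.

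The main obstacle is Step 2, specifically the terms in $\sD_\eta^3,\sD_\eta^4$ where high derivatives fall on $\eta$ rather than on $U$. Before, $\eta-r$ started at zero; now $\eta_{rrr}$ and $\eta_{rrrr}$ carry an $O(1)$ initial contribution from $\eta_0$. I would handle this by writing
\begin{equation*}
\partial_r^l\eta=\partial_r^l\eta_0+\int_0^t\partial_r^lU\,\mathrm{d}s,
\end{equation*}
bounding the first piece by the hypothesis with the weights $\chi^\sharp\rho_0^{(\gamma-1)(\frac32-\varepsilon_0)}$ and $\zeta r$, and the second by $\int_0^t\cD$. These terms then enter only as bounded coefficients, so the Grönwall-type closure of the original argument still applies.
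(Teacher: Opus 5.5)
Your proposal takes essentially the same route as the paper: the paper justifies this extension only by stating that its methodology carries over with minor modifications, and the modifications you identify are the right ones. These are the formula $\varrho=\eta_0^2(\eta_0)_r\rho_0/(\eta^2\eta_r)$, the bootstrap window $[\frac{\delta_*}{2},\frac{3\delta^*}{2}]$, and weighted control of $\partial_r^l\eta_0$ wherever derivatives of $\eta$ enter through $\bar{\mathscr{J}}$, $\bar\Lambda$ and the chain rule for $D_\eta$. One small correction: $|(U_r,\frac{U}{r})|_\infty$ is controlled by $\cE(t,U)^{\frac12}$ as in Lemma \ref{c_1-c_2}, not by $\cD(t,U)^{\frac12}$, which contains only top-order quantities, so your bootstrap bound should read $Ct\sup_{s\le t}\cE(s,U)^{\frac12}$, which closes just as well.
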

This theorem can be proved by following the methodology developed in this paper with minor modifications.
Next, we make some remarks on Theorem \ref{local-Theorem1.1}.

\begin{Remark}\label{remark-energy function}
We show some explanations of the definitions of $(\cE,\cD)$ given in \eqref{E-1}{\rm--}\eqref{D-2}.
Due to the coordinate singularity at the origin and the strong degeneracy on the moving boundary, we need to select the energy functionals separately in their respective neighborhoods and then combine them appropriately.

First, the interior energy functionals $(\cE_{\mathrm{in}},\cD_{\mathrm{in}})$ are chosen based on the  3-D Lagrangian spatial coordinates $(t,\boldsymbol{y})$. If we let  $\boldsymbol{U}(t,\boldsymbol{y}):=U(t,r)\frac{\boldsymbol{y}}{r}$, then $\cE_{\mathrm{in}}(t,U)+t\cD_{\mathrm{in}}(t,U)\in L^\infty(0,T)$ can be read as the following $H^k$-norms of $\boldsymbol{U}$ due to  {\rm Lemma \ref{lemma-initial}} in {\rm Appendix \ref{appb}: }
\begin{equation}\label{regularity-La-M}
\sum_{l=0}^1 \|\zeta\partial_t^{1-l}\boldsymbol{U} (t)\|_{H^{2l+1}(\Omega)} +t\sum_{l=0}^2 \|\zeta\partial_t^{2-l}\boldsymbol{U}(t)\|_{H^{2l}(\Omega)}  \in L^\infty(0,T).
\end{equation}

Second, to ensure that the solution is classical in the region  $\{r:\,\frac{1}{2}\leq r< 1\}$ and smooth up to the vacuum boundary at least when $t>0$, we establish some weighted $H^4(\frac{1}{2},1)$-estimates on $U$ inspired by the following embedding, due to the Hardy and Sobolev inequalities {\rm(}see {\rm Lemmas \ref{sobolev-embedding}--\ref{hardy-inequality}} in {\rm Appendix \ref{appendix A}}{\rm)}{\rm:}
\begin{equation*}
H^4_{\rho_0^{2\alpha}}\big(\frac{1}{2},1\big)\hookrightarrow W^{3,1}\big(\frac{1}{2},1\big)\hookrightarrow C^2\big[\frac{1}{2},1\big] \quad\text{for some $\alpha<\frac{3}{2}(\gamma-1)$ and $\big|\alpha-\frac{3}{2}(\gamma-1)\big|\ll 1$},
\end{equation*}
where  the weighted Sobolev space $H^4_{\rho_0^{2\alpha}}(\frac{1}{2},1)$ is defined by 
\begin{equation*}
H^4_{\rho_0^{2\alpha}}\big(\frac{1}{2},1\big)
:=\big\{f\in L^1_{\mathrm{loc}}\big(\frac{1}{2},1\big): \,\rho_0^{\alpha}\partial_r^j f\in L^2\big(\frac{1}{2},1\big) 
\ \text{for $0\leq j\leq 4$}\}.
\end{equation*}
Then the natural exterior energy and dissipation take the form{\rm:}
\begin{equation*}
\begin{aligned}
\cE^*_{\mathrm{ex}}(t,U)&=\big|\chi^\sharp\rho_0^\frac{1}{2}(U,U_t)(t)\big|_{2}^2+\big|\chi^\sharp\rho_0^\frac{1}{2}(D_\eta U,D_\eta U_t)(t)\big|_{2}^2+
\big|\chi^\sharp\rho_0^{\alpha}(D_\eta^2 U,D_\eta^3 U)(t)\big|_{2}^2,\\
\cD^*_{\mathrm{ex}}(t,U)&=\big|\chi^\sharp\rho_0^\frac{1}{2}U_{tt}(t)\big|_{2}^2+\big|\chi^\sharp\rho_0^{\alpha}(D_\eta^2 U_t,D_\eta^4 U)(t)\big|_{2}^2,
\end{aligned}
\end{equation*}
where $\alpha$ can be determined in the elliptic estimates near the vacuum boundary. In fact, to derive the highest-order elliptic estimates, we reformulate $\eqref{eq:VFBP-La-eta}_1$ by multiplying $\varrho^{\gamma-\delta-1}${\rm:}
\begin{equation}\label{reform}
\begin{aligned}
    (2a_1+a_2)\varrho^{\gamma-1} D_\eta^2 U &=\varrho^{\gamma-\delta}U_t-\frac{(2a_1+a_2)\delta}{\gamma-1} D_\eta(\varrho^{\gamma-1})\big(D_\eta U+\frac{2a_2}{2a_1+a_2}\frac{U}{\eta}\big)\\
    &\quad -2(2a_1+a_2) \varrho^{\gamma-1} D_\eta\big(\frac{U}{\eta}\big)+\frac{A\gamma}{\gamma-1}\varrho^{\gamma-\delta}D_\eta(\varrho^{\gamma-1})+\varrho^{\gamma-\delta}D_\eta\Phi.
\end{aligned}
\end{equation}
Then we formally obtain from the above that
\begin{equation*}
\begin{aligned}
\varrho^\alpha D_\eta^4 U &= \frac{1}{2a_1+a_2}\varrho^{\alpha+1-\delta} D_\eta^2 U_{t}+\mathrm{R}_{(\alpha)}= \underline{\frac{1}{(2a_1+a_2)^2}\varrho^{\alpha+2-2\delta} U_{tt}}_{:=\clubsuit}+\widetilde{\mathrm{R}}_{(\alpha)},
\end{aligned}
\end{equation*}
where $(\mathrm{R}_{(\alpha)},\widetilde{\mathrm{R}}_{(\alpha)})$ denote some harmless terms which possess either higher-order $\varrho$-weights or lower-order derivatives of $(\varrho,U)$.  As can be checked,  $\clubsuit$  contains the highest-order tangential derivative and constitutes the main obstacle in controlling the $L^2(\frac{1}{2},1)$-norm of $\varrho^\alpha D_\eta^4 U$. 
Thus, \eqref{distance-la} leads to 
\begin{equation*}
\begin{aligned}
\clubsuit\in L^2\big(\frac{1}{2},1\big) &\iff \alpha\geq \frac{1}{2}\implies \gamma-1>\frac{1}{3}.
\end{aligned}
\end{equation*}
Finally, by setting 
$$\alpha:=(\frac{3}{2}-\varepsilon_0)(\gamma-1)<\frac{3}{2}(\gamma-1)$$ with some suitable small $\varepsilon_0$ defined in \eqref{varepsilon0}, we recover the desired exterior energy and dissipation $(\cE_{\mathrm{ex}},\cD_{\mathrm{ex}})(t,U)$ in \eqref{E-2} and \eqref{D-2}.
\end{Remark}

\begin{Remark}\label{initialexample}
We give an example of the initial data required in {\rm Theorem \ref{local-Theorem1.1}}. More precisely, we find that {\rm Theorem \ref{local-Theorem1.1}} can be established if  the initial data
belong to the following class{\rm :}
\begin{equation*} 
\rho_0^{\gamma-1}(r)=1-r^2, \qquad  u_0(r)\in C_\mathrm{c}^\infty(0,1).
\end{equation*}
Clearly, we can check that $\rho_0$ satisfies assumption \eqref{distance-la}. For $u_0$, it suffices to show that the initial values $(U_t,U_{tr})(0,r)$ satisfy \eqref{a2-lo}. Indeed, we can obtain from $\eqref{eq:VFBP-La-eta}_1$ and a direct calculation that $(U_t,U_{tr})(0,r)$ satisfy the following compatibility conditions{\rm:}
\begin{equation}\label{116}
\begin{aligned}
U_t(0,r)&=\frac{2a_1+a_2}{\rho_0}\Big(\rho_0^{\delta}\big((u_0)_r+\frac{2a_2}{2a_1+a_2}\frac{u_0}{r}\big)\Big)_r+4a_1\rho_0^{\delta-1} (\frac{u_0}{r})_r\\
&\quad - \frac{A \gamma}{\gamma-1} (\rho_0^{\gamma-1})_r-\frac{4\pi G}{r^2}\int_0^r \hat r^2\rho_0\,\mathrm{d}\hat r,\\
U_{tr}(0,r)&=(2a_1+a_2)\Big(\frac{1}{\rho_0} \big(\rho_0^{\delta} (u_0)_r\big)_r+\frac{2a_2}{2a_1+a_2}\frac{1}{\rho_0}\big(\rho_0^{\delta}\frac{u_0}{r}\big)_r \Big)_r+4a_1\Big(\rho_0^{\delta-1} (\frac{u_0}{r})_r\Big)_r\\
&\quad - \frac{A \gamma}{\gamma-1} (\rho_0^{\gamma-1})_{rr}- 4\pi G \rho_0+ \frac{8\pi G}{r^3}\int_0^r \hat r^2\rho_0\,\mathrm{d}\hat r. 
\end{aligned}
\end{equation}
It is then straightforward to verify that 
\begin{equation*}
\zeta r U_t(0,r), \ \zeta r \sD_r U_{t}(0,r)\in L^2(I),\qquad  \rho_0^\frac{1}{2}U_t(0,r), \ \rho_0^\frac{\delta}{2}U_{tr}(0,r)\in L^2(\frac{1}{2},1),
\end{equation*}    
due to the regularities of $\rho_0$ and the fact that $u_0$ is compactly supported in $(0,1)$.
\end{Remark}

\begin{Remark}\label{rmk1.4}
We briefly explain how to derive the boundary condition in \eqref{N111}. First, it follows from \eqref{flowmap-r-la}, \eqref{distance-la}, $\eqref{b1-lo}_2$, and {\rm Definition \ref{definition-lag}} that 
\begin{equation}\label{eq117}
(\varrho^{\gamma-1}, \, D_\eta\varrho^{\gamma-1},\, U,\ D_{\eta}U, \, D_{\eta}^2U, \, U_t)\in  C((0,T]\times [\frac{1}{2},1]).
\end{equation}
Next, we reformulate $\eqref{eq:VFBP-La-eta}_1$ by multiplying $\varrho^{\gamma-\delta-1}${\rm:}
\begin{equation}\label{reform boundary condition}
\begin{aligned}
&\varrho^{\gamma-\delta}U_t+\frac{A\gamma}{\gamma-1}\varrho^{\gamma-\delta}D_\eta(\varrho^{\gamma-1})+\varrho^{\gamma-\delta}D_\eta\Phi\\
&=(2a_1+a_2)\varrho^{\gamma-1}\big(D_\eta^2U+2D_\eta(\frac{U}{\eta})\big)+ \frac{\delta}{\gamma-1} D_\eta(\varrho^{\gamma-1})\big((2a_1+a_2) D_\eta  U +2a_2\frac{U}{\eta}\big).
\end{aligned}
\end{equation}
Then, taking the limit $r\to 1$ in \eqref{reform boundary condition} and using \eqref{distance-la}, \eqref{eq117}, and the lower bounds of  $(\eta,\eta_r)$ near the boundary, we obtain 
\begin{equation*}
D_\eta(\varrho^{\gamma-1})\big(D_\eta U+\frac{2a_2}{2a_1+a_2}\frac{U}{\eta}\big)\big|_{r=1}=0.
\end{equation*}
Since $\rho_0^{\gamma-1} \sim 1-r$ and, by \eqref{eq:eta}, $D_\eta(\varrho^{\gamma-1})|_{r=1}\neq 0$, the boundary condition
follows directly. 
\end{Remark}

\smallskip
\begin{Remark}
For  {\rm\textbf{VFBP}} \eqref{eq:1.1-vfbp}, the usual
stress-free boundary condition holds automatically{\rm:}
\begin{equation*}
(\mathbb{T}-P\II_3)\cdot \boldsymbol{n} = \big(2a_1\rho^\delta D(\boldsymbol{u})+a_2\rho^\delta \dive\boldsymbol{u}\,\mathbb{I}_3-A\rho^\gamma\II_3\big)\cdot \boldsymbol{n}=\boldsymbol{0}.
\end{equation*}
\end{Remark}

The rest of the paper is organized as follows: 
\S \ref{subsection9.1} is devoted to  constructing  global approximation solutions for the corresponding  linearized problem. In \S \ref{subsection9.2},  we establish the uniform  energy  estimates for the solutions of the linearized problem introduced in \S \ref{subsection9.1}.
Finally, in \S \ref{subsection9.3}, based on the uniform estimates obtained above, we will give the proof for the local-in-time well-posedness of the classical solutions to the nonlinear problem \eqref{eq:VFBP-La-eta} by the classical Picard iteration.
Finally, several auxiliary lemmas and useful coordinate transformations for spherically symmetric functions are collected in Appendices \ref{appendix A}--\ref{subsection2.2}.

\section{Linearization and Global Approximation Solutions}\label{subsection9.1}

This section is  devoted to  constructing  global-in-time  approximation solutions for the corresponding  linearized problem. In \S\ref{subsection9.1}, $C\in (1,\infty)$ denotes a generic constant depending only on $(a_1,a_2,A,\delta,\gamma,\rho_0,u_0,K_1,K_2,G)$, and $C(l_1,\cdots\!,l_k)\in (1,\infty)$ a generic  constant depending on $C$
and additional  parameters $(l_1,\cdots\!,l_k)$, which may be different at each occurrence.

\subsection{Linearization} In order to solve the nonlinear problem  \eqref{eq:VFBP-La-eta}, we first need to study the  following linearized problem in $[0, T] \times I$: 
\begin{equation}\label{lp}
\begin{cases}
\displaystyle r^2\rho_0 U_t +A\big(\bar\eta^2\bar\varrho^{\gamma}\big)_r- 2A\bar\eta\bar\eta_r\bar\varrho^{\gamma} +4\pi G\frac{r^2\rho_0}{\bar\eta^2}\int_0^r\hat r^2\rho_0\,\mathrm{d}\hat r\\[10pt]
\displaystyle\quad = \Big(\bar\eta^2\bar\varrho^{\delta} \big((2a_1+a_2)D_{\bar\eta}U+2a_2\frac{U}{\bar\eta}\big)\Big)_r -\bar\eta\bar\eta_r\bar\varrho^{\delta} \Big(2a_2  D_{\bar\eta}U+4(a_1+a_2)\frac{U}{\bar\eta}\Big),\\[8pt]
U|_{t=0}= u_0 \qquad \text{in }I,
\end{cases}
\end{equation}
where 
\begin{equation}
\bar\varrho:=\frac{r^2\rho_0}{\bar\eta^2\bar\eta_r}, 
\end{equation}
$\bar \eta $ stands for the flow map corresponding to $\bar U$:
\begin{equation}\label{given-flow}
\bar \eta (t,r)=r+\int_0^t \bar U(s,r)\,\ds,\quad \bar \eta(0,r)=r, \quad \bar \eta(t,0)=0,
\end{equation}
and $\bar U$ is a given function satisfying $\bar U(0,r)=u_0(r)$ for $r\in I$ 
and, for any $T>0$,
\begin{equation}\label{given}
\begin{aligned}
&\bar\cE(t,\bar U)+t\bar\cD(t,\bar U)\in L^\infty(0,T),\qquad \bar\cD (t,\bar U)\in L^1(0,T),\\
&(\bar U,\sD_r\bar U)\in C([0,T];C(\bar I)),\qquad\ \, (\sD_r^2\bar U,\bar U_t)\in C((0,T];C(\bar I)), \quad \bar{U}|_{r=0}=0,
\end{aligned} 
\end{equation}
where $(\bar\cE,\bar\cD)(t,f)$ are defined in the same way as $(\cE,\cD)(t,f)$ in \eqref{E-1} and \eqref{D-1}, except with $\eta$ in place of $\bar\eta$. Besides, 
we define $(\bar\cE_{\mathrm{in}},\bar\cE_{\mathrm{ex}},\bar\cD_{\mathrm{in}},\bar\cD_{\mathrm{ex}})(t,f)$ in the similar manner to $(\bar\cE,\bar\cD)(t,f)$.

Clearly, \eqref{given-flow}--\eqref{given}  also provide  the regularity of $\bar\eta$, that is,
\begin{equation}\label{given-bareta}
\begin{aligned}
&\zeta r (\bar \eta,\sD_r\bar \eta,\sD_r^2\bar \eta,\sD_r^3\bar \eta,\sD_r^4\bar \eta )\in C([0,T];L^2),\\
&\chi^\sharp (\rho_0^\frac{1}{2}\bar \eta,\rho_0^\frac{\delta}{2}\bar \eta_r)\in C([0,T];L^2),\qquad \chi^\sharp\rho_0^{(\gamma-1)(\frac{3}{2}-\varepsilon_0)}(\bar \eta_{rr},\bar \eta_{rrr},\bar \eta_{rrrr})\in C([0,T];L^2),\\[4pt]
&(\bar \eta, \sD_r\bar\eta)\in C^1([0,T];C(\bar I)),\qquad \sD_r^2\bar\eta\in C^1((0,T];C(\bar I)).
\end{aligned} 
\end{equation}
Moreover, we assume here that 
\begin{equation}\label{jibenjiashe}
(\bar\eta_r,\frac{\bar\eta}{r})(t,r)\in \big[\frac{1}{2},\frac{3}{2}\big] \qquad \text{for all }(t,r)\in [0,T]\times \bar I.
\end{equation}
This requirement will be fulfilled in \S \ref{subsection9.2} for the corresponding linearization procedure. 

Now we define the classical solution of the linearized problem \eqref{lp}, which slightly differs from Definition \ref{definition-lag}.
\begin{Definition}\label{fed-cl}
We say that $U(t,r)$ is a classical solution of the linearized problem \eqref{lp} in $[0,T]\times \bar I$ if $U(t,r)$ satisfies equation $\eqref{lp}_1$ pointwise in $(0,T]\times \bar I$, and  takes the initial data $\eqref{lp}_2$  continuously, and
\begin{equation}\label{regu-class}
(U,\sD_rU)\in C([0,T];C(\bar I)),\qquad \sD_r^2 U\in C((0,T];C(\bar I)).
\end{equation}
\end{Definition}

Then the main conclusion in \S\ref{subsection9.1} can be stated in the following lemma:
\begin{Lemma}\label{existence-linearize}
Let $A>0$,  $G>0$, $a_1>0$, $2a_1+3a_2>0$, and \eqref{gammadelta} hold. Assume that $(\rho_0,u_0)(r)$ satisfy \eqref{distance-la} and $\cE(0,U)<\infty$. Then, for any $T>0$, problem \eqref{lp} admits a unique classical solution $U$ in $[0,T]\times \bar I$ satisfying 
\begin{equation}\label{n-loc}
\begin{aligned}
&\bar\cE(t,U)+t\,\bar\cD(t,U)\in L^\infty(0,T),\qquad \bar\cD (t,U)\in L^1(0,T),\\[4pt]
&U|_{r=0}=\big(D_{\bar\eta}U+\frac{2a_2}{2a_1+a_2}\frac{U}{\bar\eta}\big)\Big|_{r=1}=0\qquad\qquad  \text{for $t\in(0,T]$}.
\end{aligned}
\end{equation}
\end{Lemma}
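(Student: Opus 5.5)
The linear problem \eqref{lp} is a degenerate parabolic equation for $U$ with coefficients frozen by $\bar\eta$. I plan to solve it by a Galerkin scheme on a nondegenerate regularization, derive estimates uniform in the regularization parameter, pass to the limit, and then recover the boundary behaviour from the equation itself.

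\emph{Regularization.} For $\epsilon\in(0,1)$ I replace $\rho_0$ by $\rho_0+\epsilon$ in the weight $r^2\rho_0$ multiplying $U_t$ and in $\bar\varrho^\delta$. This keeps the singular weight $r^2$ at the origin but removes the degeneracy at $r=1$. Writing $\bar\varrho_\epsilon=r^2(\rho_0+\epsilon)/(\bar\eta^2\bar\eta_r)$, the weak formulation uses test functions $\varphi$ with $(\varphi,\varphi_r,\varphi/r)\in L^2_{r^2}$. The bilinear form
\[
\int_0^1\bar\eta^2\bar\eta_r\bar\varrho_\epsilon^\delta\Big((2a_1+a_2)|D_{\bar\eta}U|^2+4a_2D_{\bar\eta}U\frac{U}{\bar\eta}+(4a_1+2a_2)\frac{U^2}{\bar\eta^2}\Big)\,\mathrm{d}r
\]
is coercive on $\cH^1_{r^2}$ precisely because $a_1>0$ and $2a_1+3a_2>0$. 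To see this, write the integrand as $2a_1|D(\boldsymbol U)|^2+a_2(\dive\boldsymbol U)^2$. The natural boundary condition at $r=1$ is the stress-free one, which is consistent with \eqref{n-loc}. Galerkin approximation in $\cH^1_{r^2}$, combined with standard linear parabolic theory, then gives a global-in-time solution $U^\epsilon$ for every fixed $\epsilon$, with all the regularity needed to differentiate in $t$. The regularity of $\bar U$ in \eqref{given}--\eqref{jibenjiashe} makes the coefficients regular enough.

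\emph{Uniform estimates.} These are the heart of the proof, and I will derive them in increasing order.
\begin{enumerate}
\item[(i)] Test with $U$ to obtain $L^2_{r^2\rho_0}$ control of $U$ and $L^2_t$ control of the weighted $\sD_{\bar\eta}U$.
\item[(ii)] Differentiate the equation in $t$ and test with $U_t$, and then test with $U_{tt}$ weighted by $t$, to obtain the $U_t$, $U_{tt}$ and $\sD_{\bar\eta}U_t$ parts of $\bar\cE$ and $t\bar\cD$. The initial value $U_t(0)$ is controlled through the compatibility formula \eqref{116}.
\item[(iii)] Obtain the higher spatial norms by elliptic estimates, treating $U_t$ as a source.
\end{enumerate}
The elliptic estimates split into two regions.
\begin{itemize}
\item In the interior $\zeta$-region I use the 3-D form. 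Setting $\boldsymbol U=U\boldsymbol y/r$, the equation reads $\rho_0\boldsymbol U_t-\dive(\bar\varrho^\delta(\ldots))=\ldots$, which is a Lamé system. $H^2$ and $H^4$ elliptic regularity, via Lemma \ref{lemma-initial}, give the $\zeta r\sD_{\bar\eta}^k$ norms.
\item Near $r=1$ I use the reformulation \eqref{reform}, linearized with $\bar\varrho$ in place of $\varrho$. This is a first-order ODE in $D_{\bar\eta}U$ of the form $\bar\varrho^{\gamma-1}D_{\bar\eta}^2U+\frac{\delta}{\gamma-1}D_{\bar\eta}\bar\varrho^{\gamma-1}D_{\bar\eta}U=F$, with $D_{\bar\eta}\bar\varrho^{\gamma-1}<0$ at the boundary. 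Weighted ODE/Hardy estimates (Lemmas \ref{sobolev-embedding}--\ref{hardy-inequality}) then give $\rho_0^{(\frac32-\varepsilon_0)(\gamma-1)}D_{\bar\eta}^2U$ in terms of $\rho_0^{1-\delta+\ldots}U_t$. Differentiating twice more gives $D_{\bar\eta}^3U$ and $D_{\bar\eta}^4U$, with $\rho_0^{\alpha+2-2\delta}U_{tt}$ as the top term.
\end{itemize}
The choice \eqref{varepsilon0} together with $\gamma>\frac43$ guarantees $\alpha\ge\frac12$ and $\alpha+2-2\delta\geq\frac12$, so this top term is bounded by $\bar\cD$. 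All constants depend on $\bar\cE,\bar\cD$ and $T$ but not on $\epsilon$.

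\emph{Limit and uniqueness.} Weak-$*$ compactness yields a limit $U$ satisfying \eqref{n-loc}$_1$. The weighted embedding $H^4_{\rho_0^{2\alpha}}(\frac12,1)\hookrightarrow C^2$ of Remark \ref{remark-energy function}, combined with the interior Sobolev embedding, gives the regularity \eqref{regu-class} for $t>0$, so the equation holds pointwise. The boundary conditions follow as in Remark \ref{rmk1.4}: let $r\to1$ in the linearized \eqref{reform boundary condition}. The condition $U|_{r=0}=0$ follows from $U/r\in C$. Uniqueness follows from linearity and the energy estimate (i) applied to the difference of two solutions.

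The main obstacle I expect is the boundary elliptic estimate at fourth order. There one has to commute $D_{\bar\eta}$ through the degenerate operator and control the resulting terms with the borderline weights while $\bar\eta$ has only the limited regularity \eqref{given-bareta}. This requires carefully chosen Hardy inequalities with exponents tied to $\varepsilon_0$.
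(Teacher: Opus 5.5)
\emph{The gap.} The step that carries the whole argument, the $\epsilon$-uniform boundary elliptic estimate, is asserted rather than derived, and the mechanism you sketch for it fails.

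\emph{Why naive differentiation fails.} Applying $D_{\bar\eta}^2$ to the linearized \eqref{reform} gives \eqref{xxxx-l'}: $(D_{\bar\eta}^3U)_r+\big(\frac{\delta}{\gamma-1}+2\big)\frac{(\rho_0^{\gamma-1})_r}{\rho_0^{\gamma-1}}D_{\bar\eta}^3U=\sum_{i=15}^{18}J_i$. The second term is exactly as singular as the first. Bounding it directly needs $\rho_0^{(\frac12-\varepsilon_0)(\gamma-1)}D_{\bar\eta}^3U\in L^2$, which $\bar\cE$ does not provide. So ``differentiating twice more'' does not yield $\rho_0^{(\frac32-\varepsilon_0)(\gamma-1)}D_{\bar\eta}^4U$. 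The $U_{tt}$ term you single out is not the real obstruction.

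\emph{What the paper uses instead.} It closes this step with the cross-derivative inequality of Lemma \ref{prop2.1}. Its integration by parts needs $c=\frac{\delta}{\gamma-1}+2$ to be a fixed positive constant with $\frac12<\frac32-\varepsilon_0\le\frac{c+1}{2}$. It also needs the qualitative hypothesis $\chi^\sharp\rho_0^{\frac{3(\gamma-1)+\delta}{2}}(D_{\bar\eta}^3U)_r\in L^2([0,T];L^2)$. That hypothesis is proved via \eqref{refine-Ux}--\eqref{refine-3}, including a second application of Lemma \ref{prop2.1} one order lower. The lower levels rely on improved weights (\eqref{L2Linfty}, \eqref{refine-Ux}, \eqref{4015}--\eqref{4016}), obtained by integrating from $r=1$ with the natural boundary condition as in \eqref{old3...136}. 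None of this appears in your plan.

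\emph{The regularization breaks uniformity.} For your particular regularization, the claimed $\epsilon$-independence is false at top order. Up to smooth factors and lower-order terms, the regularized analogue of \eqref{old3...136} gives near $r=1$
$X^\epsilon:=D_{\bar\eta}U^\epsilon+\frac{2a_2}{2a_1+a_2}\frac{U^\epsilon}{\bar\eta}\approx\frac{4a_1}{2a_1+a_2}D_{\bar\eta}\big(\frac{U^\epsilon}{\bar\eta}\big)\big|_{r=1}(\rho_0+\epsilon)^{-\delta}\int_r^1(\rho_0+\epsilon)^{\delta}\,\mathrm{d}\hat r$.
The last factor changes from $(1-r)/(1+\frac{\delta}{\gamma-1})$ where $\rho_0\gg\epsilon$ to $1-r$ where $\rho_0\ll\epsilon$. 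So, for generic data, $D_{\bar\eta}^2U^\epsilon$ makes an $O(1)$ transition across the layer $1-r\sim\epsilon^{\gamma-1}$, and $D_{\bar\eta}^4U^\epsilon\sim\epsilon^{-2(\gamma-1)}$ there. Hence $|\chi^\sharp\rho_0^{(\frac32-\varepsilon_0)(\gamma-1)}D_{\bar\eta}^4U^\epsilon|_2^2\gtrsim\epsilon^{(3-2\varepsilon_0)(\gamma-1)}\epsilon^{-3(\gamma-1)}=\epsilon^{-2\varepsilon_0(\gamma-1)}\to\infty$, because the weight exponent is strictly below the critical $\frac32(\gamma-1)$. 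Thus $\bar\cD_{\mathrm{ex}}(t,U^\epsilon)$ is not uniformly bounded. Separately, $\cE(0,U)<\infty$ controls the degenerate compatibility data \eqref{116}, not $U^\epsilon_t(0)$ for your modified equation, so even the $t=0$ bounds need an argument.

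\emph{Consequence.} At best you get uniform lower-order bounds. You must then prove the top-order boundary regularity of the limit from the degenerate equation itself, which is exactly the part your plan leaves out.

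\emph{How the paper avoids this.} It never regularizes. The Galerkin scheme with the spherical Bessel basis runs directly in $L^2_{r^2\rho_0}$ and $\cH^1_{r^2\rho_0^\delta}$, since the Gram matrix $\big(\int_0^1r^2\rho_0\xi_k\xi_j\,\mathrm{d}r\big)$ is nonsingular. The $t$-differentiated problem \eqref{33152}, with datum from \eqref{116}, is solved as another instance of \eqref{galerkin-w} and identified with $U_t$ by uniqueness. The boundary condition is read off from the weak form (Lemma \ref{Lemma-point}).
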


\subsection{The Modified Galerkin Scheme for Some General Problems}
To establish the well-posedness of \eqref{lp} schematically, we initiate to study a  general \textbf{IBVP} in $[0,T]\times I$:
\begin{equation}\label{galerkin-w}
\begin{cases}
\displaystyle  r^2\rho_0 w_t-2(r^2\rho_0^{\delta})^\frac{1}{2}\frac{g}{\bar\eta}+\Big((r^2\rho_0^{\delta})^\frac{1}{2}\frac{h}{\bar\eta_r}\Big)_r\\[6pt] \displaystyle=\Big(\bar\eta^2\bar\varrho^{\delta} \big((2a_1+a_2)D_{\bar\eta}w+2a_2\frac{w}{\bar\eta}\big)\Big)_r -\bar\eta\bar\eta_r\bar\varrho^{\delta}\Big(2a_2  D_{\bar\eta}w+4(a_1+a_2)\frac{w}{\bar\eta}\Big),\\[8pt]
w|_{t=0}=w_0\qquad \text{on } I,
\end{cases}
\end{equation}
where $(g,h,w_0)$ are given functions satisfying 
\begin{equation*}
(g,h)\in L^2([0,T];L^2),\qquad w_0\in L^2_{r^2\rho_0}.
\end{equation*}

First, we study the weak solutions of problem \eqref{galerkin-w}. 
\begin{Definition}\label{def3.1}
We say that a function $w(t,r)$ is a weak solution in $[0,T]\times I$ of 
problem \eqref{galerkin-w} if the following three properties hold{\rm:}
\begin{enumerate}
\item[{\rm (i)}] $w\in C([0,T];L^2_{r^2\rho_0})\cap L^2([0,T];\cH^1_{r^2\rho_0^{\delta}})$ and $r^2\rho_0 w_t\in L^2([0,T];\cH^{-1}_{r^2\rho_0^{\delta}});$
\item[{\rm (ii)}]
for all $\varphi$ satisfying $\varphi \in \cH^1_{r^2\rho_0^{\delta}}$ and {\it a.e.} time $t\in(0, T)$,
\begin{equation}\label{weak.F.}
\begin{aligned}
&\big<r^2\rho_0 w_t, \varphi\big>_{\cH^{-1}_{r^2\rho_0^{\delta}}\times \cH^1_{r^2\rho_0^{\delta}}} +(2a_1+a_2)\big< \bar\eta^2\bar\eta_r\bar\varrho^{\delta} D_{\bar\eta} w,D_{\bar\eta}\varphi\big>+4(a_1+a_2)\Big<\bar\eta^2\bar\eta_r\bar\varrho^{\delta}\frac{w}{\bar\eta},\frac{\varphi}{\bar\eta}\Big>\\
&\quad + 2a_2\Big<\bar\eta^2\bar\eta_r\bar\varrho^{\delta}\frac{w}{\bar\eta},D_{\bar\eta}\varphi\Big>+ 2a_2\Big<\bar\eta^2\bar\eta_r\bar\varrho^{\delta}D_{\bar\eta} w,\frac{\varphi}{\bar\eta}\Big>\\
&=\Big<g,(r^2\rho_0^{\delta})^\frac{1}{2} \frac{2\varphi}{\bar\eta}\Big>+\big< h,(r^2\rho_0^{\delta})^\frac{1}{2}  D_{\bar\eta}\varphi\big>;
\end{aligned}
\end{equation}
\item[{\rm (iii)}] $w(0,r)=w_0(r)$ for {\it a.e.} $ r\in I$.
\end{enumerate}
\end{Definition}

Now we establish the following existence of weak solutions and their related estimates. 
\begin{Proposition}\label{prop1}
For all $T>0$,  problem \eqref{galerkin-w} admits a unique weak solution $w$ in $[0,T]\times I$, satisfying
\begin{equation*}
\begin{aligned}
&\sup_{t\in[0,T]}|w|_{2,r^2\rho_0}^2 + \int_0^T  \big(\|w\|_{\cH^{1}_{r^2\rho_0^{\delta}}}^2 + \big\|r^2\rho_0 w_t\big\|_{\cH^{-1}_{r^2\rho_0^{\delta}}}^2\big)\,\dt\\
&\leq C(T)\Big(|w_0|_{2,r^2\rho_0}^2 +  \int_0^T |(g,h)|_2^2\,\dt\Big).
\end{aligned}
\end{equation*}
\end{Proposition}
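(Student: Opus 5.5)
The proof is a standard Lions-type Galerkin argument in the weighted spaces $L^2_{r^2\rho_0}$ and $\cH^1_{r^2\rho_0^\delta}$. The coefficients of the bilinear form in \eqref{weak.F.} are controlled using \eqref{jibenjiashe}.

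\textbf{Step 1: the bilinear form.} Write
\[
\bar\eta^2\bar\eta_r\bar\varrho^\delta=r^{2\delta}\rho_0^\delta\,\bar\eta^{2-2\delta}\bar\eta_r^{1-\delta}.
\]
Since $(\bar\eta_r,\bar\eta/r)\in[\frac12,\frac32]$, this weight is comparable to $r^2\rho_0^\delta$ uniformly in $(t,r)$. Likewise $D_{\bar\eta}\varphi$ and $\varphi/\bar\eta$ are comparable to $\varphi_r$ and $\varphi/r$. Define $B(t;w,\varphi)$ as the left side of \eqref{weak.F.} minus the time-derivative term.

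Boundedness $|B(t;w,\varphi)|\le C\|w\|_{\cH^1_{r^2\rho_0^\delta}}\|\varphi\|_{\cH^1_{r^2\rho_0^\delta}}$ is then immediate. For coercivity, the quadratic form
\[
(2a_1+a_2)X^2+4a_2XY+4(a_1+a_2)Y^2,\qquad X=D_{\bar\eta}w,\ Y=w/\bar\eta,
\]
is positive definite: its discriminant condition reads
\[
4(2a_1+a_2)(a_1+a_2)-4a_2^2=4a_1(2a_1+3a_2)>0,
\]
and $2a_1+a_2>0$ follows as well. Hence
\[
B(t;w,w)\ge c\int r^2\rho_0^\delta\big(w_r^2+w^2/r^2\big)\,\mathrm{d}r,
\]
and adding a multiple of $|w|_{2,r^2\rho_0}^2$ gives a Gårding inequality. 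Here one uses $\rho_0\le C\rho_0^\delta$ (as $\delta<1$ and $\rho_0$ is bounded) to absorb the $L^2_{r^2\rho_0^\delta}$ part of the $\cH^1$ norm. In fact $\int r^2\rho_0^\delta w^2\le\int r^2\rho_0^\delta w^2/r^2$, so the $w^2$ part is already controlled. The right-hand side is bounded by $C|(g,h)|_2\|\varphi\|_{\cH^1_{r^2\rho_0^\delta}}$ by Cauchy--Schwarz.

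\textbf{Step 2: Galerkin approximations and energy estimates.} Take a basis $\{e_k\}$ of $\cH^1_{r^2\rho_0^\delta}$, e.g.\ smooth functions compactly supported in $(0,1)$. Density of these needs a check near $r=0$ and $r=1$ and follows by cutoff and Hardy-type arguments (Lemmas in Appendix A). Orthonormalize the basis in $L^2_{r^2\rho_0}$.

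Solve the resulting linear ODE system for $w_m=\sum_{k\le m}c_k(t)e_k$. Its coefficients are measurable and bounded in $t$, so Carathéodory theory gives a solution on $[0,T]$. Testing with $w_m$ gives
\[
\frac12\frac{\mathrm d}{\mathrm dt}|w_m|^2_{2,r^2\rho_0}+c\|w_m\|^2_{\cH^1}\le C|(g,h)|_2\|w_m\|_{\cH^1}.
\]
Young's inequality and Gronwall then yield uniform bounds on $\sup_t|w_m|^2_{2,r^2\rho_0}$ and $\int_0^T\|w_m\|^2_{\cH^1}$.

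The dual bound on $r^2\rho_0\partial_tw_m$ in $L^2(0,T;\cH^{-1})$ follows by the usual projection argument. Testing against $\varphi=P_m\varphi+(\varphi-P_m\varphi)$, with $P_m$ the $L^2_{r^2\rho_0}$-orthogonal projection, requires $P_m$ to be bounded on $\cH^1_{r^2\rho_0^\delta}$. This is the main technical obstacle. The cleanest fix is to choose $e_k$ as eigenfunctions of a fixed self-adjoint operator, e.g.\ the one associated with $\int r^2\rho_0^\delta(\varphi_r\psi_r+\varphi\psi/r^2)$ relative to $L^2_{r^2\rho_0}$. This requires compactness of the embedding $\cH^1_{r^2\rho_0^\delta}\hookrightarrow L^2_{r^2\rho_0}$, which I would prove from the weight gap $\rho_0\sim(1-r)^{1/(\gamma-1)}$ versus $\rho_0^\delta$ together with Hardy's inequality near $r=1$. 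With this basis, $P_m$ is simultaneously orthogonal in both spaces.

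\textbf{Step 3: passage to the limit and uniqueness.} Extract weak(-star) limits and pass to the limit in the Galerkin identity for fixed basis elements, then extend by density. Continuity $w\in C([0,T];L^2_{r^2\rho_0})$ and the energy identity follow from the Lions--Magenes lemma for the Gelfand triple
\[
\cH^1_{r^2\rho_0^\delta}\subset L^2_{r^2\rho_0}\subset\cH^{-1}_{r^2\rho_0^\delta},
\]
with the pivot pairing weighted by $r^2\rho_0$. The initial condition is identified by testing with $\phi(t)e_k$ where $\phi(T)=0$.

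Lower semicontinuity gives the stated estimate. Uniqueness follows by applying the same energy inequality to the difference of two solutions, which has $g=h=w_0=0$.
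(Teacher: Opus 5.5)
Your argument is correct, but it differs from the paper's in the choice of Galerkin basis, and therefore in how the bound on $r^2\rho_0 w_t$ is obtained.

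\textbf{The paper's route.} The paper uses a fixed, unweighted basis: the eigenfunctions of the spherical Bessel problem \eqref{SL}, which are orthonormal in $L^2_{r^2}$ and complete in $\cH^1_{r^2}$. The mass matrix $\int r^2\rho_0\xi_k\xi_j\,\mathrm{d}r$ is then only a Gram matrix, and the $L^2_{r^2\rho_0}$-projection is not uniformly bounded on $\cH^1_{r^2\rho_0^\delta}$. This is exactly the obstacle you identified. The paper therefore never estimates $r^2\rho_0\partial_t w^{N,\vartheta}$. Instead it first passes to the limit in the time-integrated identity \eqref{3,,20}. It then bounds $\varphi^M\mapsto\int_0^T\langle r^2\rho_0 w,\varphi^M_t\rangle\,\mathrm{d}t$ using the limit equation. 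Finally it extends this functional by Hahn--Banach, using density of $\mathrm{span}\{\xi_j\}$ in $\cH^1_{r^2\rho_0^\delta}$ (Lemma \ref{prop-bijin} together with completeness in $\cH^1_{r^2}$). It also mollifies $w_0$, so that projections of smooth data converge.

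\textbf{What your route buys, and what it costs.} Your eigenbasis of the weighted form relative to $L^2_{r^2\rho_0}$ makes $P_m$ orthogonal in both spaces. This gives the dual bound uniformly at the Galerkin level. It also gives $P_m w_0\to w_0$ in $L^2_{r^2\rho_0}$, and in $\cH^1_{r^2\rho_0^\delta}$ when $w_0$ lies there (as needed in Proposition \ref{prop-strong}), with no mollification. The price is compactness of $\cH^1_{r^2\rho_0^\delta}\hookrightarrow L^2_{r^2\rho_0}$, which is in fact easy and needs no Hardy inequality, because the norm already contains $\int \rho_0^\delta w^2\,\mathrm{d}r=\int r^2\rho_0^\delta\, w^2/r^2\,\mathrm{d}r$:
\begin{itemize}
\item near $r=1$, $\int_{1-\epsilon}^1 r^2\rho_0 w^2\le C\epsilon^{(1-\delta)/(\gamma-1)}\|w\|^2_{\cH^1_{r^2\rho_0^\delta}}$;
\item near $r=0$, the analogous bound holds with $\epsilon^2$;
\item Rellich applies in between.
\end{itemize}
The same observation (that the $w^2$ part of the norm is dominated by the $w^2/r^2$ part) makes the paper's estimate \eqref{kongzhi} unnecessary.

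\textbf{A caution about your first suggestion.} Smooth functions compactly supported in $(0,1)$ would not work in general. When $\delta<\gamma-1$ (allowed, e.g.\ $\gamma=2$, $\delta=\frac12$), one has $\int_{1/2}^1\rho_0^{-\delta}\,\mathrm{d}r<\infty$. Elements of $\cH^1_{r^2\rho_0^\delta}$ then have a continuous trace at $r=1$, so $C_{\mathrm{c}}^\infty(0,1)$ is not dense. The limit would satisfy \eqref{weak.F.} only for test functions vanishing at $r=1$. This loses the natural boundary condition the paper extracts in Step 4 of Lemma \ref{Lemma-point}. The eigenbasis you ultimately settle on is complete in the whole space and avoids this.
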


\begin{proof}
We divide the proof into four steps.

\smallskip
\textbf{Step 1. Introduction of the Galerkin scheme.}  First, it follows from Lemma \ref{W-space} that, for given $w_0\in L^2_{r^2\rho_0}$, there exists a smooth sequence $\{w_0^\vartheta\}_{\vartheta>0}\subset C^\infty(\bar I)$ satisfying
\begin{equation}\label{wdelta-w}
\lim_{\vartheta\to0} |w^\vartheta_0- w_0|_{2,r^2\rho_0}=0.
\end{equation}

Next, we construct a basis required in the Galerkin scheme. To match   the spherical symmetry structure of $\eqref{galerkin-w}_1$, we consider the following Sturm--Liouville problem:
\begin{equation}\label{SL}
-(r^2\xi_r)_r+2\xi=\nu\, r^2 \xi \quad \text{on $I$}, \qquad  \ \
\xi|_{r=0}=\xi_r|_{r=1}=0.
\end{equation}

Based on the Sturm--Liouville theorem, we can construct a Hilbert basis $\{\xi_j\}_{j\in \NN^*}$ of $\cH^1_{r^2}$, which is orthonormal in $L^2_{r^2}$ and orthogonal in $\cH^1_{r^2}$:
\begin{Lemma}[\cite{zettl}]\label{hilbert}
Consider the Sturm--Liouville problem \eqref{SL}. Then all eigenvalues $\nu$ of problem \eqref{SL} are nonzero, real, and have multiplicity one. 
Moreover, there are infinite but countable eigenvalues 
$\{\nu_j\}_{j\in\NN^*}$, which are bounded below, strictly increasing and $\nu_j\to \infty$ as $j\to\infty$. Besides, the following approximation results hold{\rm:}
\begin{enumerate}
\item[{\rm(i)}]  There exists a sequence of eigenfunctions $\{\xi_j\}_{j\in\NN^*}\subset \cH^1_{r^2}$, corresponding to the eigenvalues $\{\nu_j\}_{j\in\NN^*}$. Such a sequence of eigenfunctions $\{\xi_j\}_{j\in\NN^*}$ is orthonormal and  complete in $L^2_{r^2}$, namely, $\langle r^2  \xi_j ,  \xi_k \rangle=\delta_{kj}$ for $k,j\in\NN^*$, and
\begin{equation*}
\lim_{N\to\infty}|f_N-f|_{2,r^2}=0 \qquad \text{for any $f\in L^2_{r^2}$}, \qquad f_N:=\sum_{j=1}^N \langle r^2 f, \xi_j\rangle\xi_j.
\end{equation*}
\item[{\rm(ii)}] Such a sequence of eigenfunctions $\{\xi_j\}_{j\in\NN^*}$ is also orthogonal and complete in $\cH^1_{r^2}$, namely, $\langle r^2 (\xi_j)_r, (\xi_k)_r\rangle+2\big<\xi_j, \xi_k\big>=\nu_j\delta_{jk}$ for $k,j\in\NN^*$, and
\begin{equation*}
\lim_{N\to\infty}\|f_N-f\|_{\cH^1_{r^2}}=0 \qquad \text{for any $f\in \cH^1_{r^2}$}.
\end{equation*}
\end{enumerate}
\end{Lemma}
\begin{proof}
We note that the Sturm--Liouville problem  \eqref{SL} is equivalent to the classical  eigenvalue problem of the first-order spherical Bessel equation:
\begin{equation}\label{SL-Bessel}
(r^2\xi_r)_r+( \nu\, r^2 -   2)\xi=0 \quad \text{on $I$}, \qquad  \ \
\xi|_{r=0}=\xi_r|_{r=1}=0.
\end{equation}
Then the existence of the desired eigenvalues $\{\nu_j\}_{j\in\NN^*}$ and the eigenfunctions $\{\xi_j\}_{j\in\NN^*}$ satisfying the property (i) follows from the classical theories on the Sturm--Liouville problem or the Bessel equation, which can be found in \cites{zettl,watson}. 

Thus, we only need to prove the desired property in (ii). The orthogonality 
\begin{equation*}
\langle r^2 (\xi_j)_r, (\xi_k)_r\rangle+2\big<\xi_j, \xi_k\big>=\nu_j\delta_{jk}
\end{equation*}
follows from the facts that $\{\xi_j\}_{j\in\NN^*}$ solves the eigenvalue problem \eqref{SL} and is orthonormal in $L^2_{r^2}$. To show that $\{\xi_j\}_{j\in\NN^*}$ is complete in $\cH^1_{r^2}$, we define the bilinear form $B[\cdot,\cdot]$ by
\begin{equation*}
B[f,g]:=\langle r^2 f_r, g_r\rangle+ 2\langle f, g\rangle.
\end{equation*}
Then, it follows from Lemma \ref{hardy-inequality} and the H\"older inequality that
\begin{equation*}
\begin{aligned}
&B[f,g]\leq |f_r|_{2,r^2}|g_r|_{2,r^2}+2|f|_{2} |g|_{2}\leq C\|f\|_{\cH^1_{r^2}}\|g\|_{\cH^1_{r^2}},\\
&B[f,f]=|f_r|_{2,r^2}^2+  2|f|_{2}^2\geq C^{-1}\|f\|_{\cH^1_{r^2}}^2,
\end{aligned}   
\end{equation*}
which implies that $B[f,g]$ is  bounded and coercive in $\cH^1_{r^2}$. 

Finally, suppose 
\begin{equation}\label{comet}
B[\xi_j,f]=0 \qquad \text{for all $j\in\NN^*$}.  
\end{equation}
Since $\xi_j$ is the eigenfunction that corresponds to the eigenvalue $\nu_j$, we have
\begin{equation*}
B[\xi_j,f]=\nu_j\langle r^2 \xi_j,f\rangle \qquad \ 
\text{for all $f\in \cH^1_{r^2}$},
\end{equation*}
which, along with \eqref{comet}, yields 
\begin{equation*} 
\langle r^2 \xi_j,f\rangle=0 \qquad \text{for all $j\in\NN^*$},  
\end{equation*}
Thanks to the completeness of $\{\xi_j\}_{j\in\NN^*}$ in $L^2_{r^2}$, we obtain from the above that $f=0$. Hence, sequence $\{\xi_j\}_{j\in \NN^*}$ is complete in $\cH^1_{r^2}$.
\end{proof}

Consequently, based on the above  basis $\{\xi_j\}_{j\in \NN^*}$, define 
\begin{equation}\label{U^n}
w^{N,\vartheta}(t,r):=\sum_{k=1}^N \mu_k^{N,\vartheta}(t) \xi_k(r) \qquad \text{for }\vartheta\in (0,1)\text{ and }N\in \NN^*.
\end{equation}
Here, $\mu_k^{N,\vartheta}(t)$ are chosen by solving the following  ODE problem in $[0,T]$:
\begin{equation}\label{galerkin-n}
\begin{cases}
\displaystyle  \big<r^2\rho_0 w_t^{N,\vartheta}, \xi_j\big> +(2a_1+a_2)\big<\bar\eta^2\bar\eta_r\bar\varrho^{\delta} D_{\bar\eta} w^{N,\vartheta},D_{\bar\eta}\xi_j\big>+4(a_1+a_2)\Big<\bar\eta^2\bar\eta_r\bar\varrho^{\delta}\frac{w^{N,\vartheta}}{\bar\eta},\frac{\xi_j}{\bar\eta}\Big>\\
\displaystyle\quad + 2 a_2\Big<\bar\eta^2\bar\eta_r\bar\varrho^{\delta}\frac{w^{N,\vartheta}}{\bar\eta},D_{\bar\eta}\xi_j\Big>+ 2a_2\Big<\bar\eta^2\bar\eta_r\bar\varrho^{\delta}D_{\bar\eta} w^{N,\vartheta},\frac{\xi_j}{\bar\eta}\Big>\\
\displaystyle=\Big<g,(r^2\rho_0^{\delta})^\frac{1}{2} \frac{2\xi_j}{\bar\eta}\Big>+\big< h,(r^2\rho_0^{\delta})^\frac{1}{2}  D_{\bar\eta}\xi_j\big>,\\[8pt]
\mu_j^{N,\vartheta}(0)=\langle r^2 w_0^\vartheta,\xi_j\rangle, \ \ j=1,2,\cdots\!,N.
\end{cases}
\end{equation}
The above system can also  be rewritten as 
\begin{equation}\label{mu^n}
\begin{cases}
\displaystyle \mathfrak{A}\cdot\frac{\mathrm{d}}{\dt}\mu^{N,\vartheta}(t)+\mathfrak{B}(t)\cdot\mu^{N,\vartheta}(t)=\mathfrak{c}(t) \qquad
\text{in $(0,T]$},\\[10pt]
\mu_j^{N,\vartheta}(0)=\langle r^2w_0^\vartheta,\xi_j\rangle \
\qquad \text{for $j=1,2,\cdots\!,N$},
\end{cases}
\end{equation}
where $\mu^{N,\vartheta}(t):=(\mu_1^{N,\vartheta},\cdots\!,\mu_N^{N,\vartheta})^\top(t)$,
\begin{equation*} 
\mathfrak{A} =(\mathfrak{A}_{kj})_{1\leq k,j\leq N}, \qquad \mathfrak{B}(t)=(\mathfrak{B}_{kj}(t))_{1\leq k,j\leq N}, \qquad \mathfrak{c}(t)=(\mathfrak{c}_{1},\cdots\!,\mathfrak{c}_{N})^\top(t),
\end{equation*}
and
\begin{equation}\label{ABC}
\begin{aligned}
\mathfrak{A}_{kj}&:=\int_0^1 r^2\rho_0 \xi_k \xi_j\,\mathrm{d}r,\qquad \mathfrak{c}_j(t):= \int_0^1(r^2\rho_0^{\delta})^\frac{1}{2}\Big(g \frac{2\xi_j}{\bar\eta} +h D_{\bar\eta}\xi_j\Big)\,\mathrm{d}r.\\
\mathfrak{B}_{kj}(t)&:= \int_0^1 \bar\eta^2\bar\eta_r\bar\varrho^{\delta}\Big((2a_1+a_2) D_{\bar\eta}\xi_k D_{\bar\eta}\xi_j+4(a_1+a_2) \frac{\xi_k \xi_j}{\bar\eta^2} \Big)\,\mathrm{d}r\\
&\quad \ + 2a_2\int_0^1 \bar\eta^2\bar\eta_r\bar\varrho^{\delta}\Big( \frac{\xi_k}{\bar\eta} D_{\bar\eta}\xi_j+ D_{\bar\eta}\xi_k\frac{\xi_j}{\bar\eta} \Big)\,\mathrm{d}r.
\end{aligned}
\end{equation}

To solve  \eqref{mu^n}, thanks to the facts that $\rho_0^{\gamma-1}\sim 1-r$ and the linear independency of $\{\xi_j\}_{j\in \NN^*}$, we see that $\{(r^2\rho_0)^\frac{1}{2} \xi_j\}_{j\in \NN^*}$ are linearly independent, and hence its Gram matrix $\mathfrak{A}$ is non-singular. Next, it follows from \eqref{given-flow} and Lemma \ref{hilbert} that  $\mathfrak{B}(t)\in C^1[0,T]$ and $\mathfrak{c}(t)\in L^2(0,T)$. Therefore, by the classical  ODE theory, the following existence result for \eqref{mu^n} holds:
\begin{Lemma}[\cite{cod}]
Problem \eqref{mu^n} admits a unique solution $\mu_j^{N,\vartheta}\in AC[0,T]$, for each $j=1,2,\cdots\!, N$, $N\in \NN^*$, and $\vartheta\in (0,1)$, where $AC$ denotes the space of absolutely continuous functions. As a consequence, $w^{N,\vartheta} \in AC([0,T];\cH^1_{r^2})$ and $w^{N,\vartheta}$ is differentiable \textit{a.e.} in $t$, for each $N\in \NN^*$ and $\vartheta\in (0,1)$.
\end{Lemma}

\smallskip
\textbf{Step 2. Uniform estimates of $w^{N,\vartheta}$.} First, multiplying \eqref{galerkin-n} by $\mu^{N,\vartheta}_j(t)$ and  summing the resulting equality with respect to $j$ from $1$ to $N$ imply 
\begin{equation}\label{L-0}
\frac{1}{2}\frac{\mathrm{d}}{\dt}\int_0^1 r^2\rho_0|w^{N,\vartheta}|^2\,\mathrm{d}r+L_1 =L_2,
\end{equation}
where
\begin{equation}\label{def-L1L2}
\begin{aligned}
L_1&:=\int_0^1 \bar\eta^2\bar\eta_r\bar\varrho^{\delta} \big((2a_1+a_2)X^2+4a_2 XY+4(a_1+a_2)Y^2\big)\,\mathrm{d}r,\\
L_2&:=\int_0^1(r^2\rho_0^{\delta})^\frac{1}{2} \big(2g Y + hX\big)\,\mathrm{d}r,\qquad X := D_{\bar\eta}w^{N,\vartheta},\quad   Y:= \frac{w^{N,\vartheta}}{\bar\eta}.
\end{aligned}
\end{equation}

For $L_1$, thanks to \eqref{jibenjiashe}, $a_1>0$, and $2a_1+3a_2>0$, we can find a constant $c_*>0$, depending only on $(a_1,a_2)$, such that  
\begin{equation}\label{L-1}
L_1\geq c_*|\sD_r w^{N,\vartheta}|_{2,r^2\rho_0^{\delta}}^2.
\end{equation}
For $L_2$, it follows from the Young inequality that
\begin{equation}\label{L-2}
L_2\leq C|(g,h)|_2^2+\frac{c_*}{10}|\sD_r w^{N,\vartheta}|_{2,r^2\rho_0^{\delta}}^2.
\end{equation}
Besides, by Lemmas \ref{sobolev-embedding}--\ref{hardy-inequality}, we have
\begin{equation}\label{kongzhi}
\begin{aligned}
|w^{N,\vartheta}|_{2,r^2\rho_0^{\delta}}&\leq C\big(|\chi w^{N,\vartheta}|_{2,r^2}+|\chi^\sharp w^{N,\vartheta}|_{2, \rho_0^{\delta}}\big)\\
&\leq C\big(\big|\chi \frac{w^{N,\vartheta}}{r}\big|_{2,r^2}+|\chi^\sharp w^{N,\vartheta}|_{2,\rho_0}+|\chi^\sharp w_r^{N,\vartheta}|_{2,\rho_0^{\delta}}\big)\\
&\leq C\big(| w^{N,\vartheta}|_{2,r^2\rho_0}+|\sD_rw^{N,\vartheta}|_{2, r^2\rho_0^{\delta}}\big).   
\end{aligned}
\end{equation}
Collecting \eqref{L-0} and \eqref{L-1}--\eqref{L-2} and integrating over $[0,t]$, combined with \eqref{kongzhi}, imply
\begin{equation}\label{33125}
|w^{N,\vartheta}(t)|_{2,r^2\rho_0}^2+\int_0^{t} \|w^{N,\vartheta}\|_{\cH^1_{r^2\rho_0^{\delta}}}^2\!\dt\leq C(T)\Big(|w^{N,\vartheta}(0)|_{2,r^2\rho_0}^2+ \int_0^{t} |(g,h)|_2^2\,\dt\Big),
\end{equation}
where
$w^{N,\vartheta}(0,r)=\sum_{j=1}^N \mu_j^{N,\vartheta}(0)e_j=\sum_{j=1}^N \langle w_0^\vartheta,\xi_j\rangle \xi_j$.

Next, to derive the $L^2_{r^2\rho_0}$-boundedness of $w^{N,\vartheta}(0,r)$, we first see from \eqref{wdelta-w} that, for any $\varepsilon>0$, there exists $\vartheta_0=\vartheta_0(\varepsilon)>0$ satisfying
\begin{equation}\label{zer1}
|w^\vartheta_0-w_0|_{2,r^2\rho_0}<\frac{\varepsilon}{2} \qquad \text{ for any $0<\vartheta\leq\vartheta_0$}.
\end{equation}
Then, for such $\varepsilon,\vartheta_0>0$ and fixed $\vartheta\in (0,\vartheta_0]$, by $w_0^\vartheta\in L^2\subset L^2_{r^2\rho_0}$ and Lemma \ref{hilbert}, there exists a large $N_0=N_0(\varepsilon,\vartheta_0)\in \NN^*$ so that
\begin{equation*}
|w^{N,\vartheta}(0)-w^\vartheta_0|_{2,r^2\rho_0}=\Big|\sum_{j=1}^N \langle w_0^\vartheta,\xi_j\rangle \xi_j-w_0^\vartheta\Big|_{2,r^2\rho_0}<\frac{\varepsilon}{2} \qquad \text{for any $N\geq N_0$},
\end{equation*}
which, combined with \eqref{zer1}, yields 
\begin{equation*}
|w^{N,\vartheta}(0)-w_0|_{2,r^2\rho_0}\leq|w^{N,\vartheta}(0)-w^\vartheta_0|_{2,r^2\rho_0}+|w^\vartheta_0-w_0|_{2,r^2\rho_0}<\varepsilon.
\end{equation*}
Hence, setting $\varepsilon:=\min\{|w_0|_{2,r^2\rho_0},\frac{1}{2}\}$, we obtain that there exist $\vartheta_0=\vartheta_0(\varepsilon)>0$ and $N_0=N_0(\varepsilon,\vartheta_0)\in \NN^*$ such that
\begin{equation}\label{initial-converge}
|w^{N,\vartheta}(0)|_{2,r^2\rho_0}\leq 2|w_0|_{2,r^2\rho_0}\quad \text{for any $\vartheta\in (0,\vartheta_0]$ and $N\geq N_0$}.
\end{equation}

Finally, substituting  \eqref{initial-converge} into \eqref{33125} yields
\begin{equation}\label{3...18}
\sup_{t\in[0,T]} |w^{N,\vartheta}|_{2,r^2\rho_0}^2+\int_0^{T} \|w^{N,\vartheta}\|_{\cH^1_{r^2\rho_0^{\delta}}}^2\dt
\leq C(T)\Big(|w_0|_{2,r^2\rho_0}^2+\int_0^{T}|(g,h)|_2^2\,\dt\Big).
\end{equation}

\textbf{Step 3. Taking  the limit as $N,\vartheta^{-1}\to \infty$.}
Since $(L^2_{r^2\rho_0},L^2_{r^2\rho_0^{\delta}})$ are separable reflexive Banach spaces due to  Lemma \ref{W-space}, it follows from \eqref{3...18} and the weak convergence arguments that there exists a  subsequence (still denoted by) $w^{N,\vartheta}$ and some limits $w$ and  $(X_1,X_2)$ such that
\begin{equation}\label{3..124}
\begin{aligned}
w^{N,\vartheta} \rightharpoonup w\qquad  &\text{weakly* in }L^\infty([0,T];L^2_{r^2\rho_0}),\\
\big(w^{N,\vartheta},w^{N,\vartheta}_r,\frac{w^{N,\vartheta}}{r}\big) \rightharpoonup (w,X_1,X_2)\qquad &\text{weakly \, in }L^2([0,T];L^2_{r^2\rho_0^{\delta}}).
\end{aligned}
\end{equation}
Then the definition of weak derivatives implies that $(X_1,X_2)=(w_r,\frac{w}{r})$.  
Clearly, $w$ also satisfies \eqref{3...18} due to the lower semi-continuity of weak convergence \eqref{3..124}.

Now we pass to the limit $N,\vartheta^{-1}\to \infty$ in \eqref{galerkin-n}. Let $\varphi^M(t,r)=\sum_{j=1}^M \varphi_j(t)\xi_j(r)$ with $\varphi_j \in C_{\mathrm{c}}^\infty(0,T)$ for $j=1,\cdots\!,M$ and $M\in \NN^*$. 
Then by \eqref{galerkin-n}, for any $N\geq M$,
\begin{equation}\label{equ3320}
\begin{aligned}
&\int_0^T \big<r^2\rho_0 w^{N,\vartheta}_t  , \varphi^M\big>\,\dt\\
&\quad +  (2a_1+a_2)\int_0^T \big<\bar\eta^2\bar\eta_r\bar\varrho^{\delta} D_{\bar\eta} w^{N,\vartheta} ,D_{\bar\eta} \varphi^M \big>\,\dt  +4(a_1+a_2) \int_0^T \Big<\bar\eta^2\bar\eta_r\bar\varrho^{\delta}\frac{w^{N,\vartheta}}{\bar\eta} ,\frac{\varphi^M}{\bar\eta}\Big>\,\dt\\
&\quad + 2a_2\int_0^T \Big<\bar\eta^2\bar\eta_r\bar\varrho^{\delta}  \frac{w^{N,\vartheta}}{\bar\eta} ,D_{\bar\eta} \varphi^M \Big>\,\dt  +  2a_2\int_0^T \Big<\bar\eta^2\bar\eta_r\bar\varrho^{\delta} D_{\bar\eta}w^{N,\vartheta} ,\frac{\varphi^M}{\bar\eta}\Big>\,\dt\\
&=\int_0^T\Big<g,(r^2\rho_0^{\delta})^\frac{1}{2}  \frac{2\varphi^M}{\bar\eta}\Big>\,\dt+\int_0^T\big< h,(r^2\rho_0^{\delta})^\frac{1}{2} D_{\bar\eta} \varphi^M\big>\,\dt.
\end{aligned}
\end{equation}
Since $(w^{N,\vartheta},\varphi^M)$ are differentiable with respect to $t$, we can transfer $\partial_t$ in \eqref{equ3320} from $w^{N,\vartheta}$ to $\varphi^M$ via integration by parts. Then, letting $N,\vartheta^{-1}\to \infty$, we see from \eqref{3..124} that
\begin{equation}\label{3,,20}
\begin{aligned}
&-\int_0^T \langle r^2\rho_0 w, \varphi^M_t\rangle\,\dt\\
&\quad +  (2a_1+a_2)\int_0^T \big<\bar\eta^2\bar\eta_r\bar\varrho^{\delta} D_{\bar\eta} w,D_{\bar\eta} \varphi^M \big>\,\dt
+4(a_1+a_2) \int_0^T \Big<\bar\eta^2\bar\eta_r\bar\varrho^{\delta}\frac{w}{\bar\eta} ,\frac{\varphi^M}{\bar\eta}\Big>\,\dt\\
&\quad + 2a_2\int_0^T \Big<\bar\eta^2\bar\eta_r\bar\varrho^{\delta}  \frac{w}{\bar\eta} ,D_{\bar\eta} \varphi^M \Big>\,\dt+2a_2\int_0^T \Big<\bar\eta^2\bar\eta_r\bar\varrho^{\delta}D_{\bar\eta}w,\frac{\varphi^M}{\bar\eta}\Big>\,\dt\\
&=\int_0^T\Big<g,(r^2\rho_0^{\delta})^\frac{1}{2}  \frac{2\varphi^M}{\bar\eta}\Big>\,\dt+\int_0^T\big< h,(r^2\rho_0^{\delta})^\frac{1}{2} D_{\bar\eta} \varphi^M\big>\,\dt.
\end{aligned}
\end{equation}

Next, since $r^2\rho_0 w_t\in H^{-1}([0,T];L^2)$ due to $w\in L^2([0,T];L^2_{r^2\rho_0})$, by \eqref{jibenjiashe}, \eqref{3,,20}, and the definition of  distributional derivatives, we obtain
\begin{equation*}
\begin{aligned}
&\,\Big|\langle r^2\rho_0 w_t, \varphi^M\rangle_{H_t^{-1}((H^1)^*)\times H^1_{0,t}(H^1)}\Big|
=\Big|\int_0^T\langle r^2\rho_0 w, \varphi^M_t\rangle\,\dt\Big| \\
&\leq C \big(\|w\|_{L^2_t(\cH^1_{r^2\rho_0^{\delta}})}+ \|(g,h)\|_{L^2_t(L^2)}\big)\|\varphi^M\|_{L^2_t(\cH^1_{r^2\rho_0^{\delta}})}. 
\end{aligned}    
\end{equation*}
We now claim that
\begin{equation}\label{dens}
\left\{\varphi^M\right\}_{M\in\NN^*}\text{ is dense in }L^2([0,T];\cH^1_{r^2\rho_0^{\delta}}).
\end{equation}
Indeed, it suffices to prove the density of  $\mathrm{span}\{\xi_j\}_{j\in\NN^*}$ in $\cH^1_{r^2\rho_0^{\delta}}$. Thanks to Lemma \ref{prop-bijin}, for any given $f\in \cH^1_{r^2\rho_0^{\delta}}$, there exists a sequence $\{f^\varepsilon\}_{\varepsilon>0}\subset C^\infty(\bar I)\cap \cH^1_{r^2}$ such that
\begin{equation*}
f^\varepsilon\to f \ \ \text{in $\cH^1_{r^2\rho_0^{\delta}}$} \qquad \text{as $\varepsilon\to 0$}.
\end{equation*}
Next, for any $f\in C^\infty(\bar I)\cap \cH^1_{r^2}\subset \cH^1_{r^2}$, we obtain from Lemma \ref{hilbert} and $\cH^1_{r^2}\subset \cH^1_{r^2\rho_0^{\delta}}$ that
\begin{equation*}
\sum_{j=1}^M \langle f,\xi_j\rangle \xi_j\to f \, \,\,\, \text{in $\cH^1_{r^2\rho_0^{\delta}}$} \qquad\,\, \text{as $M\to\infty$}.
\end{equation*}
Consequently, if setting $f^{M,\varepsilon}:=\sum_{j=1}^M \langle f^\varepsilon,\xi_j\rangle \xi_j$, then $\{f^{M,\varepsilon}\}\subset\mathrm{span}\{\xi_j\}_{j\in\NN^*}$ will be a sequence that converges to $f$ in $\cH^1_{r^2\rho_0^{\delta}}$ as $M,\varepsilon^{-1}\to\infty$, which shows the claim.

Thus, by Hahn--Banach theorem (see \cite{conway}), we see that $r^2\rho_0 w_t$, a functional initially defined on $H^1_0([0,T];L^2)$, can be uniquely extended to a functional defined on $L^2([0,T];\cH^1_{r^2\rho_0^{\delta}})$ and 
\begin{equation}\label{H-1}
\|r^2\rho_0 w_t\|_{L^2_t(\cH^{-1}_{r^2\rho_0^{\delta}})}\leq C\big(\|w\|_{L^2_t(\cH^1_{r^2\rho_0^{\delta}})}+ \|(g,h)\|_{L^2_t(L^2)}\big)\leq C.
\end{equation}

Finally, for any $\hat\varphi\in L^2([0,T];\cH^1_{r^2\rho_0^{\delta}})$,  
there exists a sequence $\{\varphi^M\}_{M\in\NN^*}$ 
with each $\varphi^M\in L^2([0,T];\cH^1_{r^2\rho_0^{\delta}})$ such that $\varphi^M\to \hat\varphi$ in $L^2([0,T];\cH^1_{r^2\rho_0^{\delta}})$ as $M\to\infty$. Passing to the limit $M\to \infty$ in \eqref{3,,20}, along with  \eqref{H-1}, implies that, for all $\hat\varphi\in L^2([0,T];\cH^1_{r^2\rho_0^{\delta}})$,
\begin{equation}\label{3..133}
\begin{aligned}
&\int_0^T \langle r^2\rho_0 w_t, \hat\varphi\rangle_{\cH^{-1}_{r^2\rho_0^{\delta}} \times \cH^1_{r^2\rho_0^{\delta}}}\,\dt\\
&\quad  +  (2a_1+a_2)\int_0^T \big<\bar\eta^2\bar\eta_r\bar\varrho^{\delta} D_{\bar\eta} w,D_{\bar\eta} \hat\varphi  \big>\,\dt  +4(a_1+a_2) \int_0^T \Big<\bar\eta^2\bar\eta_r\bar\varrho^{\delta}\frac{w^{N,\vartheta}}{\bar\eta} ,\frac{\hat\varphi}{\bar\eta}\Big>\,\dt\\
&\quad +2 a_2  \int_0^T \Big<\bar\eta^2\bar\eta_r\bar\varrho^{\delta}  \frac{w }{\bar\eta} ,D_{\bar\eta} \hat\varphi \Big>\,\dt  +  2a_2\int_0^T \Big<\bar\eta^2\bar\eta_r\bar\varrho^{\delta} D_{\bar\eta}w ,\frac{\hat\varphi}{\bar\eta}\Big>\,\dt\\
&=\int_0^T\Big<g, (r^2\rho_0^{\delta})^\frac{1}{2}\frac{2\hat\varphi}{\bar\eta}\Big>\,\dt+\int_0^T\langle h, (r^2\rho_0^{\delta})^\frac{1}{2}  D_{\bar\eta} \hat\varphi\rangle\,\dt.
\end{aligned}
\end{equation}
Hence, the weak formulation \eqref{weak.F.} follows simply by setting $\hat\varphi(t,r)=\varphi(r)\in \cH^1_{r^2\rho_0^{\delta}}$ in \eqref{3..133} and applying $\partial_t$ to both sides of the resulting equality.

\medskip
\textbf{Step 4. Uniqueness and time continuity.}
First, it follows from  Lemma \ref{Aubin} (let $s=1$ in Lemma \ref{Aubin}), $w\in L^2([0,T];\cH^1_{r^2\rho_0^\delta})$, and $r^2\rho_0 w_t\in L^2([0,T];\cH^{-1}_{r^2\rho_0^\delta})$ that $w\in C([0,T];L^2_{r^2\rho_0})$.

Next, we  show that $w(0,r)=w_0(r)$ for \textit{a.e.} $r\in I$. To this end, thanks to \eqref{weak.F.} and $w\in C([0,T];L^2_{r^2\rho_0})$, for any $\hat\varphi\in C_{\mathrm{c}}^1([0,T);\cH^1_{r^2\rho_0^\delta})$, 
\begin{equation}\label{3...125}
\begin{aligned}
&-\int_0^T \big<r^2\rho_0 w, \hat\varphi_t\big>\,\dt\\
&\quad  +  (2a_1+a_2)\int_0^T \big<\bar\eta^2\bar\eta_r\bar\varrho^{\delta} D_{\bar\eta} w,D_{\bar\eta} \hat\varphi  \big>\,\dt  +4(a_1+a_2) \int_0^T \Big<\bar\eta^2\bar\eta_r\bar\varrho^{\delta}\frac{w^{N,\vartheta}}{\bar\eta} ,\frac{\hat\varphi}{\bar\eta}\Big>\,\dt\\
&\quad +2a_2\int_0^T \Big<\bar\eta^2\bar\eta_r\bar\varrho^{\delta}  \frac{w }{\bar\eta} ,D_{\bar\eta} \hat\varphi \Big>\,\dt  +  2a_2\int_0^T \Big<\bar\eta^2\bar\eta_r\bar\varrho^{\delta} D_{\bar\eta}w ,\frac{\hat\varphi}{\bar\eta}\Big>\,\dt\\
&=\underline{\big< r^2\rho_0 w(0), \hat\varphi(0)\big>}_{:=\mathring{L}}+\int_0^T\Big<g, (r^2\rho_0^\delta)^\frac{1}{2}\frac{2\hat\varphi}{\bar\eta}\Big>\,\dt+\int_0^T\langle h, (r^2\rho_0^\delta)^\frac{1}{2}  D_{\bar\eta} \hat\varphi\rangle\,\dt.
\end{aligned}
\end{equation}
On the other hand, let $\varphi^M(t,x)=\sum_{j=1}^M \varphi_j(t) \xi_j$ in \eqref{equ3320} with $\varphi_j(t)\in C^\infty_{\mathrm{c}}([0,T))$ ($j=1,\cdots\!,M$) satisfy $\varphi^M\to \hat\varphi$ in $C^1_{\mathrm{c}}([0,T);\cH^1_{r^2\rho_0^\delta})$ as $M\to \infty$. Then by \eqref{equ3320}, for all $M\leq N$,
\begin{equation*}
\begin{aligned}
&-\int_0^T \big<r^2\rho_0 w^{N,\vartheta},\varphi^M_t\big>\,\dt+  (2a_1+a_2)\int_0^T \big<\bar\eta^2\bar\eta_r\bar\varrho^{\delta}D_{\bar\eta} w^{N,\vartheta} ,D_{\bar\eta} \varphi^M \big>\,\dt \\
&\quad   +4(a_1+a_2) \int_0^T \Big<\bar\eta^2\bar\eta_r\bar\varrho^{\delta}\frac{w^{N,\vartheta}}{\bar\eta} ,\frac{\varphi^M}{\bar\eta}\Big>\,\dt\\
&\quad + 2a_2\int_0^T \Big<\bar\eta^2\bar\eta_r\bar\varrho^{\delta}\frac{w^{N,\vartheta}}{\bar\eta} ,D_{\bar\eta} \varphi^M \Big>\,\dt  +  2a_2\int_0^T \Big<\bar\eta^2\bar\eta_r\bar\varrho^{\delta} D_{\bar\eta}w^{N,\vartheta} ,\frac{\varphi^M}{\bar\eta}\Big>\,\dt\\
&=\big<r^2\rho_0 w^{N,\vartheta}(0),\varphi^M(0)\big>+\int_0^T\Big<g,(r^2\rho_0^\delta)^\frac{1}{2}  \frac{2\varphi^M}{\bar\eta}\Big>\,\dt+\int_0^T\big< h,(r^2\rho_0^\delta)^\frac{1}{2} D_{\bar\eta} \varphi^M\big>\,\dt.
\end{aligned}
\end{equation*}
Passing to the limits $M, N\to \infty$ in the above implies that, for all $\hat\varphi\in C^1_{\mathrm{c}}([0,T);\cH^1_{r^2\rho_0^\delta})$, \eqref{3...125} still holds except for replacing term $\mathring{L}$ with $\langle r^2\rho_0 w_0,\hat\varphi (0)\rangle$. Hence, this yields that $w(0,r)=w_0$ for \textit{a.e.} $r\in I$. Finally, setting $w_0=0$, $\varphi=w$, and $g=h=0$ in \eqref{weak.F.} implies $w=0$, which leads to the uniqueness.  
\end{proof}

Next, if $w_0\in \cH^1_{r^2\rho_0^\delta}$, we can improve the regularity of weak solutions in Proposition \ref{prop1}.
\begin{Proposition}\label{prop-strong}
Let $w$ be the weak solution of \eqref{galerkin-n} obtained in {\rm Proposition \ref{prop1}}. Assume that $w_0\in \cH^1_{r^2\rho_0^\delta}$, $(g,h)\in L^\infty([0,T];L^2)$, and $(g_t,h_t)\in L^2([0,T];L^2)$. Then 
\begin{equation}\label{reg-H}
\begin{aligned}
&w\in L^\infty([0,T];\cH^1_{r^2\rho_0^\delta}),\quad w_t\in L^2([0,T];L^2_{r^2\rho_0}),\\
&\sqrt{t} w_t\in L^\infty([0,T];L^2_{r^2\rho_0})\cap L^2([0,T];\cH^1_{r^2\rho_0^\delta}),\quad \sqrt{t}(r^2\rho_0 w_{tt})\in L^2([0,T];\cH^{-1}_{r^2\rho_0^\delta}).
\end{aligned}
\end{equation}
Moreover, $w$ satisfies the following weak formulation for any \textit{a.e.} $t>0$ and $\varphi\in  \cH^1_{r^2\rho_0^\delta}${\rm:}
\begin{align}
&\begin{aligned}
&\ \big<r^2\rho_0 w_{tt} , \varphi\big>_{\cH^{-1}_{r^2\rho_0^\delta}\times \cH^{1}_{r^2\rho_0^\delta}}  +\Big< \bar\eta^2\bar\eta_r\bar\varrho^{\delta}, (2a_1+a_2)D_{\bar\eta} w_t D_{\bar\eta}\varphi +4(a_1+a_2) \frac{w_t}{\bar\eta} \frac{\varphi}{\bar\eta}\Big>\notag\\
&\quad + 2a_2\Big<\bar\eta^2\bar\eta_r\bar\varrho^{\delta},\frac{w_t}{\bar\eta} D_{\bar\eta}\varphi+ D_{\bar\eta} w_t \frac{\varphi}{\bar\eta}\Big>
\end{aligned}\\
&\begin{aligned}
&\quad +(2a_1+a_2)\Big< \bar\eta^2\bar\eta_r\bar\varrho^{\delta}\Big(-(\delta+1)D_{\bar\eta}\bar U+(2-2\delta)\frac{\bar U}{\bar\eta}\Big) D_{\bar\eta} w,D_{\bar\eta}\varphi\Big>\\
&\quad +4(a_1+a_2)\Big<\bar\eta^2\bar\eta_r\bar\varrho^{\delta}\Big((1-\delta)D_{\bar\eta}\bar U- 2\delta \frac{\bar U}{\bar\eta} \Big)\frac{w}{\bar\eta},\frac{\varphi}{\bar\eta}\Big>
\end{aligned}\label{weak-F-H}\\
&\begin{aligned}
&\quad + 2a_2\Big<\bar\eta^2\bar\eta_r\bar\varrho^{\delta}\Big(- \delta D_{\bar\eta}\bar U+(1-2\delta)\frac{\bar U}{\bar\eta}\Big),\frac{w}{\bar\eta} D_{\bar\eta}\varphi+D_{\bar\eta} w \frac{\varphi}{\bar\eta}\Big>\notag\\
&=\Big<(r^2\rho_0^{\delta})^\frac{1}{2} \big(g_t- g \frac{\bar U}{\bar\eta}\big), \frac{2\varphi}{\bar\eta}\Big>+\big<(r^2\rho_0^{\delta})^\frac{1}{2} \big(h_t- hD_{\bar\eta}\bar U\big), D_{\bar\eta}\varphi\big>, 
\end{aligned}
\end{align}
and satisfies the following energy equality for \textit{a.e.} $t>0${\rm:}
\begin{align}
&\frac{1}{2}\frac{\mathrm{d}}{\dt}\int_0^1 r^2\rho_0|w_t|^2\,\mathrm{d}r +\int_0^1 \bar\eta^2\bar\eta_r\bar\varrho^{\delta}\Big((2a_1+a_2)|D_{\bar\eta}w_t|^2+4a_2 D_{\bar\eta}w_t\frac{w_t}{\bar\eta}+4(a_1+a_2) \frac{|w_t|^2}{\bar\eta^2}\Big)\,\mathrm{d}r\notag\\
&=(2a_1+a_2)\int_0^1\bar\eta^2\bar\eta_r\bar\varrho^{\delta}\Big(-(\delta+1)D_{\bar\eta}\bar U+(2-2\delta)\frac{\bar U}{\bar\eta}\Big)D_{\bar\eta}w D_{\bar\eta}w_t\,\mathrm{d}r\notag\\
&\quad +4(a_1+a_2)\int_0^1\bar\eta^2\bar\eta_r\bar\varrho^{\delta}\Big((1-\delta)D_{\bar\eta}\bar U- 2\delta \frac{\bar U}{\bar\eta} \Big) \frac{w}{\bar\eta}\frac{w_t}{\bar\eta}\,\mathrm{d}r\label{Energy-Id}\\
&\quad + 2a_2\int_0^1 \bar\eta^2\bar\eta_r\bar\varrho^{\delta}\Big(- \delta D_{\bar\eta}\bar U+(1-2\delta)\frac{\bar U}{\bar\eta}\Big)\Big(\frac{w}{\bar\eta} D_{\bar\eta}w_t+D_{\bar\eta}w\frac{w_t}{\bar\eta}\Big)\,\mathrm{d}r\notag\\
&\quad +\int_0^1(r^2\rho_0^{\delta})^\frac{1}{2} \Big(\big(g_t- g \frac{\bar U}{\bar\eta}\big) \frac{2w_t}{\bar\eta} + \big(h_t- h D_{\bar\eta}\bar U\big)D_{\bar\eta}w_t\Big)\,\mathrm{d}r.\notag
\end{align}
\end{Proposition}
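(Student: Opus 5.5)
We will prove Proposition~\ref{prop-strong} at the level of the Galerkin approximations $w^{N,\vartheta}$ from Step~1 of Proposition~\ref{prop1}, and then pass to the limit using weak compactness and lower semicontinuity. The one change is in the initial data. Since $w_0\in\cH^1_{r^2\rho_0^\delta}$, we approximate it by smooth $w_0^\vartheta$ converging in $\cH^1_{r^2\rho_0^\delta}$ (Lemma~\ref{prop-bijin}), and then use the $\cH^1_{r^2}$-completeness of $\{\xi_j\}$ in Lemma~\ref{hilbert}. This gives $\|w^{N,\vartheta}(0)\|_{\cH^1_{r^2\rho_0^\delta}}\le 2\|w_0\|_{\cH^1_{r^2\rho_0^\delta}}$ for $N$ large and $\vartheta$ small, by the same argument as \eqref{initial-converge}. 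Because $\mathfrak B\in C^1$ and $\mathfrak c\in H^1(0,T)$ (from $(g_t,h_t)\in L^2_tL^2$ and the regularity \eqref{given-bareta} of $\bar\eta$), the ODE solutions $\mu^{N,\vartheta}$ are $H^2(0,T)$. Hence all time differentiations below are legitimate at the discrete level.

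\textbf{First estimate: multiply by $\frac{\mathrm d}{\dt}\mu_j^{N,\vartheta}$.} We multiply \eqref{galerkin-n} by $\frac{\mathrm d}{\dt}\mu_j^{N,\vartheta}$ and sum over $j$. This produces $|w^{N,\vartheta}_t|^2_{2,r^2\rho_0}$ plus $\frac12\frac{\mathrm d}{\dt}$ of the symmetric coercive form $\mathcal L(t)[w,w]$ appearing in $L_1$. The commutator $\frac12(\partial_t\mathcal L)[w,w]$ involves $\partial_t(\bar\eta^2\bar\eta_r\bar\varrho^\delta)$ and $\partial_t D_{\bar\eta}$. Both are bounded by $|(D_{\bar\eta}\bar U,\bar U/\bar\eta)|_\infty$, which lies in $L^\infty_t$ by \eqref{given}. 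For the forcing terms we move the time derivative off $w_t$: $\langle g,\cdot\, w_t\rangle=\frac{\mathrm d}{\dt}\langle g,\cdot\, w\rangle-\langle (g\cdot)_t, w\rangle$, and similarly for $h$. The boundary-in-time pieces are absorbed by coercivity \eqref{L-1} combined with \eqref{kongzhi} and $(g,h)\in L^\infty_tL^2$. Gronwall then gives $w\in L^\infty_t\cH^1_{r^2\rho_0^\delta}$ and $w_t\in L^2_tL^2_{r^2\rho_0}$ uniformly in $N,\vartheta$.

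\textbf{Second estimate: differentiate in $t$ and multiply by $t\frac{\mathrm d}{\dt}\mu_j$.} We differentiate \eqref{galerkin-n} in $t$ and compute the time derivatives of the coefficients explicitly. Using $\bar\eta_t=\bar U$ and $\bar\varrho_t=-\bar\varrho(D_{\bar\eta}\bar U+2\bar U/\bar\eta)$, we get $\partial_t(\bar\eta^2\bar\eta_r\bar\varrho^\delta)=\bar\eta^2\bar\eta_r\bar\varrho^\delta\big((1-\delta)D_{\bar\eta}\bar U+(2-2\delta)\bar U/\bar\eta\big)$, together with $\partial_tD_{\bar\eta}f=D_{\bar\eta}f_t-D_{\bar\eta}\bar U\,D_{\bar\eta}f$ and $\partial_t(f/\bar\eta)=f_t/\bar\eta-\frac{\bar U}{\bar\eta}\frac{f}{\bar\eta}$. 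Collecting terms yields exactly the coefficients $-(\delta+1)D_{\bar\eta}\bar U+(2-2\delta)\bar U/\bar\eta$, etc., in \eqref{weak-F-H}, and the right-hand side $g_t-g\bar U/\bar\eta$, $h_t-hD_{\bar\eta}\bar U$. We then multiply by $t\,\frac{\mathrm d}{\dt}\mu_j^{N,\vartheta}$. This gives \eqref{Energy-Id} with weight $t$, plus the extra term $\frac12|w_t|_{2,r^2\rho_0}^2$, which is integrable by the first estimate. The right side is bounded by $\epsilon\, t\|w_t\|^2_{\cH^1_{r^2\rho_0^\delta}}+C\|w\|^2_{\cH^1}+C|(g,h,g_t,h_t)|_2^2$. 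Hence $\sqrt t w_t\in L^\infty_tL^2_{r^2\rho_0}\cap L^2_t\cH^1_{r^2\rho_0^\delta}$ uniformly.

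\textbf{Passage to the limit.} We pass to the limit exactly as in Steps~3--4 of Proposition~\ref{prop1}, with test functions $\varphi^M$ and the density \eqref{dens}. This gives \eqref{weak-F-H}, and the duality bound from \eqref{weak-F-H} gives $\sqrt t\,r^2\rho_0w_{tt}\in L^2_t\cH^{-1}_{r^2\rho_0^\delta}$, as in \eqref{H-1}. Finally, the energy equality \eqref{Energy-Id} (an equality, not merely an inequality, for the limit) follows by taking $\varphi=w_t(t)$ in \eqref{weak-F-H}. This choice is admissible for a.e.\ $t>0$ since $w_t\in L^2_{\mathrm{loc}}((0,T];\cH^1_{r^2\rho_0^\delta})$. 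We also use the Lions--Magenes type identity $\langle r^2\rho_0w_{tt},w_t\rangle=\frac12\frac{\mathrm d}{\dt}|w_t|^2_{2,r^2\rho_0}$ from Lemma~\ref{Aubin} on intervals $[\tau,T]$.

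\textbf{Main obstacle.} We expect the main difficulty to be the rigorous justification of the time-differentiated discrete system. Time regularity of the coefficients requires $\bar U_t$-type control only through $D_{\bar\eta}\bar U$ and $\bar U/\bar\eta$ in $L^\infty$, and these are provided by \eqref{given}. Beyond that, care is needed to check that all degenerate weights ($\bar\varrho^\delta\sim\rho_0^\delta$ near $r=1$ and $r^2$ near $0$) are handled consistently by \eqref{jibenjiashe} and the Hardy bound \eqref{kongzhi}.
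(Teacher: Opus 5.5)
Your proposal is correct and follows essentially the same route as the paper. The shared steps are the $\cH^1_{r^2\rho_0^\delta}$-approximated Galerkin data, the first estimate obtained by testing with $w^{N,\vartheta}_t$ (time derivative moved off the forcing), the second estimate for the time-differentiated system with a $t$-weight, passage to the limit, and the energy equality via Lemma~\ref{Aubin} on $[\tau,T]$. There are two minor differences. First, you weight by $t$ directly at the discrete level; since $t|w^{N,\vartheta}_t(t)|^2_{2,r^2\rho_0}$ vanishes at $t=0$ for each fixed $N$, this removes the need for the $\tau_k\to0$ argument via Lemma~\ref{bjr}. Second, you get \eqref{weak-F-H} by passing to the limit in the differentiated Galerkin system, whereas the paper differentiates the limiting weak formulation \eqref{weak.F.} in $t$.
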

\begin{proof}
We divide the proof into three steps.

\smallskip
\textbf{Step 1.} First, by Lemma \ref{prop-bijin}, for given $w_0\in \cH^1_{r^2\rho_0^{\delta}}$, we see that there exists a smooth sequence $\{w_0^\vartheta\}_{\vartheta>0}\subset C^\infty(\bar I)\cap \cH^1_{r^2\rho_0^{\delta}}$ satisfying
\begin{equation}\label{wdelta-w2}
w^\vartheta_0\to w_0 \qquad \text{in $\cH^1_{r^2\rho_0^{\delta}}$ \ as $\vartheta\to 0$}.
\end{equation}

\smallskip
\textbf{Step 2.} Based on this smooth initial data $w_0^\vartheta$ and the Galerkin scheme presented in Step 1 of the proof of Proposition \ref{prop1}, we can construct a Galerkin approximate sequence $w^{N,\vartheta}\in AC([0,T];\cH^1_{r^2\rho_0^{\delta}})$ and an ODE problem, which are the same as those given in \eqref{U^n}--\eqref{mu^n}. 

Now, multiplying $\eqref{galerkin-n}_1$ by $(\mu_j^{N,\vartheta})_t(t)$ and summing the
resulting equality with respect to $j$ from $1$ to $N$ implies
\begin{equation}\label{I0*}
\frac{1}{2}\frac{\mathrm{d}}{\mathrm{d}t}L_1+\int_0^1 r^2\rho_0|w^{N,\vartheta}_t|^2\,\mathrm{d}r=L_3+L_4  +\frac{\mathrm{d}}{\mathrm{d}t}L_5,
\end{equation}
where $(L_1,X,Y)$ are defined as in \eqref{def-L1L2} and
\begin{align*}
&\begin{aligned}
L_3&:=\frac{1}{2}\int_0^1 \bar\eta^2\bar\eta_r\bar\varrho^{\delta}\Big((2a_1+a_2)\big(-(\delta+1)D_{\bar\eta}\bar U+(2-2\delta)\frac{\bar U}{\bar\eta}\big) X^2 \\
&\qquad\qquad + 4(a_1+a_2)\big((1-\delta)D_{\bar\eta}\bar U- 2\delta \frac{\bar U}{\bar\eta} \big) Y^2 + 2a_2\big(- \delta D_{\bar\eta}\bar U+(1-2\delta)\frac{\bar U }{\bar\eta}\big)XY\Big)\,\mathrm{d}r,
\end{aligned}\\
&\begin{aligned}
L_4&:= \int_0^1 (r^2\rho_0^{\delta})^\frac{1}{2}\Big(2\big(g\frac{\bar U}{\bar\eta}-g_t\big)Y+(h D_{\bar\eta}\bar U-h_t) X \Big)\,\mathrm{d}r,\\
L_5&:=\int_0^1 (r^2\rho_0^{\delta})^\frac{1}{2}\big(2gY+h X\big)\,\mathrm{d}r.
\end{aligned}
\end{align*}

To estimate $L_3$--$L_4$, we obtain from \eqref{jibenjiashe} and the H\"older and Young inequalities that
\begin{equation*} 
\begin{aligned}
L_3&\leq C|\sD_r\bar U|_\infty|\sD_r w^{N,\vartheta}|_{2,r^2\rho_0^{\delta}}^2\leq C(T)|\sD_r w^{N,\vartheta}|_{2,r^2\rho_0^{\delta}}^2,\\
L_4&\leq C\big(|\sD_r\bar U|_\infty|(g,h)|_2+|(g_t,h_t)|_2\big)|\sD_r w^{N,\vartheta}|_{2,r^2\rho_0^{\delta}}\\
&\leq C(T)\big(|\sD_r w^{N,\vartheta}|_{2,r^2\rho_0^{\delta}}^2 + |(g,h,g_t,h_t)|_2^2\big).
\end{aligned}
\end{equation*}
While for $L_5$, we have
\begin{equation*} 
|L_5|\leq C\varepsilon^{-1}|(g,h)|_2^2+\varepsilon|\sD_r w^{N,\vartheta}|_{2,r^2\rho_0^{\delta}}^2 \qquad\text{for all $\varepsilon\in(0,1)$}.
\end{equation*}
Thus, by choosing $\varepsilon$ small enough, we obtain from \eqref{I0*}, the estimates of $L_3$--$L_5$, the estimate of $L_1$ (see \eqref{L-1}), and the Gr\"onwall inequality that 
\begin{equation*}
\begin{aligned}
&\|w^{N,\vartheta} \|_{L^\infty_t(\cH^1_{r^2\rho_0^{\delta}})}+ \|w^{N,\vartheta}_t\|_{L^2_t(L^2_{r^2\rho_0})} \\
&\leq C(T)\Big(\|w^{N,\vartheta}(0)\|_{\cH^1_{r^2\rho_0^{\delta}}}+ \|(g,h)\|_{L^\infty_t(L^2)}+\|(g_t,h_t)\|_{L^2_t(L^2)} \Big).
\end{aligned}
\end{equation*}
For the initial data, it follows from (iii) of Lemma \ref{hilbert}, \eqref{wdelta-w2}, and the argument similar to \eqref{zer1}--\eqref{initial-converge} that there exist $\vartheta_0>0$ and $N_0=N_0(\vartheta_0)\in \NN^*$ such that 
\begin{equation*} 
\|w^{N,\vartheta}(0)\|_{\cH^1_{r^2\rho_0^{\delta}}}\leq 2\|w_0\|_{\cH^1_{r^2\rho_0^{\delta}}}\qquad \text{for any $\vartheta\in (0,\vartheta_0]$ and $N\geq N_0$},
\end{equation*}
which thus leads to the uniform estimate:
\begin{equation}\label{uni-1}
\begin{aligned}
&\|w^{N,\vartheta} \|_{L^\infty_t(\cH^1_{r^2\rho_0^{\delta}})}+ \|w^{N,\vartheta}_t\|_{L^2_t(L^2_{r^2\rho_0})} \\
&\leq C(T)\Big(\|w^{N,\vartheta}_0\|_{\cH^1_{r^2\rho_0^{\delta}}}+ \|(g,h)\|_{L^\infty_t(L^2)}+\|(g_t,h_t)\|_{L^2_t(L^2)} \Big).
\end{aligned}
\end{equation}

\smallskip
\textbf{Step 3.} Note that, thanks to the regularity assumptions of $(g,h)$ and $(g_t,h_t)$ in Proposition \ref{prop-strong}, we have $\mathfrak{B}(t)\in C^1[0,T]$ and $\mathfrak{c}(t)\in H^1(0,T)$, where $(\mathfrak{B} ,\mathfrak{c})(t)$ are defined in \eqref{ABC}. Then the classical theory of ODEs (\textit{cf.} \cite{cod}) implies that $(\mu^{N,\vartheta})_t(t)\in AC([0,T])$, so that $w^{N,\vartheta}_{t}$ is differentiable \textit{a.e.} in $t$. Hence, applying $\partial_t$ to $\eqref{galerkin-n}_1$ gives
\begin{align*}
&\ \big<r^2\rho_0 w_{tt}^{N,\vartheta}, \xi_j\big>  +\Big< \bar\eta^2\bar\eta_r\bar\varrho^{\delta}, (2a_1+a_2)D_{\bar\eta} w_t^{N,\vartheta}D_{\bar\eta}\xi_j +4(a_1+a_2) \frac{w_t^{N,\vartheta}}{\bar\eta} \frac{\xi_j}{\bar\eta}\Big>\\
&\quad + 2a_2\Big<\bar\eta^2\bar\eta_r\bar\varrho^{\delta},\frac{w_t^{N,\vartheta}}{\bar\eta} D_{\bar\eta}\xi_j+ D_{\bar\eta} w_t^{N,\vartheta} \frac{\xi_j}{\bar\eta}\Big>\\
&\quad +(2a_1+a_2)\Big< \bar\eta^2\bar\eta_r\bar\varrho^{\delta}\Big(-(\delta+1)D_{\bar\eta}\bar U+(2-2\delta)\frac{\bar U}{\bar\eta}\Big) D_{\bar\eta} w^{N,\vartheta},D_{\bar\eta}\xi_j\Big>\\
&\quad +4(a_1+a_2)\Big<\bar\eta^2\bar\eta_r\bar\varrho^{\delta}\Big((1-\delta)D_{\bar\eta}\bar U- 2\delta \frac{\bar U}{\bar\eta} \Big)\frac{w^{N,\vartheta}}{\bar\eta},\frac{\xi_j}{\bar\eta}\Big>\\
&\quad + 2a_2\Big<\bar\eta^2\bar\eta_r\bar\varrho^{\delta}\Big(- \delta D_{\bar\eta}\bar U+(1-2\delta)\frac{\bar U}{\bar\eta}\Big),\frac{w^{N,\vartheta}}{\bar\eta} D_{\bar\eta}\xi_j+D_{\bar\eta} w^{N,\vartheta} \frac{\xi_j}{\bar\eta}\Big>\\
&=\Big<(r^2\rho_0^{\delta})^\frac{1}{2} \big(g_t- g \frac{\bar U}{\bar\eta}\big), \frac{2\xi_j}{\bar\eta}\Big>+\big<(r^2\rho_0^{\delta})^\frac{1}{2} \big(h_t- hD_{\bar\eta}\bar U\big), D_{\bar\eta}\xi_j\big>.
\end{align*}

Next, multiplying the above by $(\mu_j^{N,\vartheta})_{t} $ and summing $j$ from $1$ to $N$,  we have
\begin{equation*}
\frac{1}{2}\frac{\mathrm{d}}{\dt}\int_0^1 r^2\rho_0|w^{N,\vartheta}_t|^2\,\mathrm{d}r+L_6=L_7+L_8,
\end{equation*}
where $(X,Y)$ are defined as in \eqref{def-L1L2} and
\begin{align*}
L_6&:=\int_0^1 \bar\eta^2\bar\eta_r\bar\varrho^{\delta}\big((2a_1+a_2)\widetilde X^2+4a_2 \widetilde X \widetilde Y+4(a_1+a_2) \widetilde Y^2\big)\,\mathrm{d}r,\\
L_7&:=(2a_1+a_2)\int_0^1\bar\eta^2\bar\eta_r\bar\varrho^{\delta}\Big(-(\delta+1)D_{\bar\eta}\bar U+(2-2\delta)\frac{\bar U}{\bar\eta}\Big)X \widetilde X\,\mathrm{d}r\\
&\quad +4(a_1+a_2)\int_0^1\bar\eta^2\bar\eta_r\bar\varrho^{\delta}\Big((1-\delta)D_{\bar\eta}\bar U- 2\delta \frac{\bar U}{\bar\eta} \Big) Y \widetilde Y\,\mathrm{d}r\\
&\quad + 2a_2\int_0^1 \bar\eta^2\bar\eta_r\bar\varrho^{\delta}\Big(- \delta D_{\bar\eta}\bar U+(1-2\delta)\frac{\bar U}{\bar\eta}\Big)\big(Y \widetilde X+X\widetilde Y\big)\,\mathrm{d}r,\\
L_8&:= \int_0^1(r^2\rho_0^{\delta})^\frac{1}{2} \Big(\big(g_t- g \frac{\bar U}{\bar\eta}\big) 2\widetilde Y + \big(h_t- h D_{\bar\eta}\bar U\big) \widetilde X\Big)\,\mathrm{d}r,\quad (\widetilde X,\widetilde Y) :=(D_{\bar\eta}w_t^{N,\vartheta},\frac{w_t^{N,\vartheta}}{\bar\eta}).
\end{align*}

For $L_6$--$L_8$, it follow from \eqref{jibenjiashe}, $a_1>0$, and $2a_1+3a_2>0$ that  
\begin{equation*}
\begin{aligned}
L_6&\geq c_*|\sD_r w_t^{N,\vartheta}|_{2,r^2\rho_0^{\delta}}^2,\qquad L_7\leq C |\sD_r\bar U|_\infty |\sD_r w^{N,\vartheta}|_{2,r^2\rho_0^{\delta}}|\sD_r w_t^{N,\vartheta}|_{2,r^2\rho_0^{\delta}}, \\
L_8&\leq C\big(|\sD_r\bar U|_\infty|(g,h)|_2+|(g_t,h_t)|_2\big)|\sD_r w_t^{N,\vartheta}|_{2,r^2\rho_0^{\delta}}.
\end{aligned}
\end{equation*}

Thus, it follows from the above estimates, an estimate similar to \eqref{kongzhi}:
\begin{equation*} 
|w_t^{N,\vartheta}|_{2,r^2\rho_0^{\delta}} \leq C\big(| w_t^{N,\vartheta}|_{2,r^2\rho_0}+|\sD_rw_t^{N,\vartheta}|_{2, r^2\rho_0^{\delta}}\big)   
\end{equation*}  
and the Young inequality that
\begin{equation*}
\frac{\mathrm{d}}{\mathrm{d}t}|w^{N,\vartheta}_t|_{2,r^2\rho_0}^2+ c_*\|w^{N,\vartheta}_t\|_{\cH^1_{r^2\rho_0^{\delta}}}^2 \leq C(T)\big(|\sD_r w^{N,\vartheta}|_{2,r^2\rho_0^{\delta}}^2+|(g,h,g_t,h_t)|_2^2+|w^{N,\vartheta}_t|_{2,r^2\rho_0}^2\big).
\end{equation*}
Multiplying the above by $t$ and integrating over $[\tau,t]$, together with \eqref{uni-1}, we have
\begin{equation}\label{uni-2pre}
\begin{aligned}
&\|\sqrt{t}w^{N,\vartheta}_t \|_{L^\infty_t(L^2_{r^2\rho_0})}^2  +\int_\tau^ts\|w^{N,\vartheta}_t\|_{\cH^1_{r^2\rho_0^{\delta}}}^2\,\mathrm{d}s\\
&\leq\tau|w^{N,\vartheta}_t(\tau)|_{2,r^2\rho_0}^2+C(T)\Big(\|w_0\|_{\cH^1_{r^2\rho_0^{\delta}}}^2+ \|(g,h)\|_{L^\infty_t(L^2)}^2+\|(g_t,h_t)\|_{L^2_t(L^2)}^2 \Big).
\end{aligned}
\end{equation}
Finally, due to Lemma \ref{bjr} and the fact that $(r^2\rho_0)^\frac{1}{2}w^{N,\vartheta}_t\in L^2([0,T];L^2 )$, there exists a sequence $\{\tau_k\}_{k=1}^\infty\subset[0,T]$ such that
\begin{equation*}
\tau_k\to 0, \quad \tau_k|w^{N,\vartheta}_t(\tau_k)|_{2,r^2\rho_0}^2\to 0\qquad \ \ \text{as $k\to \infty$}.
\end{equation*}
Hence, letting $\tau=\tau_k\to 0$ in \eqref{uni-2pre}, we arrive at
\begin{equation*}
\begin{aligned}
&\|\sqrt{t}w^{N,\vartheta}_t\|_{L^\infty_t(r^2\rho_0)}^2  +\int_0^t\sqrt{s}\| w^{N,\vartheta}_t\|_{\cH^1_{r^2\rho_0^{\delta}}}^2\,\mathrm{d}s \\
&\leq C(T)\Big(\|w_0\|_{\cH^1_{r^2\rho_0^{\delta}}}^2+ \|(g,h)\|_{L^\infty_t(L^2)}^2+\|(g_t,h_t)\|_{L^2_t(L^2)}^2 \Big).
\end{aligned}
\end{equation*}

\smallskip
\textbf{Step 4. Weak formulation and energy equality.} First, the regularity property \eqref{reg-H} can be derived via a weak convergence argument similar to Step 3 of the proof of Proposition \ref{prop1}. We omit the details here for brevity.

Next, we prove that $\sqrt{t}r^2\rho_0 w_{tt}\in L^2([0,T];\cH^{-1}_{r^2\rho_0^{\delta}})$. Since $w_{t}\in L^2([0,T];L^2_{r^2\rho_0})$, we see from \eqref{weak.F.} that, for any $\varphi\in \cH^1_{r^2\rho_0^{\delta}}$,
\begin{equation*}
\begin{aligned}
&\big<r^2\rho_0 w_t, \varphi\big> +(2a_1+a_2)\big< \bar\eta^2\bar\eta_r\bar\varrho^{\delta} D_{\bar\eta} w,D_{\bar\eta}\varphi\big>+4(a_1+a_2)\Big<\bar\eta^2\bar\eta_r\bar\varrho^{\delta}\frac{w}{\bar\eta},\frac{\varphi}{\bar\eta}\Big>\\
&\quad + 2a_2\Big<\bar\eta^2\bar\eta_r\bar\varrho^{\delta}\frac{w}{\bar\eta},D_{\bar\eta}\varphi\Big>+ 2a_2\Big<\bar\eta^2\bar\eta_r\bar\varrho^{\delta}D_{\bar\eta} w,\frac{\varphi}{\bar\eta}\Big>\\
&=\Big<g,(r^2\rho_0^{\delta})^\frac{1}{2} \frac{2\varphi}{\bar\eta}\Big>+\big< h,(r^2\rho_0^{\delta})^\frac{1}{2}  D_{\bar\eta}\varphi\big>.
\end{aligned}
\end{equation*}
Differentiating the above with respect to $t$ gives that, for \textit{a.e.} $t>0$,
\begin{align}
\frac{\mathrm{d}}{\mathrm{d}t}\big<(r^2\rho_0 w_t), \varphi\big>&=-\Big< \bar\eta^2\bar\eta_r\bar\varrho^{\delta}, (2a_1+a_2)D_{\bar\eta} w_t D_{\bar\eta}\varphi +4(a_1+a_2) \frac{w_t}{\bar\eta} \frac{\varphi}{\bar\eta}\Big>\notag\\
&\quad - 2a_2\Big<\bar\eta^2\bar\eta_r\bar\varrho^{\delta},\frac{w_t}{\bar\eta} D_{\bar\eta}\varphi+ D_{\bar\eta} w_t \frac{\varphi}{\bar\eta}\Big>\notag\\
&\begin{aligned}
&\quad -(2a_1+a_2)\Big< \bar\eta^2\bar\eta_r\bar\varrho^{\delta}\Big(-(\delta+1)D_{\bar\eta}\bar U+(2-2\delta)\frac{\bar U}{\bar\eta}\Big) D_{\bar\eta} w,D_{\bar\eta}\varphi\Big>\\
&\quad -4(a_1+a_2)\Big<\bar\eta^2\bar\eta_r\bar\varrho^{\delta}\Big((1-\delta)D_{\bar\eta}\bar U- 2\delta \frac{\bar U}{\bar\eta} \Big)\frac{w}{\bar\eta},\frac{\varphi}{\bar\eta}\Big>
\end{aligned}\label{weak.F.tt} \\
&\begin{aligned}
&\quad - 2a_2\Big<\bar\eta^2\bar\eta_r\bar\varrho^{\delta}\Big(- \delta D_{\bar\eta}\bar U+(1-2\delta)\frac{\bar U}{\bar\eta}\Big),\frac{w}{\bar\eta} D_{\bar\eta}\varphi+D_{\bar\eta} w \frac{\varphi}{\bar\eta}\Big>\notag\\
&\quad +\Big<(r^2\rho_0^{\delta})^\frac{1}{2} \big(g_t- g \frac{\bar U}{\bar\eta}\big), \frac{2\varphi}{\bar\eta}\Big>+\big<(r^2\rho_0^{\delta})^\frac{1}{2} \big(h_t- hD_{\bar\eta}\bar U\big), D_{\bar\eta}\varphi\big>, 
\end{aligned}
\end{align}
which yields
\begin{equation*}
\begin{aligned}
\Big|\frac{\mathrm{d}}{\mathrm{d}t}\big<r^2\rho_0 w_t, \varphi\big>\Big| &\leq C\big(|(g_t,h_t)|_2+ |\sD_r\bar U|_\infty|(g,h)|_2\big)\|\varphi\|_{\cH^{1}_{r^2\rho_0^{\delta}}}\\
&\quad + C\big(\|w_t\|_{\cH^{1}_{r^2\rho_0^{\delta}}}+|\sD_r\bar U|_\infty\|w\|_{\cH^{1}_{r^2\rho_0^{\delta}}}\big)\|\varphi\|_{\cH^{1}_{r^2\rho_0^{\delta}}} \leq Q(t)\|\varphi\|_{\cH^{1}_{r^2\rho_0^{\delta}}},
\end{aligned}
\end{equation*}
for some $Q(t)\in L^2(\tau,T)$ with $\tau \in (0,T)$. This, along with Lemma 1.1 on \cite{temam}*{page 250}, leads to $(r^2\rho_0 w_{tt})\in L^2([\tau,T];\cH^{-1}_{r^2\rho_0^{\delta}})$. Hence, we obtain from Lemma \ref{Aubin} that
\begin{equation*} 
\big<(r^2\rho_0 w_{tt}),w_t\big>_{\cH^{-1}_{r^2\rho_0^{\delta}}\times \cH^{1}_{r^2\rho_0^{\delta}}}=\frac{1}{2}\frac{\mathrm{d}}{\mathrm{d}t} \|w_t\|_{L^2_{r^2\rho_0}}^2 \quad \text{for any $t\in (\tau,T)$ with $\tau \in (0,T)$}.
\end{equation*}
Finally, it follows from the above and \eqref{weak.F.tt}  that \eqref{weak-F-H}--\eqref{Energy-Id} hold. 

This completes the proof of Proposition \ref{prop-strong}.
\end{proof}

\subsection{Global Well-posedness of the Linearized Problem}\label{subsection3.3}
In this subsection, we present the proof of Lemma \ref{existence-linearize} based on Propositions \ref{prop1}--\ref{prop-strong}. We divide the proof into nine steps. 

\smallskip
\textbf{Step 1. Tangential estimate $U\in C([0,T];L^2_{r^2\rho_0^{\delta}})$.} First, let 
\begin{equation*}
\begin{aligned}
&w^{(0)}|_{t=0}=w_0^{(0)}:=u_0,\qquad h^{(0)}:=A\Big(\frac{r^2}{\bar\eta^2\bar\eta_r}\Big)^{\gamma-1} r\rho_0^{\gamma-\frac{\delta}{2}},\\
&g^{(0)}:=h^{(0)}- 2\pi G\frac{r\rho_0^{1-\frac{\delta}{2}}}{\bar\eta}\int_0^r\hat r^2\rho_0\,\mathrm{d}\hat r
\end{aligned}
\end{equation*}
in \eqref{lp}. Then $(g^{(0)},h^{(0)})\in L^2([0,T];L^2)$ and $w_0^{(0)}\in L^2_{r^2\rho_0}$, and hence we can deduce from Proposition \ref{prop1} that problem \eqref{lp} admits a unique weak solution $w^{(0)}=U$ such that
\begin{equation}\label{3..142}
U\in C([0,T];L^2_{r^2\rho_0})\cap L^2([0,T];\cH^1_{r^2\rho_0^{\delta}}),\qquad r^2\rho_0 U_t\in L^2([0,T];\cH^{-1}_{r^2\rho_0^{\delta}}).
\end{equation}

\smallskip
\textbf{Step 2. Tangential estimates $U\in C([0,T];\cH^1_{r^2\rho_0^{\delta}})$ and $U_t\in C([0,T];L^2_{r^2\rho_0})$.}
First,  formally applying $\partial_t$ to the equation in \eqref{lp}, we arrive at
\begin{equation}\label{33151}
\begin{aligned}
&r^2\rho_0 U_{tt}-2(r^2\rho_0^{\delta})^\frac{1}{2}\frac{g^{(1)}}{\bar\eta}+\Big((r^2\rho_0^{\delta})^\frac{1}{2}\frac{h^{(1)}}{\bar\eta_r}\Big)_r \\
&=\Big(\bar\eta^2\bar\varrho^{\delta}\big((2a_1+a_2)D_{\bar\eta}U_t+2a_2\frac{U_t}{\bar\eta}\big)\Big)_r -\bar\eta \bar\eta_r\bar\varrho^{\delta}\Big(2a_2  D_{\bar\eta}U_t+4(a_1+a_2)\frac{U_t}{\bar\eta}\Big),
\end{aligned}
\end{equation}
where $g^{(1)}=g^{(1)}_*+g^{(1)}_{**}$, $h^{(1)}=h^{(1)}_*+h^{(1)}_{**}$, and
\begin{align}
&\begin{aligned}
g^{(1)}_*&:= 2a_2\Big(\frac{r^2}{\bar\eta^2\bar\eta_r}\Big)^{\delta-1}r\rho_0^\frac{\delta}{2}\Big(\delta D_{\bar\eta} \bar U-(1-2\delta)\frac{\bar U}{\bar\eta}\Big)D_{\bar\eta} U, \\
&\quad \ +4(a_1+a_2)\Big(\frac{r^2}{\bar\eta^2\bar\eta_r}\Big)^{\delta-1}r\rho_0^\frac{\delta}{2}\Big((\delta-1)D_{\bar\eta} \bar U+2\delta \frac{\bar U}{\bar\eta}\Big)\frac{U}{\bar\eta},\notag
\end{aligned}\\
&\begin{aligned}
h^{(1)}_*&:=(2a_1+a_2)\Big(\frac{r^2}{\bar\eta^2\bar\eta_r}\Big)^{\delta-1}r\rho_0^\frac{\delta}{2}\Big((\delta+1)D_{\bar\eta} \bar U-(2-2\delta)\frac{\bar U}{\bar\eta}\Big) D_{\bar\eta}U\\
&\quad \ +2a_2\Big(\frac{r^2}{\bar\eta^2\bar\eta_r}\Big)^{\delta-1}r\rho_0^\frac{\delta}{2}\Big(\delta D_{\bar\eta} \bar U-(1-2\delta)\frac{\bar U}{\bar\eta}\Big)\frac{U}{\bar\eta}, 
\end{aligned}\label{q1-q2-1}\\
&\begin{aligned}
g^{(1)}_{**}&:=- A\Big(\frac{r^2}{\bar\eta^2\bar\eta_r}\Big)^{\gamma-1} r\rho_0^{\gamma-\frac{\delta}{2}}\Big(\frac{(2\gamma-1)\bar U}{\bar\eta}+(\gamma-1) D_{\bar\eta}\bar U\Big)+4\pi G\frac{r\bar U}{\bar\eta^2}\rho_0^{1-\frac{\delta}{2}}\int_0^r\hat r^2\rho_0\,\mathrm{d}\hat r,\\
h^{(1)}_{**}&:= -A\Big(\frac{r^2}{\bar\eta^2\bar\eta_r}\Big)^{\gamma-1} r\rho_0^{\gamma-\frac{\delta}{2}} \Big(\frac{(2\gamma-2)\bar U}{\bar\eta}+\gamma D_{\bar\eta}\bar U\Big). \notag
\end{aligned}
\end{align}
Clearly, a direct calculation yields the following relations:
\begin{equation}\label{the-relate}
\big(\frac{g^{(0)}}{\bar\eta}\big)_t=\frac{g^{(1)}_{**}}{\bar\eta},\qquad\big(\frac{h^{(0)}}{\bar\eta_r}\big)_t=\frac{h^{(1)}_{**}}{\bar\eta_r},
\end{equation}

Next, regarding \eqref{33151} as the equation of $w^{(1)}:=U_t$ and study the problem: 
\begin{equation}\label{33152}
\begin{cases}
\displaystyle r^2\rho_0 w^{(1)}_t-2(r^2\rho_0^{\delta})^\frac{1}{2}\frac{g^{(1)}}{\bar\eta}+\Big((r^2\rho_0^{\delta})^\frac{1}{2}\frac{h^{(1)}}{\bar\eta_r}\Big)_r \\
\displaystyle  =\Big(\bar\eta^2 \bar\varrho^{\delta}\big((2a_1+a_2)D_{\bar\eta}w^{(1)}+2a_2\frac{w^{(1)}}{\bar\eta}\big)\Big)_r-\bar\eta\bar\eta_r\bar\varrho^{\delta}\Big(2a_2  D_{\bar\eta}w^{(1)}+4(a_1+a_2)\frac{w^{(1)}}{\bar\eta}\Big),\\[10pt]
w^{(1)}|_{t=0}=w^{(1)}_0=U_t\big|_{t=0}\qquad\text{on } I.
\end{cases}
\end{equation}
We can check that $w^{(1)}_0\in L^2_{r^2\rho_0}$ and $(g^{(1)},h^{(1)})\in L^2([0,T];L^2)$ due to \eqref{given}--\eqref{jibenjiashe} and \eqref{3..142}. Hence, it follows from Proposition \ref{prop1} that  \eqref{33152} admits a unique weak solution $w^{(1)}$ satisfying
\begin{equation}\label{3.142'}
w^{(1)}\in C([0,T];L^2_{r^2\rho_0})\cap L^2([0,T];\cH^1_{r^2\rho_0^{\delta}}),\qquad r^2\rho_0 w^{(1)}_t\in L^2([0,T];\cH^{-1}_{r^2\rho_0^{\delta}}).
\end{equation}
Now, we claim that  $w^{(1)}=U_t$ for {\it a.e.} $(t,r)\in (0,T)\times I$. Set 
\begin{equation}\label{rela}
\tilde{U}(t,r):=\int_0^t w^{(1)}(\tau,r)\,\mathrm{d}\tau+ u_0(r), \qquad \ \varpi:=\tilde{U}-U.
\end{equation}
We only need to show that $\varpi=0$ for {\it a.e.} $(t,r)\in (0,T)\times I$.

Indeed, since $U$ and $w^{(1)}$ are weak solutions of \eqref{lp} and \eqref{33152}, respectively, we obtain from \eqref{weak.F.} that, for all $\varphi \in \cH^1_{r^2\rho_0^{\delta}}$ and {\it a.e.} $t\in(0,T)$,
\begin{align}
&\big<r^2\rho_0 \mathcal{Z}^{(j)}_t, \varphi\big>_{\cH^{-1}_{r^2\rho_0^{\delta}}\times \cH^1_{r^2\rho_0^{\delta}}}+(2a_1+a_2)\big< \bar\eta^2\bar\eta_r\bar\varrho^{\delta} D_{\bar\eta} \mathcal{Z}^{(j)},D_{\bar\eta}\varphi\big>+4(a_1+a_2)\Big<\bar\eta^2\bar\eta_r\bar\varrho^{\delta}\frac{\mathcal{Z}^{(j)}}{\bar\eta},\frac{\varphi}{\bar\eta}\Big>\notag\\
&\quad + 2a_2\Big<\bar\eta^2\bar\eta_r\bar\varrho^{\delta}\frac{\mathcal{Z}^{(j)}}{\bar\eta},D_{\bar\eta}\varphi\Big>+ 2a_2\Big<\bar\eta^2\bar\eta_r\bar\varrho^{\delta}D_{\bar\eta} \mathcal{Z}^{(j)},\frac{\varphi}{\bar\eta}\Big>\notag\\
&=\Big<g^{(j)},(r^2\rho_0^{\delta})^\frac{1}{2} \frac{2\varphi}{\bar\eta}\Big>+\big< h^{(j)},(r^2\rho_0^{\delta})^\frac{1}{2}  D_{\bar\eta}\varphi\big>,\label{equ1}
\end{align}
where $j=0,1$ and $(\mathcal{Z}^{(0)},\mathcal{Z}^{(1)}):=(U,w^{(1)})$.

Next, in view of \eqref{rela}, we replace $\mathcal{Z}^{(1)}=w^{(1)}$ in \eqref{equ1} by $\tilde{U}_t$ and integrate over $[0,t]$ for $t\in (0,T]$. Then based on the compatibility condition $\eqref{116}_1$ in Appendix \ref{AppB}:
\begin{equation*}
\begin{aligned}
r^2 \rho_0 U_t(0,r)&=(2a_1+a_2)\Big(r^2\rho_0^{\delta}\big((u_0)_r+\frac{2a_2}{2a_1+a_2}\frac{u_0}{r}\big)\Big)_r -2a_2 r\rho_0^{\delta}(u_0)_r-4(a_1+a_2) \rho_0^{\delta} u_0 \\
&\quad +2A  r \rho_0^{\gamma} -A (r^2 \rho_0^{\gamma})_r-4\pi G \rho_0\int_0^r \hat r^2\rho_0\,\mathrm{d}\hat r,
\end{aligned}
\end{equation*}
and the relations \eqref{the-relate}, we arrive at the following equality:
\begin{equation*}
\begin{aligned}
&\,\big<r^2\rho_0 \tilde{U}_t,\varphi\big>_{\cH^{-1}_{r^2\rho_0^{\delta}}\times \cH^1_{r^2\rho_0^{\delta}}}+(2a_1+a_2)\big<\bar\eta^2\bar\eta_r \bar\varrho^{\delta} D_{\bar\eta} \tilde{U},D_{\bar\eta}\varphi\big>+4(a_1+a_2)\Big<\bar\eta^2\bar\eta_r \bar\varrho^{\delta}\frac{\tilde{U}}{\bar\eta},\frac{\varphi}{\bar\eta}\Big>\\
&\quad + 2a_2\Big<\bar\eta^2\bar\eta_r \bar\varrho^{\delta}\frac{\tilde{U}}{\bar\eta},D_{\bar\eta}\varphi\Big>+ 2a_2\Big<\bar\eta^2\bar\eta_r \bar\varrho^{\delta}D_{\bar\eta}\tilde{U},\frac{\varphi}{\bar\eta}\Big>\\
&=\Big<g^{(0)},(r^2\rho_0^{\delta})^\frac{1}{2} \frac{2\varphi}{\bar\eta}\Big>+\big< h^{(0)},(r^2\rho_0^{\delta})^\frac{1}{2}  D_{\bar\eta}\varphi\big>+L_9,
\end{aligned}
\end{equation*}
where
\begin{equation*}
\begin{aligned}
L_9:=&-(2a_1+a_2)\Big< \bar\eta^2\bar\eta_r \bar\varrho^{\delta}\Big(-(\delta+1)D_{\bar\eta}\bar U+(2-2\delta)\frac{\bar U}{\bar\eta}\Big) D_{\bar\eta} \varpi,D_{\bar\eta}\varphi\Big>\\
&\quad -4(a_1+a_2)\Big<\bar\eta^2\bar\eta_r \bar\varrho^{\delta}\Big((1-\delta)D_{\bar\eta}\bar U- 2\delta \frac{\bar U}{\bar\eta} \Big)\frac{\varpi}{\bar\eta},\frac{\varphi}{\bar\eta}\Big>\\
&\quad - 2a_2\Big<\bar\eta^2\bar\eta_r \bar\varrho^{\delta}\Big(- \delta D_{\bar\eta}\bar U+(1-2\delta)\frac{\bar U}{\bar\eta}\Big),\frac{\varpi}{\bar\eta} D_{\bar\eta}\varphi+D_{\bar\eta} \varpi \frac{\varphi}{\bar\eta}\Big>.
\end{aligned}
\end{equation*}

Subtracting \eqref{equ1} from the above equality leads to
\begin{equation}\label{equ3}
\begin{aligned}
&\,\big<r^2\rho_0 \varpi_t,\varphi\big>_{\cH^{-1}_{r^2\rho_0^{\delta}}\times \cH^{1}_{r^2\rho_0^{\delta}}} +(2a_1+a_2)\Big< \bar\eta^2\bar\eta_r \bar\varrho^{\delta} D_{\bar\eta} \varpi,D_{\bar\eta}\varphi\Big>+4(a_1+a_2)\Big<\bar\eta^2\bar\eta_r \bar\varrho^{\delta}\frac{\varpi}{\bar\eta},\frac{\varphi}{\bar\eta}\Big>\\
&\quad + 2a_2\Big<\bar\eta^2\bar\eta_r \bar\varrho^{\delta}\frac{\varpi}{\bar\eta},D_{\bar\eta}\varphi\Big>+ 2a_2\Big<\bar\eta^2\bar\eta_r \bar\varrho^{\delta}D_{\bar\eta}\varpi,\frac{\varphi}{\bar\eta}\Big>=L_9. 
\end{aligned}
\end{equation}
Then, using the fact that $\varpi\in L^2([0,T];\cH^1_{r^2\rho_0^{\delta}})$, we can let $\varphi=\varpi$ in \eqref{equ3} and obtain from Lemma \ref{hardy-inequality}, the Young inequality, and the similar calculations in Step 2 of the proof of Proposition \ref{prop1} that
\begin{equation*}
\begin{aligned}
\frac{\mathrm{d}}{\dt}|\varpi|_{2,r^2\rho_0}^2+\|\varpi\|_{\cH^1_{r^2\rho_0^{\delta}}}^2\leq C\big(|\sD_r \bar U|_{\infty}+1\big)\int_0^t \|\varpi\|_{\cH^1_{r^2\rho_0^{\delta}}}^2\,\ds.
\end{aligned}
\end{equation*}
Integrating the above over $[0,t]$, together  with the strong continuity of $\varpi$ at $t=0$ and $\varpi|_{t=0}=0$, yields 
\begin{equation*}
\int_0^t \|\varpi\|_{\cH^1_{r^2\rho_0^{\delta}}}^2\,\ds\leq C(T)\int_0^t \Big(\int_0^\tau\|\varpi\|_{\cH^1_{r^2\rho_0^{\delta}}}^2\,\ds\Big)\mathrm{d}\tau.
\end{equation*}
Then  the Gr\"onwall inequality implies that $\|\varpi\|_{L^2_t(\cH^1_{r^2\rho_0})}= 0$, and hence $\varpi=0$.

Thus, \eqref{3.142'} holds for $U_t$:
\begin{equation}\label{33153}
U_t\in C([0,T];L^2_{r^2\rho_0})\cap L^2([0,T];\cH^1_{r^2\rho_0^{\delta}}),\qquad r^2\rho_0 U_{tt}\in L^2([0,T];\cH^{-1}_{r^2\rho_0^{\delta}}),
\end{equation}
which, along with \eqref{3..142} and Lemma \ref{sobolev-embedding}, gives
\begin{equation}\label{equ33.38}
U\in C([0,T];\cH^1_{r^2\rho_0^{\delta}}).
\end{equation}

\smallskip
\textbf{Step 3. Boundary condition of $U$.} Thanks to  \eqref{lp} and  \eqref{weak.F.}, we can show that $\eqref{lp}_1$ holds for \textit{a.e.} $(t,r)\in (0,T)\times (0,1)$, which are crucial to our further analysis.
\begin{Lemma}\label{Lemma-point}
For any $a\in (0,1)$ and \textit{a.e.} $t\in (0,T)$,
\begin{equation}\label{div-Uxx}
r\sD_{\bar\eta}^2 U\in L^2(0,a),\qquad \Big(\bar\eta^2 \bar\varrho^{\delta}\big((2a_1+a_2)D_{\bar\eta} U + 2a_2 \frac{U}{\bar\eta}\big)\Big)_r\in L^2(a,1).
\end{equation}
Furthermore, equation $\eqref{lp}_1$ holds for \textit{a.e.} $(t,r)\in (0,T)\times (0,1)$, and $U$ satisfies 
\begin{equation}\label{equ33.41}
U\in H^3_{\mathrm{loc}},\qquad \ \rho_0^{\delta}\Big(D_{\bar\eta} U+ \frac{2a_2}{2a_1+a_2}\frac{U}{\bar\eta}\Big)\Big|_{r=1}=0 \quad \text{for {\it a.e.} $t\in (0,T)$}.
\end{equation}
\end{Lemma}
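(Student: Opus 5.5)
The argument is a local elliptic regularity statement read off from the weak formulation \eqref{weak.F.} (with $w=U$, $(g,h)=(g^{(0)},h^{(0)})$), using that after Step 2 we have $U_t\in L^2([0,T];\cH^1_{r^2\rho_0^\delta})\subset L^2([0,T];L^2_{r^2\rho_0})$ by \eqref{33153}. Fix a.e. $t$ such that $U(t)\in\cH^1_{r^2\rho_0^\delta}$ and $U_t(t)\in L^2_{r^2\rho_0}$. The pairing $\langle r^2\rho_0U_t,\varphi\rangle$ is then an honest integral, so \eqref{weak.F.} tested with $\varphi\in C_{\mathrm c}^\infty(0,1)$ says that the distribution
\[
\mathcal{F}:=\Big(\bar\eta^2\bar\varrho^{\delta}\big((2a_1+a_2)D_{\bar\eta}U+2a_2\tfrac{U}{\bar\eta}\big)\Big)_r
\]
equals $r^2\rho_0U_t+A(\bar\eta^2\bar\varrho^\gamma)_r-2A\bar\eta\bar\eta_r\bar\varrho^\gamma+(\text{gravity term})+\bar\eta\bar\eta_r\bar\varrho^\delta\big(2a_2D_{\bar\eta}U+4(a_1+a_2)\tfrac{U}{\bar\eta}\big)$.

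\textbf{Exterior part.} On $(a,1)$ each term on the right is in $L^2(a,1)$. The term $r^2\rho_0U_t$ is in $L^2$ because $\rho_0\le C\rho_0^{1/2}$. The terms $\bar\varrho^\delta\mathscr{D}_{\bar\eta}U$ are in $L^2$ because $\rho_0^\delta\le C\rho_0^{\delta/2}$. The pressure and gravity terms are bounded, using \eqref{distance-la}, $\gamma>1$, and the regularity \eqref{given-bareta} of $\bar\eta$ (in particular $\bar\eta_{rr}$ is continuous for $t>0$). Hence $\mathcal F\in L^2(a,1)$, which is the second part of \eqref{div-Uxx}, and $\eqref{lp}_1$ holds a.e. on $(a,1)$.

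\textbf{Interior part.} On $(0,a)$ the density $\rho_0$ is bounded above and below, and $\bar\varrho^\delta$ is $C^1$ and positive. Expanding $\mathcal F$ gives
\[
(2a_1+a_2)\bar\eta^2\bar\varrho^\delta\bar\eta_r\big(D_{\bar\eta}^2U+2D_{\bar\eta}(U/\bar\eta)\big)=L^2_{r^2}\text{-controlled terms},
\]
where I write the operator in $\mathscr D^2_{\bar\eta}$ form: $D_{\bar\eta}^2U+2D_{\bar\eta}(U/\bar\eta)=D_{\bar\eta}\big(D_{\bar\eta}U+2U/\bar\eta\big)$ is the radial divergence-gradient. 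To get $r\mathscr D^2_{\bar\eta}U\in L^2(0,a)$ I would pass to the 3-D vector field $\boldsymbol U=U\boldsymbol y/r$ as in Remark \ref{remark-energy function} and Lemma \ref{lemma-initial}. The equation becomes a Lamé-type system $\mathrm{div}(\mu\nabla\boldsymbol U)+\ldots=\boldsymbol F\in L^2(B_a)$, with smooth coefficients close to constant and uniformly elliptic by \eqref{jibenjiashe} and $2a_1+3a_2>0$. Interior $H^2$ elliptic regularity on $B_{a'}$ with $a<a'<1$, combined with $\boldsymbol U\in H^1(B_{a'})$ (which is equivalent to $U\in\cH^1_{r^2}(0,a')$), gives $\boldsymbol U\in H^2(B_a)$, and this translates back to $r\mathscr D^2_{\bar\eta}U\in L^2(0,a)$. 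Away from $0$ and $1$, a bootstrap — differentiating the a.e. equation once and using $U_t\in\cH^1_{r^2\rho_0^\delta}$ and the $H^3$-type bounds on $\bar\eta$ — gives $U\in H^3_{\mathrm{loc}}$.

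\textbf{Boundary condition.} This is the main obstacle. Take $\varphi\in\cH^1_{r^2\rho_0^\delta}$ smooth near $r=1$ and not vanishing there, and compare \eqref{weak.F.} with the pointwise equation integrated by parts on $(a,1-\epsilon)$. The difference is the boundary term
\[
\Big[\bar\eta^2\bar\varrho^\delta\big((2a_1+a_2)D_{\bar\eta}U+2a_2\tfrac U{\bar\eta}\big)\varphi\Big]_{r=1-\epsilon}.
\]
Since $\mathcal F\in L^1(a,1)$, the flux $\mathcal G:=\bar\eta^2\bar\varrho^\delta(\ldots)$ has a limit as $r\to1$. This limit must be $0$: otherwise $\rho_0^\delta\,\mathscr D_{\bar\eta}U\sim c\ne0$ near $r=1$, which contradicts $\rho_0^{\delta/2}\mathscr D_{\bar\eta}U\in L^2$ because $\rho_0^{-\delta}\sim(1-r)^{-\delta/(\gamma-1)}$. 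I expect that the exponent works in the regime \eqref{varepsilon0}, but the careful check is whether $\int(1-r)^{-\delta/(\gamma-1)}\,dr$ diverges. If that fails, I would instead use that \eqref{weak.F.} holds for all $\varphi$ with $\varphi(1)\neq0$, which makes the boundary term vanish directly. Using $\bar\varrho^\delta\sim\rho_0^\delta$ then gives \eqref{equ33.41}.
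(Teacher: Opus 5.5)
Your proposal is correct in substance. The exterior part, the $H^3_{\mathrm{loc}}$ bootstrap, and your fallback argument for the boundary condition are essentially Steps 1, 2 and 4 of the paper's proof.

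\textbf{The boundary condition.} Drop the integrability argument for $\mathcal G(1^-)=0$. A nonzero limit forces $|\sD_{\bar\eta}U|\gtrsim\rho_0^{-\delta}\sim(1-r)^{-\delta/(\gamma-1)}$ near $r=1$. This contradicts $\rho_0^{\delta/2}\sD_{\bar\eta}U\in L^2$ only when $\delta\ge\gamma-1$, and nothing in \eqref{gammadelta} or \eqref{varepsilon0} forces that. For example, with $\gamma=2$ and $\delta=\frac12$ one gets $\rho_0^{\delta}|\sD_{\bar\eta}U|^2\sim(1-r)^{-1/2}\in L^1$, so no contradiction arises.

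The argument that works for all admissible $(\gamma,\delta)$ is your fallback, which is exactly the paper's. Take $\varphi\in C_{\mathrm{c}}^\infty((a,1])\subset\cH^1_{r^2\rho_0^\delta}$ and integrate $\eqref{lp}_1\cdot\varphi$ over $(a,1-\epsilon)$. Let $\epsilon\to0$, using that $\mathcal G\in W^{1,1}(a,1)$ is continuous up to $r=1$ and that the remaining integrands are absolutely integrable. Comparing with \eqref{weak.F.} leaves only $\mathcal G(1)\varphi(1)=0$. Since $\bar\varrho^\delta/\rho_0^\delta=(r^2/(\bar\eta^2\bar\eta_r))^\delta$ is bounded above and below by \eqref{jibenjiashe}, $\eqref{equ33.41}_2$ follows.

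\textbf{The interior estimate.} Here you genuinely depart from the paper, in proving $r\sD_{\bar\eta}^2U\in L^2(0,a)$.

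The paper stays one-dimensional. Having $U\in H^3_{\mathrm{loc}}$, it rewrites $\eqref{lp}_1$ as \eqref{new-lp} and bounds $|\zeta_aD_{\bar\eta}(D_{\bar\eta}U+2U/\bar\eta)|_{2,r^2}$ by $|\zeta_a(U_t,\sD_rU)|_{2,r^2}$ plus density terms. It then recovers all of $\sD_{\bar\eta}^2U$ from this radial divergence via the div--curl equivalence of Lemma \ref{im-1}.

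You instead lift to $\boldsymbol U=U\boldsymbol y/r$ and use interior $H^2$ regularity for the Lam\'e-type system. This avoids having to justify, for $U$ at $r=0$, the integrations by parts behind Lemma \ref{im-1}, which is stated for smooth $f$. The price is two points the paper's route does not need:
\begin{itemize}
\item The a.e. radial equation only gives the weak 3-D system on $B_{a'}\setminus\{\boldsymbol 0\}$. It extends to $B_{a'}$ because a point has zero $H^1$-capacity in $\RR^3$: $C_{\mathrm{c}}^\infty(B_{a'}\setminus\{\boldsymbol 0\})$ is dense in $H^1_0(B_{a'})$, while $\boldsymbol U\in H^1(B_{a'})$ and the right-hand side lies in $L^2$.
\item The coefficients are not smooth but only Lipschitz. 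They involve $\rho_0^\delta$ with $\rho_0^{\gamma-1}\in H^3(\Omega)$, and $\nabla\bar{\boldsymbol\eta}$ with $\bar{\boldsymbol\eta}=\bar\eta\,\boldsymbol y/r\in H^4$ near the origin by \eqref{given-bareta} and Lemma \ref{lemma-initial}. This is still enough for interior $H^2$ regularity.
\end{itemize}
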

\begin{proof}
We divide the proof into four steps.

\smallskip
\textbf{Step 1.} Note that, once $\eqref{div-Uxx}_2$ is obtained, based on the regularity properties of $\bar\eta$ in \eqref{given-bareta} and those of $U$ in \eqref{3..142} and \eqref{33153}--\eqref{equ33.38}, we can directly deduce that each term in $\eqref{lp}_1$ belongs to $L^1_{\mathrm{loc}}$, and hence $\eqref{lp}_1$ holds for \textit{a.e.} $(t,r)\in (0,T)\times (0,1)$.

To derive $\eqref{div-Uxx}_2$, we see  from \eqref{equ1}, \eqref{33153}  that, for all $\varphi\in C_{\mathrm{c}}^\infty((0,1])$,
\begin{equation*}
\begin{aligned}
\Big|\Big< \bar\eta^2 \bar\varrho^{\delta}\Big((2a_1+a_2)D_{\bar\eta} U + 2a_2 \frac{U}{\bar\eta}\Big), \varphi_r\Big>\Big| &\leq C\big|\big<r^2\rho_0 U_t, \varphi\big>\big| +C|(g,h)|_2\big|(r^2\rho_0^{\delta})^\frac{1}{2}\sD_{\bar\eta}\varphi\big|_{2}\\
&\quad+C\Big|\Big<\bar\eta^2\bar\eta_r \bar\varrho^{\delta}\frac{U}{\bar\eta},\frac{\varphi}{\bar\eta}\Big>\Big|+ C\Big|\Big<\bar\eta^2\bar\eta_r \bar\varrho^{\delta}D_{\bar\eta} U,\frac{\varphi}{\bar\eta}\Big>\Big| \leq C_\varphi |\varphi|_2,
\end{aligned}
\end{equation*}
where $C_\varphi>0$ is a constant that depends on the support of $\varphi$. This, along with the definition of the weak derivative, yields $\eqref{div-Uxx}_2$.

\smallskip
\textbf{Step 2.} We prove $\eqref{equ33.41}_1$. Since equation $\eqref{lp}_1$ takes the form:
\begin{equation*}
(\mathfrak{A}_1 U_{r}+\mathfrak{A}_2 U)_r+\mathfrak{A}_3 U_r+\mathfrak{A}_4 U=\mathfrak{A}_5,
\end{equation*}
with $\mathfrak{A}_i$ ($1\leq i\leq 5$) naturally defined,
we can check that $\mathfrak{A}_i\in H^3_{\mathrm{loc}}$ ($1\leq i\leq 4$) and $\mathfrak{A}_5\in H^1_{\mathrm{loc}}$  for \textit{a.e.} $t\in (0,T)$. Hence, it follows from the classical regularity theory of elliptic equations (\textit{cf.} \cites{evans}) that $U\in H^3_{\mathrm{loc}}$ for \textit{a.e.} $t\in (0,T)$.

\smallskip
\textbf{Step 3.} To obtain $\eqref{div-Uxx}_1$, since $U\in H^3_{\mathrm{loc}}$ and $U_t\in H^1_{\mathrm{loc}}$ for \textit{a.e.} $t\in (0,T)$, and $\eqref{lp}_1$ holds for \textit{a.e.} $(t,r)\in (0,T)\times (0,1)$, we rewrite $\eqref{lp}_1$ as
\begin{equation}\label{new-lp}
\begin{aligned}
D_{\bar\eta}\Big(D_{\bar\eta} U+ \frac{2U}{\bar\eta} \Big)&=-\frac{\delta}{\gamma-1} (D_{\bar\eta}\log(\bar\varrho^{\gamma-1})) \Big(D_{\bar\eta} U+ \frac{2a_2}{2a_1+a_2}  \frac{U}{\bar\eta}\Big)+\frac{1}{2a_1+ a_2}\bar\varrho^{1-\delta} U_t\\
&\quad +\frac{A}{2a_1+a_2}\frac{\gamma}{\gamma-\delta}D_{\bar\eta}\bar\varrho^{\gamma-\delta}+ \frac{4\pi G}{2a_1+ a_2}\frac{\bar\varrho^{1-\delta}}{\bar\eta^2}\int_0^r\hat r^2\rho_0\,\mathrm{d}\hat{r}.
\end{aligned}
\end{equation}

Then we can directly check that, for any $a\in (0,1)$, 
\begin{equation*}
\Big|\zeta_a D_{\bar\eta}\Big(D_{\bar\eta} U+ \frac{2U}{\bar\eta}\Big)\Big|_{2,r^2}\leq C\big(|\zeta_a (U_t,\sD_r U)|_{2,r^2}+ |\zeta_a(\bar\varrho^{\gamma-\delta})_r|_{2,r^2}+|\zeta_a \bar\varrho^{1-\delta}|_{2,r^2}\big)\leq C(a,T),
\end{equation*}
due to \eqref{33153}--\eqref{equ33.38}, the regularity of $\bar\eta$ in \eqref{given-bareta}, and the fact that $\bar\varrho^{\gamma-1}\sim 1-r$. Hence, by Lemma \ref{im-1}, we derive $\eqref{div-Uxx}_1$.

\smallskip
\textbf{Step 4.} Finally, we give the proof for  $\eqref{equ33.41}_2$. First, recall that \eqref{equ1} holds for $\cZ^{(0)}=U$ and for all $\varphi\in C_{\mathrm{c}}^\infty((0,1])$. Next, since $\rho_0^{\delta}(U,U_r)\in C([0,T]\times [a,1])$ for any $a\in (0,1)$ in view of  Lemma \ref{sobolev-embedding} and \eqref{div-Uxx}, multiplying $\eqref{lp}_1$ by such $\varphi$ and integrating over $I$ yield 
\begin{equation*} 
\begin{aligned}
&\big<r^2\rho_0 U_t, \varphi\big> +(2a_1+a_2)\big<\bar\eta^2\bar\eta_r\bar\varrho^{\delta} D_{\bar\eta} U,D_{\bar\eta}\varphi\big>+4(a_1+a_2)\Big<\bar\eta^2\bar\eta_r\bar\varrho^{\delta}\frac{U}{\bar\eta},\frac{\varphi}{\bar\eta}\Big>\\
&\quad + 2a_2\Big<\bar\eta^2\bar\eta_r\bar\varrho^{\delta}\frac{U}{\bar\eta},D_{\bar\eta}\varphi\Big>+ 2a_2\Big<\bar\eta^2\bar\eta_r\bar\varrho^{\delta}D_{\bar\eta} U,\frac{\varphi}{\bar\eta}\Big>\\
&=(2a_1+a_2) \bar\eta^2 \bar\varrho^{\delta}\Big(D_{\bar\eta} U + \frac{2a_2}{2a_1+a_2} \frac{U}{\bar\eta}\Big)\varphi\Big|_{r=1} +\Big<g^{(0)},(r^2\rho_0^{\delta})^\frac{1}{2} \frac{2\varphi}{\bar\eta}\Big>+\big< h^{(0)},(r^2\rho_0^{\delta})^\frac{1}{2}  D_{\bar\eta}\varphi\big>.
\end{aligned}
\end{equation*}

Hence, this, compared with \eqref{equ1} and together with \eqref{jibenjiashe}, gives
\begin{equation*}
\rho_0^{\delta}\Big(D_{\bar\eta} U + \frac{2a_2}{2a_1+a_2} \frac{U}{\bar\eta}\Big)\varphi\big|_{r=1}=0 \qquad\text{for all $\varphi\in C_{\mathrm{c}}^\infty((0,1])$},
\end{equation*}
which implies $\eqref{equ33.41}_2$.  This completes the proof of Lemma \ref{Lemma-point}.
\end{proof}

\textbf{Step 4. $L^\infty([0,T];L^2)$-estimate of $\chi^\sharp\rho_0^{(\gamma-1)(\frac{3}{2}-\varepsilon_0)}D_{\bar\eta}^2U$.} First, we claim 
\begin{equation}\label{L2Linfty}
\rho_0^\frac{\delta-\gamma+1}{2}\Big(D_{\bar\eta} U+\frac{2a_2}{2a_1+a_2}\frac{U}{\bar\eta}\Big)\in L^\infty([0,T];L^\infty(a,1)) \qquad\text{for $a\in(0,1)$}.
\end{equation}
Indeed, due to $\rho_0^{\delta}(U,U_r)\in C[a,1]$ and \eqref{equ33.41}, we can integrate $\eqref{lp}_1\times \bar\eta^{-2}$ over $[r,1]$ ($r\in [a,1)$) to obtain
\begin{equation}\label{old3...136} 
\begin{aligned}
\bar\varrho^{\delta}\Big(D_{\bar\eta} U+\frac{2a_2}{2a_1+a_2}\frac{U}{\bar\eta}\Big) &=\frac{A}{2a_1+a_2}\bar\varrho^{\gamma}-\frac{4\pi G}{2a_1+a_2}\int_r^1 \frac{\hat r^2\rho_0}{\bar\eta^4}\int_0^{\hat r}s^2\rho_0\,\mathrm{d}s\mathrm{d}\hat r\\
&\quad +\frac{4a_1}{2a_1+a_2}\int_r^1 \bar\eta_r\bar \varrho^{\delta} \Big(\frac{D_{\bar\eta} U}{\bar\eta}-\frac{U}{\bar\eta^2}\Big)\mathrm{d}\hat r-\frac{1}{2a_1+a_2}\int_r^1\frac{\hat r^2}{\bar\eta^2}\rho_0 U_t\,\mathrm{d}\hat r.
\end{aligned}
\end{equation}
Then it follows from the fact that $1-r\sim \rho_0^{\gamma-1}\in H^3(a,1)$, \eqref{jibenjiashe}, \eqref{33153}--\eqref{equ33.38}, Lemma \ref{hardy-inequality}, and the H\"older inequality that, for all $r\in [a,1)$ and $a\in (0,1)$,
\begin{equation}\label{3...136}
\begin{aligned}
\rho_0^{\delta}\Big|D_{\bar\eta} U+\frac{2a_2}{2a_1+a_2}\frac{U}{\bar\eta}\Big|&\leq C(a)\Big(\rho_0^{\gamma}\big(1+|D_{\bar\eta} \Phi|_\infty\big)
+\int_r^1\big(\rho_0^{\delta} |(U,D_{\bar\eta} U)|+\rho_0|U_t|\big)\mathrm{d}\hat r\Big)\\
&\leq C(a)\Big(\rho_0^{\gamma} +\Big(\int_r^1 \rho_0^{\delta}\,\mathrm{d}\hat r\Big)^\frac{1}{2}\big|\chi_a^\sharp\big(\rho_0^\frac{\delta}{2}|(U,D_{\bar\eta} U)|,\rho_0^{1-\frac{\delta}{2}}U_t\big)\big|_2\Big)\\
&\leq C(a)\big(\rho_0^{\gamma}+\rho_0^\frac{\gamma+\delta-1}{2}\big).
\end{aligned}
\end{equation}
Multiplying the above by $\rho_0^{-\frac{\gamma+\delta-1}{2}}$ implies claim \eqref{L2Linfty}. 

Next, it follows from \eqref{new-lp} that
\begin{equation}\label{DDv}
D_{\bar\eta}^2 U =L_{10},    
\end{equation}
where $\chi^\sharp L_{10}$ has control of the following form due to \eqref{jibenjiashe}:
\begin{equation}\label{l10}
\begin{aligned}
|\chi^\sharp L_{10}|&\leq C\chi^\sharp\frac{|(\rho_0^{\gamma-1})_r|}{\rho_0^{\gamma-1}}|U|+C\chi^\sharp\Big(\Big|\Big(\log \frac{\bar\eta^2\bar\eta_r}{r^2}\Big)_r\Big|+\frac{|(\rho_0^{\gamma-1})_r|}{\rho_0^{\gamma-1}}\Big)\Big|\Big(D_{\bar\eta} U+\frac{2a_2}{2a_1+a_2}\frac{U}{\bar\eta}\Big)\Big|\\
&\quad +C\chi^\sharp\rho_0^{1-\delta} |U_t|+C\chi^\sharp\rho_0^{1-\delta}\big|(1,D_{\bar\eta}(\bar\varrho^{\gamma-1}))\big|.
\end{aligned}
\end{equation}

Recall from $\bar\eta_t=\bar U$ and $\bar\eta(0,r)=r$ that
\begin{equation}\label{jacobi}
|\chi^\sharp D_{\bar\eta}\bar\varrho^{\gamma-1}|_\infty+\Big|\chi^\sharp\Big(\log \frac{\bar\eta^2\bar\eta_r}{r^2}\Big)_r\Big|_\infty\leq C+C\int_0^t \big|\chi^\sharp (\bar U, D_{\bar\eta} \bar U,D_{\bar\eta}^2 \bar U)\big|_\infty\,\mathrm{d}s\leq C(T),  
\end{equation}
where we have used Lemma \ref{hardy-inequality} and 
\begin{equation*}
|\chi^\sharp  D_{\bar\eta}^2 \bar U|_\infty\leq C\big|\chi^\sharp (D_{\bar\eta}^2 \bar U,D_{\bar\eta}^3 \bar U)\big|_1\leq  C\big|\chi^\sharp  \rho_0^{(\gamma-1)(\frac{3}{2}-\varepsilon_0)}(D_{\bar\eta}^3 \bar U,D_{\bar\eta}^4 \bar U)\big|_2\leq C+\frac{C}{\sqrt{t}}.
\end{equation*}

Hence, it follows from the facts that $\rho_0^{\gamma-1}\sim 1-r$ and $(\gamma-1)(\frac{3}{2}-\varepsilon_0)>\frac{1}{2}>\frac{\delta}{2}$ and \eqref{33153}, \eqref{L2Linfty}, and \eqref{DDv}--\eqref{jacobi} that
\begin{equation}\label{DDv-L2}
\begin{aligned}
\big|\chi^\sharp&\rho_0^{(\gamma-1)(\frac{3}{2}-\varepsilon_0)}D_{\bar\eta}^2 U\big|_2\leq C(T)\Big(1+\Big|\chi^\sharp\rho_0^{(\gamma-1)(\frac{1}{2}-\varepsilon_0)}\Big(D_{\bar\eta} U+\frac{2a_2}{2a_1+a_2}\frac{ U}{\bar\eta}\Big)\Big|_2\Big)\\
&\leq C(T)\Big(1+\big|\chi^\sharp\rho_0^{(\gamma-1)(1-\varepsilon_0)-\frac{\delta}{2}}\big|_2\Big|\chi^\sharp\rho_0^\frac{\delta-(\gamma-1)}{2}\Big(D_{\bar\eta} U+\frac{2a_2}{2a_1+a_2}\frac{ U}{\bar\eta}\Big)\Big|_\infty\Big)\leq C(T).
\end{aligned}
\end{equation}

\textbf{Step 5. Tangential estimates $U_t\in L^\infty([0,T];\cH^1_{r^2\rho_0^{\delta}})$ and time-weighted estimates.} We continue to improve the regularity of $U$. First, by direct calculation, $w^{(1)}(0,r)\in \cH^1_{r^2\rho_0^{\delta}}$ and $(g^{(1)},h^{(1)})\in L^\infty([0,T];L^2)$ due to \eqref{given}--\eqref{jibenjiashe}, and \eqref{3..142}, it still remains to show that
\begin{equation}\label{q1t-q2t}
(g^{(1)}_t,h^{(1)}_t)\in L^2([0,T];L^2).
\end{equation}
To obtain this, we see that $g^{(1)}_t$ and $h^{(1)}_t$ have controls of the following form due to \eqref{jibenjiashe}:
\begin{equation}\label{q1-q2-1-t}
\begin{aligned}
\big|(g^{(1)}_t,h^{(1)}_t)\big|&\leq Cr\rho_0^\frac{\delta}{2}\big(|\sD_{\bar\eta}\bar U|^2 |\sD_{\bar\eta} U|+|\sD_{\bar\eta}\bar U_t||\sD_{\bar\eta} U| +|\sD_{\bar\eta}\bar U||\sD_{\bar\eta}  U_t|\big)\\
&\quad +C r\rho_0^{1-\frac{\delta}{2}}\big(|\sD_{\bar\eta}\bar U|^2+|\sD_{\bar\eta}\bar U_t| 
\big).
\end{aligned}
\end{equation}
Here, with the help of \eqref{L2Linfty}, we can obtain \eqref{q1t-q2t} by using the regularities of $(\bar\eta,\bar U)$, the facts that $\rho_0^{\gamma-1}\sim 1-r$, \eqref{33153}--\eqref{equ33.38}, and Lemmas \ref{Lemma-point} and \ref{sobolev-embedding}--\ref{hardy-inequality}, for example,
\begin{align*}
&\,\big|r\rho_0^\frac{\delta}{2} \sD_{\bar\eta}\bar U_t\sD_{\bar\eta}U \big|_2 \leq C\big(\big|\chi r \sD_{\bar\eta}\bar U_t\sD_{\bar\eta}U\big|_2+\big|\chi^\sharp \rho_0^\frac{\delta}{2} \sD_{\bar\eta}\bar U_t\sD_{\bar\eta}U \big|_2\big)\\
&\leq C \big|\chi r^\frac{1}{2}\sD_{\bar\eta}\bar U_t\big|_4\big|\chi r^\frac{1}{2} \sD_{\bar\eta}U|_4\\
&\quad +C\Big(\big|\chi^\sharp\rho_0^\frac{\gamma-1}{2}\sD_{\bar\eta}\bar U_t\big|_2\Big|\chi^\sharp\rho_0^\frac{\delta-\gamma+1}{2}\Big(D_{\bar\eta}U+\frac{2a_2}{2a_1+a_2}\frac{U}{\bar\eta}\Big)\Big|_\infty+\big|\chi^\sharp\rho_0^{\frac{\delta(\gamma-1)}{2}}\sD_{\bar\eta}\bar U_t\big|_2 |\chi^\sharp U|_\infty\Big)\\
&\leq C(T)\big(\big|\chi r^\frac{5}{4}(\sD_{\bar\eta}U,\sD_{\bar\eta}^2U)\big|_2\big|\chi  r^\frac{5}{4}(\sD_{\bar\eta}^2\bar U_t,\sD_{\bar\eta}^2\bar U_t)\big|_2+ \big|\chi^\sharp \rho_0^{\frac{3}{2}(\gamma-1)}(\bar U_t,D_{\bar\eta}\bar U_t,D_{\bar\eta}^2\bar U_t)\big|_2\big)\\
&\quad +C(T)\big|\chi^\sharp \rho_0^{(\gamma-1)(\frac{3}{2}-\varepsilon_0)}(\bar U,D_{\bar\eta}\bar U,D_{\bar\eta}^2\bar U)\big|_2\leq C(T)(1+\bar\cD(t,\bar U)^\frac{1}{2}),
\end{align*}
which implies that $r\rho_0^\frac{\delta}{2}\sD_{\bar\eta}U \sD_{\bar\eta}\bar U_t\in L^2([0,T];L^2)$ due to $\bar\cD(t,\bar U)\in L^1(0,T)$. The remaining calculation is straightforward.

Therefore, from Proposition \ref{prop-strong}, it follows that the weak solution $U_t$ of \eqref{33152} satisfies \eqref{weak-F-H}--\eqref{Energy-Id} with $(w,g,h)$ replaced by $(U_t,g^{(1)},h^{(1)})$ and
\begin{equation}\label{linear-D3-qiexiang}
\begin{aligned}
&U_t\in L^\infty([0,T];\cH^1_{r^2\rho_0^{\delta}}),\qquad U_{tt}\in L^2([0,T];L^2_{r^2\rho_0}),\\
&\sqrt{t}U_{tt}\in L^\infty([0,T];L^2_{r^2\rho_0})\cap L^2([0,T];\cH^{1}_{r^2\rho_0^{\delta}}),\quad \sqrt{t}(r^2\rho_0 U_{ttt})\in L^2([0,T];\cH^{-1}_{r^2\rho_0^{\delta}}).
\end{aligned}
\end{equation}
Moreover, we can deduce from \eqref{weak-F-H} and a similar argument in Lemma \ref{Lemma-point} that \eqref{33151} holds for \textit{a.e.} $(t,r)\in (0,T)\times (0,1)$, and for any $a\in (0,1)$ and \textit{a.e.} $t\in (0,T)$,
\begin{equation}\label{div-Uxx-H}
\begin{aligned}
&r\sD_{\bar\eta}^2U_t \in L^2(0,a),\qquad \Big(\bar\eta^2 \bar\varrho^{\delta}\big((2a_1+a_2)D_{\bar\eta} U_t + 2a_2 \frac{U_t}{\bar\eta}\big)\Big)_r\in L^2(a,1),\\
&U_t\in H^2_{\mathrm{loc}},\qquad U\in H^4_{\mathrm{loc}}, \qquad \rho_0^{\delta}\Big(D_{\bar\eta} U_t + \frac{2a_2}{2a_1+a_2} \frac{U_t}{\bar\eta}\Big)|_{r=1}=0.
\end{aligned}
\end{equation}

In conclusion, collecting \eqref{3..142}, \eqref{33153}--\eqref{equ33.38}, and \eqref{linear-D3-qiexiang}, we arrive at all the tangential estimates for $U$:
\begin{equation}\label{TETE}
\begin{aligned}
&U\in C([0,T];\cH^1_{r^2\rho_0^{\delta}}),\qquad U_t\in C([0,T];L^2_{r^2\rho_0})\cap L^\infty([0,T];\cH^1_{r^2\rho_0^{\delta}}),\\
&U_{tt}\in L^2([0,T];L^2_{r^2\rho_0}),\qquad
\sqrt{t}U_{tt}\in L^\infty([0,T];L^2_{r^2\rho_0})\cap L^2([0,T];\cH^{1}_{r^2\rho_0^{\delta}}),\\
&r^2\rho_0  U_{tt}\in L^2([0,T];\cH^{-1}_{r^2\rho_0^{\delta}}),\qquad \sqrt{t}(r^2\rho_0 U_{ttt})\in L^2([0,T];\cH^{-1}_{r^2\rho_0^{\delta}}).
\end{aligned}
\end{equation}

\smallskip
\textbf{Step 6. Total energy and dissipation estimates for $U$.} With the help of \eqref{TETE}, we now can  show that
\begin{equation}\label{bbbb}
\begin{aligned}
\bar\cE_{\mathrm{in}}(t,U)+t\bar\cD_{\mathrm{in}}(t,U)\in L^\infty(0,T),\qquad \bar\cD_{\mathrm{in}}(t,U)\in L^1(0,T),\\
\bar\cE_{\mathrm{ex}}(t,U)+t\bar\cD_{\mathrm{ex}}(t,U)\in L^\infty(0,T),\qquad \bar\cD_{\mathrm{ex}}(t,U)\in L^1(0,T).
\end{aligned}
\end{equation}
In fact, the regularity properties $U\in H^4_{\mathrm{loc}}$ and $U_t\in H^2_{\mathrm{loc}}$ obtained from \eqref{div-Uxx-H} justify applying $\sD_{\bar\eta}$, $\sD_{\bar\eta}^2$, or $\partial_t$ directly to \eqref{new-lp}. One can then rigorously bootstrap this argument to gain higher spatial regularity for $U$. We omit the somewhat tedious calculation here, and refer the reader to Lemmas \ref{c_1-c_2}--\ref{c_3} in \S \ref{subsub-11.2.2}, where the method carries over in a similar manner.

For simplicity, we only emphasize the major difference here  compared with the calculations in Lemmas \ref{c_1-c_2}--\ref{c_3}, namely, show how to establish
\begin{equation}\label{eequ3.66}
\chi^\sharp\rho_0^{(\gamma-1)(\frac{3}{2}-\varepsilon_0)}D_{\bar\eta}^4 U \in L^2([0,T];L^2).
\end{equation}
Indeed, let $\eqref{bbbb}_1$ hold and $U$ satisfy the following exterior estimates:
\begin{equation}\label{lowlow}
\bar\cE_{\mathrm{ex}}(t,U)\in L^\infty(0,T),\qquad (\rho_0^\frac{1}{2}U_{tt},\rho_0^{(\gamma-1)(\frac{3}{2}-\varepsilon_0)}D_{\bar\eta}^2U_t)\in L^2([0,T];L^2).
\end{equation}
First, we can apply $\bar\varrho^{-(\gamma-1)}\bar\eta_rD_{\bar\eta}^2$ to $\eqref{new-lp}\times \bar\varrho^{\gamma-1}$ and obtain
\begin{equation}\label{xxxx-l}
\bar\cT_{\mathrm{cross}}:=(D_{\bar\eta}^3U)_r+\big(\frac{\delta}{\gamma-1}+2\big)\frac{(\rho_0^{\gamma-1})_r}{\rho_0^{\gamma-1}}D_{\bar\eta}^3U =\sum_{i=15}^{18}J_{i},
\end{equation}
where $J_i$ ($i=15,16,17,18$) is defined in \eqref{rr1-rr3} of \S\ref{subsub-11.2.2}. As can be checked, $\chi^\sharp\rho_0^{(\gamma-1)(\frac{3}{2}-\varepsilon_0)}J_i\in L^2([0,T];L^2)$ for $i=15,16,17,18$ (see \eqref{xxxx-l'}--\eqref{QQQ} in the proof of Lemma \ref{c_3} for details), and hence $\chi^\sharp\rho_0^{(\gamma-1)(\frac{3}{2}-\varepsilon_0)}\bar\cT_{\mathrm{cross}} \in L^2([0,T];L^2)$. Consequently, once Lemma \ref{prop2.1} in Appendix \ref{subsection2.2} can be utilized, we obtain the desired conclusion \eqref{eequ3.66} immediately. 

However, Lemma \ref{prop2.1} can not be applied directly, since we need one \textit{a priori} assumption:
\begin{equation*}
\chi^\sharp \rho_0^{\frac{3(\gamma-1)+\delta}{2}} (D_{\bar\eta}^3 U)_r \in L^2([0,T];L^2).  
\end{equation*}
By \eqref{xxxx-l} and the $L^2([0,T];L^2)$-estimates for $\chi^\sharp \rho_0^{(\gamma-1)(\frac{3}{2}-\varepsilon_0)}J_i$ ($15\leq i\leq 18$), it suffices to prove
\begin{equation}\label{refine-3}
\chi^\sharp \rho_0^{\frac{\gamma+\delta-1}{2}} D_{\bar\eta}^3 U \in L^2([0,T];L^2). 
\end{equation}

To achieve this, we claim
\begin{equation}\label{refine-Ux}
\chi^\sharp\rho_0^{\frac{\delta}{2}-(\gamma-1)}\Big(D_{\bar\eta}U+\frac{2a_2}{2a_1+a_2}\frac{U}{\bar\eta}\Big)\in L^\infty([0,T];L^2).
\end{equation}
Indeed, using an argument similar to \eqref{L2Linfty}--\eqref{3...136}, we can obtain from Lemma \ref{hardy-inequality} that 
\begin{equation}\label{cal-2}
\begin{aligned}
\Big|\chi^\sharp\rho_0^{\frac{\delta}{2}-(\gamma-1)}\Big(D_{\bar\eta}U+&\frac{2a_2}{2a_1+a_2}\frac{U}{\bar\eta}\Big)\Big|_2\leq C\big(1+\big|\chi^\sharp\big(\rho_0^{\frac{\delta}{2}-\varepsilon_0}|(U,D_{\bar\eta} U)|,\rho_0^{1-\frac{\delta}{2}-\varepsilon_0}U_t\big)\big|_2\big)\\
&\quad\qquad\leq C\big(1+\big|\chi^\sharp\rho_0^{\frac{\delta}{2}-\varepsilon_0+2(\gamma-1)}(U,D_{\bar\eta} U,D_{\bar\eta}^2 U,D_{\bar\eta}^3 U)\big|_2\big)\\
&\quad\qquad\quad+C\big|\chi^\sharp\rho_0^{\frac{1}{2}-\varepsilon_0+\gamma-1}(U_t,D_{\bar\eta} U_t)\big|_2 \leq C(1+\bar\cE_{\mathrm{ex}}(t,U)^\frac{1}{2}),
\end{aligned}
\end{equation}
which, along with \eqref{lowlow}, leads to \eqref{refine-Ux}.

We continue to show \eqref{refine-3}. Multiplying \eqref{new-lp} by $\chi^\sharp\rho_0^{\frac{\delta}{2}}$ and applying the $L^2$-norm to the resulting equality, we can deduce from $\rho_0^{\gamma-1}\sim 1-r$, \eqref{jacobi}, \eqref{lowlow}, \eqref{refine-Ux}, and the regularity of $\bar\eta$ in \eqref{given-bareta} that
\begin{equation*} 
\begin{aligned}
\big|\chi^\sharp\rho_0^{\frac{\delta}{2}}D_{\bar\eta}^2 U\big|_2&\leq C\big|\chi^\sharp\rho_0^{\frac{\delta}{2}}(\rho_0^{1-\delta}U_t,U,D_{\bar\eta}U)\big|_2+C|\bar\varrho^{1-\frac{\delta}{2}}|_\infty\Big|\chi^\sharp \frac{1}{\bar\eta^2}\int_0^r\hat r^2\rho_0\,\mathrm{d}\hat{r}\Big|_\infty\\
&\quad+C\Big(\Big|\chi^\sharp\rho_0^{\frac{\delta}{2}-(\gamma-1)} \Big(D_{\bar\eta}U+\frac{2a_2}{2a_1+a_2}\frac{U}{\bar\eta}\Big)\Big|_2+ |\bar\varrho|_\infty^{1-\delta}\Big) |\chi^\sharp D_{\bar\eta}(\bar\varrho^{\gamma-1})|_\infty\leq C(T),
\end{aligned}
\end{equation*}
that is, $\chi^\sharp\rho_0^{\frac{\delta}{2}}D_{\bar\eta}^2 U\in L^\infty([0,T];L^2)$.  Similarly, based on this, \eqref{L2Linfty}, and \eqref{refine-Ux}, applying $\chi^\sharp\rho_0^{\frac{\delta}{2}+\gamma-1}D_{\bar\eta}$ to \eqref{new-lp}, we can further obtain
\begin{equation}\label{refine-2}
\chi^\sharp\rho_0^{\frac{\delta}{2}+\gamma-1} D_{\bar\eta}^3 U\in L^\infty([0,T];L^2),
\end{equation}
where the following inequality is also used:
\begin{equation}\label{jacobiD2L}
\big|\chi^\sharp D_{\bar\eta}^2(\bar\varrho^{\gamma-1})\big|_\infty\leq C(T).
\end{equation}
Next, multiplying \eqref{new-lp} by $\bar\varrho^{\gamma-1}$ and then applying $\bar\varrho^{-(\gamma-1)}\bar\eta_rD_{\bar\eta}$ to the resulting equality, we arrive at the following type of equation:
\begin{equation*} 
\widetilde{\cT}_{\mathrm{cross}}:=(D_{\bar\eta}^2 U)_r+ \big(\frac{\delta}{\gamma-1}+1\big)\frac{(\rho_0^{\gamma-1})_r}{\rho_0^{\gamma-1}} D^2_{\bar\eta} U=L_{11},
\end{equation*}
where $L_{11}$ has control of the following form in view of \eqref{jibenjiashe}:
\begin{equation*}
\begin{aligned}
|\chi^\sharp L_{11}|&\leq C\chi^\sharp\Big(|\bar\varrho^{1-\delta}| \Big(|D_{\bar\eta}U_t|+\frac{(\bar\varrho^{\gamma-1})_r}{\bar\varrho^{\gamma-1}}U_t\Big)+\Big|\big(1,\frac{(\bar\varrho^{\gamma-1})_r}{\bar\varrho^{\gamma-1}}\big)\Big| \big|(U,D_{\bar\eta}U)\big|+|D_{\bar\eta}^2 U|\Big)\\
&\quad +C\chi^\sharp\Big|\frac{D_{\bar\eta}^2(\bar\varrho^{\gamma-1})}{\bar\varrho^{\gamma-1}} \Big|\Big|D_{\bar\eta}U+\frac{2a_2}{2a_1+a_2}\frac{U}{\bar\eta}\Big|+C\big|\big(\frac{r^2}{\bar\eta^2\bar\eta_r}\big)_r\big||D_{\bar\eta}^2U|\\
&\quad+C\chi^\sharp|\bar\varrho^{1-\delta}|\big(|D_{\bar\eta}^2\bar\varrho^{\gamma-1}|+\frac{1}{\bar\varrho^{\gamma-1}}|((D_{\bar\eta}\bar\varrho^{\gamma-1})^2,1)|+1\big).
\end{aligned}
\end{equation*}
After a direct calculation by using $\rho_0^{\gamma-1}\sim 1-r$, \eqref{L2Linfty}, \eqref{jacobi}, \eqref{lowlow}, \eqref{refine-3}, \eqref{jacobiD2L}, Lemma \ref{hardy-inequality}, and the regularity of $\bar\eta$ in \eqref{given-bareta}, we can check that
\begin{equation*}
\chi^\sharp\rho_0^\frac{\delta+\gamma-1}{2}L_{11}\in L^2([0,T];L^2)\implies \zeta^\sharp\rho_0^\frac{\delta+\gamma-1}{2}\widetilde{\cT}_{\mathrm{cross}}\in L^2([0,T];L^2),
\end{equation*}
which, along with \eqref{refine-2} and the fact that $ \chi^\sharp\rho_0^{\frac{\delta+\gamma-1}{2}}D_{\bar\eta}^2 U \in L^\infty([0,T];L^2)$ and Lemma \ref{prop2.1}, leads to $\zeta^\sharp\rho_0^\frac{\delta+\gamma-1}{2}D_{\bar\eta}^3U\in L^2([0,T];L^2)$, so that
\begin{equation*}
\begin{aligned}
\big|\chi^\sharp\rho_0^\frac{\delta+\gamma-1}{2}D_{\bar\eta}^3U\big|_2&\leq \big|(\zeta-\chi)\rho_0^\frac{\delta+\gamma-1}{2}D_{\bar\eta}^3U\big|_2+\big|\zeta^\sharp\rho_0^\frac{\delta+\gamma-1}{2}D_{\bar\eta}^3U\big|_2\\
&\leq \bar\cE_{\mathrm{in}}(t,U)^\frac{1}{2}+\bar\cD_{\mathrm{in}}(t,U)^\frac{1}{2}+\big|\zeta^\sharp\rho_0^\frac{\delta+\gamma-1}{2}D_{\bar\eta}^3U\big|_2.
\end{aligned}
\end{equation*}
This, together with $\eqref{bbbb}_1$, yields the desired estimate \eqref{refine-3}, and thus yields $\eqref{eequ3.66}_1$.

\smallskip
\textbf{Step 7. Regularity of $U$ given in \eqref{regu-class}.} Now, we can show that
\begin{equation}\label{classical-1}
(U,\sD_r U)\in C([0,T];C(\bar I)),\qquad  (\sD_r^2 U,U_t)\in C((0,T];C(\bar I)).
\end{equation}

\smallskip
\textbf{7.1. Regularity of $U$ near the origin.} In this step, we aim to show that
\begin{equation}\label{classical-in}
(U,\sD_r U)\in C([0,T];C(\bar I_\flat)),\qquad  \sD_r^2 U\in C((0,T];C(\bar I_\flat)),
\end{equation}
where $I_\flat:=[0,\frac{1}{2})$, and the regularity of $U_t$ can be derived similarly.

First, define the 3-D representative of $U$ as $\boldsymbol{U}(t,\boldsymbol{y})=U(t,r)\frac{\boldsymbol{y}}{r}$. Then using $\eqref{bbbb}_1$, Lemma \ref{lemma-initial}, and Lemma \ref{lemma-gaowei} in Appendix \ref{AppB}, we have
\begin{equation*}
\zeta\boldsymbol{U}\in L^\infty([0,T];H^3_0(\Omega))\cap L^2([0,T];H^4_0 (\Omega)),\qquad
\zeta\boldsymbol{U}_{t} \in L^2([0,T];H^2_0 (\Omega)),
\end{equation*}
where $\zeta=\zeta(\boldsymbol{y})=\zeta(r)$ defined on $\Omega$. This implies $\zeta\boldsymbol{U}\in C([0,T];H^3_0(\Omega))$ due to Lemma \ref{triple}. Hence, using Lemma \ref{lemma-initial} again, we obtain $r\sD_r^k U \in C([0,T];L^2(I_\flat))$ for $k=0,1,2,3$, which, along with Lemmas \ref{sobolev-embedding}--\ref{hardy-inequality}, 
implies
\begin{equation*}
(U, \sD_r U, \sD_r^2 U)\in C([0,T];L^2(I_\flat))\implies (U, \sD_r U)\in C([0,T];C(\bar I_\flat)).
\end{equation*}

Next, it follows similarly from $\eqref{bbbb}_1$ and Lemmas \ref{lemma-initial} and \ref{lemma-gaowei} that
\begin{equation*}
t\zeta\nabla_{\boldsymbol{y}}^2 \boldsymbol{U}\in L^\infty([0,T];H^2_0 (\Omega)),\qquad
(t\zeta\nabla_{\boldsymbol{y}}^2 \boldsymbol{U})_t\in L^2([0,T];L^2 (\Omega)),
\end{equation*}
which, along with Lemma \ref{triple}, leads to $t\zeta\nabla_{\boldsymbol{y}}^2 \boldsymbol{U}\in C([0,T];W^{1,4} (\Omega))$. This, together with Lemma \ref{lemma-initial}, implies
\begin{equation}\label{444}
r^\frac{1}{2}(\sD_r^2 U,\partial_r(\sD_r^2 U))\in C((0,T];L^4(I_\flat)).
\end{equation}
On the other hand, since, for any function $f=f(r)$, $|f|_1 \leq C|r^\frac{1}{2}f|_4$ in $I_\flat$ due to the H\"older inequality, we can derive from  \eqref{444} and Lemma \ref{sobolev-embedding} that
\begin{equation*} 
\sD_r^2 U\in C((0,T];W^{1,1}(I_\flat))\implies \sD_r^2 U\in C((0,T];C(\bar I_\flat)),
\end{equation*}
which completes the proof of \eqref{classical-in}.

\smallskip
\textbf{7.2. Regularity of $U$ away from the origin.} Define $I_\sharp:=(\frac{1}{2},1)$. To obtain \eqref{classical-1}, it still remains to show
\begin{equation}\label{classical-ex}
U\in C([0,T];C^1(\bar I_\sharp)),\qquad (U_{rr},U_t)\in C((0,T];C(\bar I_\sharp)).
\end{equation}
 
First, it follows from \eqref{TETE} that
\begin{equation}\label{tete}
(\rho_0^\frac{1}{2}U,\rho_0^\frac{\delta}{2}U_r,\rho_0^\frac{1}{2}U_t)\in C([0,T];L^2(I_\sharp)).
\end{equation}

Next, we obtain from \eqref{bbbb}, $\frac{1}{2}-\varepsilon_0>-\frac{1}{2}$, and Lemmas \ref{hardy-inequality} and \ref{lemma-gaowei} that $\rho_0^{(\gamma-1)(\frac{3}{2}-\varepsilon_0)}(U_{rr},U_{rrr})\in H^1(I_\sharp)$ for \textit{a.e.} $t\in (0,T)$. Hence, $\rho_0^{(\gamma-1)(\frac{3}{2}-\varepsilon_0)}(U_{rr},U_{rrr}) \in C(\bar I_\sharp)$ due to Lemma \ref{sobolev-embedding}, which, along with \eqref{bbbb} again, leads to
\begin{equation*}
\begin{aligned}
\zeta_\frac{1}{3}^\sharp\rho_0^\frac{(\gamma-1)(3-\varepsilon_0)}{2} (U_{rr},U_{rrr})& \in L^\infty([0,T];L^2)\cap L^2([0,T];H^1_0),\\
\zeta_\frac{1}{3}^\sharp\rho_0^\frac{(\gamma-1)(3-\varepsilon_0)}{2}(U_{trr},U_{trrr})& \in L^2([0,T];H^{-1}).
\end{aligned}
\end{equation*}
Then it follows from the above and Lemma \ref{triple} that
\begin{equation}\label{tete2}
\rho_0^\frac{(\gamma-1)(3-\varepsilon_0)}{2} (U_{rr},U_{rrr}) \in C([0,T];L^2(I_\sharp)).
\end{equation}
\eqref{tete2}, combined with \eqref{tete} and Lemma \ref{hardy-inequality}, yields $U\in C([0,T];W^{2,1}(I_\sharp))$, which, along with Lemma \ref{sobolev-embedding}, leads to $\eqref{classical-ex}_1$.

It remains to prove $\eqref{classical-ex}_2$. To this end,  we first obtain from \eqref{TETE} that
\begin{equation*}
tU_{tt}\in L^\infty([0,T];L^2_{r^2\rho_0})\cap L^2([0,T];\cH_{r^2\rho_0^{\delta}}^{1}),\qquad  r^2\rho_0 (tU_{tt})_t  \in L^2([0,T];\cH_{r^2\rho_0^{\delta}}^{-1}),
\end{equation*}
This, together with \eqref{bbbb} and Lemmas \ref{sobolev-embedding} and \ref{Aubin}, gives
\begin{equation}\label{tete4}
t(U_{tr},U_{tt})\in C([0,T];L^2_{r^2\rho_0})\implies  \rho_0^\frac{1}{2}(U_{tr},U_{tt})\in C((0,T];L^2(I_\sharp)).
\end{equation}

Now, following an argument similar to the proof of $\sqrt{t}\rho_0^{(\gamma-1)(\frac{3}{2}-\varepsilon_0)}D_{\bar\eta}^2U_t \in L^\infty([0,T];L^2(I_\sharp))$ 
(this proof is omitted in Step 6 above; see Steps 2.1 and 3 in the proof of Lemma \ref{c_3} below for details),
with the help of time-continuities \eqref{tete}--\eqref{tete4} and the regularities of $(\bar U,\bar\eta)$ in \eqref{given}--\eqref{given-bareta}, we also have $\rho_0^\frac{(\gamma-1)(3-\varepsilon_0)}{2} D_{\bar\eta}^2U_t \in C((0,T];L^2(I_\sharp))$. Then from the chain rules, we obtain 
\begin{equation}\label{tete5}
\rho_0^\frac{(\gamma-1)(3-\varepsilon_0)}{2} U_{trr} \in C((0,T];L^2(I_\sharp)).
\end{equation}

Finally, recalling \eqref{xxxx-l}, we similarly define
\begin{equation*}
\bar\cT_{\mathrm{cross}}^*=\bar\cT_{\mathrm{cross}}^*(t,r):=U_{rrrr}+(\frac{\delta}{\gamma-1}+2)\frac{(\rho_0^{\gamma-1})_r}{\rho_0^{\gamma-1}}U_{rrr}.
\end{equation*}
Clearly, based on \eqref{tete2}--\eqref{tete5}, the chain rules and the regularity of $\bar\eta$ in \eqref{given-bareta}, we can follow a similar argument in Steps 2.2 and 3 of Lemma \ref{c_3} (this argument is actually contained in Step 6 above, however, we omit it)  and obtain $\rho_0^\frac{(\gamma-1)(3-\varepsilon_0)}{2} \bar\cT^*_{\mathrm{cross}}\in C((0,T];L^2(I_\sharp))$. Then, by Lemma \ref{prop2.1}, for any $0<t,t_0\leq T$,
\begin{equation*}
\begin{aligned}
&\big|\rho_0^\frac{(\gamma-1)(3-\varepsilon_0)}{2} (U_{rrrr}(t)-U_{rrrr}(t_0))\big|_2\\
&\leq C(T)\big|\rho_0^\frac{(\gamma-1)(3-\varepsilon_0)}{2} \big(\bar\cT_{\mathrm{cross}}(t)-\bar\cT_{\mathrm{cross}}(t_0),U_{rrr}(t)-U_{rrr}(t_0)\big)\big|_2.
\end{aligned}
\end{equation*}
Passing to the limit $t\to t_0$, together with \eqref{tete2}, yields
\begin{equation}\label{tete6}
\rho_0^\frac{(\gamma-1)(3-\varepsilon_0)}{2} U_{rrrr} \in C((0,T];L^2(I_\sharp)).    
\end{equation}

Thus,  it follows from from \eqref{tete2}--\eqref{tete6} and Lemma \ref{hardy-inequality} that $(U_{rr},U_t)\in C((0,T];W^{1,1}(I_\sharp))$, which, along with Lemma \ref{sobolev-embedding}, yields $\eqref{classical-ex}_2$. This completes the proof of \eqref{classical-ex}, and hence the proof of \eqref{classical-1}.

\smallskip
\textbf{Step 8. Derivation of the boundary condition.} The boundary condition of $U$ in \eqref{N111} can be proved by basically follow the argument used in  Remark \ref{rmk1.4} of  \S \ref{section1}. First, the boundary condition $U|_{r=0}=0$ follows directly from $\frac{U}{r}\in C([0,T];C(\bar I))$. Next, since $\eqref{lp}_1$ holds pointwise in $(0,T]\times (0,1)$, we can divide $\eqref{lp}_1$ by $\bar\eta^2\bar\eta_r$ to obtain
\begin{equation}\label{lp-pre}
\bar\varrho U_t +A D_{\bar\eta}(\bar\varrho^{\gamma})+ 4\pi G\frac{\bar\varrho}{\bar\eta^2}\int_0^r\hat r^2\rho_0\,\mathrm{d}\hat{r} =(2a_1+a_2) D_{\bar\eta}\Big(\bar\varrho^{\delta}\big(D_{\bar\eta} U+ \frac{2U}{\bar\eta}\big)\Big) - 4a_1 D_{\bar\eta}(\bar\varrho^{\delta}) \frac{U}{\bar\eta},
\end{equation}
where $\bar\varrho=\frac{r^2\rho_0}{\bar\eta^2\bar\eta_r}$. Then multiplying the above by $\bar\varrho^{\gamma-\delta-1}$ gives
\begin{equation}\label{lp-pre;}
\begin{aligned}
&\frac{\delta}{\gamma-1} D_{\bar\eta}(\bar\varrho^{\gamma-1})\Big((2a_1+a_2) D_{\bar\eta} U+2a_2\frac{U}{\bar\eta}\Big)\\
&=\bar\varrho^{\gamma-\delta}U_t+\frac{A\gamma}{\gamma-1}\bar\varrho^{\gamma-\delta}D_{\bar\eta}(\bar\varrho^{\gamma-1})+4\pi G\frac{\bar\varrho^{\gamma-\delta}}{\bar\eta^2}\int_0^r \hat r^2\rho_0\,\mathrm{d}\hat{r}\\
&\quad -(2a_1+a_2)\bar\varrho^{\gamma-1}\Big(D_{\bar\eta}^2U+2D_{\bar\eta}\big(\frac{U}{\bar\eta}\big)\Big).
\end{aligned}
\end{equation}

Due to the facts that $1-r\sim\rho_0^{\gamma-1}\in C^1(\bar I)$ and
\begin{equation}\label{the facts}
(\sD_r U,\sD_r^2 U,\sD_r \bar\eta,\sD_r^2 \bar\eta)\in C((0,T];C(\bar I)),\qquad(\eta_r,\frac{\eta}{r})\in \big[\frac{1}{2},\frac{3}{2}\big],
\end{equation}
we see that the right-hand side of \eqref{lp-pre;} belongs to  $C((0,T]\times \bar I)$ and vanishes at the boundary $\{r=1\}$. Consequently, taking the limit as $r\to 1$ in \eqref{lp-pre;}, we obtain from $(\rho_0^{\gamma-1})_r|_{r=1}\neq 0$ that
\begin{equation}\label{nn-lov}
D_{\bar\eta}(\bar\rho^{\gamma-1})\Big(D_{\bar\eta} U+\frac{2a_2}{2a_1+a_2}\frac{U}{\bar\eta}\Big)\Big|_{r=1} =0 \implies \Big(D_{\bar\eta} U+\frac{2a_2}{2a_1+a_2}\frac{U}{\bar\eta}\Big)\Big|_{r=1} =0.
\end{equation}

\smallskip
\textbf{Step 9. Equation $\eqref{lp}_1$ holds pointwise in $(0, T]\times \bar I$.} By Definition \ref{fed-cl}, it remains to show that $U(t,r)$ satisfies equation $\eqref{lp}_1$ pointwise in $(0, T]\times \bar I$. However, to make the method in this step applicable to the well-posedness of the nonlinear problem \eqref{eq:VFBP-La-eta}, we consider a more general case here. More precisely, we show that $\eqref{lp}_1\times (\bar\eta^2\bar\eta_r)^{-1}$, that is, \eqref{lp-pre} holds pointwise in $(0, T]\times \bar I$. Furthermore, based on the structure of \eqref{lp-pre}, it suffices to show that the ``most singular'' term
\begin{equation*} 
\mathbb{S}:=D_{\bar\eta}(\bar\varrho^{\delta})\underline{\Big(D_{\bar\eta} U+\frac{2a_2}{2a_1+a_2}\frac{U}{\bar\eta}\Big)}_{:=\SS_*}\qquad\text{holds pointwise in $(0,T]\times \bar I$}.
\end{equation*}

A direct calculation gives 
\begin{equation}\label{most-singular1'}
\begin{aligned}
\SS&=-\delta \Big(\frac{r^2}{\bar\eta^2\bar\eta_r}\Big)^{\delta-1}\Big(\frac{2r^3}{\bar\eta^3\bar\eta_r^2}\big(\frac{\bar\eta}{r}\big)_r +\frac{r^2\bar\eta_{rr}}{\bar\eta^2\bar\eta_r^3}\Big)\rho_0^{\delta} \SS_*+\frac{\delta}{\gamma-1} \Big(\frac{r^2}{\bar\eta^2\bar\eta_r}\Big)^\delta\frac{(\rho_0^{\gamma-1})_r}{\bar\eta_r}\rho_0^{\delta-(\gamma-1)} \SS_* .
\end{aligned}
\end{equation}
From \eqref{the facts}, it follows that the first term in \eqref{most-singular1'} belong to  $C((0,T]\times \bar I)$, hence holding pointwise in $(0,T]\times \bar I$.

To prove that the second term in \eqref{most-singular1'} holds pointwise in $(0,T]\times \bar I$, we only need to check
\begin{equation}\label{claim-ss'}
\rho_0^{\delta-\gamma+1}\SS_*|_{r=1}<\infty \qquad\text{on }(0,T]. 
\end{equation}
Indeed, due to \eqref{nn-lov} and $(\sD_r^k U,\sD_r^k \bar\eta) \in C((0,T];C(\bar I))$ ($k=0,1,2$), we have
\begin{equation*} 
|\SS_*|=\Big|\int_r^1 (\SS_*)_r\,\mathrm{d}\tilde{r}\Big| \leq C(T)(1-r) \qquad\text{for any $(t,r)\in (0,T]\times\bar I$}.
\end{equation*}
Hence, we obtain that, $|\rho_0^{\delta-\gamma+1}\SS_* |\leq C(T) \rho_0^{\delta}$ for all $(t,r)\in (0,T]\times I$, which gives \eqref{claim-ss'}.

This completes the proof of Lemma \ref{existence-linearize}. 

\section{Uniform Estimates to the Linearized Problem}\label{subsection9.2}

In this section,  based on  Lemma \ref{existence-linearize}, we will  establish the uniform estimates 
for the classical solution $U$ of problem \eqref{lp}.
In \S\ref{subsection9.2}--\S\ref{subsection9.3}, $C\in (1,\infty)$ denotes a generic constant depending only on $(a_1,a_2,A,\delta,\gamma,G)$, and $C(l_1,\cdots\!,l_k)\in (1,\infty)$ a generic  constant depending on $C$
and the additional  parameters $(l_1,\cdots\!,l_k)$, which may be different at each occurrence. Moreover, for simplicity, we denote $\mathfrak{p}(\cdot)$ a generic polynomial function, taking the form: 
\begin{equation*}
\mathfrak{p}(s)=\sum_{j=1}^k s^j \qquad\text{with some $k\in \NN^*$}.
\end{equation*}

 Let $(\rho_0,u_0)$ be given initial data satisfying the hypothesis of Lemma \ref{existence-linearize}.
Assume that 
there exists a constant $c_0>1$ such that 
\begin{equation}\label{38}
1+K_1+K_2+\|\rho_0^{\gamma-1}\|_{H^3(\Omega)}+\cE(0,U)\leq c_0.
\end{equation}
Then fix $T>0$, and assume that there exist some constants $T^*\in (0,T]$ and  $c_1$ such that $1<c_0\leq c_1$ and, for all $t\in [0,T^*]$, 
\begin{equation}\label{39}
\bar\cE(t,\bar U)+t\bar\cD(t,\bar U)+\int_0^{t}\bar\cD(s,\bar U)\,\mathrm{d}s\leq c_1,\qquad
|\bar\eta_r(t)-1|_\infty+\Big|\frac{\bar\eta(t)}{r}-1\Big|_\infty\leq \frac{1}{2}.
\end{equation}
Here, $(c_1,T^*)$ will be determined later, which depend only on $(c_0,\varepsilon_0,a_1,a_2,A,\delta,\gamma,G,T)$.

\subsection{Preliminary}\label{Remark-useful-bounds}

First, we have the following estimates associated with $\bar U$.
\begin{Lemma}\label{lemma-useful1}
For any $t\in[0,T^*]$,
\begin{equation*}
\big|(\bar U,\sD_{\bar\eta} \bar U)\big|_\infty
\leq C\mathfrak{p}(c_1),\qquad
\big|(\sD_{\bar\eta}^2 \bar U,\bar U_t)\big|_\infty
\leq C(\mathfrak{p}(c_1)+\mathfrak{p}(c_0)\bar\cD(t,\bar U)^\frac{1}{2}),
\end{equation*}
and, for any $a\in (0,1)$,
\begin{equation*}
\begin{aligned}
&\big|\chi_a r^\frac{1}{2}\sD_{\bar\eta} \bar U\big|_{4}\leq  C(a)\mathfrak{p}(c_1),\qquad
\big|\chi_a r^\frac{1}{2}\sD_{\bar\eta} \bar U_t\big|_{4} \leq  C(a)(\mathfrak{p}(c_1)+\bar\cD(t,\bar U)^\frac{1}{2}),\\
&\big|\chi^\sharp_a   \rho_0^{\gamma-1} (D_{\bar\eta}^2\bar U,\bar U_t)\big|_{\infty} \leq  C(a) \mathfrak{p}(c_1),\qquad
\big|\chi_a^\sharp \rho_0^{\gamma-1} D_{\bar\eta}\bar U_{t}\big|_{\infty} \leq  C(a)(\mathfrak{p}(c_1)+\mathfrak{p}(c_0)\bar\cD(t,\bar U)^\frac{1}{2}).
\end{aligned}
\end{equation*}
\end{Lemma}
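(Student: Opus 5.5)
The plan is to obtain every bound by splitting $I$ into an interior part near the origin and an exterior part near the vacuum boundary. On each part we apply the embeddings of Appendix \ref{appendix A}, namely the Sobolev embedding Lemma \ref{sobolev-embedding}, the Hardy inequality Lemma \ref{hardy-inequality}, and the 3-D equivalence Lemma \ref{lemma-initial}, to the quantities controlled by \eqref{39}. Throughout, \eqref{39}$_2$ lets us pass freely between $\sD_{\bar\eta}$ and $\sD_r$ up to factors depending on lower-order derivatives of $\bar\eta$. Those lower-order $\bar\eta$ factors are in turn bounded by $C\mathfrak{p}(c_1)$, using $\bar\eta=r+\int_0^t\bar U\,\ds$ and $T^*\le T$.

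\emph{Interior region $r\in[0,\frac58]$.} By Lemma \ref{lemma-initial}, $\zeta\bar{\boldsymbol U}$ is controlled in $H^3(\Omega)$ by $\bar\cE_{\mathrm{in}}^{1/2}\le c_1^{1/2}$. The 3-D embedding $H^3\hookrightarrow W^{1,\infty}$ then gives the bound for $|(\bar U,\sD_{\bar\eta}\bar U)|_\infty$ there. For $\sD^2_{\bar\eta}\bar U$ and $\bar U_t$ we use $\zeta\bar{\boldsymbol U}\in H^4$ and $\zeta\bar{\boldsymbol U}_t\in H^2$, both controlled by $\bar\cD_{\mathrm{in}}^{1/2}$; combined with $H^2\hookrightarrow L^\infty$ in 3-D, this produces the $\bar\cD^{1/2}$ term. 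The $L^4$ bounds with weight $r^{1/2}$ follow from $H^1(\Omega)\hookrightarrow L^4(\Omega)$, since $|\chi_a r^{1/2}f|_4\sim\|f\|_{L^4(B_a)}$ up to a constant, and the corresponding $H^1$ norms ($\nabla\boldsymbol U\in H^2$, $\nabla\boldsymbol U_t\in H^1$) are bounded by $\bar\cE$ or $\bar\cD$. For $a>\frac58$, the part of $(0,a)$ lying away from the origin is handled by the exterior argument below, which is where the dependence $C(a)$ enters.

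\emph{Exterior region $r\in[\frac12,1)$.} Set $\alpha=(\frac32-\varepsilon_0)(\gamma-1)$. Repeated application of the Hardy inequality lowers the weight $\rho_0^{\alpha}\sim(1-r)^{\alpha/(\gamma-1)}$ by one power of $(1-r)$ for each derivative traded, as in the embedding of Remark \ref{remark-energy function}, giving $H^4_{\rho_0^{2\alpha}}\hookrightarrow W^{3,1}\hookrightarrow C^2$. For $\bar U$ and $D_{\bar\eta}\bar U$, the norms $\rho_0^{1/2}\bar U$, $\rho_0^{\delta/2}\bar U_r$ and $\rho_0^{\alpha}(\bar U_{rr},\bar U_{rrr})$, all bounded by $\bar\cE_{\mathrm{ex}}$, yield $\bar U\in W^{2,1}$ via Hardy. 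The key requirement is that $\alpha-2(\gamma-1)<\frac12(\gamma-1)$ in the correct sense, which is guaranteed by the choice of $\varepsilon_0$ in \eqref{varepsilon0}. Sobolev then gives the $C^1$ bound. For $D_{\bar\eta}^2\bar U$ we add $\rho_0^\alpha D^4_{\bar\eta}\bar U$ from $\bar\cD_{\mathrm{ex}}$, which yields the term $\mathfrak{p}(c_0)\bar\cD^{1/2}$; the factor $c_0$ arises from the $\rho_0$-dependent constants through \eqref{38}.

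The bound for $\bar U_t$ would come from $\rho_0^{1/2}\bar U_{tt}$, $\rho_0^{\delta/2}\bar U_{tr}$ and $\rho_0^{\alpha}D^2_{\bar\eta}\bar U_t$, once these are bounded. Since only $\bar\cE$ is available without time weights, the weighted bounds $|\chi_a^\sharp\rho_0^{\gamma-1}(D^2_{\bar\eta}\bar U,\bar U_t)|_\infty\le C(a)\mathfrak{p}(c_1)$ should be derived as follows. Write $\rho_0^{\gamma-1}f(r)=-\int_r^1(\rho_0^{\gamma-1}f)_r$. Cauchy--Schwarz is applied with the weight $\rho_0^{\alpha}$, and a single Hardy step is used, because $\rho_0^{\gamma-1}$ absorbs one power of $(1-r)$; this leaves only $\bar\cE_{\mathrm{ex}}$ quantities, namely $\rho_0^\alpha(D^2,D^3)\bar U$ and $\rho_0^{1/2}\bar U_t$, $\rho_0^{\delta/2}D\bar U_t$. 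For $D_{\bar\eta}\bar U_t$ the same argument uses $\rho_0^\alpha D_{\bar\eta}^2\bar U_t$ from $\bar\cD$.

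The main obstacle is the exponent bookkeeping in these Hardy chains. Each step must close, which requires the weight exponent in units of $(1-r)$ to stay below the Hardy threshold. This holds precisely because $\gamma>\frac43$ and $\varepsilon_0<\frac{3-1/(\gamma-1)}{2}$ and $\varepsilon_0<\frac{1-\delta}{\gamma-1}$. The intermediate weights $\rho_0^{1/2}$ and $\rho_0^{\delta/2}$ must also be interpolated correctly. I would carry out this check once in general form and then apply it uniformly to every term.
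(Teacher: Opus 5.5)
Your proposal is correct in substance. Near the vacuum boundary it is the paper's argument: the one-dimensional embedding $|f|_\infty\le C|(f,f_r)|_1$ from Lemma \ref{sobolev-embedding}, combined with the $p=1$ case of Lemma \ref{hardy-inequality} for the weight $\rho_0^{(\frac32-\varepsilon_0)(\gamma-1)}\sim(1-r)^{\frac32-\varepsilon_0}$. For the $\rho_0^{\gamma-1}$-weighted sup bounds, your representation $\rho_0^{\gamma-1}f=-\int_r^1(\rho_0^{\gamma-1}f)_r$ followed by one Hardy step amounts to the $p=\infty$ case of Lemma \ref{hardy-inequality}.

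The genuine difference is near the origin. The paper stays one-dimensional there as well. It bounds $|\chi f|_1\le C|\chi r(f,f_r)|_2$ and $|\chi_a r^{\frac12}f|_4\le C(a)|\chi_a r^{\frac54}(f,f_r)|_2$ by Lemma \ref{hardy-inequality} with $d=r$, and uses that $D_{\bar\eta}(\sD^k_{\bar\eta}f)$ is a component of $\sD^{k+1}_{\bar\eta}f$. You instead lift to $\bar{\boldsymbol U}$ through Lemma \ref{lemma-initial} and use $H^2\hookrightarrow L^\infty$ and $H^1\hookrightarrow L^4$ in three dimensions. The $r$-weighted Hardy inequalities are the radial form of those embeddings, so both routes give the stated bounds. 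Yours makes clear why $\bar\cE_{\mathrm{in}}$ and $\bar\cD_{\mathrm{in}}$ control sup norms: they are $H^3$ and $H^4$ norms of $\bar{\boldsymbol U}$. The paper's route uses nothing about $\bar\eta$ beyond $\partial_r=\bar\eta_rD_{\bar\eta}$ and the second bound in \eqref{39}.

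To keep that economy in your version, two adjustments are needed.
\begin{itemize}
\item Apply the 3-D embedding on the Eulerian ball $\bar\eta(B_{1/2})$, not to $\zeta\bar{\boldsymbol U}$, since $D^k_{\bar\eta}\zeta$ for $k\ge 2$ involves $\bar\eta_{rr}$ and $\bar\eta_{rrr}$.
\item Do not convert to $\partial_r$-derivatives in the exterior. Bounding $\bar\eta_{rr}$ requires $\int_0^t|D^2_{\bar\eta}\bar U|_\infty\,\ds$, which is part of what you are proving. It also brings in $t$-dependent factors that the constant $C$ of the lemma may not contain. In the paper, $\bar\eta_{rr}$ first enters in Lemma \ref{lemma-Lambda-guji}, which is why that lemma is restricted to $t\le T_1$.
\end{itemize}

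Two small corrections.
\begin{itemize}
\item The sup bound for $\bar U_t$ rests on $\rho_0^{\frac12}\bar U_t$, $\rho_0^{\frac\delta2}D_{\bar\eta}\bar U_t$ and $\rho_0^{(\frac32-\varepsilon_0)(\gamma-1)}D^2_{\bar\eta}\bar U_t$; the term $\rho_0^{\frac12}\bar U_{tt}$ plays no role.
\item The exponent conditions actually used are $\varepsilon_0>0$, so that the $p=1$ Hardy step lands in unweighted $L^1$, and $(\frac32-\varepsilon_0)(\gamma-1)>\frac12>\frac\delta2$. The latter makes this weight dominated by $\rho_0^{\frac12}$ and $\rho_0^{\frac\delta2}$ from $\bar\cE_{\mathrm{ex}}$, and it is where $\gamma>\frac43$ enters. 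The condition $\varepsilon_0<\frac{1-\delta}{\gamma-1}$ is irrelevant for this lemma. The inequality $\alpha-2(\gamma-1)<\frac12(\gamma-1)$ that you single out holds automatically.
\end{itemize}
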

\begin{proof}
The proof can be directly derived by \eqref{39} and Lemmas \ref{sobolev-embedding}--\ref{hardy-inequality}. For example, it follows that, for all $t\in [0,T^*]$ and $a\in (0,1)$,
\begin{align*}
|\sD_{\bar\eta}^2 \bar U|_\infty&\leq C\big|(\sD_{\bar\eta}^2 \bar U,\sD_{\bar\eta}^3 \bar U)\big|_1 \leq  C \sum_{j=2}^4\Big(\big|\chi r \sD_{\bar\eta}^j\bar U\big|_{2} +\big|\chi^\sharp\rho_0^{(\gamma-1)(\frac{3}{2}-\varepsilon_0)}D_{\bar\eta}^j\bar U\big|_{2}\Big) \\
&\leq C(\mathfrak{p}(c_1)+\mathfrak{p}(c_0)\bar\cD(t,\bar U)^\frac{1}{2}),\\
\big|\chi_a r^\frac{1}{2}\sD_{\bar\eta}\bar U_{t}\big|_{4} &\leq C(a) \big|\chi_a r^\frac{5}{4}(\sD_{\bar\eta}\bar U_{t},\sD_{\bar\eta}^2\bar U_{t})\big|_{2} \leq C(a)\big(\mathfrak{p}(c_1)+\bar\cD(t,\bar U)^\frac{1}{2}\big),\\
\big|\chi^\sharp_a\rho_0^\frac{\gamma-1}{2} D_{\bar\eta}\bar U_{t}\big|_{2} &\leq  C(a) \big|\chi^\sharp \rho_0^\frac{3(\gamma-1)}{2}(D_{\bar\eta}\bar U_{t},D_{\bar\eta}^2\bar U_{t})\big|_{2} \leq C(a)\big(\mathfrak{p}(c_1)+\mathfrak{p}(c_0)\bar\cD(t,\bar U)^\frac{1}{2}\big).
\end{align*}

The rest of this lemma can be proved analogously, we omit the details here for brevity.
\end{proof}
 
Next, to further simplify the calculations, we define the quantities: 
\begin{equation}\label{bar-Lambda}
\bar\Lambda:=D_{\bar\eta} \bar\varrho^{\gamma-1}=D_{\bar\eta} (\rho_0^{\gamma-1} \bar{\mathscr{J}}^{1-\gamma}),\qquad \bar{\mathscr{J}}:=\frac{\bar\eta^2\bar\eta_r}{r^2},\qquad \overline{D_\eta\Phi}:=\frac{4\pi G}{\bar\eta^2}\int_0^r\hat r^2\rho_0\,\mathrm{d}\hat{r}.
\end{equation}
Then we obtain some useful estimates for $(\bar\Lambda,\bar{\mathscr{J}},\overline{D_\eta\Phi})$.

\begin{Lemma}\label{lemma-Lambda-guji}
For any $0\leq t\leq T_1=\min\{T^*, \mathfrak{p}(c_1)^{-1}\}$, $a\in (0,1)$, and $\sigma>0$,
\begin{equation*}
\begin{aligned}
\big|\big(\overline{D_\eta\Phi},\sD_{\bar\eta}\overline{D_\eta\Phi},\chi_a^\sharp (D_{\bar\eta}^2\overline{D_\eta\Phi})^2\big)\big|_\infty   \leq \mathfrak{p}(c_0), \qquad\big|\zeta_a r^\frac{1}{2}\sD_{\bar\eta}\bar\Lambda\big|_4+\big|\zeta_a r(\sD_{\bar\eta}\bar\Lambda,\sD_{\bar\eta}^2\bar\Lambda)\big|_2\leq C(a)\mathfrak{p}(c_0),\\
|\bar\Lambda|_\infty\leq C\mathfrak{p}(c_0),\qquad |\chi^\sharp_a D_{\bar\eta}\bar{\mathscr{J}}|_\infty
+|\chi^\sharp_a D_{\bar\eta}\bar\Lambda|_\infty+\big|\chi^\sharp_a \rho_0^{(\gamma-1)(\frac{1}{2}-\varepsilon_0)} D_{\bar\eta}^2\bar\Lambda\big|_2\leq C(a)\mathfrak{p}(c_0),\\
\big|\chi^\sharp_a\rho_0^{(\gamma-1)(-\frac{1}{2}+\sigma)}\bar\Lambda\big|_2\leq C(a,\sigma)\mathfrak{p}(c_0),\qquad|\zeta_a r^\frac{1}{2}\bar\Lambda_t|_4+|\chi_a^\sharp\Lambda_t|_\infty\leq C(a)\mathfrak{p}(c_1).
\end{aligned}
\end{equation*}
\end{Lemma}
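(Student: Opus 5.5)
The proof will be direct, starting from the explicit formulas in \eqref{bar-Lambda} and the a priori bounds \eqref{39}. First I control the Jacobian $\bar{\mathscr J}$ and its derivatives. Since $\bar\eta=r+\int_0^t\bar U\,\mathrm{d}s$, every spatial derivative of $\bar{\mathscr J}$ (equivalently of $\log\bar{\mathscr J}$) is a time integral of the corresponding $\sD_{\bar\eta}$-derivatives of $\bar U$. For example,
\begin{equation*}
(\log\bar{\mathscr J})_t=D_{\bar\eta}\bar U+\frac{2\bar U}{\bar\eta},\qquad
\big(D_{\bar\eta}\log\bar{\mathscr J}\big)_t=D_{\bar\eta}\Big(D_{\bar\eta}\bar U+\frac{2\bar U}{\bar\eta}\Big)-D_{\bar\eta}\bar U\, D_{\bar\eta}\log\bar{\mathscr J}.
\end{equation*}
The second-order analogues have the same structure. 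Integrating these identities in time and using Lemma \ref{lemma-useful1} gives bounds of the form $|\chi_a^\sharp D_{\bar\eta}\bar{\mathscr J}|_\infty\le C\int_0^t(\mathfrak p(c_1)+\mathfrak p(c_0)\bar\cD^{1/2})\,\mathrm{d}s$. The Cauchy--Schwarz inequality and $\int_0^t\bar\cD\le c_1$ bound the right-hand side by $t\,\mathfrak p(c_1)+\sqrt t\,\mathfrak p(c_1)$. This is at most $1$ once $t\le T_1=\min\{T^*,\mathfrak p(c_1)^{-1}\}$, after enlarging $\mathfrak p$. So all $\bar\eta$-dependent quantities stay within $O(1)$ of their initial values $r$, $1$, $0$. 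The interior versions near the origin ($\zeta_a r^{1/2}(\cdot)$ in $L^4$, $\zeta_a r(\cdot)$ in $L^2$) are handled the same way, using the $L^4$ bounds of Lemma \ref{lemma-useful1} and the $\sD$-structure together with Lemma \ref{lemma-initial}.

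Next I treat $\bar\Lambda=D_{\bar\eta}(\rho_0^{\gamma-1}\bar{\mathscr J}^{1-\gamma})$ and expand it by the Leibniz rule:
\begin{equation*}
\bar\Lambda=\bar{\mathscr J}^{1-\gamma}\frac{(\rho_0^{\gamma-1})_r}{\bar\eta_r}+(1-\gamma)\rho_0^{\gamma-1}\bar{\mathscr J}^{-\gamma}D_{\bar\eta}\bar{\mathscr J},
\end{equation*}
and similarly for $D_{\bar\eta}\bar\Lambda$ and $D^2_{\bar\eta}\bar\Lambda$. The terms involving $\rho_0$ alone are controlled by \eqref{distance-la}: $(\rho_0^{\gamma-1})_r\in L^\infty$ and $\rho_0^{\gamma-1}\in H^3$, which gives $(\rho_0^{\gamma-1})_{rr}\in L^\infty$ and $(\rho_0^{\gamma-1})_{rrr}\in L^2$. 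These yield $|\bar\Lambda|_\infty\le C\mathfrak p(c_0)$, $|\chi^\sharp_a D_{\bar\eta}\bar\Lambda|_\infty\le C(a)\mathfrak p(c_0)$, and the weighted $L^2$ bound on $D^2_{\bar\eta}\bar\Lambda$. In that last bound the weight $\rho_0^{(\gamma-1)(1/2-\varepsilon_0)}$ absorbs the product $\rho_0^{\gamma-1}D^3_{\bar\eta}\bar{\mathscr J}$. For that product I use $\rho_0^{(\gamma-1)(3/2-\varepsilon_0)}D^3_{\bar\eta}\bar U\in L^2$ integrated in time. The $\chi^\sharp_a\rho_0^{(\gamma-1)(-1/2+\sigma)}\bar\Lambda$ bound then follows from $|\bar\Lambda|_\infty$ and $\rho_0^{\gamma-1}\sim 1-r$, since $(1-r)^{-1+2\sigma}$ is integrable. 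For $\bar\Lambda_t$ I differentiate in time: $\bar{\mathscr J}_t=\bar{\mathscr J}(D_{\bar\eta}\bar U+2\bar U/\bar\eta)$ and $[\partial_t,D_{\bar\eta}]=-D_{\bar\eta}\bar U\,D_{\bar\eta}$. This gives an expression involving $\sD_{\bar\eta}\bar U$ and $\rho_0^{\gamma-1}\sD^2_{\bar\eta}\bar U$, bounded by $\mathfrak p(c_1)$ through Lemma \ref{lemma-useful1} (the $\chi_a^\sharp\rho_0^{\gamma-1}D^2_{\bar\eta}\bar U$ estimate there is exactly the one needed).

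For the potential term, $\overline{D_\eta\Phi}=4\pi G\bar\eta^{-2}\int_0^r\hat r^2\rho_0$. Near the origin $\int_0^r\hat r^2\rho_0\le Cr^3$, so with $\bar\eta\sim r$ the quantities $\overline{D_\eta\Phi}$ and $\sD_{\bar\eta}\overline{D_\eta\Phi}$ are bounded. Here $D_{\bar\eta}$ produces $r^2\rho_0/(\bar\eta^2\bar\eta_r)$ and a term $-2\overline{D_\eta\Phi}/\bar\eta$; both are bounded, and $\rho_0\in L^\infty$. Away from the origin the second derivative involves $(\rho_0)_r=\frac{1}{\gamma-1}\rho_0^{2-\gamma}(\rho_0^{\gamma-1})_r$, which may be singular when $\gamma>2$. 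Its square is still bounded, however: if $\rho_0^{2-\gamma}$ is singular it is at least $L^1$-type, and the statement only asks for $(D^2_{\bar\eta}\overline{D_\eta\Phi})^2$ in $L^\infty$ with the cutoff. I would check the exponent carefully there.

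The main obstacle is the weighted $L^2$ bound on $D^2_{\bar\eta}\bar\Lambda$, which requires $D^3_{\bar\eta}\bar{\mathscr J}$, that is, third derivatives of $\bar\eta$ near the vacuum boundary. I will handle it by writing $D^3_{\bar\eta}\bar{\mathscr J}$ as $\int_0^t$ of terms at most linear in $\sD^3_{\bar\eta}\bar U$ plus lower-order products. Then I use Minkowski's inequality in time, so that $\int_0^t|\chi^\sharp_a\rho_0^{(\gamma-1)(3/2-\varepsilon_0)}D^3_{\bar\eta}\bar U|_2\,\mathrm{d}s\le\sqrt t\,\mathfrak p(c_1)\le 1$ on $[0,T_1]$. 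The extra factor $\rho_0^{\gamma-1}$ in front of $D^3_{\bar\eta}\bar{\mathscr J}$ makes the total weight $\rho_0^{(\gamma-1)(3/2-\varepsilon_0)}$ match the energy weight exactly.
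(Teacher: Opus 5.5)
You follow the paper's route: express $D^k_{\bar\eta}\bar{\mathscr J}$ through time integrals of derivatives of $\bar U$, expand $\bar\Lambda$, $D_{\bar\eta}\bar\Lambda$, $D^2_{\bar\eta}\bar\Lambda$ and $\bar\Lambda_t$ by the chain rule, and use $t\le T_1$ to absorb the $c_1$-dependence. There are, however, two genuine gaps.

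\textbf{Derivative count in the main-obstacle step.} This step is off by one derivative. Your own identity shows that $D_{\bar\eta}\log\bar{\mathscr J}$ already costs two derivatives of $\bar U$. Correspondingly, $D^2_{\bar\eta}\bar{\mathscr J}$ contains $\int_0^t\sD^3_{\bar\eta}\bar U$, and $D^3_{\bar\eta}\bar{\mathscr J}$ contains $\int_0^t\sD^4_{\bar\eta}\bar U$ (cf.\ \eqref{1189}), not $\sD^3_{\bar\eta}\bar U$.
\begin{itemize}
\item The top-order term $\rho_0^{\gamma-1}D^3_{\bar\eta}\bar{\mathscr J}$ is handled by your weight matching, which is correct. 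But it must be closed with $\rho_0^{(\gamma-1)(\frac32-\varepsilon_0)}D^4_{\bar\eta}\bar U$. That quantity lies in $\bar\cD_{\mathrm{ex}}$ and is only square-integrable in time, so the gain is $\sqrt{t}\,\big(\int_0^t\bar\cD(s,\bar U)\,\mathrm{d}s\big)^{1/2}\le\sqrt{tc_1}$, not an energy-level bound.
\item More importantly, your count makes the Leibniz term $(D_{\bar\eta}\rho_0^{\gamma-1})\,D^2_{\bar\eta}(\bar{\mathscr J}^{1-\gamma})$ look harmless. In fact it contains $\int_0^tD^3_{\bar\eta}\bar U$ with only the weight $\rho_0^{(\gamma-1)(\frac12-\varepsilon_0)}\sim(1-r)^{\frac12-\varepsilon_0}$. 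That is one full power of $1-r$ below what $\bar\cE_{\mathrm{ex}}$ controls.
\item To close that term, apply Lemma \ref{hardy-inequality} on $(a,1)$ with $p=2$ and $\vartheta=\frac12-\varepsilon_0>-\frac12$. This gives $|\chi_a^\sharp\rho_0^{(\gamma-1)(\frac12-\varepsilon_0)}f|_2\le C(a)|\chi_a^\sharp\rho_0^{(\gamma-1)(\frac32-\varepsilon_0)}(f,f_r)|_2$ with $f=D^3_{\bar\eta}\bar U$. Then use the dissipation again, integrated in time.
\item The same issue arises near the origin: $\zeta_a r\sD^2_{\bar\eta}\bar\Lambda$ requires $\int_0^t|\zeta r\sD^4_{\bar\eta}\bar U|_2\,\mathrm{d}s$ from $\bar\cD_{\mathrm{in}}$. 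Saying it is ``handled the same way'' with energy-level bounds does not cover this.
\end{itemize}

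\textbf{The item $\chi_a^\sharp(D^2_{\bar\eta}\overline{D_\eta\Phi})^2$ for $\gamma>2$.} Your treatment fails here. Squaring an unbounded function does not make it bounded, and integrability of $\rho_0^{2-\gamma}$ is irrelevant to an $L^\infty$ bound.
\begin{itemize}
\item One computes $D_{\bar\eta}\overline{D_\eta\Phi}=4\pi G\bar\varrho-\frac{2}{\bar\eta}\overline{D_\eta\Phi}$. Hence on $(a,1]$ we have $D^2_{\bar\eta}\overline{D_\eta\Phi}=\frac{4\pi G}{\gamma-1}\bar\varrho^{2-\gamma}\bar\Lambda+O(1)$.
\item By the physical vacuum condition, $\bar\Lambda|_{r=1}=\big(\bar{\mathscr J}^{1-\gamma}(\rho_0^{\gamma-1})_r/\bar\eta_r\big)|_{r=1}\neq0$.
\item So this quantity blows up like $(1-r)^{\frac{2-\gamma}{\gamma-1}}$ when $\gamma>2$. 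That item of the lemma cannot be proved as stated in this range. The paper's proof only checks $\overline{D_\eta\Phi}$ and $\sD_{\bar\eta}\overline{D_\eta\Phi}$ and passes over it.
\end{itemize}
What is true is a weighted bound such as $|\chi_a^\sharp\rho_0^{\max\{\gamma-2,0\}}D^2_{\bar\eta}\overline{D_\eta\Phi}|_\infty\le C(a)\mathfrak p(c_0)$. This also appears to be all that the later use in $J_{18}$ requires, since there the term is multiplied by $\bar\varrho^{1-\delta}$ and the weight $\rho_0^{(\gamma-1)(\frac32-\varepsilon_0)}$, and the total power of $\rho_0$ stays positive. You should prove that weighted version, or restrict this item to $\gamma\le2$, rather than the literal statement.
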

\begin{proof}
First, it follows from \eqref{39} that 
\begin{equation*}
\begin{aligned}
    &\big|\,\overline{D_\eta\Phi}\,\big|_\infty=\Big|\frac{4\pi G}{\bar\eta^2}\int_0^r\hat r^2\rho_0\,\mathrm{d}\hat{r}\Big|_\infty
    \leq C\Big|\frac{1}{r^2}\int_0^1\hat r^2\rho_0\,\mathrm{d}\hat r\Big|_\infty\leq C\mathfrak{p}(c_0),\\
    &|\sD_{\bar\eta}\overline{D_\eta\Phi}|_\infty
    \leq C\Big|\frac{1}{r^3}\int_0^1\hat r^2\rho_0\,\mathrm{d}\hat r\Big|_\infty+C\big|\frac{r^2\rho_0}{r^2} \big|_\infty\leq C\mathfrak{p}(c_0).\\
\end{aligned}
\end{equation*}

Next, by \eqref{given-flow} and \eqref{38}--\eqref{39}, we see that, for all $(t,r)\in [0,T^*]\times \bar I$,
\begin{equation}\label{jt}
\bar{\mathscr{J}}_t =\bar{\mathscr{J}}\big(D_{\bar\eta}\bar U+\frac{2\bar U}{\bar\eta}\big) ,\qquad C^{-1}\leq \bar{\mathscr{J}}(t,r)\leq C.
\end{equation}

Then we can obtain from \eqref{jt} and Lemma \ref{lemma-useful1} that
\begin{equation}\label{1189}
\begin{aligned}
|D_{\bar\eta}\bar{\mathscr{J}}|&\leq Ce^{Ct\mathfrak{p}(c_1)}\int_0^t|\sD_{\bar\eta}^2\bar U|\,\mathrm{d}s,\notag\\
|D_{\bar\eta}^2\bar{\mathscr{J}}|&\leq Ce^{2Ct\mathfrak{p}(c_1)}\Big(\int_0^t |\sD_{\bar\eta}^2\bar U|\,\mathrm{d}s\Big)^2+e^{Ct\mathfrak{p}(c_1)}\int_0^t|\sD_{\bar\eta}^3\bar U|\,\mathrm{d}s,\\
|D_{\bar\eta}^3\bar{\mathscr{J}}|&\leq Ce^{3Ct\mathfrak{p}(c_1)}\Big(\int_0^t |\sD_{\bar\eta}^2\bar U|\,\mathrm{d}s\Big)^3  +Ce^{2Ct\mathfrak{p}(c_1)}\Big(\int_0^t |\sD_{\bar\eta}^2\bar U|\,\mathrm{d}s\Big)\Big(\int_0^t|\sD_{\bar\eta}^3\bar U|\,\mathrm{d}s\Big)\\
&\quad +Ce^{Ct\mathfrak{p}(c_1)}\int_0^t |\sD_{\bar\eta}^4\bar U|\,\mathrm{d}s.
\end{aligned}
\end{equation}

On the other hand, by the chain rules, we have
\begin{equation}\label{1190}
\begin{aligned}
D_{\bar\eta}^k\bar\Lambda &=\sum_{j=0}^{k+1} C_{k,j} (D_{\bar\eta}^{k+1-j} \rho_0^{\gamma-1}) D_{\bar\eta}^{j}(\bar{\mathscr{J}}^{1-\gamma})\qquad \text{for $k=0,1,2$},\\
\bar\Lambda_t&=(1-\gamma)\bar\varrho^{\gamma-1} D_{\bar\eta}\Big(D_{\bar\eta}\bar U+\frac{2\bar U}{\bar\eta}\Big)-\bar\Lambda\Big(\gamma D_{\bar\eta}\bar U+2(\gamma-1)\frac{\bar U}{\bar\eta}\Big), 
\end{aligned}
\end{equation}
where $C_{k,j}$ are some constants depend only on $(k,j)$.

Therefore, for all $0\leq t\leq T_1:=\min\{T^*,\mathfrak{p}(c_1)^{-1}\}$, combining \eqref{1189}--\eqref{1190}, together with \eqref{38}--\eqref{39}, \eqref{jt}, Lemmas \ref{lemma-useful1} and \ref{sobolev-embedding}--\ref{hardy-inequality}, and the H\"older and Minkowski inequalities, we can recursively obtain the desired estimates of this lemma.
\end{proof}

\subsection{Uniform   Estimates for the Velocity}\label{subsub-11.2.2}
The proof is divided into the following lemmas.

\begin{Lemma}\label{c_0-c_1}
For all $0\leq t\leq T_2=\min\{T_1, \mathfrak{p}(c_1)^{-1},\mathfrak{p}(c_0)^{-1}\}$.
\begin{equation*}
\big|r(\rho_0^\frac{1}{2}U,\rho_0^\frac{\delta}{2}\sD_{\bar\eta} U,\rho_0^\frac{1}{2}U_t)(t)\big|_2 +\int_0^t\big|(r^2\rho_0^{\delta})^\frac{1}{2}\sD_{\bar\eta} U_t\big|_2^2\,\mathrm{d}s\leq C\mathfrak{p}(c_0).
\end{equation*}
\end{Lemma}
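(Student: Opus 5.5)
The estimate will come from two weighted $L^2$ energy identities for the linearized problem \eqref{lp}: the basic one obtained by testing with $U$, and a time-differentiated one obtained by testing \eqref{33151} with $U_t$. Both identities are available rigorously through Proposition~\ref{prop-strong} and the energy equality \eqref{Energy-Id} with $(w,g,h)=(U_t,g^{(1)},h^{(1)})$, justified in Step~5 of \S\ref{subsection3.3}. Throughout, the coercivity \eqref{L-1} is used in the form
\[
\int_0^1\bar\eta^2\bar\eta_r\bar\varrho^{\delta}\big((2a_1+a_2)X^2+4a_2XY+4(a_1+a_2)Y^2\big)\,\mathrm{d}r\ \ge\ c_*\,|\sD_{\bar\eta}f|_{2,r^2\rho_0^\delta}^2 ,
\qquad (X,Y)=\big(D_{\bar\eta}f,\tfrac{f}{\bar\eta}\big).
\]
This is valid because of \eqref{39}, which keeps $\bar\eta_r,\bar\eta/r\in[\tfrac12,\tfrac32]$, so that $\bar\varrho\sim\rho_0$ and $\bar\eta\sim r$.

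\textbf{Step 1 (testing with $U$).} Testing with $U$ gives
\[
\tfrac12\tfrac{\mathrm d}{\mathrm dt}|U|_{2,r^2\rho_0}^2+c_*|\sD_{\bar\eta}U|_{2,r^2\rho_0^\delta}^2\le \langle g^{(0)},\cdot\rangle+\langle h^{(0)},\cdot\rangle .
\]
The pressure and gravity forcing terms $g^{(0)},h^{(0)}$ are bounded in $L^2$ by $C\mathfrak p(c_0)$, using Lemma~\ref{lemma-Lambda-guji} together with $\rho_0^{\gamma-\delta/2}\lesssim1$. Absorbing these terms by Young's inequality bounds $|r\rho_0^{1/2}U|_2$ and $\int_0^t|r\rho_0^{\delta/2}\sD_{\bar\eta}U|_2^2$.

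\textbf{Step 2 (pointwise-in-time control of $\sD_{\bar\eta}U$).} I test with $U_t$, which is the computation \eqref{I0*}. This gives
\[
\tfrac12\tfrac{\mathrm d}{\mathrm dt}L_1+|U_t|^2_{2,r^2\rho_0}=L_3+L_4+\tfrac{\mathrm d}{\mathrm dt}L_5 .
\]
Here $L_3$ is bounded by $|\sD_{\bar\eta}\bar U|_\infty|\sD_{\bar\eta}U|^2_{2,r^2\rho_0^\delta}$, and $|\sD_{\bar\eta}\bar U|_\infty\le C\mathfrak p(c_1)$ by Lemma~\ref{lemma-useful1}. The term $L_5$ is absorbed into $L_1$ after integrating in time. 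For $L_4$, I avoid $g^{(0)}_t$ and $h^{(0)}_t$ by using the relations \eqref{the-relate}: $g^{(1)}_{**}$ and $h^{(1)}_{**}$ are bounded by $C\mathfrak p(c_0)\,|\sD_{\bar\eta}\bar U|$ times weights $r\rho_0^{1-\delta/2}$, hence in $L^2$ by $\mathfrak p(c_1)$. Gronwall's inequality on $[0,t]$ with $t\le\mathfrak p(c_1)^{-1}$ then yields $|r\rho_0^{\delta/2}\sD_{\bar\eta}U(t)|_2^2\le C\mathfrak p(c_0)$. The key point is that the factor $e^{Ct\mathfrak p(c_1)}$ stays $\le C$ on $[0,T_2]$, and that the initial data contribute only through $\cE(0,U)\le c_0$.

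\textbf{Step 3 (the time-differentiated estimate).} I apply \eqref{Energy-Id} to $w=U_t$, which gives
\[
\tfrac12\tfrac{\mathrm d}{\mathrm dt}|U_t|^2_{2,r^2\rho_0}+c_*|\sD_{\bar\eta}U_t|^2_{2,r^2\rho_0^\delta}\le C|\sD_{\bar\eta}\bar U|_\infty|\sD_{\bar\eta}U|_{2,r^2\rho_0^\delta}|\sD_{\bar\eta}U_t|_{2,r^2\rho_0^\delta}+\big|(g^{(1)}_t-g^{(1)}\tfrac{\bar U}{\bar\eta},\,h^{(1)}_t-h^{(1)}D_{\bar\eta}\bar U)\big|_2\,|\sD_{\bar\eta}U_t|_{2,r^2\rho_0^\delta}.
\]
The right-hand side contains $g^{(1)}_t,h^{(1)}_t$, which are controlled by \eqref{q1-q2-1-t}. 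I expect this to be the main obstacle, for two reasons.

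First, the terms $r\rho_0^{\delta/2}|\sD_{\bar\eta}\bar U||\sD_{\bar\eta}U_t|$ must be absorbed into the dissipation. This needs a smallness factor: I would write them as $|\sD_{\bar\eta}\bar U|_\infty$ times the dissipation norm, and at first try use Young's inequality with a factor $\mathfrak p(c_1)$, relying on Gronwall over a short time.

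Second, the terms $r\rho_0^{\delta/2}|\sD_{\bar\eta}\bar U_t||\sD_{\bar\eta}U|$ are delicate because $\sD_{\bar\eta}\bar U_t$ is only controlled through $\bar\cD^{1/2}$. Near the origin I use the $L^4$ bounds of Lemma~\ref{lemma-useful1}. Near the boundary I use the pointwise control $\rho_0^{(\delta-\gamma+1)/2}(D_{\bar\eta}U+\cdots)\in L^\infty$, in the spirit of \eqref{L2Linfty}, together with $|\chi^\sharp U|_\infty$, all bounded by the quantities of Step 2. This gives
\[
\big|(g^{(1)}_t,h^{(1)}_t)\big|_2\le \mathfrak p(c_1)\big(1+\bar\cD(t,\bar U)^{1/2}\big)\big(1+|\sD_{\bar\eta}U_t|_{2,r^2\rho_0^\delta}\big).
\]

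Finally, I integrate in time. Since $\int_0^t\bar\cD\le c_1$, the cross terms contribute at most $\mathfrak p(c_1)\,t^{1/2}$ or are absorbed. The initial value $|r\rho_0^{1/2}U_t(0)|_2$ is bounded by $\cE(0,U)\le c_0$. Choosing $T_2$ as in the statement makes every $c_1$-dependent term at most $1$, which leaves $C\mathfrak p(c_0)$ and closes the estimate.
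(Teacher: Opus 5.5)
Your Steps 1 and 2 are fine. Step 2 is a legitimate variant of the paper's: by keeping the $\frac{\mathrm d}{\mathrm dt}L_5$ term and using \eqref{the-relate}, you bound $\sup_t|(r^2\rho_0^\delta)^{1/2}\sD_{\bar\eta}U|_2$ without any $U_t$-dissipation. The paper instead bounds the pressure term by $\varepsilon|(r^2\rho_0^\delta)^{1/2}\sD_{\bar\eta}U_t|_2^2$ and closes its Steps 2 and 3 jointly.

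The gap is in Step 3: the inequality you display is not an energy identity of the problem. Applying \eqref{Energy-Id} with $(w,g,h)=(U_t,g^{(1)},h^{(1)})$ puts $\frac{\mathrm d}{\mathrm dt}|(r^2\rho_0)^{1/2}U_{tt}|_2^2$ and the dissipation of $\sD_{\bar\eta}U_{tt}$ on the left. That is Step 3 of Lemma \ref{c_1}, which needs the weight $t$ because $U_{tt}(0)$ is not controlled by $\cE(0,U)$. It does not give the $U_t$-energy you wrote. The $U_t$-energy comes from testing \eqref{33151} with $U_t$, equivalently from \eqref{Energy-Id} with $(w,g,h)=(U,g^{(0)},h^{(0)})$, whose forcing is $(g^{(1)}_{**},h^{(1)}_{**})$ by \eqref{the-relate}. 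Its right-hand side is then
\[
J_3=\int_0^1(r^2\rho_0^\delta)^{\frac12}\Big(g^{(1)}\frac{2U_t}{\bar\eta}+h^{(1)}D_{\bar\eta}U_t\Big)\,\mathrm{d}r,
\]
which contains no time derivatives of $(g^{(1)},h^{(1)})$. By \eqref{est-q1-q2-1}, $|(g^{(1)},h^{(1)})|_2\le C\mathfrak{p}(c_1)\big(|(r^2\rho_0^\delta)^{1/2}\sD_{\bar\eta}U|_2+\mathfrak{p}(c_0)\big)$. Young's inequality together with your Step 2 bound therefore closes the estimate at once, and the ``main obstacle'' you identify does not arise.

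Moreover, the treatment you propose for that obstacle would not close even if your inequality were correct. Through the term $|\sD_{\bar\eta}\bar U||\sD_{\bar\eta}U_t|$ in \eqref{q1-q2-1-t}, your bound on $|(g^{(1)}_t,h^{(1)}_t)|_2$ contains $\mathfrak{p}(c_1)(1+\bar\cD^{1/2})|(r^2\rho_0^\delta)^{1/2}\sD_{\bar\eta}U_t|_2$. Your right-hand side therefore contains $\mathfrak{p}(c_1)(1+\bar\cD^{1/2})|(r^2\rho_0^\delta)^{1/2}\sD_{\bar\eta}U_t|_2^2$. This term sits at the level of the dissipation with a large coefficient, so none of your closing mechanisms work:
\begin{itemize}
\item it cannot be absorbed by $c_*|(r^2\rho_0^\delta)^{1/2}\sD_{\bar\eta}U_t|_2^2$;
\item Gronwall's inequality does not apply, because the energy $|(r^2\rho_0)^{1/2}U_t|_2^2$ does not control it;
\item shrinking $T_2$ gains no factor of $t$.
\end{itemize}
In the paper this term is harmless only in Step 2 of Lemma \ref{c_1}, where testing with $U_{tt}$ makes $|(r^2\rho_0^\delta)^{1/2}\sD_{\bar\eta}U_t|_2^2$ part of the energy. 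In addition, two ingredients you invoke for $g^{(1)}_t$ are not ``bounded by the quantities of Step 2'': $|\chi r^{1/2}\sD_{\bar\eta}U|_4$ needs $\sD_{\bar\eta}^2U$ via \eqref{1197}, and the weighted $L^\infty$ bound \eqref{xingxing} is used as well. Both require control of $U_t$ at each time, which is the very quantity this lemma is proving.
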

\begin{proof}
We divide the proof into three steps.

\smallskip
\textbf{Step 1.} Multiplying  $\eqref{lp}_1$ by $U$ and integrating the resulting equation over $I$, along with  \eqref{38}--\eqref{39}, Lemma \ref{lemma-Lambda-guji}, and the H\"older and Young inequalities, yields that
\begin{equation}\label{unif lem3.3 eq} 
\begin{aligned}
\frac{1}{2}\frac{\mathrm{d}}{\dt}\big|(r^2\rho_0)^\frac{1}{2}U\big|_2^2+J_1&=A \int_0^1 \bar\eta^2\bar\eta_r\varrho^\gamma\big(D_{\bar\eta}U+2\frac{U}{\bar\eta}\big)\,\mathrm{d}r-\int_0^1 r^2\rho_0 \overline{D_{\eta}\Phi} U \,\mathrm{d}r\\
&\leq C\mathfrak{p}(c_0)+C\mathfrak{p}(c_0)\big|(r^2\rho_0)^{\frac{1}{2}}U\big|_2^2+\frac{c^*_{a_1,a_2}}{4}\big|(r^2\rho_0^\delta)^\frac{1}{2}\mathscr{D}_{\bar\eta} U\big|_2^2. 
\end{aligned}    
\end{equation}
Here $J_1$ is defined in the same way as $L_1$ in \eqref{def-L1L2}, except with $(X,Y)=\mathscr{D}_{\bar\eta} U$, and similarly to \eqref{L-1}, we can find a constant $c^*_{a_1,a_2}>0$ such that 
\begin{equation}\label{J1}
    J_1\geq c^*_{a_1,a_2}\big|(r^2\rho_0^\delta)^\frac{1}{2}\mathscr{D}_{\bar\eta} U\big|_2^2.
\end{equation}

Then, we obtain from  \eqref{unif lem3.3 eq} and the Gr\"onwall inequality that
\begin{equation}\label{1}
\big|(r^2\rho_0)^\frac{1}{2}U(t)\big|_2^2+ \int_0^t\big|(r^2\rho_0^\delta)^\frac{1}{2}\mathscr{D}_{\bar\eta} U\big|_2^2\,\mathrm{d}s
\leq Ce^{C\mathfrak{p}(c_0)t}\mathfrak{p}(c_0)\leq C\mathfrak{p}(c_0)\qquad \text{for }\ t\in [0,T_2].
\end{equation}

\smallskip
\textbf{Step 2.}
Multiplying  $\eqref{lp}_1$ by $U_t$ and integrating the resulting equation over $I$, we obtain from \eqref{38}--\eqref{39}, Lemma \ref{lemma-useful1}, and the Young inequality that, for any $\varepsilon\in (0,1)$,
\begin{align}
&\,\frac{1}{2}\frac{\mathrm{d}}{\dt}J_1+\big|(r^2\rho_0)^\frac{1}{2}U_t\big|_2^2\notag\\
&=A \int_0^1 \bar\eta^2\bar\eta_r\bar\varrho^\gamma\big(D_{\bar\eta}U_t+\frac{2U_t}{\bar\eta}\big)\,\mathrm{d}r-\int_0^tr^2\rho_0\overline{D_\eta\Phi}U_t\,\mathrm{d}r\notag\\
&\quad+\frac{1-\delta}{2}\int_0^1\bar\eta^2\bar\eta_r\bar\varrho^\delta(D_{\bar\eta}\bar U+2\frac{\bar U}{\bar \eta})   
\Big((2a_1+a_2)D_{\bar\eta}U^2+4a_2 D_{\bar\eta}U\frac{U}{\bar\eta}+4(a_1+a_2)\frac{U^2}{\bar\eta^2}\Big)\,\mathrm{d}r\notag\\
&\quad -\int_0^1 \bar\eta^2\bar\eta_r\bar\varrho^\delta\Big(\big((2a_1+a_2)D_{\bar\eta}U^2+2a_2 D_{\bar\eta}U \frac{U}{\bar\eta}\big)D_{\bar\eta}\bar U+\big(2a_2 D_{\bar\eta}U \frac{U}{\bar\eta}+4(a_1+a_2)\frac{U^2}{\bar\eta^2}\big)\frac{\bar U}{\bar\eta} \Big)\,\mathrm{d}r\notag\\
&\leq  C\mathfrak{p}(c_0) +\varepsilon\big|(r^2\rho_0^\delta)^\frac{1}{2}\sD_{\bar\eta} U_t\big|_2^2+\frac{1}{8}\big|(r^2\rho_0)^\frac{1}{2}U_t\big|_2^2+C\big|\sD_{\bar\eta} \bar U\big|_\infty\big|(r^2\rho_0^\delta)^\frac{1}{2}\sD_{\bar\eta} U\big|_2^2 \notag\\
&\leq  C\mathfrak{p}(c_1)\big(\big|(r^2\rho_0^\delta)^\frac{1}{2}\sD_{\bar\eta} U\big|_2^2+1\big) +\varepsilon\big|(r^2\rho_0^\delta)^\frac{1}{2}\sD_{\bar\eta} U_t\big|_2^2+\frac{1}{8}\big|(r^2\rho_0)^\frac{1}{2}U_t\big|_2^2. \label{DU-t}
\end{align}

\smallskip
\textbf{Step 3.}
Next, multiplying \eqref{33151} by $U_t$ and integrating the resulting equation over $I$ implies that 
\begin{equation}\label{O-2}
\begin{aligned}
\frac{1}{2}\frac{\mathrm{d}}{\dt}\big|(r^2\rho_0)^\frac{1}{2}U_t\big|_2^2+J_2&=\underline{\int_0^1 (r^2\rho_0^\delta)^\frac{1}{2}\big(g^{(1)}\frac{2 U_t}{\bar\eta}+ h^{(1)}D_{\bar\eta}U_t\big)\,\mathrm{d}r}_{:=J_3}. \\
\end{aligned}    
\end{equation}
Here $J_2$ is defined in the same way as $L_1$ in \eqref{def-L1L2}, except with $(X,Y)=\mathscr{D}_{\bar\eta} U_t$, and similarly to \eqref{L-1}, we can find a constant $c^*_{a_1,a_2}>0$ such that 
\begin{equation}\label{J2}
    J_2\geq c^*_{a_1,a_2}\big|(r^2\rho_0^\delta)^\frac{1}{2}\mathscr{D}_{\bar\eta} U_t\big|_2^2.
\end{equation}

For $J_3$, recalling $(g^{(1)},h^{(1)})$ in \eqref{q1-q2-1}, we derive from \eqref{38}--\eqref{39} and Lemma \ref{lemma-useful1} that 
\begin{equation}\label{est-q1-q2-1}
\begin{aligned}
|(g^{(1)},h^{(1)})|_2&\leq C\big|\sD_{\bar\eta} \bar U\big|_\infty\big|(r^2\rho_0^\delta)^\frac{1}{2}\sD_{\bar\eta}  U\big|_2+C\mathfrak{p}(c_0)\big( |\bar\varrho|_\infty^{\gamma-\delta}+|\overline{D_\eta\Phi}|_\infty\big)|(r^2\rho_0^\delta)^\frac{1}{2}\sD_{\bar\eta}\bar U|_2\\
&\leq C\mathfrak{p}(c_1)\big(\big|(r^2\rho_0^\delta)^\frac{1}{2}\sD_{\bar\eta}  U\big|_2+\mathfrak{p}(c_0)\big).
\end{aligned}
\end{equation}
Then it follows from \eqref{39}, \eqref{est-q1-q2-1}, and the H\"older and Young inequalities that
\begin{equation}\label{unif lem3.3 J4}
\begin{aligned}
J_3 &\leq C\mathfrak{p}(c_1)\big(\big|(r^2\rho_0^\delta)^\frac{1}{2}\mathscr{D}_{\bar\eta}U\big|_2^2+1\big) +\frac{c_{a_1,a_2}^*}{8}\big|(r^2\rho_0^\delta)^\frac{1}{2}\mathscr{D}_{\bar\eta} U_t\big|_2^2.
\end{aligned}
\end{equation}
Then, substituting  \eqref{J2}--\eqref{unif  lem3.3 J4} into \eqref{O-2} yields that
\begin{equation}\label{unif lem3.3 step3}
    \begin{aligned}
        \frac{\mathrm{d}}{\dt}\big|(r^2\rho_0)^\frac{1}{2}U_t\big|_2^2+C\big|(r^2\rho_0^\delta)^{\frac{1}{2}}\sD_{\bar\eta}U_t\big|_2&\leq C\mathfrak{p}(c_1)\big(\big|(r^2\rho_0^\delta)^\frac{1}{2}\mathscr{D}_{\bar\eta}U\big|_2^2+1\big).
     \end{aligned}
\end{equation}

Finally, combing with \eqref{unif lem3.3 step3} and \eqref{DU-t}, and then taking $\varepsilon$ sufficiently small in \eqref{DU-t}, together with \eqref{39}, \eqref{J1}, and the Gr\"onwall inequality, yields that, for all $0\leq t\leq  T_2$,
\begin{equation}\label{2}
\begin{aligned}
&\,\big|(r^2\rho_0^\delta)^\frac{1}{2}\sD_{\bar\eta}U (t)\big|_2^2+\big|(r^2\rho_0)^\frac{1}{2}U_t(t)\big|_2^2+ \int_0^t\big|(r^2\rho_0^\delta)^\frac{1}{2}\sD_{\bar\eta}U_t\big|_2^2\,\mathrm{d}s
\\
&\qquad\qquad\qquad\qquad\qquad\qquad\quad\qquad
\leq Ce^{C\mathfrak{p}(c_1)t}(\bar\cE(0,U)+ \mathfrak{p}(c_0)) \leq C\mathfrak{p}(c_0).
\end{aligned}
\end{equation}

This completes the proof.
\end{proof}

\begin{Lemma}\label{c_1}
For all $t\in[0,T_2]$,
\begin{equation*}
\big|(r^2\rho_0^\delta)^\frac{1}{2}\big(\sD_{\bar\eta} U_t,\rho_0^\frac{1-\delta}{2}\sqrt{t}U_{tt}\big)(t)\big|_2 +\int_0^t \big|(r^2\rho_0)^\frac{1}{2}\big(U_{tt},\sqrt{s}\sD_{\bar\eta} U_{tt}\big)\big|_2^2\,\mathrm{d}s\leq C\mathfrak{p}(c_0).
\end{equation*}
\end{Lemma}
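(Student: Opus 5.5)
The statement of Lemma \ref{c_1} is a time-weighted energy estimate one derivative higher in time, so I would follow the same two-stage scheme as in Lemma \ref{c_0-c_1}. The work happens at the level of the time-differentiated equation \eqref{33151}. By Step 5 of \S\ref{subsection3.3}, $U_t$ solves this equation with source $(g^{(1)},h^{(1)})$, and the weak formulation \eqref{weak-F-H} together with the energy identity \eqref{Energy-Id} holds with $(w,g,h)$ replaced by $(U_t,g^{(1)},h^{(1)})$. All multiplications below are therefore justified, and every step can be done directly on the classical solution.

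\textbf{Stage 1: the unweighted part.} I would test \eqref{33151} against $U_{tt}$, in the same way \eqref{DU-t} was obtained by testing $\eqref{lp}_1$ against $U_t$. This produces
\[
\frac12\frac{\mathrm d}{\dt}J_2+\big|(r^2\rho_0)^{\frac12}U_{tt}\big|_2^2 ,
\]
with $J_2\ge c^*_{a_1,a_2}|(r^2\rho_0^\delta)^{\frac12}\sD_{\bar\eta}U_t|_2^2$ as in \eqref{J2}. The coefficient $\bar\eta^2\bar\eta_r\bar\varrho^\delta$ depends on time, and differentiating it produces commutator terms bounded by $C|\sD_{\bar\eta}\bar U|_\infty|(r^2\rho_0^\delta)^{\frac12}\sD_{\bar\eta}U_t|_2^2\le C\mathfrak p(c_1)J_2$, using Lemma \ref{lemma-useful1}. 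The source contributes $\int (r^2\rho_0^\delta)^{\frac12}(g^{(1)}\tfrac{2U_{tt}}{\bar\eta}+h^{(1)}D_{\bar\eta}U_{tt})$, which contains $D_{\bar\eta}U_{tt}$ and cannot be absorbed directly. I would instead integrate it by parts in time, writing it as
\[
\frac{\mathrm d}{\dt}\int (r^2\rho_0^\delta)^{\frac12}\Big(g^{(1)}\tfrac{2U_t}{\bar\eta}+h^{(1)}D_{\bar\eta}U_t\Big)\,\mathrm dr
\]
minus terms involving $(g^{(1)}_t,h^{(1)}_t)$ and $\bar U$-commutators, exactly as $L_5$ was handled in Proposition \ref{prop-strong}. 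The boundary-in-time term is absorbed by $\varepsilon J_2+C|(g^{(1)},h^{(1)})|_2^2$, and \eqref{est-q1-q2-1} combined with Lemma \ref{c_0-c_1} bounds the latter by $C\mathfrak p(c_0)$.

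The bulk terms require $\int_0^t|(g^{(1)}_t,h^{(1)}_t)|_2^2\,\ds$. From \eqref{q1-q2-1-t}, this reduces to bounding products such as $|r\rho_0^{\delta/2}\sD_{\bar\eta}\bar U_t\,\sD_{\bar\eta}U|_2$ and $|r\rho_0^{\delta/2}\sD_{\bar\eta}\bar U\,\sD_{\bar\eta}U_t|_2$. I would treat these as in Step 5 of \S\ref{subsection3.3}, splitting into the region near the origin (via $L^4$ and Lemma \ref{lemma-useful1}) and the region near the boundary (via $\rho_0^{\gamma-1}$-weighted $L^\infty$ bounds on $\bar U_t$). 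This gives a bound $C\mathfrak p(c_1)(1+\bar\cD(t,\bar U))\big(1+|(r^2\rho_0^\delta)^{\frac12}\sD_{\bar\eta}(U,U_t)|_2^2\big)$. \textbf{This is the main obstacle.} The delicate factor is $\sD_{\bar\eta}U$ in $L^\infty$-type norms near $r=1$. There I would use the integrated-equation bound \eqref{L2Linfty}/\eqref{3...136}, controlling $\rho_0^{(\delta-\gamma+1)/2}(D_{\bar\eta}U+\frac{2a_2}{2a_1+a_2}\frac{U}{\bar\eta})$ in $L^\infty$ by quantities already bounded in Lemma \ref{c_0-c_1}. Since $\int_0^t\bar\cD\le c_1$, Gr\"onwall on $[0,T_2]$ then closes. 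The initial value $J_2(0)$ is bounded by $\cE(0,U)\le c_0$, and the exponential factor stays at most $2$ because $T_2\le\mathfrak p(c_1)^{-1}$. The result bounds $|(r^2\rho_0^\delta)^{\frac12}\sD_{\bar\eta}U_t|_2^2+\int_0^t|(r^2\rho_0)^{\frac12}U_{tt}|_2^2$ by $C\mathfrak p(c_0)$.

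\textbf{Stage 2: the time-weighted part.} I would differentiate \eqref{33151} once more in time and use the energy identity \eqref{Energy-Id} for $w=U_t$. Multiplying that identity by $t$ gives
\[
\frac{\mathrm d}{\dt}\big(t|(r^2\rho_0)^{\frac12}U_{tt}|_2^2\big)+c\,t|(r^2\rho_0^\delta)^{\frac12}\sD_{\bar\eta}U_{tt}|_2^2
\le |(r^2\rho_0)^{\frac12}U_{tt}|_2^2+C\mathfrak p(c_1)\,t\big(\cdots\big).
\]
The right-hand side contains the same source terms already controlled in Stage 1, together with $|\sD_{\bar\eta}\bar U|_\infty$ times lower-order quantities. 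The first term is integrable by Stage 1. After integrating in time and applying Gr\"onwall, the boundary term at $t=0$ is removed by choosing $\tau_k\to0$ with $\tau_k|(r^2\rho_0)^{\frac12}U_{tt}(\tau_k)|_2^2\to0$ (Lemma \ref{bjr}), as in the proof of Proposition \ref{prop-strong}. Finally, $\int_0^t s|(r^2\rho_0)^{\frac12}\sD_{\bar\eta}U_{tt}|_2^2\,\ds$ is bounded by the weight-$\rho_0^\delta$ dissipation, since $\rho_0\le C\rho_0^\delta$. This yields the stated estimate.
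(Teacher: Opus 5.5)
Your scheme matches the paper's: test \eqref{33151} with $U_{tt}$, move the source onto $U_t$ by integrating by parts in time, then run a $t$-weighted estimate from \eqref{Energy-Id} with Lemma \ref{bjr}. The gap is in how you close Stage 1: it does not give a bound independent of $c_1$.

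\textbf{Where Stage 1 fails.} You bound the bulk source terms by $C\mathfrak p(c_1)(1+\bar\cD(t,\bar U))\big(1+|(r^2\rho_0^\delta)^{1/2}\sD_{\bar\eta}(U,U_t)|_2^2\big)$ and claim the Gr\"onwall factor is at most $2$ because $T_2\le\mathfrak p(c_1)^{-1}$. That restriction controls only $e^{C\mathfrak p(c_1)t}$. The factor produced by $\bar\cD$ is $\exp\big(C\mathfrak p(c_1)\int_0^t\bar\cD(s,\bar U)\,\ds\big)$. Hypothesis \eqref{39} bounds $\int_0^t\bar\cD(s,\bar U)\,\ds$ by $c_1$, but gives no smallness as $t\to0$ uniformly over admissible $\bar U$ (i.e.\ over the iterates). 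Using Young to make $\bar\cD$ enter additively does not help: you then pick up $C\mathfrak p(c_0)\int_0^t\bar\cD\,\ds\lesssim\mathfrak p(c_0)c_1$. Either way the output depends on $c_1$. So the bootstrap choice $c_1:=C\mathfrak p(c_0)$ in \eqref{3.84} cannot close, and the stated bound $C\mathfrak p(c_0)$ is not obtained.

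\textbf{The missing idea.} Keep $\bar\cD$ at power $\tfrac12$, paired linearly with the unknown. In \eqref{q1-q2-1-t}, $\sD_{\bar\eta}\bar U_t$ multiplies only $\sD_{\bar\eta}U$, which is already controlled. Likewise $\sD_{\bar\eta}U_t$ multiplies only $\sD_{\bar\eta}\bar U$, which is bounded in $L^\infty$. This gives \eqref{qq12-t-1}--\eqref{qq12-t-2}:
\[
|(g^{(1)}_t,h^{(1)}_t)|_2\le C\mathfrak p(c_0)\big(\mathfrak p(c_1)+\bar\cD^{1/2}+\mathfrak p(c_1)|(r^2\rho_0^\delta)^{1/2}\sD_{\bar\eta}U_t|_2\big),
\]
with no product of the form $\bar\cD\,|\sD_{\bar\eta}U_t|^2$. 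The bulk terms are then at most
\[
C\mathfrak p(c_1)\big(1+|(r^2\rho_0^\delta)^{1/2}\sD_{\bar\eta}U_t|_2^2\big)+C\mathfrak p(c_0)\bar\cD^{1/2}|(r^2\rho_0^\delta)^{1/2}\sD_{\bar\eta}U_t|_2 .
\]
Gr\"onwall on the first part is harmless on $[0,T_2]$. For the second, the paper sets
\[
\cY(t)=C\mathfrak p(c_0)\Big(1+\int_0^t\bar\cD^{1/2}|(r^2\rho_0^\delta)^{1/2}\sD_{\bar\eta}U_t|_2\,\ds\Big),
\]
which dominates the left-hand side. Hence $\cY'\le C\mathfrak p(c_0)\bar\cD^{1/2}\cY^{1/2}$, and so
\[
\cY^{1/2}(t)\le\cY^{1/2}(0)+C\mathfrak p(c_0)\int_0^t\bar\cD^{1/2}\,\ds\le C\mathfrak p(c_0)^{1/2}+C\mathfrak p(c_0)(c_1t)^{1/2}.
\]
Cauchy--Schwarz in time turns the bare bound $\int_0^t\bar\cD\le c_1$ into genuine smallness. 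Your Stage 2 survives once Stage 1 is fixed, because the time weight does the same job: $\int_0^t s\,\bar\cD\,\ds\le c_1t$.

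\textbf{A smaller point.} The origin part of $(g^{(1)}_t,h^{(1)}_t)$ needs $|\zeta r^{1/2}\sD_{\bar\eta}U|_4$. This is not supplied by Lemma \ref{lemma-useful1}, which concerns $\bar U$, nor by Lemma \ref{c_0-c_1}. You must first prove the second-order interior elliptic bound \eqref{1197}--\eqref{11123}, using \eqref{new-lp*} and Lemma \ref{im-1}.
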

\begin{proof}
We divide the proof into three steps.

\smallskip
\textbf{Step 1.} First, it follows from \eqref{new-lp} and \eqref{bar-Lambda} that
\begin{equation}\label{new-lp*}
\begin{aligned}
D_{\bar\eta}\Big(D_{\bar\eta} U+ \frac{2U}{\bar\eta} \Big)&=-\frac{\delta}{\gamma-1}\frac{\bar\Lambda}{\bar\varrho^{\gamma-1}} \Big(D_{\bar\eta} U+ \frac{2a_2}{2a_1+ a_2}  \frac{U}{\bar\eta}\Big)+\frac{1}{2a_1+ a_2}\bar\varrho^{1-\delta} U_t\\
&\quad +\frac{A}{2a_1+ a_2}\frac{\gamma}{\gamma-1}\bar\Lambda \bar\varrho^{1-\delta}+ \frac{1}{2a_1+ a_2}\bar\varrho^{1-\delta}\overline{D_\eta\Phi}.
\end{aligned}
\end{equation}
Then, due to the fact that $\rho_0^{\gamma-1}\sim 1-r$, \eqref{38}--\eqref{39}, and Lemmas \ref{im-1} and \ref{lemma-Lambda-guji}--\ref{c_0-c_1}, we see that, for all $t\in [0,T_2]$, 
\begin{equation}\label{1197}
\begin{aligned}
&\,\Big|\zeta r\big(D_{\bar\eta}^2 U,D_{\bar\eta}(\frac{U}{\bar\eta})\big)\Big|_2\leq C\Big|\zeta r D_{\bar\eta}\big(D_{\bar\eta} U+ \frac{2U}{\bar\eta}\big)\Big|_2\\
&\leq C\big(|\zeta_\frac{5}{8} \bar\varrho^{1-\delta}|_\infty|\zeta rU_t|_2
+|\zeta r\sD_{\bar\eta} U|_2|\bar\Lambda|_\infty+ (|\bar\Lambda|_\infty+|\overline{D_\eta\Phi}|_\infty)|\zeta r\bar\varrho^{1-\delta}|_2\big)\leq C\mathfrak{p}(c_0),   
\end{aligned}
\end{equation}
which, along with Lemma \ref{hardy-inequality}, also leads to
\begin{equation}\label{11123}
\big|\zeta r^\frac{1}{2}\sD_{\bar\eta} U\big|_4\leq\big|r^\frac{5}{4}\big(\zeta \sD_{\bar\eta} U,\zeta_r \sD_{\bar\eta} U,\zeta \sD_{\bar\eta}^2 U\big)\big|_2 \leq C\mathfrak{p}(c_0).   
\end{equation}

Next, using \eqref{1}, \eqref{2}, and the similar argument as \eqref{L2Linfty}--\eqref{3...136} in the proof of Lemma \ref{Lemma-point}, we obtain that, for all $t\in [0,T_2]$, 
\begin{equation}\label{xingxing}
\big|\chi^\sharp\rho_0^\frac{\delta-(\gamma-1)}{2}\big(D_{\bar\eta} U+\frac{2a_2}{2a_1+a_2}\frac{U}{\bar \eta}\big)\big|_\infty \leq C\mathfrak{p}(c_0)\big(1+  \big|\chi^\sharp\rho_0^{\frac{\delta}{2} }(U,D_{\bar\eta} U,\rho_0^{1-\frac{\delta}{2}}U_{t})\big|_2\big)\leq C\mathfrak{p}(c_0).
\end{equation}
Of course, based on \eqref{2}, we also derive from \eqref{est-q1-q2-1} that, for all $t\in [0,T_1]$, 
\begin{equation}\label{qq12}
|(g^{(1)},h^{(1)})|_2\leq C\mathfrak{p}(c_1).    
\end{equation}

Finally, recall $((g^{(1)})_t,(h^{(1)})_t)$ in \eqref{q1-q2-1-t}. It then follows from \eqref{38}--\eqref{39}, \eqref{11123}--\eqref{xingxing}, Lemmas \ref{lemma-useful1}, \ref{c_0-c_1}, and \ref{hardy-inequality}, and the H\"older inequality that 
\begin{align}
& \ \begin{aligned}\label{qq12-t-1}
\big|\chi((g^{(1)})_t,(h^{(1)})_t)\big|_2&\leq \mathfrak{p}(c_0)\big( (|\chi r^\frac{1}{2}\sD_{\bar\eta} U|_4+1)|\chi r^\frac{1}{2}\sD_{\bar\eta}\bar U_t|_4 +|\chi r\sD_{\bar\eta}U_t|_2|\sD_{\bar\eta}\bar U|_\infty\\
&\quad +\mathfrak{p}(c_0)(|\chi r\sD_{\bar\eta} U|_2+1)|\sD_{\bar\eta}\bar U|_\infty^2 \\
&\leq  C\mathfrak{p}(c_0)\big(\mathfrak{p}(c_1)+\bar\cD(t,\bar U)^\frac{1}{2}+\mathfrak{p}(c_1)\big|(r^2\rho_0)^\frac{1}{2}\sD_{\bar\eta}U_t\big|_2\big),
\end{aligned}\\
&\begin{aligned}\label{qq12-t-2}
\big|\chi^\sharp((g^{(1)})_t,(h^{(1)})_t)\big|_2&\leq C(\big|\chi^\sharp\rho_0^\frac{\delta}{2}\sD_{\bar\eta} U\big|_2+1)|\sD_{\bar\eta} \bar U|_\infty^2+ C\big|\chi^\sharp\rho_0^{\frac{\delta}{2}}\sD_{\bar\eta} U_t)\big|_2|\sD_{\bar\eta} \bar U|_\infty\\
&\quad +C\big|\chi^\sharp\rho_0^\frac{\gamma-1}{2}\sD_{\bar\eta}\bar U_t\big|_2\Big|\chi^\sharp\rho_0^\frac{\delta-(\gamma-1)}{2}\big(D_{\bar\eta}U+\frac{2a_2}{2a_1+a_2}\frac{U}{\bar \eta}\big)\Big|_\infty\\
&\quad+C\big|\chi^\sharp\rho_0^{\frac{\delta}{2}-\frac{3}{4}(\gamma-1)}U\big|_4\big|\rho_0^{\frac{3}{4}(\gamma-1)}D_{\bar\eta}\bar U_t\big|_4\\
&\leq C(\big|\chi^\sharp\rho_0^\frac{\delta}{2}\sD_{\bar\eta} U\big|_2+1)|\sD_{\bar\eta} \bar U|_\infty^2+ C\big|\chi^\sharp\rho_0^{\frac{\delta}{2}}\sD_{\bar\eta} U_t)\big|_2|\sD_{\bar\eta} \bar U|_\infty\\
&\quad +C\big|\chi^\sharp\rho_0^\frac{3(\gamma-1)}{2}(\bar U_t,D_{\bar\eta}\bar U_t,D_{\bar\eta}^2\bar U_t)\big|_2\Big|\chi^\sharp\rho_0^\frac{\delta-(\gamma-1)}{2}\big(D_{\bar\eta}U+\frac{2a_2}{2a_1+a_2}\frac{U}{\bar \eta}\big)\Big|_\infty\\
&\quad+C\big|\chi^\sharp\rho_0^{\frac{\delta}{2}}(U,D_{\bar\eta}U)\big|_2\big|\rho_0^{\frac{3}{2}(\gamma-1)}(\bar U_t,D_{\bar\eta}\bar U_t,D_{\bar\eta}^2 \bar U_t)\big|_2\\
&\leq  C\mathfrak{p}(c_0)\big(\mathfrak{p}(c_1)+\bar\cD(t,\bar U)^\frac{1}{2}+\mathfrak{p}(c_1)\big|(r^2\rho_0^\delta)^\frac{1}{2}\sD_{\bar\eta} U_t\big|_2\big).
\end{aligned}
\end{align}

\textbf{Step 2.} Now, multiplying \eqref{33151} by $U_{tt}$ and integrating over $I$, we have
\begin{equation}\label{I0*3}
\begin{aligned}
&\frac{1}{2}\frac{\mathrm{d}}{\mathrm{d}t}J_2+\big|(r^2\rho_0)^\frac{1}{2}U_{tt}\big|_2^2=\frac{\mathrm{d}}{\mathrm{d}t}J_4-  J_5,
\end{aligned}
\end{equation}
with
\begin{equation*}
\begin{aligned}
J_4&:=\int_0^1 (r^2\rho_0^\delta)^\frac{1}{2}\big(g^{(1)}\frac{2 U_t}{\bar\eta}+h^{(1)}D_{\bar\eta} U_t\big)\,\mathrm{d}r,\\
J_5&:=\int_0^1 \bar\eta^2\bar\eta_r\bar\varrho^\delta\Big((2a_1+a_2)D_{\bar\eta}U_t^2D_{\bar\eta}\bar U+2a_2 D_{\bar\eta}U_tD_{\bar\eta}\bar U\frac{U_t}{\bar\eta}\Big)\,\mathrm{d}r\\
&\quad +\int_0^1 \bar\eta^2\bar\eta_r\bar\varrho^\delta\Big(2a_2 D_{\bar\eta}U_t\frac{\bar U}{\bar\eta}\frac{U_t}{\bar\eta}+4(a_1+a_2)\frac{U_t^2}{\bar\eta^2}\frac{\bar U}{\bar\eta} \Big)\,\mathrm{d}r\\
&\quad -\int_0^1 (r^2\rho_0^\delta)^\frac{1}{2}\Big(\big(g^{(1)}\frac{\bar U}{\bar\eta}-(g^{(1)})_t\big)\frac{2 U_t}{\bar\eta}+(h^{(1)}D_{\bar\eta}\bar U-(h^{(1)})_t) D_{\bar\eta} U_t\big)\Big)\,\mathrm{d}r\\
&\quad -\frac{1-\delta}{2}\int_0^1\bar\eta^2\bar\eta_r\bar\varrho^\delta(D_{\bar\eta}\bar U+2\frac{\bar U}{\bar \eta})   
\Big((2a_1+a_2)D_{\bar\eta}U_t^2 +4a_2 D_{\bar\eta}U_t\frac{U_t}{\bar\eta}+4(a_1+a_2)\frac{U_t^2}{\bar\eta^2}\Big)\,\mathrm{d}r. 
\end{aligned}
\end{equation*}
For $J_2$, we have obtained in \eqref{J2} that
\begin{equation*}
        J_2\geq c^*_{a_1,a_2}\big|(r^2\rho_0^\delta)^\frac{1}{2}\mathscr{D}_{\bar\eta} U_t\big|_2^2,
\end{equation*}
and $J_4$ can be handled by
\begin{equation}\label{I*3}
|J_4|\leq C\varepsilon^{-1}\mathfrak{p}(c_0)+\varepsilon|(r^2\rho_0^\delta)^\frac{1}{2}\mathscr{D}_{\bar\eta} U_t\big|_2^2\qquad\text{for all $\varepsilon\in(0,1)$}.
\end{equation}
For $J_5$, it follows from \eqref{qq12}--\eqref{qq12-t-2}, Lemma \ref{lemma-useful1}, and the H\"older and Young inequalities that
\begin{equation}\label{I1*3}
\begin{aligned}
J_5&\leq C\big(\big|\sD_{\bar\eta}\bar U\big|_\infty|(r^2\rho_0^\delta)^\frac{1}{2}\mathscr{D}_{\bar\eta} U_t\big|_2 +\big|\sD_{\bar\eta}\bar U\big|_\infty|(g^{(1)},h^{(1)})|_2\!+|((g^{(1)})_t,(h^{(1)})_t)|_2\big)|(r^2\rho_0^\delta)^\frac{1}{2}\mathscr{D}_{\bar\eta} U_t\big|_2\\
&\leq C\mathfrak{p}(c_1)\big(1+|(r^2\rho_0^\delta)^\frac{1}{2}\mathscr{D}_{\bar\eta} U_t\big|_2^2\big)+C\mathfrak{p}(c_0)\bar\cD(t,\bar U)^\frac{1}{2}|(r^2\rho_0^\delta)^\frac{1}{2}\mathscr{D}_{\bar\eta} U_t\big|_2.
\end{aligned}
\end{equation}
Thus, substituting \eqref{I*3}--\eqref{I1*3} into \eqref{I0*3} with $\varepsilon$ sufficiently small, we can deduce from the Gr\"onwall inequality that, for all $t\in[0,T_2]$,
\begin{equation*}
|(r^2\rho_0^\delta)^\frac{1}{2}\mathscr{D}_{\bar\eta} U_t\big|_2^2+\int_0^t \big|(r^2\rho_0)^\frac{1}{2}U_{tt}\big|_2^2\,\mathrm{d}s\leq C\mathfrak{p}(c_0)\Big(1+\int_0^t \bar\cD(s,\bar U)^\frac{1}{2}|(r^2\rho_0^\delta)^\frac{1}{2}\mathscr{D}_{\bar\eta} U_t\big|_2\,\mathrm{d}s\Big).
\end{equation*}

To further simplified the above inequality, define
\begin{equation*}
\cY(t)=C\mathfrak{p}(c_0)\Big(1+\int_0^t \bar\cD(s,\bar U )^\frac{1}{2}|(r^2\rho_0^\delta)^\frac{1}{2}\mathscr{D}_{\bar\eta} U_t\big|_2\,\mathrm{d}s\Big).
\end{equation*}
Then $\cF(t)\leq \cY(t)$ and 
\begin{equation*}
\cY'(t)=C\mathfrak{p}(c_0) \bar\cD(t,\bar U )^\frac{1}{2}|(r^2\rho_0^\delta)^\frac{1}{2}\mathscr{D}_{\bar\eta} U_t\big|_2 \leq C\mathfrak{p}(c_0) \bar\cD(t,\bar U )^\frac{1}{2}\cY(t)^\frac{1}{2}.
\end{equation*}
Clearly, this, together with \eqref{39}, implies that, for all $t\in[0,T_2]$, 
\begin{equation*}
\cY^\frac{1}{2}(t)\leq \cY^\frac{1}{2}(0)+C\mathfrak{p}(c_0)\int_0^t\bar\cD(s,\bar U)^\frac{1}{2}\,\mathrm{d}s\leq C\mathfrak{p}(c_0)^\frac{1}{2}+C\mathfrak{p}(c_0)(c_1t)^\frac{1}{2}\leq C\mathfrak{p}(c_0),
\end{equation*}
which yields that, for all $t\in[0,T_2]$,
\begin{equation}\label{11130}
|(r^2\rho_0^\delta)^\frac{1}{2}\mathscr{D}_{\bar\eta} U_t\big|_2^2+\int_0^t \big|(r^2\rho_0)^\frac{1}{2}U_{tt}\big|_2^2\,\mathrm{d}s\leq C\mathfrak{p}(c_0).
\end{equation}

\smallskip
\textbf{Step 3.} Since we have shown that $U$ satisfies \eqref{Energy-Id} with $(w,g,h)$ replaced by $(U_t,g^{(1)},h^{(1)})$ in Step 4 of \S\ref{subsection3.3}, we have
\begin{equation}\label{O-4}
\begin{aligned}
&\,\frac{1}{2}\frac{\mathrm{d}}{\dt}\big|(r^2\rho_0)^\frac{1}{2}U_{tt}\big|_2^2+J_6=J_7+J_8,
\end{aligned}    
\end{equation}
where $J_6$ is defined in the same way as $L_1$ in \eqref{def-L1L2}, except with $(X,Y)=\mathscr{D}_{\bar\eta} U_{tt}$, and
\begin{equation}
\begin{aligned}
J_7&:=(2a_1+a_2)\int_0^1\bar\eta^2\bar\eta_r\bar\varrho^{\delta}\Big(-(\delta+1)D_{\bar\eta}\bar U+(2-2\delta)\frac{\bar U}{\bar\eta}\Big)D_{\bar\eta}U_t D_{\bar\eta}U_{tt}\,\mathrm{d}r\\
&\quad +4(a_1+a_2)\int_0^1\bar\eta^2\bar\eta_r\bar\varrho^{\delta}\Big((1-\delta)D_{\bar\eta}\bar U- 2\delta \frac{\bar U}{\bar\eta} \Big) \frac{U_t}{\bar\eta}\frac{U_{tt}}{\bar\eta}\,\mathrm{d}r\\
&\quad + 2a_2\int_0^1 \bar\eta^2\bar\eta_r\bar\varrho^{\delta}\Big(- \delta D_{\bar\eta}\bar U+(1-2\delta)\frac{\bar U}{\bar\eta}\Big)\Big(\frac{U_t}{\bar\eta} D_{\bar\eta}U_{tt}+D_{\bar\eta}U_t\frac{U_{tt}}{\bar\eta}\Big)\,\mathrm{d}r,  \\
J_8&:=\int_0^1(r^2\rho_0^{\delta})^\frac{1}{2} \Big(\big(g^{(1)}_t- g^{(1)} \frac{\bar U}{\bar\eta}\big) \frac{2U_{tt}}{\bar\eta} + \big(h^{(1)}_t- h^{(1)} D_{\bar\eta}\bar U\big)D_{\bar\eta}U_{tt}\Big)\,\mathrm{d}r.
\end{aligned}    
\end{equation}

For $J_6$, similarly to \eqref{L-1}, we can find a constant $c^*_{a_1,a_2}>0$ such that 
\begin{equation}\label{J6}
    J_6\geq c^*_{a_1,a_2}\big|(r^2\rho_0^\delta)^\frac{1}{2}\mathscr{D}_{\bar\eta} U_{tt}\big|_2^2.
\end{equation}

For $J_7$--$J_8$, it follows from \eqref{qq12}--\eqref{qq12-t-2}, \eqref{11130}, Lemma \ref{lemma-useful1}, and the H\"older and Young inequalities that  
\begin{align}
&\begin{aligned}\label{i5}
J_7&\leq C\big|\sD_{\bar\eta}\bar U\big|_\infty\big|(r^2\rho_0^\delta)^\frac{1}{2}\sD_{\bar\eta} U_{t}\big|_2\big|(r^2\rho_0^\delta)^\frac{1}{2}\sD_{\bar\eta} U_{tt}\big|_2\leq C\mathfrak{p}(c_1)+\frac{c_{a_1,a_2}^*}{100}\big|(r^2\rho_0^\delta)^\frac{1}{2}\sD_{\bar\eta} U_{tt}\big|_2^2,
\end{aligned}\\
&\begin{aligned}\label{i6}
J_8&\leq C\big(\big|\sD_{\bar\eta}\bar U\big|_\infty|(g^{(1)},h^{(1)})|_2\!+|((g^{(1)})_t,(h^{(1)})_t)|_2\big)\big|(r^2\rho_0^\delta)^\frac{1}{2}\sD_{\bar\eta} U_{tt}\big|_2\\
&\leq C(\mathfrak{p}(c_1)+\bar\cD(t,\bar U))+\frac{c_{a_1,a_2}^*}{100}\big|(r^2\rho_0^\delta)^\frac{1}{2}\sD_{\bar\eta} U_{tt}\big|_2^2.
\end{aligned}
\end{align}

Thus, plugging \eqref{J6}--\eqref{i6} into \eqref{O-4}, we obtain from \eqref{39} that
\begin{equation}\label{new utt}
\begin{aligned}
&\frac{\mathrm{d}}{\dt}\big|(r^2\rho_0)^\frac{1}{2}U_{tt}\big|_2^2+ \big|(r^2\rho_0^\delta)^\frac{1}{2}\mathscr{D}_{\bar\eta} U_{tt}\big|_2^2\leq C(\mathfrak{p}(c_1)+\bar\cD(t,\bar U)).
\end{aligned}
\end{equation}
Multiplying \eqref{new utt} by $t$ and  integrating over $[\tau,t]$ with $\tau\in (0,t)$, along with \eqref{39} and \eqref{11130}, gives
\begin{equation}\label{tauk}
t\big|(r^2\rho_0)^\frac{1}{2}U_{tt}(t)\big|_2^2+\int_\tau^t s\big|(r^2\rho_0)^\frac{1}{2}\sD_\eta U_{tt}\big|_2^2\,\ds\leq \tau\big|(r^2\rho_0)^\frac{1}{2}U_{tt}(\tau)\big|_2^2+C\mathfrak{p}(c_1).    
\end{equation}
Thanks to \eqref{11130} and Lemma \ref{bjr}, we can find a sequence $\{\tau_k\}_{k=1}^\infty$ such that $\tau_k\to 0$ and $\tau_k|(r^2\rho_0)^\frac{1}{2}U_{tt}(\tau_k)|_2\to 0$ as $k\to\infty$. Hence, taking $\tau=\tau_k$ in \eqref{tauk} and then letting $k\to\infty$, we finally obtain the desired estimate.
\end{proof}

\begin{Lemma}\label{c_1-c_2}
For any $t\in [0,T_2]$,
\begin{equation}
\bar\cE(t,U)+\big|(U,\sD_{\bar\eta}U\big)(t)\big|_\infty\leq C\mathfrak{p}(c_0).
\end{equation}
\end{Lemma}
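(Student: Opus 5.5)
Since Lemmas \ref{c_0-c_1}--\ref{c_1} already control all tangential pieces of $\bar\cE(t,U)$, namely $\rho_0^{\frac12}(U,U_t)$ and $\rho_0^{\frac{\delta}{2}}(D_{\bar\eta}U,D_{\bar\eta}U_t)$ with the weight $r$, the remaining work is elliptic. We must bound $\zeta r(\sD_{\bar\eta}^2U,\sD_{\bar\eta}^3U)$ in the interior and $\chi^\sharp\rho_0^{(\gamma-1)(\frac32-\varepsilon_0)}(D_{\bar\eta}^2U,D_{\bar\eta}^3U)$ near the boundary. Once these are available, the $L^\infty$ bound on $(U,\sD_{\bar\eta}U)$ follows exactly as in Lemma \ref{lemma-useful1}: Lemmas \ref{sobolev-embedding}--\ref{hardy-inequality} give $|(U,\sD_{\bar\eta}U)|_\infty\le C|(\cdot)|_{W^{1,1}}\le C\bar\cE(t,U)^{\frac12}$.

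\textbf{Interior.} The second-order interior bound is already contained in \eqref{1197}. For third order, apply $D_{\bar\eta}$ (and the $\frac{1}{\bar\eta}$-variant making up $\sD_{\bar\eta}^3$) to \eqref{new-lp*}. The left side becomes $D_{\bar\eta}^2(D_{\bar\eta}U+\frac{2U}{\bar\eta})$. By the identities listed for $\sD_\eta$ together with Lemma \ref{im-1} (the interior div-type elliptic estimate, applied as in \eqref{1197} but to $\sD_{\bar\eta}U$), this controls $|\zeta r\sD_{\bar\eta}^3U|_2$ up to lower-order terms. The right side involves $\zeta r\sD_{\bar\eta}U_t$, which is bounded by Lemma \ref{c_1}. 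It also involves $\sD_{\bar\eta}\bar\Lambda$, $\sD_{\bar\eta}\overline{D_\eta\Phi}$ and $\bar\varrho^{1-\delta}$ multiplied by lower derivatives of $U$. These are handled with Lemma \ref{lemma-Lambda-guji}, using $|\zeta_a r^{\frac12}\sD_{\bar\eta}\bar\Lambda|_4$ paired with the bound \eqref{11123} on $|\zeta r^{\frac12}\sD_{\bar\eta}U|_4$. Cut-off commutators from $\zeta_r$ are supported in $[\frac12,\frac58]$ and are absorbed by the exterior bounds.

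\textbf{Exterior.} Start from \eqref{DDv}--\eqref{l10}. With \eqref{xingxing} in place of \eqref{L2Linfty}, the computation \eqref{DDv-L2} gives $|\chi^\sharp\rho_0^{(\gamma-1)(\frac32-\varepsilon_0)}D_{\bar\eta}^2U|_2\le C\mathfrak{p}(c_0)$. The key point is that the singular factor $\frac{(\rho_0^{\gamma-1})_r}{\rho_0^{\gamma-1}}$ multiplies the combination $D_{\bar\eta}U+\frac{2a_2}{2a_1+a_2}\frac{U}{\bar\eta}$, which vanishes like $\rho_0^{\frac{\gamma-1-\delta}{2}}$.

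For $D_{\bar\eta}^3U$, multiply \eqref{new-lp*} by $\bar\varrho^{\gamma-1}$ and apply $\bar\varrho^{-(\gamma-1)}\bar\eta_rD_{\bar\eta}$. This yields the cross-derivative equation $\widetilde\cT_{\mathrm{cross}}=(D_{\bar\eta}^2U)_r+(\frac{\delta}{\gamma-1}+1)\frac{(\rho_0^{\gamma-1})_r}{\rho_0^{\gamma-1}}D_{\bar\eta}^2U=L_{11}$, as in Step 6 of \S\ref{subsection3.3}. We estimate $\chi^\sharp\rho_0^{(\gamma-1)(\frac32-\varepsilon_0)}L_{11}$ in $L^2$ using the following ingredients:
\begin{itemize}
\item Lemma \ref{c_1} for $\rho_0^{\frac{\delta}{2}}D_{\bar\eta}U_t$ and $\rho_0^{\frac12}U_t$; the extra weight $\rho_0^{(\gamma-1)(\frac32-\varepsilon_0)+1-\delta}$ is more than enough for these;
\item Lemma \ref{lemma-Lambda-guji} for $D_{\bar\eta}\bar\Lambda$;
\item the second-order bound just obtained;
\item \eqref{xingxing} for the term $\frac{D_{\bar\eta}^2\bar\varrho^{\gamma-1}}{\bar\varrho^{\gamma-1}}\big(D_{\bar\eta}U+\cdots\big)$.
\end{itemize}
Lemma \ref{prop2.1} then converts control of $\widetilde\cT_{\mathrm{cross}}$ into control of $\rho_0^{(\gamma-1)(\frac32-\varepsilon_0)}D_{\bar\eta}^3U$. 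Its a priori qualitative hypothesis is supplied by the regularity from Lemma \ref{existence-linearize}. This works because the coefficient $\frac{\delta}{\gamma-1}+1$ has the right sign relative to the weight exponent. Finally, close with $T_2$ small so that every $\mathfrak{p}(c_1)t$ factor is at most $1$.

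\textbf{Main obstacle.} I expect the hardest step to be the weighted third-order boundary estimate. One must check that every term of $L_{11}$ carries enough $\rho_0$-weight, that is, that $(\gamma-1)(\frac32-\varepsilon_0)-(\gamma-1)+\frac{\gamma-1-\delta}{2}>-\frac12$. This is where the restrictions $\gamma>\frac43$ and the choice of $\varepsilon_0$ in \eqref{varepsilon0} enter. One must also verify that the constant depends only on $c_0$, not on $c_1$, which requires pairing each $\bar U$-dependent factor with small time, as in Lemma \ref{lemma-Lambda-guji}.
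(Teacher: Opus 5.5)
Your interior estimate and your exterior second-order estimate agree with the paper: \eqref{11112}, and \eqref{new-lp**} with \eqref{xingxing} giving \eqref{11139}. The exterior third-order step, however, fails as stated.

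\textbf{The gap.} Your cross-derivative equation is Lemma~\ref{prop2.1} with $\varphi=\rho_0^{\gamma-1}$, $f=D_{\bar\eta}^2U$, $c=1+\frac{\delta}{\gamma-1}$, and your target weight forces $b=\frac32-\varepsilon_0$.
\begin{itemize}
\item Hypothesis \eqref{con2.9} requires $b\le\frac{c+1}{2}=1+\frac{\delta}{2(\gamma-1)}$, i.e.\ $(1-2\varepsilon_0)(\gamma-1)\le\delta$. This fails on most of the range \eqref{gammadelta}, for example $\gamma=2$ with $\delta\le\frac{49}{50}$.
\item When it fails, the admissible interval $[b,\frac{c+1}{2}]$ for $q$ is empty. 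In \eqref{2004} the term $(2b-1-c)c\int_0^1(\zeta^\sharp)^2\varphi^{2b-2}\varphi_r^2f^2\,\mathrm{d}r$ is then positive and, by Hardy, of the same order as $|\zeta^\sharp\varphi^bf_r|_2^2$, so it cannot be absorbed. The ``right sign'' you invoke is exactly the one that breaks.
\item This is not a technicality. The combination $\varphi^bf_r+c\varphi^{b-1}\varphi_rf=\varphi^{b-c}(\varphi^cf)_r$ annihilates $f=\varphi^{-c}$. In this regime $\delta<(1-\varepsilon_0)(\gamma-1)$, so this $f$ makes the right side of \eqref{con2.11} finite and the left side infinite. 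Any valid argument must use extra qualitative information to exclude it.
\end{itemize}

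\textbf{Repair within your route.} Apply Lemma~\ref{prop2.1} with the admissible exponent $b'=\min\{\frac32-\varepsilon_0,\,1+\frac{\delta}{2(\gamma-1)}\}$, then use $\rho_0^{(\gamma-1)(\frac32-\varepsilon_0)}\le C\rho_0^{(\gamma-1)b'}$.
\begin{itemize}
\item At the weight $\rho_0^{\gamma-1+\frac{\delta}{2}}$, the singular factors $\frac{(\bar\varrho^{\gamma-1})_r}{\bar\varrho^{\gamma-1}}$ and $\frac{D^2_{\bar\eta}(\bar\varrho^{\gamma-1})}{\bar\varrho^{\gamma-1}}$ in $L_{11}$ leave only $\rho_0^{\frac{\delta}{2}}(U,D_{\bar\eta}U,S)$ and $\rho_0^{1-\frac{\delta}{2}}U_t$, where $S:=D_{\bar\eta}U+\frac{2a_2}{2a_1+a_2}\frac{U}{\bar\eta}$. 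Lemmas~\ref{c_0-c_1}--\ref{c_1} control these, and your second-order bound handles the remaining terms.
\item The qualitative hypothesis with $q=b'$ holds for a.e.\ $t$, from $\bar\cE(t,U)+\bar\cD(t,U)<\infty$ (Lemma~\ref{existence-linearize}) and Lemma~\ref{hardy-inequality}.
\end{itemize}

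\textbf{The paper's route.} The paper avoids the cross-derivative lemma at this order altogether.
\begin{itemize}
\item It bootstraps the integrated identity \eqref{old3...136} into \eqref{cal-1'} with $\iota+\sigma=(\frac12-\varepsilon_0)(\gamma-1)$, using Hardy and $(\frac32-\varepsilon_0)(\gamma-1)>\frac12$. This gives the improved decay \eqref{4015} of $S$.
\item It feeds this into \eqref{new-lp**} to obtain \eqref{4016}, which controls $D_{\bar\eta}^2U$ with a full factor $\rho_0^{\gamma-1}$ less weight than in $\bar\cE$.
\item It then reads $D_{\bar\eta}^3U$ off \eqref{new-lp*3} directly, treating $\frac{\bar\Lambda}{\bar\varrho^{\gamma-1}}D^2_{\bar\eta}U$ and $\frac{\bar\Lambda^2}{\bar\varrho^{2(\gamma-1)}}S$ as sources.
\end{itemize}

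In either version, the weight count for $L_{11}$ that you flag as the main obstacle is routine. The real constraint is the compatibility between the weight exponent and the cross coefficient.
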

\begin{proof}
We divide the proof into three steps.

\smallskip
\textbf{Step 1. Boundedness of $\bar\cE_{\mathrm{in}}(t,U)$.} It only remains to establish the third-order elliptic estimate for $U$ near the origin. First, it follows from \eqref{new-lp} that
\begin{equation}\label{new-lp*3}
\begin{aligned}
&\,D_{\bar\eta}^2\big(D_{\bar\eta} U+ \frac{2U}{\bar\eta}\big)\\
&=\frac{1}{2a_1+ a_2}\big(\bar\varrho^{1-\delta} D_{\bar\eta}U_t+\frac{1-\delta}{\gamma-1}\Lambda\bar\varrho^{2-\gamma-\delta} U_t\big)-\frac{\delta}{\gamma-1}\frac{\bar\Lambda}{\bar\varrho^{\gamma-1}} \big(D_{\bar\eta}^2 U+ \frac{2a_2}{2a_1+ a_2}  D_{\bar\eta}\big(\frac{U}{\bar\eta}\big)\big)\\
&\quad -\frac{\delta}{\gamma-1}\big(\frac{D_{\bar\eta}\bar\Lambda}{\bar\varrho^{\gamma-1}}+\frac{\bar\Lambda^2}{\bar\varrho^{2(\gamma-1)}}\big) \big(D_{\bar\eta} U+ \frac{2a_2}{2a_1+ a_2}  \frac{U}{\bar\eta}\big)\\
&\quad +\frac{A}{2a_1+ a_2}\frac{\gamma}{\gamma-1}\big(D_{\bar\eta}\bar\Lambda \bar\varrho^{1-\delta}+\frac{1-\delta}{\gamma-1}\bar\Lambda^2\bar\varrho^{2-\delta-\gamma}\big)+ \frac{1}{2a_1+ a_2}D_{\bar\eta}(\bar\varrho^{1-\delta}\overline{D_\eta\Phi}).
\end{aligned}
\end{equation}
Then it follows from the fact that $\rho_0^{\gamma-1}\sim 1-r$, \eqref{38}--\eqref{39}, \eqref{new-lp*}--\eqref{11123}, \eqref{new-lp*3}, and Lemmas \ref{lemma-Lambda-guji}--\ref{c_0-c_1}, \ref{hardy-inequality}, and \ref{im-1} that, for all $t\in [0,T_2]$, 
\begin{equation}\label{11112}
\begin{aligned}
\big|\zeta r\sD_{\bar\eta}^3 U\big|_2&\leq C\Big|\zeta r D_{\bar\eta}^2\big(D_{\bar\eta} U+ \frac{2U}{\bar\eta}\big)\Big|_2+\Big|\zeta r \frac{1}{\bar\eta}D_{\bar\eta}\big(D_{\bar\eta} U+ \frac{2U}{\bar\eta}\big) \Big|_2\\
&\leq C\mathfrak{p}(c_0)\big(\big|\zeta r\sD_{\bar\eta}U_t\big|_2+\big|\zeta_\frac{5}{8} r^\frac{1}{2}\sD_{\bar\eta}\bar\Lambda\big|_4 |\zeta r^\frac{1}{2} \sD_{\bar\eta} U|_4 \big)\\
&\quad +C\mathfrak{p}(c_0)\big(\big|\zeta r(\sD_{\bar\eta}^2 U,\sD_{\bar\eta}\bar\Lambda)\big|_2+|\sD_{\bar\eta}\overline{D_\eta\Phi}|_\infty+1\big)\leq C\mathfrak{p}(c_0).
\end{aligned}
\end{equation} 

\smallskip
\textbf{Step 2. Boundedness of $\bar\cE_{\mathrm{ex}}(t,U)$.} It only remains to establish the second- and third-order elliptic estimates for $U$ away from the origin.

\smallskip
\textbf{2.1.} First, rewrite \eqref{new-lp*} as
\begin{equation}\label{new-lp**}
\begin{aligned}
D_{\bar\eta}^2 U &=-2D_{\bar\eta}(\frac{U}{\bar\eta})-\frac{\delta}{\gamma-1}\frac{\bar\Lambda}{\bar\varrho^{\gamma-1}} \Big(D_{\bar\eta} U+ \frac{2a_2}{2a_1+ a_2}  \frac{U}{\bar\eta}\Big)+\frac{1}{2a_1+ a_2}\bar\varrho^{1-\delta} U_t\\
&\quad +\frac{A}{2a_1+ a_2}\frac{\gamma}{\gamma-1}\bar\Lambda \bar\varrho^{1-\delta}+ \frac{1}{2a_1+ a_2}\bar\varrho^{1-\delta}\overline{D_\eta\Phi}.
\end{aligned}
\end{equation}
Then it follows from the above, \eqref{38}--\eqref{39}, \eqref{xingxing}, and Lemmas \ref{lemma-Lambda-guji}--\ref{c_0-c_1} that, for all $\varepsilon>0$,
\begin{equation}
\begin{aligned}
\big|\chi^\sharp \rho_0^{\frac{\delta}{2}+\varepsilon}D_{\bar\eta}^2 U\big|_2&\leq C\big|\chi^\sharp \rho_0^{\frac{\delta}{2}+\varepsilon}(\rho_0^{1-\delta} U_t,U,D_{\bar\eta}U)\big|_2\\
&\quad +C\big|\chi^\sharp \rho_0^{-\frac{\gamma-1}{2}+\varepsilon}\bar\Lambda\big|_2\Big|\chi^\sharp\rho_0^\frac{\delta-(\gamma-1)}{2}\big(D_{\bar\eta} U+\frac{2a_2}{2a_1+a_2}\frac{U}{\bar \eta}\big)\Big|_\infty \\
&\quad +C\big|\chi^\sharp\rho_0^{1-\frac{\delta}{2}+\varepsilon}\big|_2(|\bar\Lambda|_\infty+|\overline{D_\eta\Phi}|_\infty)\leq C(\varepsilon)\mathfrak{p}(c_0).
\end{aligned}
\end{equation}
Clearly, since $(\frac{3}{2}-\varepsilon_0)(\gamma-1)>\frac{\delta}{2}$, the above also leads to
\begin{equation}\label{11139}
\big|\chi^\sharp \rho_0^{(\gamma-1)(\frac{3}{2}-\varepsilon_0)}D_{\bar\eta}^2 U\big|_2 \leq C\mathfrak{p}(c_0).
\end{equation}

\smallskip
\textbf{2.2.} 
Next, Indeed, using an argument similar to \eqref{L2Linfty}--\eqref{3...136}, we can obtain that,  for all $\iota<\delta+\frac{\gamma-1}{2}$ and $\sigma>0$, and for all $t\in[0,T_2]$,
\begin{equation}\label{cal-1'}
\Big|\chi^\sharp\rho_0^{\iota-(\gamma-1)+\sigma}\big(D_{\bar\eta} U+\frac{2a_2}{2a_1+a_2}\frac{U}{\bar \eta}\big)\Big|_2\leq C(\sigma,\iota)\mathfrak{p}(c_0)\big(1+\big|\chi^\sharp\rho_0^\iota\big(U,D_{\bar\eta} U,\rho_0^{1-\delta}U_t\big)\big|_2\big).
\end{equation}
Then, due to the facts that
\begin{equation*}
\varepsilon_0<\frac{1}{100}, \qquad \big(\frac{3}{2}-\varepsilon_0\big)(\gamma-1)>\frac{1}{2},
\end{equation*}
we can choose fixed $(\iota,\sigma)$ in \eqref{cal-1'} such that
\begin{equation*}
\iota+\sigma=\big(\frac{1}{2}-\varepsilon_0\big)(\gamma-1),\qquad \iota < \delta+\frac{\gamma-1}{2}, \qquad 0<\sigma<\big(\frac{3}{2}-\varepsilon_0\big)(\gamma-1)-\frac{1}{2}.
\end{equation*}
Hence, it follows from the above, \eqref{11139}, and Lemmas \ref{c_0-c_1}--\ref{c_1} and \ref{hardy-inequality} that
\begin{equation}\label{4015}
\begin{aligned}
&\Big|\chi^\sharp\rho_0^{-(\frac{1}{2}+\varepsilon_0)(\gamma-1)}\big(D_{\bar\eta} U+\frac{2a_2}{2a_1+a_2}\frac{U}{\bar \eta}\big)\Big|_2\leq C\mathfrak{p}(c_0)\big(1+\big|\chi^\sharp\rho_0^{(\gamma-1)(\frac{1}{2}-\varepsilon_0)-\sigma}(U,D_{\bar\eta} U,\rho_0^{1-\delta}U_t)\big|_2\big)\\
&\leq C\mathfrak{p}(c_0)\big(1+\big|\chi^\sharp\rho_0^{(\gamma-1)(\frac{3}{2}-\varepsilon_0)-\sigma}(U,D_{\bar\eta} U,D_{\bar\eta}^2 U,U_t,D_{\bar\eta} U_{t})\big|_2\big)\leq C\mathfrak{p}(c_0).
\end{aligned}    
\end{equation}

Finally, based on \eqref{new-lp**}, using an argument similar to \eqref{L2Linfty}--\eqref{3...136}, together with the fact $(\gamma-1)(\frac{3}{2}-\varepsilon_0)>\frac{1}{2}$, \eqref{4015}, and Lemmas \ref{lemma-Lambda-guji}--\ref{c_1} and \ref{hardy-inequality}, yields that, for all $0\leq t\leq T_2$,
\begin{equation}\label{4016}
\begin{aligned}
\big|\chi^\sharp\rho_0^{(\gamma-1)(\frac{1}{2}-\varepsilon_0)}D_{\bar\eta}^2 U\big|_2&\leq C\big(\big|\chi^\sharp\rho_0^{(\gamma-1)(\frac{1}{2}-\varepsilon_0)}(U,D_{\bar\eta}U,U_t)\big|_2+
|(\bar\Lambda,\overline{D_\eta\Phi})|_\infty\big)  \\
&\quad+ C\mathfrak{p}(c_0)\Big|\chi^\sharp\rho_0^{-(\frac{1}{2}+\varepsilon_0)(\gamma-1)}\big(D_{\bar\eta} U+\frac{2a_2}{2a_1+a_2}\frac{U}{\bar \eta}\big)\Big|_2|\bar\Lambda|_\infty\\    
&\leq C\mathfrak{p}(c_0)\big(\big|\chi^\sharp\rho_0^{(\gamma-1)(\frac{3}{2}-\varepsilon_0)}(U,D_{\bar\eta}U,D_{\bar\eta}^2 U,U_t,D_{\bar\eta} U_t)\big|_2+1\big)\leq C\mathfrak{p}(c_0).
\end{aligned}
\end{equation}

\smallskip
\textbf{2.3.} Based on \eqref{new-lp*3}, we obtain from \eqref{11139}--\eqref{4016} and Lemmas \ref{lemma-Lambda-guji}--\ref{c_1} and \ref{hardy-inequality}, that, for all $t\in[0,T_2]$,
\begin{equation}\label{3369}
\begin{aligned}
&\,\big|\chi^\sharp\rho_0^{(\gamma-1)(\frac{3}{2}-\varepsilon_0)}D_{\bar\eta}^3 U\big|_2\\
&\leq C\big|\chi^\sharp \rho_0^{(\gamma-1)(\frac{3}{2}-\varepsilon_0)}(U,D_{\bar\eta} U,D_{\bar\eta}^2 U, U_t,D_{\bar\eta} U_{t})\big|_2+C\big|\chi^\sharp \rho_0^{(\gamma-1)(\frac{1}{2}-\varepsilon_0)}(D_{\bar\eta}^2 U, D_{\bar\eta}U,U)\big|_2|\Lambda|_\infty\\
&\quad +C\big(|\chi^\sharp  D_{\bar\eta}\bar\Lambda|_\infty+|\bar\Lambda|_\infty^2\big) \Big|\chi^\sharp \rho_0^{-(\frac{1}{2}+\varepsilon_0)(\gamma-1)}\big( D_{\bar\eta} U+ \frac{2a_2}{2a_1+ a_2}  \frac{U}{\bar\eta}\big)\Big|_2\\
&\quad  +C\big(|\chi^\sharp D_{\bar\eta}\bar\Lambda|_\infty+| \bar\Lambda|_\infty^2+|\bar\Lambda|_\infty|(\overline{D_\eta\Phi},D_{\bar\eta}(\overline{D_\eta\Phi}))|_\infty\big)\leq C\mathfrak{p}(c_0). 
\end{aligned}
\end{equation}

\smallskip
\textbf{Step 3.} Finally,  it follows from \eqref{1197}, \eqref{11112}, \eqref{11139}, \eqref{3369}, and Lemmas \ref{c_0-c_1}--\ref{c_1} and  \ref{sobolev-embedding}--\ref{hardy-inequality} that, for all $t\in[0,T_2]$,
\begin{equation*}
\begin{aligned}
&\,\big|(U,\sD_{\bar\eta}U)\big|_\infty \leq C\big|(U,\sD_{\bar\eta}U,\sD_{\bar\eta}^2U)\big|_1\\
&\leq C\sum_{j=0}^3 \big(\big|\chi rD_{\bar\eta}^j U\big|_2+\big|\chi^\sharp\rho_0^{(\gamma-1)(\frac{3}{2}-\varepsilon_0)}D_{\bar\eta}^j U\big|_2\big) +C\sum_{j=0}^2\Big|\zeta rD_{\bar\eta}^j(\frac{U}{\bar\eta})\Big|_2\leq C\mathfrak{p}(c_0).
\end{aligned}
\end{equation*}

This completes the proof.
\end{proof}

\begin{Lemma}\label{c_3}
For all $t\in[0,T_2]$,
\begin{equation*}
t\bar\cD(t,U)+\int_0^t\bar\cD(s,U)\,\mathrm{d}s \leq C\mathfrak{p}(c_0).
\end{equation*}
\end{Lemma}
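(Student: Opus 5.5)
The plan is to read off the components of $\bar\cD(t,U)$ one at a time, using the tangential bounds of Lemma \ref{c_1} together with elliptic-type estimates obtained from \eqref{new-lp*}, \eqref{new-lp*3} and their time derivatives. The tangential parts need no new work. Lemma \ref{c_1} already gives $t|(r^2\rho_0)^{\frac12}U_{tt}|_2^2\le C\mathfrak{p}(c_0)$ and $\int_0^t |(r^2\rho_0)^{\frac12}U_{tt}|_2^2\,\ds\le C\mathfrak{p}(c_0)$. This covers $\zeta r U_{tt}$ and $\rho_0^{\frac12}U_{tt}$ on $(\frac12,1)$.

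\textbf{Interior estimates.} For the interior, I apply $\partial_t$ to \eqref{new-lp*}. This expresses $\zeta r D_{\bar\eta}(D_{\bar\eta}U_t+2U_t/\bar\eta)$ in terms of three groups:
\begin{itemize}
\item[(a)] $\bar\varrho^{1-\delta}U_{tt}$;
\item[(b)] $\bar\Lambda\bar\varrho^{1-\gamma}\,\sD_{\bar\eta}U_t$;
\item[(c)] commutator terms carrying $\sD_{\bar\eta}\bar U$, $\bar\Lambda_t$ and $\sD_{\bar\eta}U$, $\sD_{\bar\eta}^2U$.
\end{itemize}
The weight $\bar\varrho^{1-\gamma}$ in (b) is harmless on $\mathrm{supp}\,\zeta$. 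Group (a) is bounded in $L^2$ by $|\zeta rU_{tt}|_2$. The commutators in (c) are bounded via Lemmas \ref{lemma-useful1}, \ref{lemma-Lambda-guji} and \ref{c_1-c_2}. Lemma \ref{im-1} then gives $|\zeta r\sD^2_{\bar\eta}U_t|_2\le C\mathfrak{p}(c_0)(1+|\zeta rU_{tt}|_2+\bar\cD(t,\bar U)^{\frac12})$. For $\zeta r\sD^4_{\bar\eta}U$, I apply $D_{\bar\eta}$ to \eqref{new-lp*3}, use Lemma \ref{im-1} once more, and control the right side by $\zeta r\sD_{\bar\eta}^2U_t$. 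Here the term $\zeta r\sD^2_{\bar\eta}\bar\Lambda$, which involves $\int_0^t\sD^4_{\bar\eta}\bar U$, is handled through Lemma \ref{lemma-Lambda-guji}. Multiplying by $t$ or integrating in time, and using \eqref{39} with $T_2$ small, yields the interior part.

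\textbf{Exterior estimates.} Near the boundary I multiply the $\partial_t$-version of \eqref{new-lp**} by $\chi^\sharp\rho_0^{(\gamma-1)(\frac32-\varepsilon_0)}$ to get $D^2_{\bar\eta}U_t$. The delicate term is $\bar\Lambda\bar\varrho^{1-\gamma}(D_{\bar\eta}U_t+\frac{2a_2}{2a_1+a_2}U_t/\bar\eta)$. I treat it exactly as in \eqref{cal-1'}--\eqref{4016}, now at the $U_t$ level: integrate the time-differentiated equation from $r$ to $1$, using the boundary condition in \eqref{div-Uxx-H}. This bounds $\rho_0^{-(\frac12+\varepsilon_0)(\gamma-1)}(D_{\bar\eta}U_t+\cdots)$ in $L^2$ by $\rho_0^{\frac12}U_{tt}$, the previous energy, and $\bar U_t$-commutators, the last of which are bounded by $\bar\cD(t,\bar U)^{\frac12}$.

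\textbf{Main obstacle: the top-order term $\rho_0^{(\gamma-1)(\frac32-\varepsilon_0)}D^4_{\bar\eta}U$.} Applying $D_{\bar\eta}$ directly loses a power of $\rho_0^{\gamma-1}$. Instead I use the cross-derivative structure \eqref{xxxx-l}: $\bar\cT_{\mathrm{cross}}=(D^3_{\bar\eta}U)_r+(\frac{\delta}{\gamma-1}+2)\frac{(\rho_0^{\gamma-1})_r}{\rho_0^{\gamma-1}}D^3_{\bar\eta}U$ equals a sum $J_{15},\dots,J_{18}$ of terms involving $D^2_{\bar\eta}U_t$, lower derivatives, $D^2_{\bar\eta}\bar\Lambda$ and $\Lambda$-products. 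I would estimate $\chi^\sharp\rho_0^{(\gamma-1)(\frac32-\varepsilon_0)}J_i$ in $L^2$ by the quantities already controlled. In particular, $\rho_0^{(\gamma-1)(\frac12-\varepsilon_0)}D^2_{\bar\eta}\bar\Lambda$ is covered by Lemma \ref{lemma-Lambda-guji}, and the singular factor $\bar\Lambda^2/\bar\varrho^{2(\gamma-1)}$ is paired with the refined bound above. Lemma \ref{prop2.1} then converts the $\bar\cT_{\mathrm{cross}}$ bound into one for $D^4_{\bar\eta}U$. This requires the a priori membership $\rho_0^{\frac{3(\gamma-1)+\delta}{2}}(D^3_{\bar\eta}U)_r\in L^2$, which Lemma \ref{existence-linearize} guarantees qualitatively. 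The step I expect to be most delicate is the weight bookkeeping that keeps each $J_i$ at a power at least $(\gamma-1)(\frac32-\varepsilon_0)$. This is where the constraints on $\varepsilon_0$ in \eqref{varepsilon0} enter.

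Finally, I combine all pieces. After multiplying by $t$ and integrating, each estimate has the form $\bar\cD(t,U)\le C\mathfrak{p}(c_0)(1+|\rho_0^{\frac12}U_{tt}|_2^2+\bar\cD(t,\bar U))$. Using $\int_0^t\bar\cD(\cdot,\bar U)\,\ds\le c_1$ and $t\le\mathfrak{p}(c_1)^{-1}$ then closes the bound with $C\mathfrak{p}(c_0)$.
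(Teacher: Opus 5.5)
Your architecture matches the paper's: tangential bounds from Lemma \ref{c_1}; $\partial_t$ of \eqref{new-lp*} and $D_{\bar\eta}$ of \eqref{new-lp*3} with Lemma \ref{im-1} near the origin; the time-differentiated integrated identity near the boundary; and \eqref{xxxx-l'} plus Lemma \ref{prop2.1} at top order. The closing step, however, fails as written.

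You arrive at $\bar\cD(t,U)\le C\mathfrak{p}(c_0)\big(1+|\rho_0^{\frac12}U_{tt}|_2^2+\bar\cD(t,\bar U)\big)$. Integrating in time produces a term $C\mathfrak{p}(c_0)\int_0^t\bar\cD(s,\bar U)\,\ds\le C\mathfrak{p}(c_0)c_1$. The $t$-weighted version produces $C\mathfrak{p}(c_0)\,t\bar\cD(t,\bar U)\le C\mathfrak{p}(c_0)c_1$. Restricting to $t\le\mathfrak{p}(c_1)^{-1}$ makes neither small, because \eqref{39} bounds these two quantities by $c_1$ with no smallness in $t$. Since $c_1$ is afterwards defined as $C\mathfrak{p}(c_0)$ by means of this very lemma (see \eqref{3.84}), a bound of size $\mathfrak{p}(c_0)c_1$ is circular, and the iteration does not close. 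Dissipation norms of $\bar U$ are affordable only when they enter linearly under a time integral, via $\int_0^t\bar\cD(s,\bar U)^{\frac12}\,\ds\le(c_1t)^{\frac12}$. That is how $\zeta r\sD^2_{\bar\eta}\bar\Lambda$ is handled in Lemma \ref{lemma-Lambda-guji}. They cannot appear squared inside a pointwise-in-time elliptic bound.

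The missing observation is that no $\bar\cD(t,\bar U)$ is needed. The $\partial_t$-versions of \eqref{new-lp*} and of \eqref{old3...136} contain only $\bar U$, $\sD_{\bar\eta}\bar U$, $\sD_{\bar\eta}^2\bar U$ and $\bar\Lambda_t$, never $\bar U_t$, so there are no ``$\bar U_t$-commutators''. Each of these is controlled at the energy level, pointwise in time, by $\mathfrak{p}(c_1)$:
\begin{itemize}
\item near the origin, by $|\zeta_{\frac58}r^{\frac12}\sD_{\bar\eta}^2\bar U|_4\,|\zeta r^{\frac12}\sD_{\bar\eta}U|_4$;
\item near the boundary, by $|\chi^\sharp\rho_0^{\gamma-1}D^2_{\bar\eta}\bar U|_\infty$ (Lemma \ref{lemma-useful1});
\item for $\bar\Lambda_t$, by $|\zeta r^{\frac12}\bar\Lambda_t|_4+|\chi^\sharp\bar\Lambda_t|_\infty$ (Lemma \ref{lemma-Lambda-guji}).
\end{itemize}
Use these rather than $|\sD^2_{\bar\eta}\bar U|_\infty$. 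You then get $\bar\cD(t,U)\le C\mathfrak{p}(c_1)+C\mathfrak{p}(c_0)|(r^2\rho_0)^{\frac12}U_{tt}|_2^2$, which closes with Lemma \ref{c_1} and $t\,\mathfrak{p}(c_1)\le 1$.

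Separately, in the exterior step you claim an $L^2$ bound on $\rho_0^{-(\frac12+\varepsilon_0)(\gamma-1)}(D_{\bar\eta}U_t+\cdots)$. This is unnecessary, and also not obtainable from your inputs. It is unnecessary because the singular coefficients in the $\partial_t$-version of \eqref{new-lp**} carry only one factor $\bar\varrho^{1-\gamma}$. After the weight $\rho_0^{(\gamma-1)(\frac32-\varepsilon_0)}$, you only need $\rho_0^{(\gamma-1)(\frac12-\varepsilon_0)}D_{\bar\eta}U_t$ in $L^2$, which is \eqref{9997**}. It is not obtainable because its $U_{tt}$-part is $\rho_0^{-\delta-(\frac12+\varepsilon_0)(\gamma-1)}\int_r^1\rho_0U_{tt}\,\mathrm{d}\hat r$. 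The Hardy argument bounds this by $|\chi^\sharp\rho_0^{\frac12}U_{tt}|_2$ only when $\frac12-\varepsilon_0+\frac{1/2-\delta}{\gamma-1}\ge 0$. That condition fails, for example, when $\delta$ is near $1$ and $\gamma$ is near $\frac43$.
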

\begin{proof}
We divide the proof into three steps.

\smallskip
\textbf{Step 1. $L^1(0,T)$-boundedness of $\bar\cD_{\mathrm{in}}(t,U)$.} 

\smallskip
\textbf{1.1.} 
First, applying $\partial_t$ to \eqref{new-lp*} yields that 
\begin{equation}\label{11140*}
D_{\bar\eta}\big(D_{\bar\eta} U_t+ \frac{2U_t}{\bar\eta}\big)=\sum_{i=9}^{11} J_i,
\end{equation}
where
\begin{align}
J_9 &:=D_{\bar\eta}^2 \bar U D_{\bar\eta} U+2D_{\bar\eta}(\frac{\bar U}{\bar\eta})\frac{ U}{\bar\eta}+2D_{\bar\eta} \bar U D_{\bar\eta}^2 U+2\big(D_{\bar\eta} \bar U+\frac{ \bar U}{\bar\eta}\big)D_{\bar\eta}(\frac{U}{\bar\eta}),\notag\\
J_{10}&:=\frac{1}{2a_1+a_2}\bar\varrho^{1-\delta}\big(U_{tt}-(1-\delta)\big(D_{\bar\eta}\bar U+2\frac{\bar  U}{\bar\eta}\big)U_t\big) - \frac{\delta}{\gamma-1}\frac{\bar\Lambda_t}{\bar\varrho^{\gamma-1}}\big(D_{\bar\eta} U+\frac{2a_2}{2a_1+a_2}\frac{U}{\bar \eta}\big)\notag\\
&\quad  -\delta\frac{\bar\Lambda}{\bar\varrho^{\gamma-1}} \Big(\big(\frac{\gamma-2}{\gamma-1}D_{\bar\eta}\bar U+2\frac{\bar U}{\bar\eta}\big)D_{\bar\eta} U\label{I789}\\
&\quad +\frac{2a_2}{2a_1+a_2}\big(D_{\bar\eta}\bar U+\frac{2\gamma-3}{\gamma-1}\frac{\bar U}{\bar\eta}\big)\frac{U}{\bar\eta}+\frac{1}{\gamma-1} \big(D_{\bar\eta}U_t+\frac{2a_2}{2a_1+a_2}\frac{U_t}{\bar\eta}\big)\Big),\notag\\
J_{11}&:=\frac{\bar\varrho^{1-\delta}}{2a_1+a_2}\Big(\frac{A\gamma}{\gamma-1}\big(\bar\Lambda_t-(1-\delta)\big(D_{\bar\eta}\bar U+2\frac{\bar U}{\bar\eta}\big)\big)-\overline{D_\eta\Phi}\big((1-\delta)\big(D_{\bar\eta}\bar U+2\frac{\bar U}{\bar\eta}\big) +\frac{2\bar U}{\bar\eta} \big)\Big).\notag
\end{align}
Then, by the fact that $\rho_0^{\gamma-1}\sim 1-r$, \eqref{38}, \eqref{1197}--\eqref{11123},  Lemmas \ref{lemma-useful1}--\ref{lemma-Lambda-guji}, and the H\"older inequality, we have
\begin{align}
&\begin{aligned}\label{11142}
|\zeta rJ_{9}|_2&\leq C\big|\zeta_\frac{5}{8} r^\frac{1}{2}\sD_{\bar\eta}^2 \bar U\big|_4\big|\zeta r^\frac{1}{2}\sD_{\bar\eta}U\big|_4 +C\big|\sD_{\bar\eta} \bar U\big|_\infty\big|\zeta r\sD_{\bar\eta}^2  U\big|_2\leq C\mathfrak{p}(c_1), 
\end{aligned}\\
&\begin{aligned}
|\zeta rJ_{10}|_2&\leq C(|\zeta rU_{tt}|_2+|\sD_{\bar\eta}U|_\infty|\zeta rU_t|_2)+C|\zeta_{\frac{5}{8}} r^\frac{1}{2}\bar\Lambda_t|_4|\zeta r^\frac{1}{2}\sD_{\bar\eta} U|_4\\
&\quad +C|\bar\Lambda|_\infty\big(\big|\sD_{\bar\eta} \bar U\big|_\infty|\zeta r \sD_{\bar\eta}U|_2+|\zeta r \sD_{\bar\eta} U_t|_2\big)\leq C(|\zeta rU_{tt}|_2+\mathfrak{p}(c_1)),
\end{aligned}\\
&\begin{aligned}\label{11144}
|\zeta rJ_{11}|_2\leq C\big(|\bar\Lambda|_\infty \big|\sD_{\bar\eta} \bar U\big|_\infty + |\zeta r^\frac{1}{2}\bar\Lambda_t|_4\big)\leq C\mathfrak{p}(c_1).
\end{aligned}
\end{align}

Hence, collecting \eqref{11140*} and \eqref{11142}--\eqref{11144}, together with Lemmas \ref{c_1} and \ref{im-1}, gives that, for all $t\in [0,T_2]$,
\begin{equation}\label{11145}
\int_0^t\big|\zeta r \sD_{\bar\eta}^2 U_t\big|_2^2\,\mathrm{d}s\leq C\Big(\int_0^t|\zeta rU_{tt}|_2^2\,\mathrm{d}s+\mathfrak{p}(c_1)t\Big)\leq C\mathfrak{p}(c_0).
\end{equation}

\smallskip
\textbf{1.2.} It follows from \eqref{new-lp*3} by applying $D_{\bar\eta}$ that
\begin{equation}\label{11146}
\zeta D_{\bar\eta}^3\big(D_{\bar\eta} U+ \frac{2U}{\bar\eta}\big)=\zeta J_{12},
\end{equation}
where $J_{12}$ has controls of the following form: 
\begin{equation*}
\begin{aligned}
|\zeta J_{12}|&\leq C\zeta \Big((|D_{\bar\eta}^2U_t|+|\bar\Lambda||D_{\bar\eta}U_t|+|D_{\bar\eta}\bar\Lambda||U_t|+|\bar\Lambda|^2|U_t|)\\
&\quad+|\bar\Lambda|\big(|D_{\bar\eta}^3 U|+|D_{\bar\eta}^2(\frac{U}{\bar\eta})|\big)+\big(|D_{\bar\eta}^2\bar\Lambda|+|\bar\Lambda D_{\bar\eta}\bar\Lambda|+|\bar\Lambda|^3\big)\big|\sD_{\bar\eta} U\big|\\
&\quad +(|D_{\bar\eta}\bar\Lambda|+|\bar\Lambda| +|\bar\Lambda|^2)\big(|D_{\bar\eta}^2 U| +\big|D_{\bar\eta}(\frac{U}{\bar\eta})\big|\big)\\
&\quad +|D_{\bar\eta}^2\bar\Lambda|+|\bar\Lambda|^3+|\bar\Lambda D_{\bar\eta} \bar\Lambda|+|\bar\Lambda||\overline{D_\eta \Phi}|+|D_{\bar\eta}\overline{D_\eta \Phi}|\Big). 
\end{aligned}
\end{equation*}
Then, due to the fact that $\rho_0^{\gamma-1}\sim 1-r$, \eqref{38}, \eqref{11123}, Lemmas \ref{lemma-Lambda-guji}, \ref{c_1-c_2}, and \ref{hardy-inequality}, and the H\"older inequality, we obtain that, for all $t\in [0,T_2]$, 
\begin{equation}\label{11149}
\begin{aligned}
|\zeta rJ_{12}|_2&\leq C|\zeta rD_{\bar\eta}^2 U_t|_2+|\bar\Lambda|_\infty|\zeta rD_{\bar\eta}U_t|_2+|\zeta_{\frac{5}{8}}r^{\frac{1}{2}}D_{\bar\eta}\bar\Lambda|_4|\zeta r^{\frac{1}{2}} U_t|_4+|\bar\Lambda|^2_\infty|\zeta r U_t|_2\\
&\quad+C\big(|\zeta rD_{\bar\eta}^2 \bar\Lambda|_2+|\bar\Lambda|_\infty|\zeta rD_{\bar\eta} \bar\Lambda|_2+|\bar\Lambda|_\infty^3\big)(|\sD_{\bar\eta} U|_\infty+1)\\
&\quad +C\big(\big|\zeta_\frac{5}{8} r^\frac{1}{2}D_{\bar\eta} \bar\Lambda\big|_4+|\bar\Lambda|_\infty+|\bar\Lambda|_\infty^2\big)|\zeta r^\frac{1}{2} \sD_{\bar\eta}^2 U|_4+\big|(\overline{D_\eta\Phi},D_{\bar\eta}\overline{D_\eta\Phi})\big|_\infty\\
&\quad +C |\bar\Lambda|_\infty\Big|\zeta r \big(D_{\bar\eta}^3 U,D_{\bar\eta}^2(\frac{U}{\bar\eta})\big)\Big|_2\leq C(|\zeta rD_{\bar\eta}^2 U_t|_2+\mathfrak{p}(c_0)),
\end{aligned}
\end{equation}
where the following inequality has also been used:
\begin{equation*}
    |\zeta r^{\frac{1}{2}}(\sD_{\bar\eta}^2U,U_t)|_4\leq \big|\zeta r^{\frac{5}{4}}\big( D_{\bar\eta}^2U,D_{\bar\eta}(\frac{U}{\bar\eta}),D_{\bar\eta}^3U,D_{\bar\eta}^2(\frac{U}{\bar\eta}),U_t,D_{\bar\eta}U_t \big)\big|_2\leq C\mathfrak{p}(c_0).
\end{equation*}

Hence, collecting \eqref{11146}--\eqref{11149} gives that, for all $t\in[0,T_2]$,
\begin{equation}\label{11150}
\Big|\zeta rD_{\bar\eta}^3\big(D_{\bar\eta} U+ \frac{2U}{\bar\eta}\big)\Big|_2\leq C(|\zeta rD_{\bar\eta}^2 U_t|_2+\mathfrak{p}(c_0)).
\end{equation}

Next, we multiply \eqref{new-lp*} by $\frac{1}{\bar\eta}$ and apply $D_{\bar\eta}$ to the resulting equality to obtain 
\begin{equation*}
\begin{aligned}
\zeta D_{\bar\eta}\Big(\frac{1}{\bar\eta}D_{\bar\eta}\big(D_{\bar\eta} U+ \frac{2U}{\bar\eta}\big)\Big)&=\zeta J_{13},
\end{aligned}
\end{equation*}
where $J_{13}$ has controls of the following form:
\begin{equation*}
\begin{aligned}
|\zeta J_{13}|&\leq C\zeta \Big(\big|D_{\bar\eta}\big(\frac{U_t}{\bar\eta}\big)\big|+|\bar\Lambda|\big|\frac{U_t}{\bar\eta}\big|+\big(\big|D_{\bar\eta}\big(\frac{\bar\Lambda}{\bar\eta}\big)\big|+|\bar\Lambda|\big|\frac{\bar\Lambda}{\bar\eta}\big|\big)\big(|\sD_{\bar\eta}U|+1\big)\\
&\quad +\big|\frac{1}{\bar\eta}\sD_{\bar\eta}^2U\big||\bar\Lambda|+\Big|\big(\frac{1}{\bar\eta^2}\overline{D_\eta\Phi}, \frac{1}{\bar\eta} D_{\bar\eta}\overline{D_\eta\Phi}\big)\Big|\Big). 
\end{aligned}
\end{equation*}
Then, following the calculations \eqref{11146}--\eqref{11150}, we can similarly obtain 
\begin{equation}\label{11150-}
\Big|\zeta rD_{\bar\eta}\Big(\frac{1}{\bar\eta}D_{\bar\eta}\big(D_{\bar\eta} U+ \frac{2U}{\bar\eta}\big)\Big)\Big|_2\leq C\big(\big|\zeta r D_{\bar\eta}(\frac{U_t}{\bar\eta})\big|_2+\mathfrak{p}(c_0)\big).
\end{equation}

Finally, combining \eqref{11150}--\eqref{11150-}, along with Lemma \ref{im-1} and \eqref{11145}, implies that, for all $t\in[0,T_2]$,
\begin{equation}
\begin{aligned}
\int_0^t\big|\zeta r \sD_{\bar\eta}^4 U \big|_2^2\,\mathrm{d}s \leq C\int_0^t\big|\zeta r \sD_{\bar\eta}^2 U_t\big|_2^2\,\mathrm{d}s+Ct \mathfrak{p}(c_0)\leq  C\mathfrak{p}(c_0).
\end{aligned}
\end{equation}

\smallskip
\textbf{Step 2. $L^1(0,T)$-boundedness of $\bar\cD_{\mathrm{ex}}(t,U)$.} 

\smallskip
\textbf{2.1.} First, applying $\partial_t$ to \eqref{old3...136} yields
\begin{equation*}
\begin{aligned}
\chi^\sharp D_{\bar\eta} U_t&=\chi^\sharp J_{14},
\end{aligned}
\end{equation*}
where  $J_{14}$ has controls of the following form:
\begin{equation*}
\begin{aligned}
|\chi^\sharp   J_{14}| &\leq |D_{\bar\eta}U||D_{\bar\eta}\bar U|+|U||\bar U|+|U_t|+|\rho_0^{\gamma-\delta}\sD_{\bar\eta}\bar U|+\frac{|\sD_{\bar\eta}\bar U|}{\rho_0^\delta}
\Big|\int_r^1 \frac{\hat r^2\rho_0}{\bar\eta^2}\overline{D_\eta\Phi}\mathrm{d}\hat r\Big|\\ 
&\quad\, +\frac{|\sD_{\bar\eta}\bar U|}{\rho_0^\delta}\Big|\int_r^1  \rho_0^{\delta}|\sD_{\bar\eta} U| \mathrm{d}\hat r\Big|+\frac{1}{\rho_0^\delta}\Big|\int_r^1  \rho_0^{\delta}(|\sD_{\bar\eta} U| |\sD_{\bar\eta}\bar U|+|\sD_{\bar\eta}U_t|)\mathrm{d}\hat r\Big|\\
&\quad\,+\frac{|\sD_{\bar\eta}\bar U|}{\rho_0^\delta}\Big|\int_r^1\rho_0 U_t\,\mathrm{d}\hat r\Big|++\frac{1}{\rho_0^\delta}\Big|\int_r^1\rho_0 (|\sD_{\bar\eta}\bar U||U_t|,U_{tt})\,\mathrm{d}\hat r\Big|.
\end{aligned}
\end{equation*}

Then, using an argument similar to \eqref{L2Linfty}--\eqref{3...136}, together with the fact that $(3-2\varepsilon_0)(\gamma-1)-\delta>0$, Lemmas \ref{lemma-useful1}, \ref{c_1-c_2}, and \ref{hardy-inequality}, and the H\"older inequality, we can obtain that, 
\begin{align}
&\,\big|\chi^\sharp\rho_0^{(\gamma-1)(\frac{1}{2}-\varepsilon_0)}D_{\bar\eta} U_t\big|_2\notag\\
&\leq |\sD_{\bar\eta}U|_\infty|\sD_{\bar\eta}\bar U|_\infty+|\chi^\sharp\rho_0^{(\gamma-1)(\frac{1}{2}-\varepsilon_0)}U_t|_2+|\sD_{\bar\eta}\bar U|_\infty\big|\chi^\sharp\rho_0^{(\gamma-1)(\frac{1}{2}-\varepsilon_0)-\delta}|\chi^\sharp\rho_0^{\frac{\delta}{2}}|_2\big|_2|\overline{D_\eta\Phi}|_\infty\notag\\
&\quad+\big|\chi^\sharp\rho_0^{(\gamma-1)(\frac{1}{2}-\varepsilon_0)-\delta}|\chi^\sharp\rho_0^{\frac{\delta}{2}}|_2\big|_2\big(|\sD_{\bar\eta}U|_\infty|\sD_{\bar\eta}\bar U|_\infty+|\chi^\sharp\rho_0^{\frac{\delta}{2}}\sD_{\bar\eta}U_t|_2+|\chi^\sharp \rho_0^{\frac{1}{2}}(|\sD_{\bar\eta}\bar U|_\infty U_t,U_{tt})|_2\big)\notag\\
&\leq C\mathfrak{p}(c_1)+C\mathfrak{p}(c_0)\big|\chi^\sharp\rho_0^\frac{1}{2}U_{tt}\big|_2.\label{9997**} 
\end{align}

As a consequence, recalling \eqref{11140*}--\eqref{I789}, we obtain from \eqref{varepsilon0}, \eqref{38}, \eqref{4015}--\eqref{4016}, \eqref{9997**}, and Lemmas \ref{lemma-useful1}--\ref{lemma-Lambda-guji} and \ref{hardy-inequality} that, for all $t\in [0, T_2]$,
\begin{align*}
&\begin{aligned} 
\big|\chi^\sharp \rho_0^{(\gamma-1)(\frac{3}{2}-\varepsilon_0)}J_9\big|_2&\leq C\big|\chi^\sharp \rho_0^{\gamma-1} \sD_{\bar\eta}^2 \bar U\big|_\infty \big|\chi^\sharp \rho_0^{(\gamma-1)(\frac{1}{2}-\varepsilon_0)}(U, D_{\bar\eta} U)\big|_2\\
&\quad +C|(\bar U,D_{\bar\eta}\bar U)|_\infty\big |\chi^\sharp\rho_0^{(\gamma-1)(\frac{1}{2}-\varepsilon_0)}(U,D_{\bar\eta} U,D_{\bar\eta}^2 U)\big|_2\leq C\mathfrak{p}(c_1),
\end{aligned}\\
&\begin{aligned}
\big|\chi^\sharp \rho_0^{(\gamma-1)(\frac{3}{2}-\varepsilon_0)}J_{10}\big|_2&\leq C\big|\chi^\sharp  \rho_0^{(\gamma-1)(\frac{3}{2}-\varepsilon_0)}(U_{tt},|\sD_{\bar\eta}\bar U|_\infty U_t)\big|_2 \\
&\quad +C |\chi^\sharp \bar\Lambda_t|_\infty\big|\chi^\sharp  \rho_0^{(\gamma-1)(\frac{1}{2}-\varepsilon_0)}(D_{\bar\eta}U,U)\big|_2 \\
&\quad +C|\bar\Lambda|_\infty\big(|(\bar U,D_{\bar\eta}\bar U)|_\infty\big|\chi^\sharp \rho_0^{(\gamma-1)(\frac{1}{2}-\varepsilon_0)}(D_{\bar\eta}U,U)\big|_2\\
&\quad+\big|\chi^\sharp \rho_0^{(\gamma-1)(\frac{1}{2}-\varepsilon_0)}(D_{\bar\eta}U_t,U_t)\big|_2\big)\leq C\big(\mathfrak{p}(c_1)+\mathfrak{p}(c_0)\big|\chi^\sharp\rho_0^\frac{1}{2}U_{tt}\big|_2\big),
\end{aligned}\\
&\begin{aligned}
\big|\chi^\sharp \rho_0^{(\gamma-1)(\frac{3}{2}-\varepsilon_0)}J_{11}\big|_2&\leq C\big|\chi^\sharp \rho_0^{(\gamma-1)(\frac{3}{2}-\varepsilon_0)}\big|_2 |(1,\bar U,D_{\bar\eta}\bar U)|_\infty|\chi^\sharp(\bar\Lambda,\bar\Lambda_t,\overline{D_\eta\Phi})|_\infty\leq C\mathfrak{p}(c_1), 
\end{aligned}
\end{align*}
which, along with \eqref{11140*} and Lemmas \ref{c_0-c_1}--\ref{c_1}, leads to
\begin{equation}\label{9997***}
\begin{aligned}
\big|\chi^\sharp\rho_0^{(\gamma-1)(\frac{3}{2}-\varepsilon_0)}D_{\bar\eta}^2 U_t\big|_2&\leq C\big|\chi^\sharp\rho_0^{(\gamma-1)(\frac{3}{2}-\varepsilon_0)}(U_t,D_{\bar\eta} U_t)\big|_2+C\big(\mathfrak{p}(c_1)+\mathfrak{p}(c_0)\big|\chi^\sharp\rho_0^\frac{1}{2}U_{tt}\big|_2\big)\\
&\leq C\big(\mathfrak{p}(c_1)+\mathfrak{p}(c_0)\big|\chi^\sharp\rho_0^\frac{1}{2}U_{tt}\big|_2\big).
\end{aligned}
\end{equation}
Finally, integrating the above over $[0,t]$, together with Lemma \ref{c_1}, implies that, for all $t\in[0,T_2]$,
\begin{equation}\label{9997+}
\int_0^t\big|\chi^\sharp\rho_0^{(\gamma-1)(\frac{3}{2}-\varepsilon_0)}D_{\bar\eta}^2 U_t\big|_2^2\,\mathrm{d}s\leq C\mathfrak{p}(c_1)t+C\mathfrak{p}(c_0)\int_0^t\big|\chi^\sharp\rho_0^\frac{1}{2}U_{tt}\big|_2^2\,\mathrm{d}s\leq C\mathfrak{p}(c_0).
\end{equation}

\smallskip
\textbf{2.2.}
First, recall the derivation of \eqref{xxxx-l}:
\begin{equation}\label{xxxx-l'}
\bar\cT_{\mathrm{cross}}:=(D_{\bar\eta}^3U)_r+\big(\frac{\delta}{\gamma-1}+2\big)\frac{(\rho_0^{\gamma-1})_r}{\rho_0^{\gamma-1}}D_{\bar\eta}^3U =\sum_{i=15}^{18}J_i,
\end{equation}
where
\begin{align} 
J_{15}&:=-\big(1+\frac{2\delta}{\gamma-1} \big)\frac{\bar\eta_r}{\bar\varrho^{\gamma-1}}D_{\bar\eta} \bar\Lambda D_{\bar\eta}^2 U-\frac{\delta}{\gamma-1}\frac{\bar\eta_r}{\bar\varrho^{\gamma-1}} D_{\bar\eta}^2\bar\Lambda \big(D_{\bar\eta} U+\frac{2a_2}{2a_1+a_2}\frac{U}{\bar\eta}  \big)\notag\\
&\quad +(\delta+2(\gamma-1))\bar\eta_r\frac{D_{\bar\eta}\mathscr{J}}{\mathscr{J}}D_{\bar\eta}^3U,\notag\\
J_{16}&:=-2\frac{\bar\eta_r}{\bar\varrho^{\gamma-1}} \big(1+\frac{\delta}{\gamma-1}\frac{2a_2}{2a_1+a_2}\big)D_{\bar\eta} \bar\Lambda D_{\bar\eta}\big(\frac{U}{\bar\eta}\big)\notag\\
&\quad -2\frac{\bar\eta_r}{\bar\varrho^{\gamma-1}}\Big(\big(2+\frac{\delta}{\gamma-1}\frac{a_2}{2a_1+a_2}\big) \bar\Lambda D_{\bar\eta}^2\big(\frac{U}{\bar\eta}\big)+ \bar\varrho^{\gamma-1} D_{\bar\eta}^3\big(\frac{U}{\bar\eta}\big)\Big),\notag\\
J_{17}&:=\frac{1}{2a_1+a_2}\frac{\bar\eta_r}{\bar\varrho^{\gamma+\delta-2}}\Big(\frac{\gamma-\delta}{\gamma-1}\big(D_{\bar\eta} \bar\Lambda\, +\frac{1-\delta}{\gamma-1}\frac{\bar\Lambda^2}{\bar\varrho^{\gamma-1}}\big) U_t+2\frac{\gamma-\delta}{\gamma-1}\bar\Lambda D_{\bar\eta} U_t+ \bar\varrho^{\gamma-1} D_{\bar\eta}^2 U_t\Big),\notag\\
J_{18}&:=\frac{A\gamma}{(2a_1+a_2)(\gamma-1)} \bar\eta_r \bar\varrho^{1-\delta-2(\gamma-1)}\Big( \bar\varrho^{2(\gamma-1)} D_{\bar\eta}^2 \bar\Lambda+2\frac{\gamma-\delta}{\gamma-1}\bar\varrho^{\gamma-1} \bar\Lambda D_{\bar\eta} \bar\Lambda +\frac{\gamma-\delta}{\gamma-1}\frac{1-\delta}{\gamma-1}\bar\Lambda^3\Big)\notag\\
&\quad+\frac{\bar\eta_r\bar\varrho^{-2(\gamma-1)+1-\delta}}{2a_1+a_2}\frac{\gamma-\delta}{\gamma-1}\bar\varrho^{\gamma-1}\big(D_{\bar\eta} \bar\Lambda\, +\frac{1-\delta}{\gamma-1}\frac{\bar\Lambda^2}{\bar\varrho^{\gamma-1}}\big)\overline{D_\eta\Phi}\notag\\
&\quad+\frac{\bar\eta_r\bar\varrho^{-2(\gamma-1)+1-\delta}}{2a_1+a_2}\Big(2\frac{\gamma-\delta}{\gamma-1}\bar\varrho^{\gamma-1}\bar\Lambda D_{\bar\eta}\overline{D_\eta\Phi}+\bar\varrho^{2(\gamma-1)}D_{\bar\eta}^2\overline{D_\eta\Phi}\Big).\label{rr1-rr3}
\end{align}

For the right-hand side of the above, it follows from the facts that
\begin{equation*}
\rho_0^{\gamma-1}\sim 1-r,\quad \big(\frac{1}{2}-\varepsilon_0\big)(\gamma-1)+1-\delta>0,\quad \big(\frac{3}{2}-\varepsilon_0\big)(\gamma-1)-2(\gamma-1)+1-\delta>-\frac{\gamma-1}{2},
\end{equation*} 
\eqref{varepsilon0}, \eqref{38}, \eqref{4016}, and Lemmas \ref{lemma-useful1}--\ref{c_1-c_2} and \ref{hardy-inequality} that, for all $t\in [0,T_2]$,
\begin{align}
&\begin{aligned}
\big|\chi^\sharp \rho_0^{(\gamma-1)(\frac{3}{2}-\varepsilon_0)}J_{15}\big|_2&\leq C|\chi^\sharp D_{\bar\eta}\bar\Lambda|_\infty\big|\chi^\sharp \rho_0^{(\gamma-1)(\frac{1}{2}-\varepsilon_0)} D_{\bar\eta}^2U\big|_2+\big|\chi^\sharp \rho_0^{(\gamma-1)(\frac{3}{2}-\varepsilon_0)}D_{\bar\eta}^3U\big|_2\\
&\quad+C|\chi^\sharp \sD_{\bar\eta} U|_\infty\big|\chi^\sharp \rho_0^{(\gamma-1)(\frac{1}{2}-\varepsilon_0)}D_{\bar\eta}^2 \bar\Lambda\big|_2 \leq C\mathfrak{p}(c_0),\notag
\end{aligned}\\
&\begin{aligned}
\big|\chi^\sharp \rho_0^{(\gamma-1)(\frac{3}{2}-\varepsilon_0)}J_{16}\big|_2&\leq C\big|\chi^\sharp \rho_0^{(\gamma-1)(\frac{1}{2}-\varepsilon_0)}D_{\bar\eta} \bar\Lambda\big|_2|\chi^\sharp(U,D_\eta U)|_\infty\\
&\quad + C(1+|\bar\Lambda|_\infty) \big|\chi^\sharp \rho_0^{(\gamma-1)(\frac{3}{2}-\varepsilon_0)} (U,D_{\bar\eta} U,D_{\bar\eta}^2 U,D_{\bar\eta}^3 U)\big|_2\leq C\mathfrak{p}(c_0),\notag
\end{aligned}\\
&\begin{aligned}[t]\label{QQQ}
\big|\chi^\sharp \rho_0^{(\gamma-1)(\frac{3}{2}-\varepsilon_0)}J_{17}\big|_2&\leq C(|\chi^\sharp D_{\bar\eta} \bar\Lambda|_\infty+|\bar\Lambda|_\infty)\big|\chi^\sharp \rho_0^{(\gamma-1)(\frac{3}{2}-\varepsilon_0)} (U_t,D_{\bar\eta} U_t)\big|_2\\
&\quad +C(1+|\bar\Lambda|_\infty)\big|\chi^\sharp \rho_0^{(\gamma-1)(\frac{3}{2}-\varepsilon_0)} (D_{\bar\eta} U_t,D_{\bar\eta}^2 U_t)\big|_2\\
&\leq C\mathfrak{p}(c_0)\big(1+\big|\chi^\sharp \rho_0^{(\gamma-1)(\frac{3}{2}-\varepsilon_0)} D_{\bar\eta}^2 U_t\big|_2\big),
\end{aligned}\\
&\begin{aligned}
\big|\chi^\sharp \rho_0^{(\gamma-1)(\frac{3}{2}-\varepsilon_0)}J_{18 }\big|_2&\leq C\big(\big|\chi^\sharp \rho_0^{(\gamma-1)(\frac{1}{2}-\varepsilon_0)}D_{\bar\eta}^2\bar\Lambda\big|_2 + |(\bar\Lambda^2,\bar\Lambda)|_\infty\big)\times\\
&\qquad\qquad\qquad\quad\big|\big(1,\overline{D_\eta\Phi},\sD_{\bar\eta}\overline{D_\eta\Phi},\chi_a^\sharp (D_{\bar\eta}^2\overline{D_\eta\Phi})^2\big)\big|_\infty\leq C\mathfrak{p}(c_0).\notag
\end{aligned}
\end{align}

Therefore, substituting \eqref{QQQ} into \eqref{xxxx-l'} gives
\begin{equation*}
\big|\zeta^\sharp\rho_0^{(\gamma-1)(\frac{3}{2}-\varepsilon_0)}\bar\cT_{\mathrm{cross}}\big|_2\leq C\mathfrak{p}(c_0)\big(1+\big|\chi^\sharp \rho_0^{(\gamma-1)(\frac{3}{2}-\varepsilon_0)} D_{\bar\eta}^2 U_t\big|_2\big),
\end{equation*}
which, along with  Lemmas \ref{c_1-c_2} and \ref{prop2.1}, yields
\begin{equation*}
\begin{aligned}
\big|\chi^\sharp\rho_0^{(\gamma-1)(\frac{3}{2}-\varepsilon_0)}D_{\bar\eta}^4U\big|_2&\leq \big|(\zeta-\chi)\rho_0^{(\gamma-1)(\frac{3}{2}-\varepsilon_0)}D_{\bar\eta}^4U\big|_2+\big|\zeta^\sharp\rho_0^{(\gamma-1)(\frac{3}{2}-\varepsilon_0)}D_{\bar\eta}^4U\big|_2\\
&\leq C\bar\cD_{\mathrm{in}}(t,U)^\frac{1}{2}+C\big|\zeta^\sharp\rho_0^{(\gamma-1)(\frac{3}{2}-\varepsilon_0)}\bar\cT_{\mathrm{cross}}\big|_2+C\mathfrak{p}(c_0) \big|\rho_0^{(\gamma-1)(\frac{3}{2}-\varepsilon_0)}D_{\bar\eta}^3U\big|_2\\
&\leq  C\mathfrak{p}(c_0)\big(1+\bar\cD_{\mathrm{in}}(t,U)^\frac{1}{2}+\big|\chi^\sharp \rho_0^{(\gamma-1)(\frac{3}{2}-\varepsilon_0)} D_{\bar\eta}^2 U_t\big|_2\big).
\end{aligned}
\end{equation*}

Finally, it follows from the $L^1(0,T)$-estimate of $\bar\cD(t,U)$ and \eqref{9997+} that, for all $t\in[0,T_2]$,
\begin{equation*}
\begin{aligned}
&\,\int_0^t\big|\chi^\sharp\rho_0^{(\gamma-1)(\frac{3}{2}-\varepsilon_0)}D_{\bar\eta}^4U\big|_2^2\,\mathrm{d}s\\
&\leq  C\mathfrak{p}(c_0)\Big(t+\int_0^t\big(\bar\cD_{\mathrm{in}}(t,U)+\big|\chi^\sharp \rho_0^{(\gamma-1)(\frac{3}{2}-\varepsilon_0)} D_{\bar\eta}^2 U_t\big|_2^2\big)\,\mathrm{d}s\Big)\leq C\mathfrak{p}(c_0).
\end{aligned}
\end{equation*}

\smallskip
\textbf{Step 3. $L^\infty(0,T)$-boundedness for $t\bar\cD(t,U)$.} Following a similar argument in Steps 1--2 gives
\begin{equation*}
\begin{aligned}
&\sqrt{t}\big|\zeta r \sD_{\bar\eta}^2 U_t\big|_2\leq C\sqrt{t}(|\zeta rU_{tt}|_2+\mathfrak{p}(c_1)),\qquad\sqrt{t}\big|\zeta r \sD_{\bar\eta}^4 U\big|_2\leq C\sqrt{t}\big(\big|\zeta r \sD_{\bar\eta}^2 U_t\big|_2+\mathfrak{p}(c_0)\Big),\\
&\sqrt{t}\big|\chi^\sharp\rho_0^{(\gamma-1)(\frac{3}{2}-\varepsilon_0)}D_{\bar\eta}^2 U_t\big|_2\leq C\sqrt{t}\big(\mathfrak{p}(c_0)\big|\chi^\sharp\rho_0^\frac{1}{2}U_{tt}\big|_2+\mathfrak{p}(c_1)\big),\\
&\sqrt{t}\big|\chi^\sharp\rho_0^{(\gamma-1)(\frac{3}{2}-\varepsilon_0)}D_{\bar\eta}^4U\big|_2\leq C\sqrt{t}\mathfrak{p}(c_0)\big(\big|\chi^\sharp \rho_0^{(\gamma-1)(\frac{3}{2}-\varepsilon_0)} D_{\bar\eta}^2 U_t\big|_2+\bar\cD_{\mathrm{in}}(t,U)^\frac{1}{2}+1\big).
\end{aligned}
\end{equation*}

Therefore, this, combined with Lemma \ref{c_1}, leads to the desired result of this lemma.
\end{proof}

\begin{Lemma}\label{lemma-12.11}
For any $0\leq t\leq T_3=\min\{T_2,(2C\mathfrak{p}(c_0))^{-1}\}$,
\begin{equation}\label{39''-0}
\cE(t,U)+t\cD(t,U)+\int_0^{t} \cD(s,U)\,\mathrm{d}s\leq C\mathfrak{p}(c_0),\qquad|\eta_r(t)-1|_\infty+\Big|\frac{\eta(t)}{r}-1\Big|_\infty\leq \frac{1}{2}.
\end{equation}
\end{Lemma}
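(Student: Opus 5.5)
The plan is to transfer the bounds of Lemmas \ref{c_0-c_1}--\ref{c_3}, which are stated with $D_{\bar\eta}$, to the flow map $\eta(t,r)=r+\int_0^tU(s,r)\,\mathrm{d}s$ generated by the solution $U$ itself. Since Lemmas \ref{c_1-c_2}--\ref{c_3} already give
\[
\bar\cE(t,U)+t\bar\cD(t,U)+\int_0^t\bar\cD(s,U)\,\mathrm{d}s\leq C\mathfrak{p}(c_0)
\]
on $[0,T_2]$, what remains is a change of the differential operator from $D_{\bar\eta}$ to $D_\eta$. This change costs only lower-order factors involving derivatives of $\eta$ and $\bar\eta$, and these factors are controlled once $t$ is small.

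\textbf{Step 1: the Jacobian bounds, $\eqref{39''-0}_2$.} Since $\eta_r-1=\int_0^tU_r\,\mathrm{d}s$ and $\frac{\eta}{r}-1=\int_0^t\frac{U}{r}\,\mathrm{d}s$, and since $U_r=\bar\eta_rD_{\bar\eta}U$ and $\frac{U}{r}=\frac{\bar\eta}{r}\frac{U}{\bar\eta}$ with $\bar\eta_r,\frac{\bar\eta}{r}\le\frac32$ by \eqref{39}, Lemma \ref{c_1-c_2} gives
\[
|\eta_r-1|_\infty+\Big|\frac{\eta}{r}-1\Big|_\infty\leq Ct\,\sup_s|(U,\sD_{\bar\eta}U)|_\infty\leq 2Ct\,\mathfrak{p}(c_0)\le\frac12
\]
for $t\le(2C\mathfrak{p}(c_0))^{-1}$. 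This is where the constraint defining $T_3$ comes from.

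\textbf{Step 2: regularity of $\eta$ and of the ratio $\bar\eta_r/\eta_r$.} I would write $D_\eta f=\theta D_{\bar\eta}f$ with $\theta:=\bar\eta_r/\eta_r$, and similarly $\frac{f}{\eta}=\frac{\bar\eta}{\eta}\frac{f}{\bar\eta}$. By induction, $\sD_\eta^kf$ equals $\theta^k\sD_{\bar\eta}^kf$ plus a sum of products of $\sD_{\bar\eta}^jf$ ($j<k$) with $D_{\bar\eta}$-derivatives (up to order $k-1$) of $\theta$ and $\bar\eta/\eta$. These derivatives are in turn time integrals of $\sD_{\bar\eta}^{m}U$ and $\sD_{\bar\eta}^m\bar U$ for $m\le k$, exactly as in \eqref{1189}. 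Their time integrals are then estimated as follows.
\begin{itemize}
\item For the $\bar U$-parts, Lemma \ref{lemma-useful1} and \eqref{39} give bounds of the form $\int_0^t\bar\cD^{1/2}\le (c_1t)^{1/2}$ and $t\,\mathfrak{p}(c_1)$. Both are $\le C$ because $t\le T_2\le\mathfrak{p}(c_1)^{-1}$.
\item For the $U$-parts, the bound $\int_0^t\bar\cD(s,U)\,\mathrm{d}s\le C\mathfrak{p}(c_0)$ is used.
\end{itemize}
Concretely, I aim for the following: $|(\theta,\frac{\bar\eta}{\eta})|_\infty$ and $|\sD_{\bar\eta}(\theta,\frac{\bar\eta}{\eta})|_\infty\le C$; the quantities $|\zeta r^{1/2}\sD^2_{\bar\eta}(\theta,\cdot)|_4$ and $|\chi^\sharp\rho_0^{\gamma-1}D_{\bar\eta}^2\theta|_\infty$ are $\le C\mathfrak{p}(c_0)$; and the third-order quantities are bounded in the weighted $L^2$ norms $|\zeta r\,\cdot|_2$ and $|\chi^\sharp\rho_0^{(\gamma-1)(\frac32-\varepsilon_0)}\cdot|_2$, with an $L^2$-in-time (not pointwise-in-time) bound in the dissipative parts.

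\textbf{Step 3: reassembling $\cE$ and $\cD$.} Each term of $\cE(t,U)$ and $\cD(t,U)$ is expanded via Step 2. The principal term $\theta^k\sD_{\bar\eta}^kU$ is bounded by $C\bar\cE^{1/2}$ or $C\bar\cD^{1/2}$. Each commutator term is split into an $L^\infty$ factor times a weighted $L^2$ factor, and where both factors are only $L^2$ or $L^4$, Hardy/Sobolev (Lemmas \ref{sobolev-embedding}--\ref{hardy-inequality}) is used, as in \eqref{11123} near the origin and as in \eqref{9997**} near the boundary. For example, in $D_\eta^4U$ the worst commutator is $D_{\bar\eta}^3\theta\,D_{\bar\eta}U$. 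I would bound it by $|D_{\bar\eta}U|_\infty\,|\chi^\sharp\rho_0^{(\gamma-1)(\frac32-\varepsilon_0)}D^3_{\bar\eta}\theta|_2$, and the latter quantity is controlled by $\int_0^t\bar\cD^{1/2}(s,U+\bar U)\,\mathrm{d}s$. After multiplying by $t$, this is bounded through $t\big(\int_0^t\bar\cD\big)$, which is finite. The terms with $U_t$ and $U_{tt}$ are analogous: $\partial_t$ commutes with $\theta$ only up to the term $\theta_t$, which is expressed through $\sD U$ and $\sD\bar U$. Summing these estimates yields $\eqref{39''-0}_1$ with a new polynomial $\mathfrak{p}(c_0)$.

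\textbf{Main obstacle.} The hardest part is the exterior fourth-order terms in $\cD$: $\rho_0^{(\gamma-1)(\frac32-\varepsilon_0)}D_\eta^4U$ and $D_\eta^2U_t$. There the weighted bound for $D^3_{\bar\eta}\theta$ involves $\sD^4_{\bar\eta}\bar U$, which is only $L^2$ in time. I will handle this by keeping those factors inside time integrals (Minkowski) and pairing them with $L^\infty$ bounds on lower derivatives of $U$, which Lemma \ref{c_1-c_2} provides. I will not attempt any pointwise-in-time control of $\sD^4\eta$.
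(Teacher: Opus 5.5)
Your argument is correct, and your Step~1 is exactly the paper's \eqref{eta-etar}. The difference is in how you change frames. The paper never compares $D_{\bar\eta}$ with $D_\eta$ directly. It applies Lemma~\ref{lemma-gaowei} twice, passing through the reference coordinate $r$:
first with the flow $\bar\eta$ of $\bar U$ (hypothesis \eqref{C333} supplied by \eqref{39}), to pass from $\bar\cE,\bar\cD$ of $U$ to $\mathring\cE,\mathring\cD$;
then with the flow $\eta$ of $U$, to pass from $\mathring\cE,\mathring\cD$ to $\cE,\cD$.

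Each conversion thus involves a single flow map and its own velocity. The quotient $\theta=\bar\eta_r/\eta_r$, whose $D_{\bar\eta}^3$ mixes fourth derivatives of both $U$ and $\bar U$, is never differentiated, and the same lemma is reused for \eqref{395k-r}.

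Your direct route costs more bookkeeping but is self-contained, and it buys something real. All derivatives of $\theta$ and $\bar\eta/\eta$ are controlled by quantities already bounded: the $\bar\eta$-frame energies of $U$ from Lemmas~\ref{c_1-c_2}--\ref{c_3}, and \eqref{39} for $\bar U$. By contrast, the paper's second application of Lemma~\ref{lemma-gaowei} formally needs \eqref{C333} for the pair $(\eta,U)$, i.e.\ an a priori bound on $\cE(t,U)$ itself. This is easily repaired via $\sD_r^k\eta=\int_0^t\sD_r^kU\,\mathrm{d}s$ for $k\ge 2$ and the already obtained $\mathring\cE,\mathring\cD$, but the paper does not say so.

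You also make explicit a point the paper leaves implicit. The paper writes the constant of Lemma~\ref{lemma-gaowei} as $C\mathfrak{p}(c_0)$, although it depends on $\mathrm{E}_1,\mathrm{E}_2\le c_1$. All $c_1$-dependence must be absorbed through $t\le T_2\le\mathfrak{p}(c_1)^{-1}$, and this is what makes the choice $c_1=C\mathfrak{p}(c_0)$ in \eqref{3.84} consistent.

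Two bookkeeping corrections. First, $\theta$ and $\bar\eta/\eta$ are even (scalar) coefficients. They should be measured by $D_{\bar\eta}$ and the scalar structure $\sD_{\bar\eta}^{j-1}(D_{\bar\eta}\,\cdot)$ of Lemma~\ref{lemma-initial}(i), not by $\sD_{\bar\eta}^j$. The component $\theta/\bar\eta$ of $\sD_{\bar\eta}\theta$ behaves like $1/r$ at the origin, so your bound $|\sD_{\bar\eta}(\theta,\bar\eta/\eta)|_\infty\le C$ is false as written; what you actually use, $|D_{\bar\eta}(\theta,\bar\eta/\eta)|_\infty\le C$, is fine. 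Second, no $\theta_t$ ever arises, since $D_\eta U_t=\theta\,D_{\bar\eta}U_t$ at each fixed $t$.
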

\begin{proof}
Collecting the  estimates established  in Lemmas \ref{c_0-c_1}--\ref{c_3}, we see that, for all $t\in[0,T_2]$,
\begin{equation}\label{39''}
\bar\cE(t,U)+t\bar\cD(t,U)+\int_0^{t} \bar\cD(s,U)\,\mathrm{d}s\leq C\mathfrak{p}(c_0).
\end{equation}

First, \eqref{given-flow}, together  with \eqref{39} and Lemma \ref{c_1-c_2}, yields that, for any $0\leq t\leq T_3:=\min\{T_2,(2C\mathfrak{p}(c_0))^{-1}\}$, 
\begin{equation}\label{eta-etar}
|\eta_r(t)-1|_\infty+\Big|\frac{\eta(t)}{r}-1\Big|_\infty\leq \int_0^t |\sD_r U|_\infty \leq C\int_0^t |\sD_{\bar\eta}U|_\infty \,\mathrm{d}s\leq C\mathfrak{p}(c_0)T_3\leq \frac{1}{2}.
\end{equation}

Next, let $(\mathring{\cE},\mathring{\cD})(t,f)$ be defined in the same way as $(\cE,\cD)(t,f)$ in \eqref{E-1} and \eqref{D-1}, respectively, except for letting $\eta(r)=r$. Then,  based on \eqref{39''}--\eqref{eta-etar}, we obtain from Lemma \ref{lemma-gaowei} that, 
for all $t\in[0,T_3]$, 
\begin{equation*}
\begin{aligned}
\mathring\cE(t,U)+t\mathring\cD(t,U)+\int_0^{t} \mathring\cD(s,U)\,\mathrm{d}s\leq C\mathfrak{p}(c_0)\Big(\bar\cE(t,U)+t\bar\cD(t,U)+\int_0^{t} \bar\cD(s,U)\,\mathrm{d}s \Big),\\
\cE(t,U)+t\cD(t,U)+\int_0^{t} \cD(s,U)\,\mathrm{d}s\leq C\mathfrak{p}(c_0)\Big(\mathring\cE(t,U)+t\mathring\cD(t,U)+\int_0^{t} \mathring\cD(s,U)\,\mathrm{d}s\Big),
\end{aligned}
\end{equation*}
which thus leads to $\eqref{39''-0}_1$.
\end{proof}

Hence, define constants $(c_1,T^*)$ as 
\begin{equation}\label{3.84}
c_1:= C\mathfrak{p}(c_0),\qquad
T^*:=T_3=\min\{T_2,(2C\mathfrak{p}(c_0))^{-1}\},
\end{equation}
then we obtain from Lemma \ref{lemma-12.11} that, for all $t\in[0,T^*]$,
\begin{equation}\label{uniform bounds}
\cE(t,U)+t\cD(t,U)+\int_0^{t} \cD(s,U)\,\mathrm{d}s\leq c_1,\qquad |\eta_r(t)-1|_\infty+\Big|\frac{\eta(t)}{r}-1\Big|_\infty \leq \frac{1}{2}.
\end{equation}

\section{Local Well-Posedness of the Nonlinear Problem}\label{subsection9.3}

In this section, we prove the local well-posedness of classical solutions of \eqref{eq:VFBP-La-eta} stated in Theorem \ref{local-Theorem1.1}. 
For convenience, in the rest of \S \ref{subsection9.3}, we let $(\mathring{\cE},\mathring{\cD})(t,f)$ and $(\cE_\mathrm{k}, \cD_\mathrm{k})(t,f)$ be defined in the same way as $(\cE,\cD)(t,f)$ in \eqref{E-1} and \eqref{D-1}, except with $\eta(r)$ in place of $r$ and $\eta^\mathrm{k}$, respectively, where $\eta^\mathrm{k}$ denotes the $\mathrm{k}$-th generation of the iterative sequence that will be given later (see also Appendix \ref{AppB}).

\subsection{Introduction of the Picard Iteration}
Let $c_0$ be given as in \eqref{38} and
\begin{equation*}
U^0(t,r):= u_0,\qquad \eta^0(t,r)=r+tu_0.
\end{equation*}
Then there exists a small positive time $T^\prime\leq T^*$, with $T^*$ defined in \eqref{3.84}, 
such that,  for all $t\in[0,T']$,
\begin{equation}\label{395}
\cE_0(t,U^0)+t\cD_0(t,U^0)+\int_0^{t} \cD_0(s,U^0)\,\mathrm{d}s\leq c_1,\quad |\eta_r^0(t)-1|_\infty+\Big|\frac{\eta^0(t)}{r}-1\Big|_\infty \leq \frac{1}{2}.
\end{equation}

Next, set $\bar \eta=\eta^0$ in \eqref{lp}. By Lemma \ref{existence-linearize} and \eqref{given-flow}, problem \eqref{lp} admits a unique classical solution $(U^1,\eta^1)$ in $[0,T']\times \bar I$ . Certainly, it follows from \eqref{uniform bounds} that $(v^1,\eta^1)$ also satisfies the following uniform estimates on $[0,T']$:
\begin{equation}\label{395'''}
\cE_1(t,U^1)+t\cD_1(t,U^1)+\int_0^{t} \cD_1(s,U^1)\,\mathrm{d}s\leq c_1,\quad |\eta_r^1(t)-1|_\infty+\Big|\frac{\eta^1(t)}{r}-1\Big|_\infty \leq \frac{1}{2}.
\end{equation}

As a consequence, the approximate sequence $(U^{\mathrm{k}+1},\eta^{\mathrm{k}+1})$ ($\mathrm{k}\in \NN^*$) can be constructed iteratively as follows: Given $(U^{\mathrm{k}},\eta^\mathrm{k})$, define $(U^{\mathrm{k}+1},  \eta^{\mathrm{k}+1})$ by solving the following problem in $(0,T']\times I$: 
\begin{equation}\label{396}
\begin{cases}
\displaystyle r^2\rho_0 U_t^{\mathrm{k+1}} +A\big((\eta^{\mathrm{k}})^2(\varrho^{\mathrm{k}})^{\gamma}\big)_r- 2A\eta^{\mathrm{k}}\eta_r^{\mathrm{k}}(\varrho^{\mathrm{k}})^{\gamma} +4\pi G\frac{r^2\rho_0}{(\eta^{\mathrm{k}})^2}\int_0^r\hat r^2\rho_0\,\mathrm{d}\hat r\\[10pt]
\displaystyle\ = \Big((\eta^{\mathrm{k}})^2(\varrho^{\mathrm{k}})^{\delta} \big((2a_1+a_2)\frac{U_r^{\mathrm{k+1}}}{\eta_r^{\mathrm{k}}}+2a_2\frac{U^{\mathrm{k+1}}}{\eta^{\mathrm{k}}}\big)\Big)_r\\[10pt]
\displaystyle\quad\quad-\eta^{\mathrm{k}}\eta_r^{\mathrm{k}}(\varrho^{\mathrm{k}})^{\delta} \Big(2a_2  \frac{U^{\mathrm{k+1}}_r}{\eta_r^{\mathrm{k}}}+4(a_1+a_2)\frac{U^{\mathrm{k+1}}}{\eta^{\mathrm{k}}}\Big),\\[8pt]
\eta_t^{\mathrm{k+1}}=U^{\mathrm{k+1}},\\[8pt]
(U^{\mathrm{k+1}},\eta^{\mathrm{k+1}})|_{t=0}(r)=\big(u_0(r),r\big) \qquad \text{for }r \in I,
\end{cases}
\end{equation}
where $\varrho^\mathrm{k}$ is given by 
\begin{equation*}
\varrho^\mathrm{k}=\frac{r^2\rho_0}{(\eta^\mathrm{k})^2\eta^\mathrm{k}_r}.
\end{equation*}
By \eqref{uniform bounds}, we obtain an iterative solution sequence $(U^{\mathrm{k}},\eta^\mathrm{k})$ satisfying \eqref{395}, that is, for all $t\in[0,T']$ and $\mathrm{k}\in \NN$, 
\begin{equation}\label{395k}
\left\{
\begin{aligned}
&\cE_\mathrm{k}(t,U^{\mathrm{k}})+t\cD_\mathrm{k}(t,U^{\mathrm{k}})+\int_0^{t} \cD_\mathrm{k}(s,U^{\mathrm{k}})\,\mathrm{d}s\leq c_1,\quad |\eta_r^\mathrm{k}(t)-1|_\infty+\Big|\frac{\eta^\mathrm{k}(t)}{r}-1\Big|_\infty \leq \frac{1}{2},\\[4pt]
&U^{\mathrm{k}}\big|_{r=0}=\big(D_{\eta^{\mathrm{k}}}U^{\mathrm{k}}+\frac{2a_2}{2a_1+a_2}\frac{U^{\mathrm{k}}}{\eta^{\mathrm{k}}}\big)\Big|_{r=1}=0.
\end{aligned}
\right .
\end{equation}
Of course, by Lemma \ref{lemma-gaowei}, \eqref{395k} also implies
\begin{equation}\label{395k-r}
\mathring\cE(t,U^{\mathrm{k}})+t\mathring\cD(t,U^{\mathrm{k}})+\int_0^{t} \mathring\cD(s,U^{\mathrm{k}})\,\mathrm{d}s\leq C\mathfrak{p}(c_1),\quad |\eta_r^\mathrm{k}(t)-1|_\infty+\Big|\frac{\eta^\mathrm{k}(t)}{r}-1\Big|_\infty \leq \frac{1}{2}.
\end{equation}

\subsection{Strong  Convergence of the Approximation Solutions.}
Define
\begin{equation*}
\begin{aligned}
\widehat U^{\mathrm{k}+1}&:=U^{\mathrm{k}+1}-U^{\mathrm{k}},\qquad &&\weta^{\mathrm{k}+1}:=\eta^{\mathrm{k}+1}-\eta^\mathrm{k}=\int_0^t \wv^{\mathrm{k}+1}(s,r)\,\ds,\\
\psi^\mathrm{k}&:=(\varrho^\mathrm{k})^{\gamma-1},\qquad &&\widehat \psi^\mathrm{k+1}\!:=\psi^\mathrm{k+1}-\psi^\mathrm{k},
\end{aligned}
\end{equation*}
and introduce the following energy function:
\begin{equation*}
\begin{aligned}
\widehat E_\mathrm{k}(t)&:=\sup_{s\in[0,t]}\big|(r^2\rho_0)^\frac{1}{2} \wv^{\mathrm{k}}\big|_2^2+\int_0^t\big|(r^2\rho_0^\delta)^\frac{1}{2} \sD_r \wv^{\mathrm{k}}\big|_2^2\,\ds.   
\end{aligned}
\end{equation*}

Then, based on \eqref{396}, the problem of $(\wv^{\mathrm{k}+1},\weta^{\mathrm{k}+1})$ can be written as 
\begin{equation}\label{398}
\!\!\begin{cases}
\displaystyle r^2\rho_0\wv^{\mathrm{k}+1}_t-\Big((\eta^{\mathrm{k}})^2(\varrho^{\mathrm{k}})^{\delta} \big((2a_1+a_2)\frac{\wv_r^{\mathrm{k+1}}}{\eta_r^{\mathrm{k}}}+2a_2\frac{\wv^{\mathrm{k+1}}}{\eta^{\mathrm{k}}}\big)\Big)_r\\[10pt] 
\displaystyle\quad+\eta^{\mathrm{k}}\eta_r^{\mathrm{k}}(\varrho^{\mathrm{k}})^{\delta} \Big(2a_2  \frac{\wv^{\mathrm{k+1}}_r}{\eta_r^{\mathrm{k}}}+4(a_1+a_2)\frac{ \wv^{\mathrm{k+1}}}{\eta^{\mathrm{k}}}\Big)
=\big((r^2\rho_0^\delta)^\frac{1}{2}\frac{h^\mathrm{k}}{\eta^\mathrm{k}_r}\big)_r+(r^2\rho_0^\delta)^\frac{1}{2}\frac{g^\mathrm{k}}{\eta^\mathrm{k}},\\[8pt] 
\weta^{\mathrm{k}+1}_t=\wv^{\mathrm{k}+1},\\[4pt]
(\wv^{\mathrm{k}+1},\weta^{\mathrm{k}+1})(r)\big|_{t=0}=(0,0) \qquad\text{for }r\in I,
\end{cases}
\end{equation}
where
\begin{equation*}
\begin{aligned}
h^\mathrm{k}&:=(r^2\rho_0^\delta)^\frac{1}{2} \Big(\big((\eta^\mathrm{k})^{2}\eta^\mathrm{k}_r\big)^{(1-\delta)}\Big((2a_1+a_2) \frac{U_{r}^\mathrm{k}}{\eta^\mathrm{k}_r}\big(1-\frac{(\eta^\mathrm{k-1})^{2(1-\delta)}(\eta^\mathrm{k-1}_r)^{-(1+\delta)}}{(\eta^\mathrm{k})^{2(1-\delta)}(\eta_r^\mathrm{k})^{-(1+\delta)}} \big)\\
&\qquad+2a_2\frac{U^\mathrm{k}}{\eta^\mathrm{k}}\big(1-\frac{(\eta^\mathrm{k-1})^{2(1-\delta)-1}(\eta^\mathrm{k-1}_r)^{-\delta}}{(\eta^\mathrm{k})^{2(1-\delta)-1}(\eta_r^\mathrm{k})^{ - \delta}}\big)\Big)-A\rho_0^{1-\delta}\big(\widehat \psi^\mathrm{k}+\psi^\mathrm{k-1} (1 - \frac{\eta^\mathrm{k}_r}{\eta^\mathrm{k-1}_r})\big)\Big),\\
g^\mathrm{k}&:=-(r^2\rho_0^\delta)^\frac{1}{2}\Big(\big((\eta^\mathrm{k})^{2}\eta^\mathrm{k}_r\big)^{(1-\delta)}\Big(2a_2  \frac{U^{\mathrm{k}}_r}{\eta^\mathrm{k}_r}\big(1-\frac{(\eta^\mathrm{k-1})^{2(1-\delta)-1}(\eta^\mathrm{k-1}_r)^{-\delta}}{(\eta^{\mathrm{k}})^{2(1-\delta)-1}(\eta^\mathrm{k}_r)^{-\delta}}\big)\\
&\qquad+4(a_1+a_2)\frac{U^{\mathrm{k}}}{\eta^\mathrm{k}}\big(1-\frac{(\eta^\mathrm{k-1})^{-2\delta}(\eta^\mathrm{k-1}_r)^{1-\delta}}{(\eta^\mathrm{k})^{-2\delta}(\eta^\mathrm{k}_r)^{1-\delta}}\big)\Big)-2A\rho_0^{1-\delta} \big( \widehat \psi^\mathrm{k}+ \psi^\mathrm{k-1}(1-\frac{\eta^\mathrm{k}}{\eta^\mathrm{k-1}})\big)\Big)\\
&\qquad-4\pi G\frac{r\rho_0^{1-\frac{\delta}{2}}}{\eta^{\mathrm{k}}}\big(1-\frac{(\eta^{\mathrm{k}})^2}{(\eta^{\mathrm{k-1}})^2}\big)\int_0^r\hat r^2\rho_0\,\mathrm{d}\hat r.
\end{aligned}
\end{equation*}

Since
\begin{align*}
&\begin{aligned}
&\ \big(\frac{\eta^\mathrm{k}}{\eta^\mathrm{k-1}}-1\big)_t=\frac{\wv^\mathrm{k}}{\eta^\mathrm{k-1}}-\frac{U^{\mathrm{k}-1}}{\eta^\mathrm{k-1}}\big(\frac{\eta^\mathrm{k}}{\eta^\mathrm{k-1}}-1\big)\\
&\implies\frac{\eta^\mathrm{k}}{\eta^\mathrm{k-1}}=1+\int_0^{t} \exp\Big(-\int_s^t\frac{U^{\mathrm{k}-1}}{\eta^\mathrm{k-1}}\,\mathrm{d}\tau\Big)\frac{\wv^\mathrm{k}}{\eta^\mathrm{k-1}}\,\mathrm{d}s,
\end{aligned}
\end{align*}
it follows from \eqref{395k} that, for all $t\in[0,T']$,
\begin{equation}\label{395kk}
\Big|\frac{\eta^\mathrm{k}}{\eta^\mathrm{k-1}}-1\Big|+\Big|\big(\frac{\eta^\mathrm{k}}{\eta^\mathrm{k-1}}\big)^2-1\Big|\leq Ce^{C\mathfrak{p}(c_1)t}\int_0^{t} \Big|\frac{\wv^\mathrm{k}}{r}\Big|\,\mathrm{d}s.
\end{equation}
Similarly, we can also obtain
\begin{equation}\label{395kk'}
\begin{aligned}
\Big|\frac{\eta_r^\mathrm{k}}{\eta_r^\mathrm{k-1}}-1\Big|+\Big|\big(\frac{\eta_r^\mathrm{k}}{\eta_r^\mathrm{k-1}}\big)^2-1\Big| +
\Big|\frac{(\eta^\mathrm{k-1})^{2(1-\delta)}(\eta^\mathrm{k-1}_r)^{-(1+\delta)}}{(\eta^\mathrm{k})^{2(1-\delta)}(\eta_r^\mathrm{k})^{-(1+\delta)}}-1\Big| &\leq Ce^{C\mathfrak{p}(c_1)t}\int_0^{t} |\sD_r\wv^\mathrm{k}|\,\mathrm{d}s, \\[6pt]
\Big|\frac{(\eta^\mathrm{k-1})^{2(1-\delta)-1}(\eta^\mathrm{k-1}_r)^{-\delta}}{(\eta^\mathrm{k})^{2(1-\delta)-1}(\eta_r^\mathrm{k})^{ - \delta}}-1\Big|+
\Big|\frac{(\eta^\mathrm{k-1})^{-2\delta}(\eta^\mathrm{k-1}_r)^{1-\delta}}{(\eta^\mathrm{k})^{-2\delta}(\eta^\mathrm{k}_r)^{1-\delta}}-1\Big|&\leq Ce^{C\mathfrak{p}(c_1)t}\int_0^{t} |\sD_r\wv^\mathrm{k}|\,\mathrm{d}s.
\end{aligned}
\end{equation}

Moreover, since
\begin{equation*}
\begin{aligned}
\widehat \psi^\mathrm{k}_t&=-(\gamma-1)\widehat \psi^\mathrm{k}\big(\frac{U_r^\mathrm{k}}{\eta_r^\mathrm{k}}+\frac{2U^{\mathrm{k}}}{\eta^\mathrm{k}}\big)-(\gamma-1) \psi^\mathrm{k-1}\Big(\frac{\wv_r^\mathrm{k}}{\eta_r^\mathrm{k}}+ \frac{U_r^\mathrm{k-1}}{\eta_r^\mathrm{k}}\big(1-\frac{\eta_r^\mathrm{k}}{\eta_r^\mathrm{k-1}}\big)\Big)\\
&\quad -2(\gamma-1) \psi^\mathrm{k-1}\Big(\frac{\wv^\mathrm{k}}{\eta^\mathrm{k}}+ \frac{U^{\mathrm{k}-1}}{\eta^\mathrm{k}}\big(1-\frac{\eta^\mathrm{k}}{\eta^\mathrm{k-1}}\big)\Big),
\end{aligned}
\end{equation*}
it follows from \eqref{395k} and \eqref{395kk}--\eqref{395kk'} that, for all $t\in [0,T']$,
\begin{equation}\label{395kkk}
|\widehat \psi^\mathrm{k}| \leq Ce^{C\mathfrak{p}(c_1)t}\int_0^{t} |\sD_r\wv^\mathrm{k}|\,\mathrm{d}s.
\end{equation}

Therefore, it follows from \eqref{395k}, \eqref{395kk}--\eqref{395kkk}, and the H\"older and Minkowski inequalities that, for all $t\in [0,T']$ and $\mathrm{k}\in \NN^*$, 
\begin{equation}\label{basic}
\begin{aligned}
|(g^\mathrm{k},h^\mathrm{k})(t)|_2^2
\leq C\mathfrak{p}(c_1)te^{C\mathfrak{p}(c_1)t} \int_0^t\big|(r^2\rho_0^\delta)^\frac{1}{2}\sD_r \wv^{\mathrm{k}}\big|_2^2\,\ds.
\end{aligned}
\end{equation}

Now, multiplying $\eqref{398}_1$ by $\wv^{\mathrm{k}+1}$ and integrating the resulting equality over $I$, then using an argument similar to $L_1$ in \eqref{L-1}, we obtain from \eqref{395k}, \eqref{basic}, and the H\"older and Young inequalities that
\begin{equation*}
\begin{aligned}
&\,\frac{\mathrm{d}}{\dt}\big|(r^2\rho_0)^\frac{1}{2} \wv^{\mathrm{k}+1}\big|_2^2+\big|(r^2\rho_0^\delta)^\frac{1}{2}\sD_r\wv^{\mathrm{k+1}}\big|_2^2\leq C|(g^\mathrm{k},h^\mathrm{k})|_2^2\\
&\leq C\mathfrak{p}(c_1)te^{C\mathfrak{p}(c_1)t} \int_0^t\big|(r^2\rho_0^\delta)^\frac{1}{2}\sD_r\wv^{\mathrm{k}}\big|_2^2\ds.
\end{aligned}
\end{equation*}
Integrating the above over $[0,t]$ leads to
\begin{equation*}
\widehat E_{\mathrm{k}+1}(t)\leq C\mathfrak{p}(c_1)t^2e^{C\mathfrak{p}(c_1)t} \widehat E_\mathrm{k}(t) \qquad \text{for all $t\in [0,T^\prime]$ and $\mathrm{k}\in \NN^*$}.
\end{equation*}

Choosing $t=T_*$ in the above inequality such that
\begin{equation*}
C\mathfrak{p}(c_1)T_*^2e^{C\mathfrak{p}(c_1)T_*}\leq \frac{1}{2},\qquad T_*\leq T',
\end{equation*}
leads to 
\begin{equation}\label{total-bound}
\sum_{\mathrm{k}=1}^{\infty} \widehat E_\mathrm{k}(T_*) \leq \Big(\sum_{\mathrm{k}=0}^{\infty}2^{-\mathrm{k}}\Big) \widehat E_1(T_*)\leq C\mathfrak{p}(c_1).
\end{equation}

Then \eqref{total-bound} implies that $\{U^{\mathrm{k}}\}_{\mathrm{k}\in \NN}$ is a Cauchy sequence that converges to some limit $U$ as $\mathrm{k}\to\infty$ in the following sense:
\begin{equation*} 
\begin{aligned}
(r^2\rho_0)^\frac{1}{2} U^{\mathrm{k}}\to (r^2\rho_0)^\frac{1}{2}U&\qquad  \text{in } C([0,T_*];L^2),\\
(r^2\rho_0)^\frac{1}{2}\sD_r U^{\mathrm{k}}\to (r^2\rho_0)^\frac{1}{2}\sD_r U&\qquad  \text{in } L^2([0,T_*];L^2),
\end{aligned}
\end{equation*}
which also leads to 
\begin{equation}\label{5.10}
U^{\mathrm{k}}\to U \qquad  \text{in } L^2([0,T_*];H^1_{\mathrm{loc}}) \qquad \text{as $\mathrm{k}\to \infty$}.
\end{equation}
On the other hand, from \eqref{395k-r} and $\rho_0^{\gamma-1}\sim 1-r$, it follows that, for any $t\in [0,T_*]$ and $a\in (0,1)$, 
\begin{equation}\label{5.11}
t\|U^{\mathrm{k}}\|_{H^4(a,1-a)}^2\leq C(a) (\mathring\cE(t,U)+t\mathring\cD(t,U))\leq C(a)\mathfrak{p}(c_1).
\end{equation}
Hence, it follows \eqref{5.10}--\eqref{5.11} and Lemma \ref{GNinequality'} that, for any $a\in (0,1)$, 
\begin{equation}\label{H333}
U^{\mathrm{k}}\to U\qquad \text{in }L^1([0,T_*];H^3_\mathrm{loc}) \ \ \text{as $\mathrm{k}\to \infty$}.
\end{equation}
 
Now, based on \eqref{H333} and $\eqref{396}_2$, we have
\begin{equation}\label{H333'}
\eta^\mathrm{k}=r+\int_0^tU^{\mathrm{k}}\,\mathrm{d}s\to r+\int_0^tU \,\mathrm{d}s \qquad \text{in } C([0,T_*];H^3_\mathrm{loc}). 
\end{equation}
This also implies that $\{\eta^\mathrm{k}\}_{k\in\NN}$ is a Cauchy sequence in $C([0,T_*];H^3_\mathrm{loc})$ which converges to some limit $\eta$ so that
\begin{equation*}
\eta=r+\int_0^tU \,\mathrm{d}s \qquad \text{for \textit{a.e.} $(t,r)\in (0,T_*)\times (0,1)$}.
\end{equation*}

To recover equation $\eqref{eq:VFBP-La-eta}_1$, we first divide $\eqref{396}_1$ by $(\eta^\mathrm{k})^2\eta_r^\mathrm{k}$ to derive
\begin{equation*}
\begin{aligned}
U_t^{\mathrm{k+1}}&= \frac{1}{\varrho^\mathrm{k}\eta_r^\mathrm{k}}\Big((\varrho^{\mathrm{k}})^{\delta} \big((2a_1+a_2)\frac{U_r^{\mathrm{k+1}}}{\eta_r^{\mathrm{k}}}+2a_2\frac{U^{\mathrm{k+1}}}{\eta^{\mathrm{k}}}\big)-A(\varrho^{\mathrm{k}})^{\gamma}\Big)_r 
-4\pi G\frac{1}{(\eta^{\mathrm{k}})^2}\int_0^r\hat r^2\rho_0\,\mathrm{d}\hat r\\
&\displaystyle\quad+\frac{4a_1}{(\varrho^\mathrm{k})^{1-\delta}\eta^\mathrm{k}}\Big(  \frac{U^{\mathrm{k+1}}_r}{\eta_r^{\mathrm{k}}}-\frac{U^{\mathrm{k+1}}}{\eta^{\mathrm{k}}}\Big).
\end{aligned}
\end{equation*}
Then, due to Lemma \ref{sobolev-embedding}, $\rho_0^{\gamma-1}\sim 1-r$, $\rho^{\gamma-1}_0 \in H^3_{\mathrm{loc}}$, and \eqref{H333}--\eqref{H333'}, we take the limit as $k\to\infty$ in the above and obtain
\begin{equation*}
\begin{aligned}
U_t^{\mathrm{k}}&= \frac{1}{\varrho}D_\eta\Big((2a_1+a_2)\varrho^{\delta} \big(D_\eta U+\frac{2U}{\eta}\big)-A\varrho^{\gamma}\Big) 
-D_\eta \Phi-4a_1\frac{D_\eta(\varrho^\delta)}{\varrho}\frac{U}{\eta}, 
\end{aligned}
\end{equation*}
which, along with the uniqueness of limits, implies that $\eqref{eq:VFBP-La-eta}_1$ holds for \textit{a.e.} $(t,r)\in (0,T_*)\times (0,1)$.  Moreover, by the lower semi-continuity of weak convergence and \eqref{395k-r}, we have
\begin{equation*}
\mathring\cE(t,U)+t\mathring\cD(t,U)+\int_0^{t} \mathring\cD(s,U)\,\mathrm{d}s\leq C\mathfrak{p}(c_1),\quad |\eta_r(t)-1|_\infty+\Big|\frac{\eta(t)}{r}-1\Big|_\infty \leq \frac{1}{2}.
\end{equation*}

This completes the proof of the existence.

\subsection{Uniqueness  and Time Continuity} 
Let $(U^\mathsf{a},\eta^\mathsf{a})$ and $(U^\mathsf{b},\eta^\mathsf{b})$ be two solutions of \eqref{eq:VFBP-La-eta} in $(0,T_*)\times (0,1)$. Define 
\begin{equation*}
(\weta,\wv):=(\eta^\mathsf{b}-\eta^\mathsf{a},U^\mathsf{b}-U^\mathsf{a}),\quad \,\,
\widehat E(t):=\sup_{s\in[0,t]}\big|(r^2\rho_0)^\frac{1}{2} \wv\big|_2^2+\int_0^t\big|(r^2\rho_0^\delta)^\frac{1}{2} \sD_r \wv\big|_2^2\,\ds.
\end{equation*}

It follows from  \eqref{eq:VFBP-La-eta} that  $(\wv,\weta)$ solves the following problem in $(0,T_*]\times I$:
\begin{equation}
\!\!\begin{cases}
\displaystyle r^2\rho_0\wv_t-\Big((\eta^{\mathsf{b}})^2(\varrho^{\mathsf{b}})^{\delta} \big((2a_1+a_2)\frac{\wv_r}{\eta_r^{\mathsf{b}}}+2a_2\frac{\wv}{\eta^{\mathsf{b}}}\big)\Big)_r\\[10pt] 
\displaystyle\quad+\eta^{\mathsf{b}}\eta_r^{\mathsf{b}}(\varrho^{\mathsf{b}})^{\delta} \Big(2a_2  \frac{\wv_r}{\eta_r^{\mathsf{b}}}+4(a_1+a_2)\frac{ \wv}{\eta^{\mathsf{b}}}\Big)
=\big((r^2\rho_0^\delta)^\frac{1}{2}\frac{\tilde{h}}{\eta^\mathsf{b}_r}\big)_r+(r^2\rho_0^\delta)^\frac{1}{2}\frac{\tilde{g}}{\eta^\mathsf{b}},\\[8pt] 
\weta_t=\wv,\\[4pt]
(\wv,\weta)(r)\big|_{t=0}=(0,0) \qquad\text{for }r\in I,
\end{cases}
\end{equation}
where
\begin{equation*}
\begin{aligned}
\tilde{h}&:=(r^2\rho_0^\delta)^\frac{1}{2} \bigg(\big((\eta^\mathsf{b})^{2}\eta^\mathsf{b}_r\big)^{(1-\delta)}\Big((2a_1+a_2) \frac{U_{r}^\mathsf{a}}{\eta^\mathsf{b}_r}\big(1-\frac{(\eta^\mathsf{a})^{2(1-\delta)}(\eta^\mathsf{a}_r)^{-(1+\delta)}}{(\eta^\mathsf{b})^{2(1-\delta)}(\eta_r^\mathsf{b})^{-(1+\delta)}} \big)\\
&\qquad\qquad\qquad+2a_2\frac{U^\mathsf{a}}{\eta^\mathsf{b}}\big(1-\frac{(\eta^\mathsf{a})^{2(1-\delta)-1}(\eta^\mathsf{a}_r)^{-\delta}}{(\eta^\mathsf{b})^{2(1-\delta)-1}(\eta_r^\mathsf{b})^{ - \delta}}\big)\Big)-A\rho_0^{1-\delta}\big(\widehat \psi+\psi^\mathsf{a} (1 - \frac{\eta^\mathsf{b}_r}{\eta^\mathsf{a}_r})\big)\bigg),\\
\tilde{g}&:=-(r^2\rho_0^\delta)^\frac{1}{2}\bigg(\big((\eta^\mathsf{b})^{2}\eta^\mathsf{b}_r\big)^{(1-\delta)}\Big(2a_2  \frac{U^{\mathsf{a}}_r}{\eta^\mathsf{b}_r}\big(1-\frac{(\eta^\mathsf{a})^{2(1-\delta)-1}(\eta^\mathsf{a}_r)^{-\delta}}{(\eta^{\mathsf{b}})^{2(1-\delta)-1}(\eta^\mathsf{b}_r)^{-\delta}}\big)\\
&\qquad\qquad\qquad+4(a_1+a_2)\frac{U^{\mathsf{a}}}{\eta^\mathsf{b}}\big(1-\frac{(\eta^\mathsf{a})^{-2\delta}(\eta^\mathsf{a}_r)^{1-\delta}}{(\eta^\mathsf{b})^{-2\delta}(\eta^\mathsf{b}_r)^{1-\delta}}\big)\Big)-2A\rho_0^{1-\delta} \big( \widehat \psi+\psi^\mathsf{a}(1-\frac{\eta^\mathsf{b}}{\eta^\mathsf{a}})\big)\bigg)\\
&\qquad-4\pi G\frac{r\rho_0^{1-\frac{\delta}{2}}}{\eta^{\mathsf{b}}}\big(1-\frac{(\eta^{\mathsf{b}})^2}{(\eta^{\mathsf{a}})^2}\big)\int_0^r\hat r^2\rho_0\,\mathrm{d}\hat r,\\
&\psi^\mathsf{a}:=\big(\frac{r^2\rho_0}{(\eta^\mathsf{a})^2\eta^\mathsf{a}_r}\big)^{\gamma-1}, \qquad \psi^\mathsf{b}:=\big(\frac{r^2\rho_0}{(\eta^\mathsf{b})^2\eta^\mathsf{b}_r}\big)^{\gamma-1}, \qquad \widehat\psi:=\psi^\mathsf{b}-\psi^\mathsf{a}.
\end{aligned}
\end{equation*}

Similarly, we can show that $(\tilde{h} ,\tilde{g})$ satisfy \eqref{basic} with $\wv^{\mathrm{k}}$ replaced by $\wv$. Hence, by the same arguments as in Step 2.2, we have 
\begin{equation*}
\frac{\mathrm{d}}{\dt}\big|(r^2\rho_0)^\frac{1}{2} \wv\big|_2^2+\big|(r^2\rho_0^\delta)^\frac{1}{2} \sD_r \wv\big|_2^2\leq C\mathfrak{p}(c_1)te^{C\mathfrak{p}(c_1)t} \int_0^t\big|(r^2\rho_0^\delta)^\frac{1}{2} \sD_r \wv\big|_2^2\,\ds,
\end{equation*}
which, together  with the Gr\"onwall inequality, yields that  $\widehat E(t) \equiv 0$, \textit{i.e.}, $U^\mathsf{a}\equiv U^\mathsf{b}$.

Now, we show that $(U,\eta)$ are classical satisfying $\eqref{eq:VFBP-La-eta}_1$. First, the regularity of $U$ and the boundary condition shown in  \eqref{N111} can be proved by the same argument as in Steps 7--8 in \S \ref{subsection3.3}. Then the regularity of $\eta$ follows directly from the formula: $\eta_t=U$. Finally, following a similar argument in Step 9 in \S \ref{subsection3.3}, we can show that $\eqref{eq:VFBP-La-eta}_1$ holds pointwise in $(0,T_*]\times \bar I$.

\appendix
\section{Some Basic Lemmas}\label{appendix A}

For the convenience of readers, we list some basic facts that have been used frequently in this paper. Throughout the following Appendices \ref{appendix A}--\ref{subsection2.2}, let $\rho_0(\boldsymbol{y})=\rho_0(r)$, with $\rho_0(r)$ satisfy \eqref{distance-la}, that is, for some constants $K_2>K_1>0$ and for all $r\in I$,
\begin{equation*}
r\big(\rho_0,\sD_r^j((\rho^{\gamma-1}_0)_r)\big)\in L^2(I)\quad \text{for $j=0,1,2$},\qquad K_1(1-r) \leq \rho^{\gamma-1}_0(r)\leq K_2(1-r).
\end{equation*}

The first two lemmas concern the separability and density of the weighted Sobolev spaces. 
\begin{Lemma}[\cite{kufner}]\label{W-space}
Let $\vartheta_1,\vartheta_2\in (-1,\infty)$, and let $\tilde{d}=\tilde{d}(r)=r^{\vartheta_1}(1-r)^{\vartheta_2}$ be a function defined on $I$. Then, for $k\in \ZZ$, $H^k_{\tilde{d}}$ is a reflexive separable Banach space. Moreover, if $k\in \mathbb{N}$, $C^\infty(\bar I)$ is dense in $H^k_{\tilde{d}}$ with respect to the norm $\norm{\cdot}_{k,\tilde{d}}$. 
\end{Lemma}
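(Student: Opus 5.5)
The statement to prove is Lemma \ref{W-space}: for the weight $\tilde d=r^{\vartheta_1}(1-r)^{\vartheta_2}$ with $\vartheta_1,\vartheta_2>-1$, the space $H^k_{\tilde d}$ is a reflexive separable Banach space, and for $k\in\NN$ the set $C^\infty(\bar I)$ is dense in it. The plan is to treat $k\ge 0$ first by realising $H^k_{\tilde d}$ as a closed subspace of a finite product of weighted $L^2$ spaces, and then to obtain the case $k<0$ by duality.

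\textbf{Step 1 (completeness).} Consider the map
$$T:f\mapsto(\partial_r^jf)_{j=0}^k\in\prod_{j=0}^k L^2_{\tilde d}.$$
Each factor $L^2_{\tilde d}$ is isometric to $L^2(I)$ via $g\mapsto\sqrt{\tilde d}\,g$, so the product is a separable Hilbert space. The map $T$ is an isometry onto its image. To see that the image is closed, use that $\tilde d$ is bounded below by a positive constant on each compact subinterval of $(0,1)$. Hence convergence in $L^2_{\tilde d}$ implies convergence in $L^2_{\mathrm{loc}}$, and weak derivatives pass to the limit when tested against $C_c^\infty(0,1)$. Closed subspaces of separable Hilbert spaces are separable and reflexive, so $H^k_{\tilde d}$ is too (indeed it is a Hilbert space).

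\textbf{Step 2 (negative $k$).} For $k<0$, $H^k_{\tilde d}$ is by definition the dual of $H^{-k}_{\tilde d}$. The dual of a separable Hilbert space is again a separable Hilbert space, hence reflexive and separable.

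\textbf{Step 3 (density of $C^\infty(\bar I)$, $k\in\NN$).} Fix $f\in H^k_{\tilde d}$ and proceed in three moves.
\begin{itemize}
\item \emph{Localisation.} Split $f=\phi_0f+\phi_1f$ with a smooth partition of unity, $\phi_0$ supported in $[0,\tfrac23)$ and $\phi_1$ supported in $(\tfrac13,1]$. Multiplying by smooth cutoffs preserves $H^k_{\tilde d}$ by the Leibniz rule. It therefore suffices to approximate functions supported near one endpoint, say near $r=1$ with weight comparable to $(1-r)^{\vartheta_2}$; the endpoint $r=0$ is symmetric.
\item \emph{Outward shift.} For small $\tau>0$ define $f_\tau(r)=f(r-\tau)$, extended by zero on the left; this is harmless since $\phi_1f$ vanishes near $r=\tfrac13$. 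Now $f_\tau$ lives on $(\tau,1+\tau)$, so restricted to $[0,1]$ it has been pulled back from the degenerate point. I expect to show that $f_\tau\to f$ in $H^k_{\tilde d}$.
\item \emph{Mollification.} On $[0,1]$ the function $f_\tau$ is in unweighted $H^k$ near $1$, because the weight $(1-(r-\tau))^{\vartheta_2}$ is nondegenerate there. Mollifying at scale $\varepsilon\ll\tau$ then gives $C^\infty(\bar I)$ functions converging in $H^k(1-\tau/2,1)$. Since $\vartheta_2>-1$, the weight is integrable, so this convergence also holds in the weighted norm near $r=1$. Away from $r=1$ the weights are comparable, so the convergence there is immediate.
\end{itemize}

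\textbf{Main obstacle.} The expected difficulty is the convergence $f_\tau\to f$ in $L^2_{\tilde d}$ for each derivative, that is, showing
$$\int|\partial^j f(r-\tau)-\partial^j f(r)|^2(1-r)^{\vartheta_2}\,\mathrm{d}r\to0.$$
Here the weight is not translation invariant. The plan is to bound the translated weight by the original one: on the relevant range, $(1-r)^{\vartheta_2}\le C(1-r+\tau)^{\vartheta_2}$ when $\vartheta_2\ge0$ (so translation is bounded in the weighted norm). Then use density of $C_c(0,1)$ in $L^2_{\tilde d}$ plus uniform boundedness of translations. For $-1<\vartheta_2<0$ the weight blows up at $r=1$. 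In that case I would first invoke the Kufner-type density of functions smooth up to the boundary, which the paper cites from \cite{kufner}. If the direct estimate fails, I would fall back on that citation for the density statement.
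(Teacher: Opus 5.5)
Your Steps 1 and 2 are correct. Step 3 also works when $\vartheta_1,\vartheta_2\ge 0$. In that case the weight at the target point of your shift is dominated by the weight at the source point (e.g.\ $(1-s-\tau)^{\vartheta_2}\le(1-s)^{\vartheta_2}$). Translations are therefore uniformly bounded on $L^2_{\tilde d}$ for functions supported away from $r=0$, and the $3\epsilon$ argument with $C_c(0,1)$ gives $f_\tau\to f$.

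Even in that case the mollification step needs a different justification. Weighted convergence of $\partial_r^k(f_\tau*\omega_\varepsilon)$ holds because $\tilde d$ is \emph{bounded} on $[\frac13,1]$, not because it is integrable. Integrability only upgrades uniform convergence, which you have for derivatives of order $\le k-1$ but not for the $k$-th one.

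The genuine gap is the range $\vartheta_1$ or $\vartheta_2\in(-1,0)$, which the lemma explicitly includes. There the shift is unbounded, not merely hard to estimate: the ratio $\big(\frac{1-s-\tau}{1-s}\big)^{\vartheta_2}$ blows up as $s\to1-\tau$. In fact $f_\tau$ need not lie in $H^k_{\tilde d}$ at all. Near $r=1$, $\partial_r^kf_\tau$ is $\partial_r^kf$ near the interior point $1-\tau$, where it is merely $L^2$; take $\partial_r^kf\sim|s-(1-\tau)|^{-\beta}$ with $\frac{1+\vartheta_2}{2}\le\beta<\frac12$.

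Your fallback of invoking \cite{kufner} in that range cites the very statement you are proving. The paper itself only cites \cite{kufner} for the whole lemma, so as written you have a self-contained proof for nonnegative exponents only. That case covers every use in the paper, whose weights $r^2\rho_0^{s}$, $\rho_0^{\upsilon(\gamma-1)}$, $\varphi^{2b}$ all have nonnegative exponents, but it does not cover the lemma as stated.

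A one-dimensional argument covers all $\vartheta_1,\vartheta_2>-1$ at once, with no localisation, translation or mollification: approximate only the top derivative and integrate back. For $k=0$, use density of $C_c^\infty(0,1)$ in $L^2_{\tilde d}$, which holds because $\tilde d\,\mathrm{d}r$ is a locally finite measure on $(0,1)$. For $k\ge1$ we have $f\in C^{k-1}(0,1)$. Choose $g_n\in C_c^\infty(0,1)$ with $g_n\to\partial_r^kf$ in $L^2_{\tilde d}$ and set
\begin{equation*}
f_n(r):=\sum_{j=0}^{k-1}\frac{\partial_r^jf(\frac12)}{j!}\big(r-\tfrac12\big)^j+\int_{1/2}^r\frac{(r-s)^{k-1}}{(k-1)!}\,g_n(s)\,\mathrm{d}s\in C^\infty(\bar I).
\end{equation*}
Then $\partial_r^kf_n=g_n$, and $|\partial_r^j(f_n-f)(r)|\le\big|\int_{1/2}^r|g_n-\partial_r^kf|\,\mathrm{d}s\big|$ for $j<k$. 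Now apply the Hardy inequality
\begin{equation*}
\int_0^{1/2}\Big(\int_r^{1/2}|h|\,\mathrm{d}s\Big)^2r^{\vartheta_1}\,\mathrm{d}r\le\frac{4}{(1+\vartheta_1)^2}\int_0^{1/2}|h|^2r^{\vartheta_1+2}\,\mathrm{d}r,
\end{equation*}
together with its mirror image at $r=1$ with exponent $\vartheta_2$. Using $\tilde d\simeq r^{\vartheta_1}$ on $(0,\frac12)$, $\tilde d\simeq(1-r)^{\vartheta_2}$ on $(\frac12,1)$, and that the extra factors $r^2$, $(1-r)^2$ are at most $1$, this gives $\|\partial_r^j(f_n-f)\|_{L^2_{\tilde d}}\le C\|g_n-\partial_r^kf\|_{L^2_{\tilde d}}\to0$. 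The Hardy constant is exactly where the hypothesis $\vartheta_1,\vartheta_2>-1$ enters.
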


\begin{Lemma}\label{prop-bijin}
$\cH^1_{r^2\rho_0^{\upsilon(\gamma-1)}}$  with $\upsilon\geq 0$ is a reflexive separable Banach space. 
Moreover, for any $f\in \cH^1_{r^2\rho_0^{\upsilon(\gamma-1)}}$, there exists 
a sequence $\{f^\varepsilon\}_{\varepsilon>0}\subset C^\infty(\bar I)\cap\cH^1_{r^2}$ such that 
\begin{equation*}
\|f^\varepsilon-f\|_{\cH^1_{r^2\rho_0^{\upsilon(\gamma-1)}}}\to 0 \qquad\text{as $\varepsilon\to 0$}.
\end{equation*}    
\end{Lemma}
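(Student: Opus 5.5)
The plan is to reduce everything to the fixed weight $\mathrm{w}=r^2\rho_0^{\upsilon(\gamma-1)}$ on $I$, split $I$ into a neighbourhood of the origin and a neighbourhood of the vacuum boundary, and use the two facts that near $r=0$ the weight behaves like $r^2$ while near $r=1$ it behaves like $(1-r)^{\upsilon}$ because of \eqref{distance-la}. \emph{Functional-analytic part.} The map $f\mapsto (\sqrt{\mathrm{w}}f,\sqrt{\mathrm{w}}f_r,\sqrt{\mathrm{w}}f/r)$ is an isometry of $\cH^1_{\mathrm{w}}$ into $(L^2(I))^3$. Its image is closed: if the images of a Cauchy sequence converge, then on every compact $K\subset(0,1)$ the weight is bounded above and below, so $f_n\to f$ in $H^1(K)$. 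The limit triple is therefore $(\sqrt{\mathrm{w}}f,\sqrt{\mathrm{w}}f_r,\sqrt{\mathrm{w}}f/r)$. Completeness follows. A closed subspace of the separable reflexive Hilbert space $(L^2)^3$ is again separable and reflexive, and in fact $\cH^1_{\mathrm{w}}$ is a Hilbert space.

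\emph{Density.} Fix $f\in\cH^1_{\mathrm{w}}$ and take the cut-off $\zeta$ from \eqref{zeta}. Write $f=\zeta f+\zeta^\sharp f$ and treat the two pieces separately.

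For the interior piece $\zeta f$, the weight is comparable to $r^2$ on $\mathrm{supp}\,\zeta$. I would identify $\zeta f$ with the radial vector field $\boldsymbol{F}(\boldsymbol{y})=\zeta f(r)\boldsymbol{y}/r$. By Lemma \ref{lemma-initial}, $\boldsymbol{F}\in H^1(B_1)$, and $\boldsymbol{F}$ is compactly supported in $B_{5/8}$. Mollifying $\boldsymbol{F}$ with a radial mollifier keeps it a radial vector field, because convolution with a radial kernel commutes with rotations. The mollified field is therefore of the form $f^\varepsilon_1(r)\boldsymbol{y}/r$ with $f^\varepsilon_1$ smooth, odd in $r$, and vanishing at $r=0$. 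Convergence in $H^1(B_1)$ translates back, again by Lemma \ref{lemma-initial}, into convergence of $f^\varepsilon_1$ to $\zeta f$ in $\cH^1_{r^2}$, which is stronger than convergence in $\cH^1_{\mathrm{w}}$.

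For the exterior piece $\zeta^\sharp f$, it is supported in $[1/2,1]$, where $1/r$ and $r^2$ are harmless. On that interval, membership in $\cH^1_{\mathrm{w}}$ is equivalent to membership in $H^1_{(1-r)^{\upsilon}}$, by $K_1(1-r)\le\rho_0^{\gamma-1}\le K_2(1-r)$. Lemma \ref{W-space} with $\vartheta_1=0$ and $\vartheta_2=\upsilon\ge0>-1$ then gives $g^\varepsilon\in C^\infty(\bar I)$ converging to $\zeta^\sharp f$ in $H^1_{(1-r)^\upsilon}$. Multiplying by a smooth cut-off equal to $1$ on $[1/2,1]$ and vanishing near $0$ keeps the convergence (the cut-off's derivative is bounded) and makes the approximants vanish near $r=0$.

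Setting $f^\varepsilon=f^\varepsilon_1+g^\varepsilon$ gives functions in $C^\infty(\bar I)\cap\cH^1_{r^2}$. On the exterior piece the weight $(1-r)^\upsilon$ is bounded, so these approximants lie in $\cH^1_{r^2}$, and the convergence $f^\varepsilon\to f$ holds in $\cH^1_{\mathrm{w}}$.

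The main obstacle is the origin. One has to keep $f^\varepsilon/r$ controlled, i.e.\ preserve the vanishing at $r=0$, and a naive one-dimensional mollification would destroy this. The 3-D radial-vector-field detour is what handles it. If that detour proves delicate, a fallback is to prove Hardy's inequality on $(0,a)$ directly and approximate $f\chi_{(\epsilon,1)}$ by truncation in $\log r$.
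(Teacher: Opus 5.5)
Your proposal is correct and follows essentially the same route as the paper. You split $f=\zeta f+\zeta^\sharp f$, approximate the interior piece by mollifying the associated radial vector field in $H^1$ with a spherically symmetric mollifier, and translate back via Lemma \ref{lemma-initial}; the exterior piece comes from the density statement in Lemma \ref{W-space} together with $\rho_0^{\gamma-1}\sim 1-r$. The differences are minor: you prove completeness, separability and reflexivity directly by realizing $\cH^1_{r^2\rho_0^{\upsilon(\gamma-1)}}$ isometrically as a closed subspace of $(L^2)^3$, which is a bit more self-contained than the paper's appeal to Lemma \ref{W-space} (that lemma does not literally include the $f/r$ term), and you mollify the compactly supported field built from $\zeta f$ instead of mollifying on $B_{3/4}$ and cutting off afterwards.
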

\begin{proof}
The first assertion follows from Lemma \ref{W-space}. For the convergence, define $\boldsymbol{f}(\boldsymbol{y}):=f(r)\frac{\boldsymbol{y}}{r}$. Clearly, thanks to   Lemma \ref{lemma-initial} in Appendix \ref{appb} and the fact that $\rho_0^{\gamma-1}\sim 1-r$, we have $\boldsymbol{f}\in H^1(B_a)$ for any $a\in (0,1)$.

Now, we claim that there exists a sequence $\{\boldsymbol{f}_\flat^\varepsilon(\boldsymbol{y})
=f^\varepsilon_\flat(r)\frac{\boldsymbol{y}}{r}\}_{\varepsilon>0}\subset C^\infty(B_\frac{3}{4})$, such that
\begin{equation}\label{cliam121}
\boldsymbol{f}_\flat^\varepsilon\to \boldsymbol{f}\qquad \text{in $H^1(B_\frac{3}{4})$ as $\varepsilon\to 0$}.
\end{equation}
Indeed,  let $\{\omega_\varepsilon(\boldsymbol{y})\}_{\varepsilon>0}$ be the spherically symmetric mollifier defined on $\mathbb{R}^3$. 
Then $\boldsymbol{f}_\flat^\varepsilon(\boldsymbol{y}):=(\boldsymbol{f}*\omega_\varepsilon)(\boldsymbol{y})$ (with $\varepsilon$ sufficiently small) satisfies \eqref{cliam121}. Thus, by Lemma \ref{duichen-dengjia}, in order to verify that $\boldsymbol{f}_\flat^\varepsilon$ is spherically symmetric,    we only need to show that $\boldsymbol{f}_\flat^\varepsilon(\mathcal{O}\boldsymbol{y})=(\mathcal{O}\boldsymbol{f}_\flat^\varepsilon)(\boldsymbol{y})$ for any matrix $\mathcal{O}\in \mathrm{SO}(n)$. In fact, we have
\begin{equation*}
\boldsymbol{f}_\flat^\varepsilon(\mathcal{O}\boldsymbol{y})= \int_{\Omega} \boldsymbol{f}(\mathcal{O}\boldsymbol{y}-\boldsymbol{z})\omega_\varepsilon(\boldsymbol{z})\,\mathrm{d}\boldsymbol{z} \qquad\text{for $0<\varepsilon<\frac{1}{100}$ and $\boldsymbol{y}\in B_\frac{3}{4}$}.
\end{equation*}
Changing the coordinate $\boldsymbol{z}\mapsto \mathcal{O}\boldsymbol{z}$, along with the facts that $|\mathcal{O}\boldsymbol{y}|=|\boldsymbol{y}|$ and $\det \mathcal{O}=1$, gives
\begin{equation*}
\begin{aligned}
\boldsymbol{f}_\flat^\varepsilon(\mathcal{O}\boldsymbol{y})&= \int_{\Omega} \boldsymbol{f}(\mathcal{O}\boldsymbol{y}-\mathcal{O}\boldsymbol{z})\omega_\varepsilon(\mathcal{O}\boldsymbol{z})(\det \mathcal{O})\,\mathrm{d}\boldsymbol{z} = \int_{\Omega} \boldsymbol{f}(\mathcal{O}(\boldsymbol{y}-\boldsymbol{z}))\omega_\varepsilon(\boldsymbol{z}) \,\mathrm{d}\boldsymbol{z}\\
&= \int_{\Omega} (\mathcal{O} \boldsymbol{f})(\boldsymbol{y}-\boldsymbol{z})\omega_\varepsilon(\boldsymbol{z}) \,\mathrm{d}\boldsymbol{z} =\Big(\mathcal{O}\big( \int_{\Omega} \boldsymbol{f}(\cdot-\boldsymbol{z})\omega_\varepsilon(\boldsymbol{z}) \,\mathrm{d}\boldsymbol{z}\big)\Big)(\boldsymbol{y})=(\mathcal{O}\boldsymbol{f}_\flat^\varepsilon)(\boldsymbol{y}).
\end{aligned}
\end{equation*}
This completes the proof of the claim. 

Consequently, it follows from \eqref{cliam121} and Lemma \ref{lemma-initial} that 
\begin{equation}\label{jin} 
f^\varepsilon_\flat\in \cH^1_{r^2}(B_\frac{3}{4}),\qquad\,\, 
\|\zeta f^\varepsilon_\flat-\zeta f\|_{\cH^1_{r^2\rho_0^{\upsilon(\gamma-1)}}}\to 0 \qquad\text{as $\varepsilon\to 0$},
\end{equation}
where the cut-off function $\zeta$ is  defined in \S 1.2.

On the other hand, it follows from Lemma \ref{W-space} that there exists a smooth sequence $\{f_\sharp^\varepsilon\}_{\varepsilon>0}\subset C^\infty[\frac{1}{3},1]$ such that
\begin{equation}\label{yuan}
\|\zeta^\sharp f^\varepsilon_\sharp-\zeta^\sharp f\|_{\cH^1_{r^2\rho_0^{\upsilon(\gamma-1)}}}\leq C\|\zeta^\sharp f^\varepsilon_\sharp-\zeta^\sharp f\|_{1,\rho_0^{\upsilon(\gamma-1)}}\to 0 \qquad\text{as $\varepsilon\to 0$}.
\end{equation}
Therefore, defining $f^\varepsilon:=\zeta f_\flat^\varepsilon+\zeta^\sharp f_\sharp^\varepsilon$, we see that $f^\varepsilon\in C^\infty(\bar I)\cap \cH^1_{r^2}$. Then we can obtain from \eqref{jin}--\eqref{yuan} that
\begin{equation*}
\|f^\varepsilon-f\|_{\cH^1_{r^2\rho_0^{\upsilon(\gamma-1)}}}\leq \|\zeta f^\varepsilon_\flat-\zeta f\|_{\cH^1_{r^2\rho_0^{\upsilon(\gamma-1)}}}  +\|\zeta^\sharp f^\varepsilon_\sharp-\zeta^\sharp f\|_{\cH^1_{r^2\rho_0^{\upsilon(\gamma-1)}}}\to 0 \qquad\text{as $\varepsilon\to 0$}.
\end{equation*}

This completes the proof.
\end{proof}

The third  lemma concerns the well-known interpolation theory for the $H^k$ spaces.
\begin{Lemma}[\cite{leoni}]\label{GNinequality'} 
Let $J\subset \RR$ be some open interval and $F\in H^p(J)\cap H^q(J)$ $(p,q\geq 0)$. Then $F\in H^l(J)$ for $l=p\vartheta+q(1-\vartheta)$ and  $0\leq \vartheta\leq 1$, and the following inequality holds{\rm:}
\begin{equation*}
\norm{F}_{l}\leq C \norm{F}_{p}^\vartheta\norm{F}_{q}^{1-\vartheta},
\end{equation*}
where $C>0$ is a constant depending only on $(p, q,\vartheta)$.
\end{Lemma}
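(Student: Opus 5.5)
The plan is to prove the interpolation inequality $\norm{F}_{l}\leq C\norm{F}_{p}^\vartheta\norm{F}_{q}^{1-\vartheta}$ with $l=p\vartheta+q(1-\vartheta)$ by reducing to the whole line and using the Fourier characterization of $H^s(\RR)$. By symmetry we may assume $p\leq q$, and the endpoint cases $\vartheta\in\{0,1\}$ are trivial.

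\textbf{Step 1: extension.} We fix a bounded linear extension operator $E$ from functions on $J$ to functions on $\RR$ that is simultaneously bounded $H^s(J)\to H^s(\RR)$ for every $s\in[0,q]$. Candidates are a Stein-type universal extension, or, for a half-line or bounded interval, a higher-order reflection $EF(-x)=\sum_{j=1}^{m}\lambda_j F(jx)$ with $m>q$. The coefficients $\lambda_j$ are chosen so that derivatives up to order $m-1$ match at the endpoint. For a bounded interval we combine the two endpoint reflections via a partition of unity. Boundedness on integer orders is direct, and boundedness for fractional $s$ then follows from complex interpolation. Alternatively, if $H^s(J)$ for noninteger $s$ is *defined* as a restriction space, this step is immediate.

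\textbf{Step 2: whole-line interpolation.} For $G\in H^p(\RR)\cap H^q(\RR)$ we write $\norm{G}_{H^s}^2=\int(1+|\xi|^2)^{s}|\hat G|^2\,\mathrm{d}\xi$. We then split the weight as
\begin{equation*}
(1+|\xi|^2)^{l}|\hat G|^2=\big((1+|\xi|^2)^{p}|\hat G|^2\big)^{\vartheta}\big((1+|\xi|^2)^{q}|\hat G|^2\big)^{1-\vartheta}.
\end{equation*}
Applying H\"older's inequality with exponents $1/\vartheta$ and $1/(1-\vartheta)$ gives $\norm{G}_{l}\leq\norm{G}_{p}^\vartheta\norm{G}_{q}^{1-\vartheta}$ with constant $1$.

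\textbf{Step 3: restriction back to $J$.} We apply Step 2 to $G=EF$ and restrict: $\norm{F}_{H^l(J)}\leq\norm{EF}_{H^l(\RR)}\leq\norm{EF}_p^\vartheta\norm{EF}_q^{1-\vartheta}\leq C\norm{F}_p^\vartheta\norm{F}_q^{1-\vartheta}$. This also gives membership $F\in H^l(J)$.

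The main obstacle is Step 1, and in particular making sure a single operator $E$ is bounded in both $H^p$ and $H^q$ when the orders are fractional. The same issue arises for unbounded $J$, where the constant must not depend on the length of $J$. I would first try the reflection construction for integer orders and then invoke complex interpolation. For the paper's use, with $J=(a,1-a)$ and integer orders such as $p=1,q=4,l=3$, only the integer-order extension is needed, so this difficulty is largely avoided; the remaining general cases follow the cited reference \cite{leoni}.
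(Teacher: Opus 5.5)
The paper gives no proof of this lemma; it only cites Leoni's book, so there is no in-paper argument to compare against. Your route (extension, H\"older in Fourier space after splitting the weight $(1+|\xi|^2)^l$, then restriction) is the standard proof and is sound in outline. Step 2 is exactly right, and Step 3 gives both the membership $F\in H^l(J)$ and the inequality, provided you have one operator $E$ that is bounded on $H^p(J)$ and on $H^q(J)$.

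One correction to Step 1. Defining $H^s(J)$ as a restriction space does not make that step immediate. The near-optimal extensions of $F$ for the $H^p(\RR)$ and $H^q(\RR)$ quotient norms are in general different functions, whereas Step 3 needs a single $G$ controlled in both norms. So you still need the universal $E$; the retraction--coretraction argument then identifies the interpolation spaces on $J$ with the restriction spaces.

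For the only case the paper uses ($p=1$, $q=4$, $l=3$ on $(a,1-a)$, all integers), a more elementary route avoids extension altogether. Apply the classical one-dimensional inequality $\|u^{(j)}\|_{L^2(J)}\le C(\|u\|_{L^2(J)}+\|u\|_{L^2(J)}^{1-j/m}\|u^{(m)}\|_{L^2(J)}^{j/m})$ to $u=F^{(p)}$ with $m=q-p$ and $j=k-p$ for each order $p<k\le l$. At $k=l$ you have $1-j/m=\vartheta$. Then absorb the lower-order term using $\|F\|_{H^p(J)}\le\|F\|_{H^p(J)}^{\vartheta}\|F\|_{H^q(J)}^{1-\vartheta}$, which holds because $\|F\|_{H^p(J)}\le\|F\|_{H^q(J)}$.

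What cannot be proved is the literal claim that $C$ depends only on $(p,q,\vartheta)$ for every interval $J$. Take $J=(0,\epsilon)$, $F(x)=x$ and $(p,q,\vartheta)=(0,2,\frac12)$, so $l=1$. Since $F''=0$, you have $\|F\|_{H^2(J)}=\|F\|_{H^1(J)}$. The inequality then forces $\|F\|_{H^1(J)}\le C^2\|F\|_{L^2(J)}$, that is, $\epsilon+\epsilon^3/3\le C^4\epsilon^3/3$, which fails as $\epsilon\to0$.

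So for bounded $J$ the constant must depend on a lower bound for $|J|$. Your construction produces exactly this dependence, since the partition-of-unity cutoffs have $k$-th derivatives of size $|J|^{-k}$. This is harmless in the paper, where $J=(a,1-a)$ and constants may depend on $a$ (see \eqref{5.11}--\eqref{H333}). Your remark that for unbounded $J$ ``the constant must not depend on the length of $J$'' should be replaced by the following. For $J=\RR$ or a half-line the constant is uniform by translation and reflection invariance. For bounded $J$ it genuinely depends on $|J|$.
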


The fourth  lemma is on the classical Sobolev embedding theorems.
\begin{Lemma}[\cite{leoni}]\label{sobolev-embedding}
We state the Sobolev embeddings in the one-dimensional and three-dimensional cases separately. 
\begin{enumerate}
\item[{\rm (i)}]
Let $J\subset \RR$ be some open interval and $f=f(r)$ be some function on $J$. Then there exist two positive constants $(s_0,C)$, depending only on the Lebesgue measure of $J$, such that
\begin{equation*}
\begin{aligned}
&\|f\|_{L^\infty(J)} \leq s_0\|f\|_{L^1(J)}+C\|f_r\|_{L^1(J)} &&\quad \text{for all }f\in W^{1,1}(J),\\[4pt]
&\|f\|_{L^\infty(J)} \leq s_0\|f\|_{L^2(J)}+C\|f_r\|_{L^2(J)} &&\quad \text{for all }f\in H^1(J),\\
&\|f\|_{L^\infty(J)} \leq s_0\|f\|_{L^2(J)}+ C\|f\|^\frac{1}{2}_{L^2(J)}\|f_r\|^\frac{1}{2}_{L^2(J)} &&\quad \text{for all }f\in H^1(J).
\end{aligned}
\end{equation*}
In particular, $W^{1,1}(J)\into C(\bar J)$ and $H^1(J)\into C(\bar J)$ continuously{\rm ;} moreover, 
if $f(r_0)=0$ for some $r_0\in \bar J$, then $s_0=0$ can be chosen in the above.

\item[{\rm (ii)}]Let $J \subset \mathbb{R}^{3}$ be some bounded domain with smooth boundary. Assume that $f\in H^1(J)$, then there exists a positive constant $C(J)$ depending only on $J$, such that
\begin{equation*}
\|f\|_{L^{6}(J)} \leq C(J)\, \|f\|_{H^{1}(J)}.
\end{equation*}
In particular, if $J=\RR^3$, then for any $f\in H^{1}(\RR^3)$, there exists a positive universal constant $C_0$ such that
\begin{equation*}
    \|f\|_{L^6(\RR^3)} \leq C_0\, \|\nabla f\|_{L^2(\RR^3)},
\end{equation*}
\end{enumerate}
\end{Lemma}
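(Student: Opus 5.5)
The statement in question is Lemma \ref{sobolev-embedding}: the one-dimensional embeddings $W^{1,1}(J),H^1(J)\hookrightarrow C(\bar J)$ with the three stated inequalities, and the three-dimensional $H^1\hookrightarrow L^6$ bounds. I will prove part (i) by hand, since it is elementary and the constants matter (in particular that $s_0=0$ when $f$ vanishes somewhere). For part (ii) I will quote the classical Gagliardo--Nirenberg--Sobolev theorem together with an extension operator.

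\textbf{Part (i).} By density (Lemma \ref{W-space} with trivial weight, or the standard density of $C^\infty(\bar J)$ in $W^{1,p}(J)$), I will first assume $f\in C^1(\bar J)$. For any $r,s\in\bar J$ we have $f(r)=f(s)+\int_s^r f_r$, hence
\[
|f(r)|\le |f(s)|+\|f_r\|_{L^1(J)} .
\]
Averaging in $s$ over $J$ gives $|f(r)|\le |J|^{-1}\|f\|_{L^1(J)}+\|f_r\|_{L^1(J)}$, which is the first inequality with $s_0=|J|^{-1}$ and $C=1$. The $L^2$ version then follows from Cauchy--Schwarz, $\|g\|_{L^1(J)}\le |J|^{1/2}\|g\|_{L^2(J)}$, applied to $f$ and to $f_r$.

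For the third inequality I apply the same argument to $f^2$, whose derivative is $2ff_r$:
\[
|f(r)|^2\le |J|^{-1}\|f\|_{L^2(J)}^2+2\|f\|_{L^2(J)}\|f_r\|_{L^2(J)} .
\]
Taking square roots and using $\sqrt{a+b}\le\sqrt a+\sqrt b$ yields the interpolation bound.

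If $f(r_0)=0$ for some $r_0\in\bar J$, I take $s=r_0$ instead of averaging, and the lower-order term disappears, so $s_0=0$. Continuity of the embedding follows by passing to the limit along smooth approximants: the inequalities show that $L^\infty$ convergence holds, so the limit has a continuous representative.

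\textbf{Part (ii).} Since $J$ is bounded with smooth boundary, there is a bounded extension operator $E:H^1(J)\to H^1(\RR^3)$. On $\RR^3$ I will apply the Gagliardo--Nirenberg--Sobolev inequality $\|g\|_{L^{6}}\le C_0\|\nabla g\|_{L^2}$ for $g\in C_c^\infty(\RR^3)$. This is proved by applying the $L^1$ version $\|g\|_{L^{3/2}}\le\prod_i\|\partial_i g\|_{L^1}^{1/3}$ to $|g|^4$ and then using H\"older; density of $C_c^\infty(\RR^3)$ in $H^1(\RR^3)$ extends it to all of $H^1(\RR^3)$. Restricting $Eg$ back to $J$ then gives $\|f\|_{L^6(J)}\le C(J)\|f\|_{H^1(J)}$.

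The only genuinely nontrivial ingredient is the whole-space Gagliardo--Nirenberg inequality, and I will cite it from \cite{leoni} rather than reprove it. Everything in part (i) is routine; the one point needing care is keeping track of how the constants depend on $|J|$.
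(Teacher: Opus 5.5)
Your proof is correct and is the standard one. The paper gives no argument of its own and simply quotes the lemma from \cite{leoni}; your route, the fundamental theorem of calculus plus averaging over $J$ (applied to $f$, and to $f^2$ for the interpolation bound) in one dimension, and a bounded extension operator plus the whole-space Gagliardo--Nirenberg--Sobolev inequality (via $|g|^4$ and H\"older) in three dimensions, is the textbook proof behind that citation.

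Two bookkeeping points are worth tightening.
\begin{enumerate}
\item[(a)] In the case $f(r_0)=0$, the smooth approximants need not vanish at $r_0$. Either work directly with the absolutely continuous representative of $f$, for which $f(r)=\int_{r_0}^r f_r$, or keep the term $|f^\varepsilon(r_0)|$ and let it tend to $0$ using the uniform convergence you already have.
\item[(b)] The averaging step uses $|J|<\infty$. This holds in every application in the paper; for an unbounded $J$ you would instead average over a unit-length subinterval containing $r$.
\end{enumerate}
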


The next two lemmas are on the Hardy inequality and some weighted interpolation inequality. In Lemmas \ref{hardy-inequality}--\ref{GNinequality}, we let $0\leq a<b<\infty$ and $J=(a,b)$, and let $d=d(r)$ be some function on $J$, taking one of the following two forms{\rm:}
\begin{equation*}
d(r)=r-a,\qquad \text{or} \ \ d(r)=b-r.
\end{equation*}
\begin{Lemma}[\cites{opic1,opic2}]\label{hardy-inequality}
Let $p\in [1,\infty]$ and $\vartheta>-\frac{1}{p}$ $(\vartheta>0\text{ if }p=\infty)$. Then, for any $f$ such that $d^{\vartheta+\frac{1}{2}+\frac{1}{p}} (f,f_r)\in L^2(J)$,
\begin{enumerate}
\item[{\rm (i)}] If $p\in [1,2)$, for any $\varepsilon>0$, there exists a constant $C_1>0$, depending only on $(\varepsilon, p,a,b,\vartheta)$, such that
\begin{equation*}
\|d^{\vartheta+\varepsilon} f\|_{L^p(J)} \leq C_1 \big\|d^{\vartheta+\frac{1}{2}+\frac{1}{p}} (f,f_r)\big\|_{L^2(J)};
\end{equation*}
\item[{\rm (ii)}] If $p\in [2,\infty]$, there exists a constant $C_2>0$, which depends only on $(p,a,b,\vartheta)$ if $p\ne \infty$ and depends only on $(a,b,\vartheta)$ if $p=\infty$, such that
\begin{equation*}
\|d^\vartheta f\|_{L^p(J)} \leq C_2 \big\|d^{\vartheta+\frac{1}{2}+\frac{1}{p}} (f,f_r)\big\|_{L^2(J)}. 
\end{equation*}
\end{enumerate}
\end{Lemma}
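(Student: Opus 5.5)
We will reduce everything to two model estimates near a single endpoint and then obtain the general $p$ by Hölder interpolation. By the reflection $r\mapsto a+b-r$ we may assume $d(r)=r-a$. Split $J$ into $J_1=(a,c)$ and $J_2=(c,b)$ with $c=\frac{a+b}{2}$. On $J_2$ we have $d\sim 1$, so every weight is harmless and the bound follows from Lemma \ref{sobolev-embedding}\,(i) and Hölder's inequality. It therefore suffices to work on $J_1$, and the whole proof rests on the two cases $p=\infty$ and $p=2$.

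\emph{Case $p=\infty$, $\vartheta>0$.} For $r\in J_1$, write
\[
f(r)=f(\tilde r)-\int_r^{\tilde r} f_s\,\mathrm{d}s ,
\]
where $\tilde r\in J_2$ is a point at which $|f(\tilde r)|$ is controlled by $\|f\|_{H^1(J_2)}$. By Cauchy--Schwarz,
\[
\int_r^{\tilde r}|f_s|\,\mathrm{d}s\leq \Big(\int_J d^{2\vartheta+1}|f_s|^2\,\mathrm{d}s\Big)^{\frac12}\Big(\int_r^{b} d^{-2\vartheta-1}\,\mathrm{d}s\Big)^{\frac12}\leq C\,d(r)^{-\vartheta}\,\big\|d^{\vartheta+\frac12} f_r\big\|_{L^2(J)} .
\]
The last step uses $\vartheta>0$. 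Multiplying by $d(r)^\vartheta$ gives the claim, since in this case $\vartheta+\frac12+\frac1p=\vartheta+\frac12$.

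\emph{Case $p=2$, $\vartheta>-\frac12$.} This is the classical Hardy inequality
\[
\int d^{2\vartheta}f^2\leq C\int d^{2\vartheta+2}(f^2+f_r^2).
\]
To prove it, take a cutoff $\phi$ equal to $1$ on $(a,c)$ and supported in $[a,\frac{a+3b}{4})$. Since $2\vartheta+1>0$, we have $(d^{2\vartheta+1})'=(2\vartheta+1)d^{2\vartheta}$. Integrate $(2\vartheta+1)d^{2\vartheta}\phi f^2$ by parts. The boundary term at $r=a$ vanishes because $d^{2\vartheta+1}\to0$, after a routine approximation by $f$ truncated away from $a$ and passing to the limit by monotone convergence. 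Then
\[
2\!\int d^{2\vartheta+1}\phi ff_r\le \epsilon\!\int d^{2\vartheta}\phi f^2+C_\epsilon\!\int d^{2\vartheta+2}f_r^2 ,
\]
and the $\phi_r$-term lives where $d\sim1$. Absorbing the $\epsilon$-term gives the estimate. I expect justifying the vanishing boundary term, i.e. the density/truncation step, to be the only delicate point of the whole proof.

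\emph{Case $p\in(2,\infty)$.} Set $\vartheta_1=\vartheta+\frac1p$ and $\vartheta_2=\vartheta+\frac1p-\frac12$, so that
\[
d^\vartheta|f|=(d^{\vartheta_1}|f|)^{1-\frac2p}(d^{\vartheta_2}|f|)^{\frac2p},\qquad \|d^\vartheta f\|_{L^p}\leq \|d^{\vartheta_1}f\|_{L^\infty}^{1-\frac2p}\,\|d^{\vartheta_2}f\|_{L^2}^{\frac2p}.
\]
The hypothesis $\vartheta>-\frac1p$ gives $\vartheta_1>0$ and $\vartheta_2>-\frac12$, so the two model cases apply. Both produce the same right-hand weight $d^{\vartheta_1+\frac12}=d^{\vartheta_2+1}=d^{\vartheta+\frac12+\frac1p}$, which yields (ii).

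\emph{Case $p\in[1,2)$.} Let $\vartheta'=\vartheta+\frac1p-\frac12>-\frac12$ and $\frac1q=\frac1p-\frac12$. By Hölder,
\[
\|d^{\vartheta+\varepsilon}f\|_{L^p}\leq \|d^{\vartheta'}f\|_{L^2}\,\big\|d^{\varepsilon+\frac12-\frac1p}\big\|_{L^q}.
\]
The second factor is finite because $q(\varepsilon+\frac12-\frac1p)=q\varepsilon-1>-1$; when $p=2$ take $q=\infty$, and the factor is bounded trivially. The first factor is bounded by the $p=2$ case with weight $d^{\vartheta'+1}=d^{\vartheta+\frac12+\frac1p}$, which gives (i). The constants depend only on the stated parameters.
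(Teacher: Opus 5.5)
Your proof is correct, and it takes a different route from the paper. The paper gives no proof: it imports the inequality from Opic--Kufner's theory of weighted Hardy inequalities, where estimates of the type $\|wf\|_{L^p}\le C\|v(f,f_r)\|_{L^2}$ are characterized by Muckenhoupt-type conditions on the weights.

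You instead reduce everything to two elementary model cases:
\begin{itemize}
\item $p=\infty$, via the fundamental theorem of calculus and Cauchy--Schwarz;
\item $p=2$, via the classical integration-by-parts Hardy inequality.
\end{itemize}
Every other $p$ then follows by H\"older. The exponent bookkeeping is right. The hypothesis $\vartheta>-\frac1p$ is exactly what gives both $\vartheta+\frac1p>0$ and $\vartheta+\frac1p-\frac12>-\frac12$, and both model cases land on the same weight $d^{\vartheta+\frac12+\frac1p}$. Citing Opic--Kufner buys generality and sharp conditions. Your argument is self-contained and shows where $\vartheta>-\frac1p$ enters. It also explains the $\varepsilon$-loss for $p<2$: it comes from the divergence of $\int d^{-1}$ at the H\"older endpoint.

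One justification in your $p=2$ case is wrong as stated, though the conclusion holds. The boundary term does not vanish ``because $d^{2\vartheta+1}\to0$''. A priori you only know $d^{\vartheta+1}f\in L^2$, and that does not stop $d^{2\vartheta+1}f^2$ from blowing up at $a$. What actually works is the sign. On $(a+\epsilon,b)$ you have
\[
(2\vartheta+1)\int_{a+\epsilon}^{b} d^{2\vartheta}\phi f^2\,\mathrm{d}r=-\epsilon^{2\vartheta+1}f(a+\epsilon)^2-\int_{a+\epsilon}^{b} d^{2\vartheta+1}\big(\phi_r f^2+2\phi f f_r\big)\,\mathrm{d}r .
\]
The boundary contribution is $\le 0$, so you can simply drop it. Since $d\ge\epsilon$ there, the left side is finite: it is at most $\epsilon^{-2}\int d^{2\vartheta+2}f^2$. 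So the Young absorption is legitimate and gives a bound uniform in $\epsilon$, and monotone convergence finishes the argument.

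The same favorable sign appears if you truncate with a smooth increasing cutoff $\chi_\epsilon$; it comes through the term $-\int d^{2\vartheta+1}\phi(\chi_\epsilon^2)_r f^2$. What you must not do is apply the smooth-function inequality to $\chi_\epsilon f$ as a black box. That produces $\int d^{2\vartheta+2}(\chi_\epsilon)_r^2 f^2$, which is not controlled a priori. That the boundary term tends to zero is true only a posteriori, and you never need it.
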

 
\begin{Lemma}[\cite{CZZ2}]\label{GNinequality}
Let $p\in [2,\infty]$ and $\vartheta>-\frac{1}{p}$ $(\vartheta>0\text{ if }p=\infty)$. Then, for any $f$ satisfying $d^{\vartheta+\frac{1}{p}} (f,f_r)\in L^2(J)$, there exists a constant $C>0$, which depends only on $(p,a,b,\vartheta)$ if $p\ne \infty$ and depends only on $(a,b,\vartheta)$ if $p=\infty$, such that
\begin{equation}\label{G-N1}
\|d^\vartheta f\|_{L^p(J)} \leq C\big(\big\|d^{\vartheta+\frac{1}{p}} f\big\|_{L^2(J)}+\big\|d^{\vartheta+\frac{1}{p}} f\big\|_{L^2(J)}^\frac{1}{2}\big\|d^{\vartheta+\frac{1}{p}} f_r\big\|_{L^2(J)}^\frac{1}{2}\big).
\end{equation}
\end{Lemma}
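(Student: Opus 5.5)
The statement to prove is Lemma \ref{GNinequality}: the weighted Gagliardo--Nirenberg inequality
\[
\|d^\vartheta f\|_{L^p(J)} \leq C\big(\|d^{\vartheta+\frac1p} f\|_{L^2(J)}+\|d^{\vartheta+\frac1p} f\|_{L^2(J)}^{1/2}\|d^{\vartheta+\frac1p} f_r\|_{L^2(J)}^{1/2}\big).
\]
By the symmetry $r\mapsto a+b-r$ we may assume $d(r)=r-a$, and after translating we take $a=0$, $J=(0,b)$.

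\textbf{Reduction.} The plan is to cover the interval by a dyadic decomposition near the degenerate endpoint. On each piece the weight is essentially constant, so the unweighted one-dimensional Gagliardo--Nirenberg inequality of Lemma \ref{sobolev-embedding}(i) applies with scaling-correct constants. The pieces are then summed, using $p\geq 2$ to pass from an $\ell^2$ sum to an $\ell^p$ sum. Set $J_k=(2^{-k-1}b,2^{-k}b)$ for $k\ge0$, so that $d\sim 2^{-k}b$ on $J_k$.

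\textbf{Local estimate.} Rescale $J_k$ to the fixed interval $(1/2,1)$ by $r=2^{-k}b\,s$, where the constants are uniform. For $g(s)=f(2^{-k}bs)$, Lemma \ref{sobolev-embedding}(i) together with H\"older gives
\[
\|g\|_{L^p}\le C\big(\|g\|_{L^2}+\|g\|_{L^2}^{1/2}\|g_s\|_{L^2}^{1/2}\big),
\]
interpolating between $L^2$ and $L^\infty$ for $p\in[2,\infty]$. Scaling back with $\ell_k=2^{-k}b$ yields
\[
\|f\|_{L^p(J_k)}\le C\big(\ell_k^{\frac1p-\frac12}\|f\|_{L^2(J_k)}+\ell_k^{\frac1p-\frac12}\|f\|_{L^2(J_k)}^{1/2}\,\ell_k^{1/2}\|f_r\|_{L^2(J_k)}^{1/2}\big).
\]
Multiplying by $d^\vartheta\sim\ell_k^\vartheta$ gives
\[
\|d^\vartheta f\|_{L^p(J_k)}\le C\Big(\ell_k^{-\frac12}\|d^{\vartheta+\frac1p}f\|_{L^2(J_k)}+\|d^{\vartheta+\frac1p}f\|_{L^2(J_k)}^{1/2}\|d^{\vartheta+\frac1p}f_r\|_{L^2(J_k)}^{1/2}\Big).
\]

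\textbf{Main obstacle.} The first term carries an unbounded factor $\ell_k^{-1/2}$, so a naive sum over $k$ fails. I would fix this by using the rescaled inequality in its sharper, scale-invariant form. Applying Lemma \ref{sobolev-embedding}(i) to $g-\bar g$ (mean zero, so it vanishes somewhere) gives
\[
\|g\|_{L^p}\le C|\bar g|+C\|g\|_{L^2}^{1/2}\|g_s\|_{L^2}^{1/2}.
\]
The interior oscillation part is then scale-invariant, and the only non-invariant quantity is the averages $\bar f_k$. To control these I would run a telescoping argument. Adjacent averages differ by at most $C\ell_k^{1/2}\|f_r\|_{L^2(J_k\cup J_{k+1})}$, and summing from the bulk piece $k=0$ expresses $\bar f_k$ through the $L^2$ norm on $J_0$ plus a weighted sum of derivative norms. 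A discrete Hardy inequality applies here because $\vartheta>-\frac1p$; this is the same mechanism as in Lemma \ref{hardy-inequality}.

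To avoid producing the full $\|d^{\vartheta+\frac1p}f_r\|$ linearly, I would instead bound $\ell_k^{\vartheta+\frac1p}|\bar f_k|$ by $\sup_k \ell_k^{\vartheta+\frac1p-\frac12}\|f\|_{L^2(J_k)}$-type quantities. I would then interpolate this with the oscillation term: the estimate $|\bar f_k|^2\le C\ell_k^{-1}\|f\|^2_{L^2(J_k)}$, combined with a Hardy-type control of $|f|^2$ via $\int |f||f_r|$, gives exactly the product structure in \eqref{G-N1}.

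\textbf{Concluding step.} Concretely, I expect the cleanest route is to prove the $p=\infty$ case directly, writing
\[
d^{2\vartheta}f^2(r)=-\int_r^{b}\big(d^{2\vartheta}f^2\big)_s\,\mathrm{d}s+d^{2\vartheta}f^2(b).
\]
The boundary value is bounded by the $L^2$ term, and Cauchy--Schwarz bounds the remainder by $\|d^{\vartheta}f\|_{L^2}\|d^\vartheta f_r\|_{L^2}$ plus $\int d^{2\vartheta-1}f^2$. That last integral is absorbed by the Hardy inequality of Lemma \ref{hardy-inequality} when $\vartheta>0$. The cases $p\in[2,\infty)$ then follow by H\"older interpolation between this $L^\infty$ bound and $\|d^{\vartheta+\frac1p}f\|_{L^2}$, choosing the weight exponents so that the powers of $d$ match.
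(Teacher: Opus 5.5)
Your dyadic/telescoping sketch is never completed, so the argument rests on your concluding step. For $p<\infty$ that step fails. Interpolating an $L^\infty$ bound for $d^\beta f$ against $\|d^{\vartheta+\frac1p}f\|_{L^2}$ forces $\beta=\vartheta-\frac{2}{p(p-2)}<\vartheta+\frac1p$. Your $L^\infty$ bound for $d^\beta f$ then contains $\|d^\beta f_r\|_{L^2}$, which is not controlled by $\|d^{\vartheta+\frac1p}f_r\|_{L^2}$. Moreover, $\beta<0$ as soon as $\vartheta<\frac{2}{p(p-2)}$, and then no $L^\infty$ bound for $d^\beta f$ exists at all.

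If instead you put the weight $\alpha:=\vartheta+\frac1p$ on the $L^\infty$ factor, the powers of $d$ do not match. What H\"older actually gives is
\[
\int_J d^{p\vartheta}|f|^p\,\mathrm{d}r\le \|d^{\alpha}f\|_{L^\infty(J)}^{p-2}\int_J d^{2\alpha-1}f^2\,\mathrm{d}r .
\]
The surplus factor $d^{-1}$ cannot be removed by exponent bookkeeping: for $f(\lambda\,\cdot)$ concentrating at the endpoint, the inequality $\|d^\vartheta f\|_{L^p}^p\le C\|d^\alpha f\|_{L^\infty}^{p-2}\|d^\alpha f\|_{L^2}^2$ is violated by a factor $\lambda$. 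At $p=2$ the $L^\infty$ factor has exponent $0$, so you would need $\|d^\vartheta f\|_{L^2}\le C\|d^{\vartheta+\frac12}f\|_{L^2}$, which is false. Hence the $p=2$ case, the one actually used in \eqref{2004}, is not proved.

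The missing ingredient is the weighted estimate
\[
\int_J d^{2\alpha-1}f^2\,\mathrm{d}r\le C\big(\|d^{\alpha}f\|_{L^2(J)}^2+\|d^{\alpha}f\|_{L^2(J)}\|d^{\alpha}f_r\|_{L^2(J)}\big)\qquad(\alpha>0).
\]
You prove it by writing $d^{2\alpha-1}=\frac{1}{2\alpha}(d^{2\alpha})_r$ against a cutoff equal to $1$ near the degenerate endpoint and integrating by parts; the boundary term there has a favourable sign. This is the mechanism of \eqref{2..9}. It is the $p=2$ case of \eqref{G-N1}, and combined with the H\"older display and your $L^\infty$ bound at weight $\alpha$ it gives every $p\in[2,\infty)$. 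This is exactly where $\vartheta>-\frac1p$, i.e.\ $\alpha>0$, enters.

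Your $p=\infty$ step also needs repair. Absorbing $\int d^{2\vartheta-1}f^2$ by Lemma \ref{hardy-inequality} costs $\|d^{\vartheta+\frac12}(f,f_r)\|_{L^2}^2$, which is quadratic in the derivative norm and destroys the product structure of \eqref{G-N1}. No absorption is needed. With $d(r)=r-a$, the contribution of this integral to $-\int_r^b(d^{2\vartheta}f^2)_s\,\mathrm{d}s$ is $-2\vartheta\int_r^b d^{2\vartheta-1}f^2\,\mathrm{d}s\le 0$, so you simply drop it, and only $2\|d^\vartheta f\|_{L^2}\|d^\vartheta f_r\|_{L^2}$ remains. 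Likewise, the value $d^{2\vartheta}f^2(b)$ is not bounded by an $L^2$ norm alone. Bound it by Lemma \ref{sobolev-embedding}(i) on an interval near $b$ where $d$ is comparable to $1$; this again yields terms of the form appearing in \eqref{G-N1}.
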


The seventh lemma is used to obtain the time-weighted estimates of the velocity.
\begin{Lemma}[\cite{bjr}]\label{bjr}
Let $f\in L^2([0,T]; L^2)$. Then there exists a sequence $\{t_k\}_{k=1}^\infty$ such that
\begin{equation*}
t_k\rightarrow 0, \quad\,\, t_k |f(t_k)|^2_{2}\rightarrow 0 \qquad\,\, \text{as $k\rightarrow\infty$}.
\end{equation*}
\end{Lemma}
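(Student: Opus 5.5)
The statement is Lemma~\ref{bjr}: for $f\in L^2([0,T];L^2)$ there is a sequence $t_k\to0$ with $t_k|f(t_k)|_2^2\to0$. My plan is an elementary argument by contradiction, applied to the scalar function $\phi(t):=|f(t)|_2^2$.

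First, Fubini gives $\phi\in L^1(0,T)$, since $\int_0^T\phi\,\mathrm{d}t=\|f\|_{L^2_t(L^2)}^2<\infty$. Because $f$ is only defined almost everywhere in $t$, I fix a representative. The claim is that $\liminf_{t\to0^+}t\,\phi(t)=0$ along points of a full-measure set. Once that holds, I extract the sequence $t_k$ directly from the definition of $\liminf$.

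Suppose the claim fails. Then there exist $c>0$ and $\tau\in(0,T)$ such that $t\,\phi(t)\ge c$ for a.e.\ $t\in(0,\tau)$. Equivalently, $\phi(t)\ge c/t$ a.e.\ on $(0,\tau)$. Integrating gives
\begin{equation*}
\int_0^\tau\phi\,\mathrm{d}t\ \ge\ c\int_0^\tau\frac{\mathrm{d}t}{t}=\infty,
\end{equation*}
which contradicts $\phi\in L^1(0,T)$.

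It follows that for every $k\in\mathbb{N}^*$ the set $\{t\in(0,1/k):\ t\,\phi(t)<1/k\}$ has positive measure. In particular it is nonempty, and we may take $t_k$ in it while also avoiding the null set where the representative is undefined. This gives $t_k\to0$ and $t_k|f(t_k)|_2^2<1/k\to0$.

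The only delicate point is the a.e.\ issue, and I expect it to cause no real difficulty. Since each of the sets above has positive measure, $t_k$ can always be chosen among the "good" times at which $f(t_k)\in L^2$ is well defined. This is exactly the way the lemma is used in \eqref{uni-2pre} and \eqref{tauk}.
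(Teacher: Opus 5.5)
Your proof is correct. The paper gives no proof of this lemma and only cites \cite{bjr}, so your argument, the standard contradiction with the non-integrability of $1/t$ at $0$, makes the statement self-contained rather than offering a competing route.

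One feature of your write-up is worth keeping. You choose each $t_k$ from a set of positive measure, not merely from a nonempty set. This lets you also avoid any prescribed null set of times, for instance times where the inequalities \eqref{uni-2pre} or \eqref{tauk} might fail to hold. That is exactly the flexibility those two applications need.
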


The eighth lemma is an equivalent statement for the spherical symmetry of  vector  functions.
\begin{Lemma}[\cite{CZZ2}]\label{duichen-dengjia}
Let $\boldsymbol{f}=\boldsymbol{f}(\boldsymbol{y})$ be a spherically symmetric  continuous vector function on $B_R=\{\boldsymbol{y}: \,|\boldsymbol{y}|<R\}$ for some $R>0$. Then $\boldsymbol{f}$ takes the form{\rm :} $\boldsymbol{f}(\boldsymbol{y})=f(|\boldsymbol{y}|)\frac{\boldsymbol{y}}{|\boldsymbol{y}|}$ if and only if 
\begin{equation}\label{dengjia}
\mathcal{O}\boldsymbol{f}(\boldsymbol{y})=\boldsymbol{f}(\mathcal{O}\boldsymbol{y}) \qquad \text{for all $\boldsymbol{y}\in B_R$ and $\mathcal{O}\in \mathrm{SO}(n)$}.
\end{equation}
In particular, any spherically symmetric  vector  function $\boldsymbol{f}$ satisfies $\boldsymbol{f}(\boldsymbol{0})=\boldsymbol{0}$.
\end{Lemma}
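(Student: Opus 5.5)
The plan is to prove the equivalence on $B_R\subset\RR^n$ for $n\geq 3$, which contains the case $n=3$ used in the paper. I read ``$f(|\boldsymbol{y}|)\frac{\boldsymbol{y}}{|\boldsymbol{y}|}$'' at $\boldsymbol{y}=\boldsymbol{0}$ as the value $\boldsymbol{0}$, extended by continuity. Two ingredients are used throughout. First, $\mathrm{SO}(n)$ acts transitively on each sphere $\{|\boldsymbol{y}|=s\}$. Second, for a unit vector $\boldsymbol{e}$, the stabilizer $\{\mathcal{O}\in\mathrm{SO}(n):\mathcal{O}\boldsymbol{e}=\boldsymbol{e}\}$ acts on $\boldsymbol{e}^\perp$ as the full group $\mathrm{SO}(n-1)$.

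\emph{Necessity.} Suppose $\boldsymbol{f}(\boldsymbol{y})=f(|\boldsymbol{y}|)\frac{\boldsymbol{y}}{|\boldsymbol{y}|}$. For $\boldsymbol{y}\neq\boldsymbol{0}$, linearity of $\mathcal{O}$ and $|\mathcal{O}\boldsymbol{y}|=|\boldsymbol{y}|$ give
\[
\mathcal{O}\boldsymbol{f}(\boldsymbol{y})=f(|\boldsymbol{y}|)\frac{\mathcal{O}\boldsymbol{y}}{|\boldsymbol{y}|}=f(|\mathcal{O}\boldsymbol{y}|)\frac{\mathcal{O}\boldsymbol{y}}{|\mathcal{O}\boldsymbol{y}|}=\boldsymbol{f}(\mathcal{O}\boldsymbol{y}).
\]
At $\boldsymbol{y}=\boldsymbol{0}$ both sides equal $\boldsymbol{0}$.

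\emph{Sufficiency, step 1: $\boldsymbol{f}(\boldsymbol{y})$ is parallel to $\boldsymbol{y}$.} Assume \eqref{dengjia}. Fix $\boldsymbol{y}\neq\boldsymbol{0}$ and set $\boldsymbol{e}=\boldsymbol{y}/|\boldsymbol{y}|$. Decompose $\boldsymbol{f}(\boldsymbol{y})=\lambda\boldsymbol{e}+\boldsymbol{w}$ with $\boldsymbol{w}\perp\boldsymbol{e}$. For every $\mathcal{O}$ in the stabilizer of $\boldsymbol{e}$, \eqref{dengjia} gives $\mathcal{O}\boldsymbol{f}(\boldsymbol{y})=\boldsymbol{f}(\boldsymbol{y})$, so $\mathcal{O}\boldsymbol{w}=\boldsymbol{w}$. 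Since $n-1\geq 2$, the group $\mathrm{SO}(n-1)$ fixes no nonzero vector: for any $\boldsymbol{w}\neq 0$, a rotation by $\pi$ in a $2$-plane of $\boldsymbol{e}^\perp$ containing $\boldsymbol{w}$ sends $\boldsymbol{w}$ to $-\boldsymbol{w}$. Hence $\boldsymbol{w}=\boldsymbol{0}$, and $\boldsymbol{f}(\boldsymbol{y})=\lambda(\boldsymbol{y})\frac{\boldsymbol{y}}{|\boldsymbol{y}|}$ with $\lambda(\boldsymbol{y})=\boldsymbol{f}(\boldsymbol{y})\cdot\frac{\boldsymbol{y}}{|\boldsymbol{y}|}$, which is continuous on $B_R\setminus\{\boldsymbol{0}\}$.

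\emph{Sufficiency, step 2: $\lambda$ depends only on $|\boldsymbol{y}|$.} Let $|\boldsymbol{z}|=|\boldsymbol{y}|\neq 0$ and choose $\mathcal{O}\in\mathrm{SO}(n)$ with $\mathcal{O}\boldsymbol{y}=\boldsymbol{z}$, using transitivity. Then \eqref{dengjia} gives
\[
\lambda(\boldsymbol{z})\frac{\boldsymbol{z}}{|\boldsymbol{z}|}=\boldsymbol{f}(\mathcal{O}\boldsymbol{y})=\mathcal{O}\boldsymbol{f}(\boldsymbol{y})=\lambda(\boldsymbol{y})\frac{\boldsymbol{z}}{|\boldsymbol{z}|},
\]
so $\lambda(\boldsymbol{z})=\lambda(\boldsymbol{y})$. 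Setting $f(s):=\lambda(s\boldsymbol{e}_1)$ then gives a continuous function on $(0,R)$ with $\boldsymbol{f}(\boldsymbol{y})=f(|\boldsymbol{y}|)\frac{\boldsymbol{y}}{|\boldsymbol{y}|}$ for all $\boldsymbol{y}\neq\boldsymbol{0}$.

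\emph{Step 3: the origin and the ``in particular'' clause.} Taking $\boldsymbol{y}=\boldsymbol{0}$ in \eqref{dengjia} gives $\mathcal{O}\boldsymbol{f}(\boldsymbol{0})=\boldsymbol{f}(\boldsymbol{0})$ for all $\mathcal{O}$. No nonzero vector is fixed by all of $\mathrm{SO}(n)$, $n\geq 2$, so $\boldsymbol{f}(\boldsymbol{0})=\boldsymbol{0}$. This is consistent with the representation, and by continuity $f(s)\to 0$ as $s\to 0^+$. Note the ``in particular'' clause needs only $n\geq 2$.

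\emph{Main obstacle.} The only delicate point is the stabilizer argument in step 1, which needs $n\geq 3$. For $n=2$ the stabilizer is trivial, and equivariant fields such as $g(|\boldsymbol{y}|)\boldsymbol{y}^\perp$ satisfy \eqref{dengjia} without being radial. In that case the ``if'' direction holds only if one additionally assumes $\boldsymbol{f}(\boldsymbol{y})\parallel\boldsymbol{y}$, for instance via reflection invariance. For $n=1$, \eqref{dengjia} is vacuous, since $\mathrm{SO}(1)$ is trivial. I would state this dimensional restriction explicitly. It is harmless for the paper, which works in $\RR^3$.
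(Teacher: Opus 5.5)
The paper gives no proof of this lemma; it is quoted from \cite{CZZ2}, so there is no internal argument to compare with. Your proof is correct and self-contained for $n\geq 3$, which covers the only case the paper needs, $n=3$.

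The direction the paper actually relies on is your sufficiency direction. In Lemma~\ref{prop-bijin} the authors only check $\boldsymbol{f}_\flat^\varepsilon(\mathcal{O}\boldsymbol{y})=\mathcal{O}\boldsymbol{f}_\flat^\varepsilon(\boldsymbol{y})$ for the mollified field, and then invoke this lemma to conclude that $\boldsymbol{f}_\flat^\varepsilon$ has the radial form $f_\flat^\varepsilon(r)\frac{\boldsymbol{y}}{r}$. Your stabilizer argument is exactly what removes the tangential component there. For $n=3$ it reduces to the rotation by $\pi$ about the axis $\boldsymbol{e}$, namely $2\boldsymbol{e}\boldsymbol{e}^{\top}-\mathbb{I}_3\in\mathrm{SO}(3)$, which fixes $\boldsymbol{e}$ and acts as $-1$ on $\boldsymbol{e}^\perp$. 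Your counterexample $g(|\boldsymbol{y}|)\boldsymbol{y}^\perp$ for $n=2$ correctly shows that the ``$\mathrm{SO}(n)$'' in the statement must be read with $n\geq 3$. You also never use the hypothesis that $\boldsymbol{f}$ is ``spherically symmetric'' beyond its continuity, so you prove slightly more than stated.

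One small presentational point concerns the origin. In the necessity step you treat $\boldsymbol{f}(\boldsymbol{0})=\boldsymbol{0}$ as a reading convention, which presupposes the ``in particular'' clause. It is cleaner to derive it. For $\boldsymbol{y}\neq\boldsymbol{0}$ the identity $\mathcal{O}\boldsymbol{f}(\boldsymbol{y})=\boldsymbol{f}(\mathcal{O}\boldsymbol{y})$ holds by your computation. Letting $\boldsymbol{y}\to\boldsymbol{0}$ and using continuity gives $\mathcal{O}\boldsymbol{f}(\boldsymbol{0})=\boldsymbol{f}(\boldsymbol{0})$ for every $\mathcal{O}\in\mathrm{SO}(n)$, and hence $\boldsymbol{f}(\boldsymbol{0})=\boldsymbol{0}$ by your Step~3. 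Phrased this way, both the necessity direction at the origin and the final clause are non-circular.
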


For giving the time continuity of the velocity in our proof, the following two types of evolution triple embedding are required. 
\begin{Lemma}[\cite{evans}]\label{triple}
Let $T>0$ and $J\subset \RR^k$ $(k=1,2,3)$ be some open subset. Assume that $f\in L^2([0,T];H^1_0(J))$ and  $f_t\in L^2([0,T];H^{-1}(J))$. 
Then $f\in C([0,T];L^2(J))$, and the map{\rm :} $t\mapsto \|f(t)\|_{L^2(J)}^2$ is absolutely continuous 
with
\begin{equation*}
\frac{\mathrm{d}}{\mathrm{d}t} \|f(t)\|_{L^2(J)}^2=2\left<f_t, f\right>_{H^{-1}(J)\times H_0^1(J)} \qquad \text{for {\it a.e.} $t\in (0,T)$}.
\end{equation*}
Moreover, if additionally $f\in L^\infty([0,T];H_0^1(J))$, then $f\in C([0,T];L^4(J))$.
\end{Lemma}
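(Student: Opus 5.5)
The statement to prove is the last one displayed, Lemma~\ref{triple}: the evolution-triple (Lions--Magenes) embedding, together with the extra $L^4$ continuity under an $L^\infty_t(H^1_0)$ bound. I would prove it by the standard approach of Evans, using the Gelfand triple $H^1_0(J)\subset L^2(J)\subset H^{-1}(J)$.

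\textbf{Step 1 (regularization).} Extend $f$ to a slightly larger interval $[-\sigma,T+\sigma]$ by even reflection at $t=0$ and $t=T$. This preserves $f\in L^2_t(H^1_0)$ and $f_t\in L^2_t(H^{-1})$. Then mollify in time to get $f^\varepsilon\in C^1([0,T];H^1_0(J))$ with
\[
f^\varepsilon\to f \ \text{in } L^2_t(H^1_0), \qquad f^\varepsilon_t\to f_t \ \text{in } L^2_t(H^{-1}).
\]
For smooth functions one has $\frac{\mathrm d}{\mathrm dt}\|f^\varepsilon\|_{L^2}^2=2\langle f^\varepsilon_t,f^\varepsilon\rangle$. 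Applying this to the difference $f^\varepsilon-f^\delta$ and integrating from a time $s$ to $t$ gives
\[
\sup_t\|f^\varepsilon-f^\delta\|_{L^2}^2\le \|(f^\varepsilon-f^\delta)(s)\|_{L^2}^2+\int_0^T\Big(\|f^\varepsilon_t-f^\delta_t\|_{H^{-1}}^2+\|f^\varepsilon-f^\delta\|_{H^1_0}^2\Big)\,\mathrm dt .
\]
Averaging over $s\in[0,T]$ turns the first term into an $L^2_tL^2_x$ norm, which tends to $0$.

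\textbf{Step 2 (limit).} By Step 1, $\{f^\varepsilon\}$ is Cauchy in $C([0,T];L^2(J))$. Its limit agrees with $f$ a.e., so $f\in C([0,T];L^2(J))$. Passing to the limit in the integrated identity
\[
\|f^\varepsilon(t)\|_{L^2}^2=\|f^\varepsilon(s)\|_{L^2}^2+2\int_s^t\langle f^\varepsilon_t,f^\varepsilon\rangle\,\mathrm d\tau
\]
gives absolute continuity of $t\mapsto\|f(t)\|_{L^2}^2$ and the stated derivative formula. The main obstacle is controlling the endpoints; the reflection extension handles this, and it is the one place where care is needed.

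\textbf{Step 3 (the $L^4$ continuity).} Here $k\le 3$, and $f\in L^\infty_t(H^1_0)$ together with $f\in C_t(L^2)$. By Gagliardo--Nirenberg/Sobolev (Lemma~\ref{sobolev-embedding}) in dimension $k\le 3$,
\[
\|g\|_{L^4}\le C\|g\|_{L^2}^{1-k/4}\|g\|_{H^1_0}^{k/4}.
\]
Apply this with $g=f(t)-f(t_0)$. The $H^1_0$ factor is bounded by $2\|f\|_{L^\infty_tH^1_0}$; this uses that $f(t)\in H^1_0$ with that bound for every $t$, not just almost every $t$, which holds by weak lower semicontinuity together with weak continuity into $L^2$. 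The $L^2$ factor tends to $0$ as $t\to t_0$ by Step 2. Hence $f\in C([0,T];L^4(J))$.
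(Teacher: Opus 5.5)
Your proposal is correct and takes essentially the same route as the paper, which cites Evans for this lemma and sketches exactly this time-mollification and energy-identity argument for its weighted analogue, Lemma~\ref{Aubin}. Averaging over $s$ is a harmless, even slightly cleaner, substitute for fixing a good base point. The $L^4$ addendum, which the paper states without proof, is correctly handled by your estimate $\|g\|_{L^4}\le C\|g\|_{L^2}^{1-k/4}\|g\|_{H^1_0}^{k/4}$, together with your observation that the $L^\infty_t(H^1_0)$ bound holds at every $t$ for the continuous representative.
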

\begin{Lemma}[\cite{CZZ2}]\label{Aubin}
Let $T>0$, $s\geq 0$, and 
\begin{equation*}
\cH\into L^2_{r^2\rho_0^s},\qquad \cH\into L^2  \into \cH^*,
\end{equation*}
where $\cH$ is a Banach space. Assume that $f\in L^2([0,T];\cH)$ and  $r^2\rho_0^sf_t\in L^2([0,T];\cH^*)$. Then $f\in C([0,T];L^2_{r^2\rho_0^s})$, and the map{\rm :} $t\mapsto |f(t)|_{2,r^2\rho_0^s}^2$ is absolutely continuous with 
\begin{equation}\label{timedirivative}
\frac{\mathrm{d}}{\dt} |f(t)|_{2,r^2\rho_0^s}^2=2\left<r^2\rho_0^sf_t, f\right>_{\cH^*\times \cH}.
\end{equation}
\end{Lemma}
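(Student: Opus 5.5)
The plan is to adapt the classical Lions--Magenes argument (the proof of Lemma \ref{triple} in \cite{evans}) to the weighted pivot space $L^2_{r^2\rho_0^s}$, using mollification in time. Write $\mathrm{w}:=r^2\rho_0^s$. The first step is to fix the identifications. Since $\cH\into L^2_{\mathrm{w}}$, every $g\in L^2_{\mathrm{w}}$ defines an element $\mathrm{w}g\in\cH^*$ through $\langle \mathrm{w}g,\varphi\rangle:=\int_0^1 \mathrm{w}g\varphi\,\mathrm{d}r$. This element satisfies $\|\mathrm{w}g\|_{\cH^*}\le C|g|_{2,\mathrm{w}}$, and it is consistent with the distributional meaning of $r^2\rho_0^s f_t$. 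Because the weight is independent of $t$, multiplication by $\mathrm{w}$ commutes with time derivatives and with time convolutions. This observation is what makes the whole argument work.

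Next, extend $f$ to $(-\sigma,T+\sigma)$ by even reflection about $t=0$ and $t=T$. This keeps $f\in L^2(\cH)$ and $\mathrm{w}f_t\in L^2(\cH^*)$ on the larger interval. Then mollify in time: $f^\varepsilon:=\omega_\varepsilon * f$. Each $f^\varepsilon$ is smooth in $t$ with values in $\cH$, and
\begin{equation*}
f^\varepsilon\to f \ \text{in } L^2([0,T];\cH),\qquad \mathrm{w}f^\varepsilon_t=\omega_\varepsilon*(\mathrm{w}f_t)\to \mathrm{w}f_t \ \text{in } L^2([0,T];\cH^*).
\end{equation*}
For these smooth functions the chain rule is elementary:
\begin{equation*}
\frac{\mathrm{d}}{\dt}|f^\varepsilon|_{2,\mathrm{w}}^2=2\int_0^1 \mathrm{w}f^\varepsilon_t f^\varepsilon\,\mathrm{d}r=2\langle \mathrm{w}f^\varepsilon_t,f^\varepsilon\rangle_{\cH^*\times\cH}.
\end{equation*}
Apply the same identity to the difference $f^\varepsilon-f^\delta$ and integrate from $t_0$ to $t$. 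Choose $t_0$ such that $f^\varepsilon(t_0)\to f(t_0)$ in $L^2_{\mathrm{w}}$; such $t_0$ exist for a.e. choice, since $L^2(\cH)\into L^2(L^2_{\mathrm{w}})$. This gives
\begin{equation*}
\sup_{t\in[0,T]}|f^\varepsilon-f^\delta|_{2,\mathrm{w}}^2\le |(f^\varepsilon-f^\delta)(t_0)|_{2,\mathrm{w}}^2+2\|\mathrm{w}(f^\varepsilon_t-f^\delta_t)\|_{L^2_t(\cH^*)}\|f^\varepsilon-f^\delta\|_{L^2_t(\cH)}\to0.
\end{equation*}
Hence $\{f^\varepsilon\}$ is Cauchy in $C([0,T];L^2_{\mathrm{w}})$. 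Its limit is continuous and coincides a.e. with $f$, which proves $f\in C([0,T];L^2_{\mathrm{w}})$.

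Finally, pass to the limit $\varepsilon\to0$ in the integrated identity
\begin{equation*}
|f^\varepsilon(t)|_{2,\mathrm{w}}^2=|f^\varepsilon(\tau)|_{2,\mathrm{w}}^2+2\int_\tau^t\langle \mathrm{w}f^\varepsilon_t,f^\varepsilon\rangle\,\mathrm{d}s.
\end{equation*}
This uses the uniform convergence just established together with the $L^2(\cH^*)\times L^2(\cH)$ convergence. The resulting formula exhibits $t\mapsto|f(t)|_{2,\mathrm{w}}^2$ as the integral of an $L^1$ function, so it is absolutely continuous, and \eqref{timedirivative} follows a.e.

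I expect the main obstacle to be the first step: verifying that the distributional object $r^2\rho_0^s f_t$, a priori only in $L^2(\cH^*)$, is compatible with the embedding $L^2_{\mathrm{w}}\ni g\mapsto \mathrm{w}g\in\cH^*$. Concretely, one must check that $\mathrm{w}f^\varepsilon_t=\omega_\varepsilon*(\mathrm{w}f_t)$ holds as an identity in $\cH^*$, and that the reflection at the endpoints preserves the bound. The rest is the routine Cauchy argument above.
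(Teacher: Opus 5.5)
Your proposal is correct and is essentially the paper's argument. Both mollify in time as in the proof of Theorem 3 in \S5.9 of Evans, use the energy identity for the difference of two mollifications to get a Cauchy sequence in $C([0,T];L^2_{r^2\rho_0^s})$, and then pass to the limit in the integrated identity.

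Your even reflection at $t=0,T$ and your choice of a good base point $t_0$ are actually the correct way to handle the endpoints. The paper's extension by zero would add terms $\omega_\varepsilon(t)\,r^2\rho_0^s f(0)$ to $r^2\rho_0^s f^\varepsilon_t$ that are not small in $L^2([0,T];\cH^*)$. Its use of the base point $t=0$ also presupposes that $f(0)$ is already defined in $L^2_{r^2\rho_0^s}$.
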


\begin{proof}
This lemma can be obtained by basically following the proof of Theorem 3 on page 303 
in  \cite{evans}*{Chapter 5}, and we only sketch it here. After extending $f(t,\cdot)$ by zero from $[0,T]$ to $\RR$, we can regularize $f$ with respect to $t$ via the convolution:
\begin{equation*}
f^\varepsilon(t,r):=\int_{-\infty}^\infty f(t-\tau,r)\omega_\varepsilon(\tau)\,\mathrm{d}\tau,
\end{equation*}
where $\omega_\varepsilon$ is the standard mollifier. Thus,  for any $\varepsilon,\sigma>0$, we have
\begin{equation*}
\frac{\mathrm{d}}{\dt} \|f^\varepsilon(t)- f^\sigma(t)\|_{L^2_{r^2\rho_0^s}}^2 
=2\big<r^2\rho_0^sf_t^\varepsilon-r^2\rho_0^sf_t^\sigma, f^\varepsilon- f^\sigma\big>_{\cH^*\times \cH}.
\end{equation*}
Integrating the above over $[0,T]$ implies 
\begin{equation*}
\begin{aligned}
\sup_{t\in[0,T]}\|f^\varepsilon(t)- f^\sigma(t)\|_{L^2_{r^2\rho_0^s}}^2&\leq \|f^\varepsilon(0)- f^\sigma(0)\|_{L^2_{r^2\rho_0^s}}^2\\
&\quad + \int_0^T \big(\|f^\varepsilon-f^\sigma\|^2_{\cH}+\big\|r^2\rho_0^sf_t^\varepsilon-r^2\rho_0^sf_t^\sigma\big\|^2_{\cH^*}\big)\,\dt.
\end{aligned}
\end{equation*}

Since $L^2_{r^2\rho_0^s}$, $\cH$, and $\cH^*$ are all Banach spaces due 
to Lemma \ref{W-space}, by Theorem 8.20 in \cite{leoni}*{Chapter 8}, for all $g_1(0)\in L^2_{r^2\rho_0^s}$, $g_2\in L^2([0,T];\cH)$, and $g_3\in L^2([0,T];\cH^*)$, we have
\begin{equation*}
\begin{gathered}
\lim_{\varepsilon\to 0} \|g_1^\varepsilon(0)-g_1(0)\|_{L^2_{r^2\rho_0^s}}+\lim_{\varepsilon\to 0}\int_0^T \big(\|g_2^\varepsilon-g_2\|_{\cH}^2+\|g_3^\varepsilon-g_3\|_{\cH^*}^2\big)\,\dt=0.
\end{gathered}
\end{equation*}
Hence, letting $(\varepsilon,\sigma)\to (0,0)$, together with the fact that $(r^2\rho_0^s f)*\omega_\varepsilon=r^2\rho_0^s f^\varepsilon$, yields
\begin{equation*}
\begin{aligned}
&\limsup_{(\varepsilon,\sigma)\to (0,0)}\sup_{t\in[0,T]}\|f^\varepsilon(t)- f^\sigma(t)\|_{L^2_{r^2\rho_0^s}}^2\\
&\leq \lim_{(\varepsilon,\sigma)\to (0,0)}\|f^\varepsilon(0)- f^\sigma(0)\|_{L^2_{r^2\rho_0^s}}^2\\
&\quad\,\, +\lim_{(\varepsilon,\sigma)\to (0,0)}\int_0^T \big(\|f^\varepsilon-f^\sigma\|^2_{\cH}+\big\|r^2\rho_0^sf_t^\varepsilon-r^2\rho_0^sf_t^\sigma\big\|^2_{\cH^*}\big)\,\dt=0,
\end{aligned}
\end{equation*}
which shows that $f^\varepsilon$ converges  to $f$  in $C([0,T];L^2_{r^2\rho_0^s})$.
Similarly, we have
\begin{equation*}
\|f^\varepsilon(t)\|_{L^2_{r^2\rho_0^s}}^2=\|f^\varepsilon(\tau)\|_{L^2_{r^2\rho_0^s}}^2+2\int_\tau^t\big<r^2\rho_0^sf_t^\varepsilon, f^\varepsilon\big>_{\cH^*\times \cH}\,\mathrm{d}t'
\qquad\,\,\text{for all $\tau,t\in [0,T]$}.
\end{equation*}
 Taking the limit as $\varepsilon\to 0$ implies
\begin{equation}\label{A10}
\|f(t)\|_{L^2_{r^2\rho_0^s}}^2=\|f(\tau)\|_{L^2_{r^2\rho_0^s}}^2+2\int_\tau^t\big<r^2\rho_0^sf_t, f\big>_{\cH^*\times \cH}\,\mathrm{d}t',
\end{equation}
which implies the absolute  continuity of $\|f(t)\|_{L^2_{r^2\rho_0^s}}^2$.  Applying $\partial_t$ to \eqref{A10} yields \eqref{timedirivative}.
\end{proof}

Finally,  we give a variant of the div-curl estimates for spherically symmetric functions. 
\begin{Lemma}\label{im-1}
Let $\eta$ be the classical solution obtained in {\rm Theorem \ref{local-Theorem1.1}}, and let $f=f(r)$ be a smooth function defined on $I$. Then the following norms are equivalent{\rm :} for any $a\in (0,1)$,
\begin{equation*}
\begin{aligned}
&|\zeta_a \sD_\eta f|_{2,r^2}\sim\Big|\zeta_a \big(D_\eta f+ \frac{2f}{\eta}\big)\Big|_{2,r^2},\\
&|\zeta_a \sD_\eta^k f|_{2,r^2}\sim\Big|\zeta_a \sD_\eta^{k-2}\Big(D_\eta\big(D_\eta f+ \frac{2f}{\eta}\big)\Big)\Big|_{2,r^2} \quad \text{for $k=2,3,4$}, 
\end{aligned}
\end{equation*}
where $F_1\sim F_2$ denotes that there exists a constant $C\geq 1$ {\rm (}independent of $(a,\eta,f)${\rm )} such that $C^{-1}F_1\leq F_2\leq CF_1$.
\end{Lemma}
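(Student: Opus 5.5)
The plan is to reduce Lemma \ref{im-1} to the spherically symmetric structure and to the bounds \eqref{b1-lo2} on $(\eta_r,\eta/r)$. I will treat $k=1$ first and then bootstrap to $k=2,3,4$.

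\textbf{Step 1: $k=1$.} The inequality $|\zeta_a(D_\eta f+2f/\eta)|_{2,r^2}\le C|\zeta_a\sD_\eta f|_{2,r^2}$ is immediate from the triangle inequality. For the reverse direction I set $g:=D_\eta f+2f/\eta=\eta^{-2}D_\eta(\eta^2 f)$. I multiply $g$ by $\zeta_a^2\eta^2\eta_r f$ and integrate by parts, using that $f$ is smooth with $f(0)=0$ in the spherically symmetric setting (the relevant cases being $U$ and its derivatives). This gives the identity
\begin{equation*}
\int\zeta_a^2\eta^2\eta_r\Big(|D_\eta f|^2+2\frac{f^2}{\eta^2}\Big)\,\mathrm{d}r=\int\zeta_a^2\eta^2\eta_r\, g\,D_\eta f\,\mathrm{d}r+\text{(terms with }(\zeta_a^2)_r).
\end{equation*}
Equivalently, this is the 3-D identity $\|\nabla\boldsymbol f\|^2=\|\dive\boldsymbol f\|^2$ for the curl-free field $\boldsymbol f=f\frac{\boldsymbol y}{r}$, pulled back by $\eta$. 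Since $\eta_r,\eta/r\in[\frac12,\frac32]$, the weights $\eta^2\eta_r$ and $r^2$ are comparable, so Young's inequality closes the estimate. The cut-off terms carry $(\zeta_a)_r\le0$ and are localized near $r\sim a$, away from the origin. Controlling them is the delicate point, because the constant is claimed independent of $a$. I would absorb them using the monotonicity of $\zeta_a$: the leftover term is $-\int(\zeta_a^2)_r\eta^2 f^2/\eta\cdots$, which has a favorable sign once the integration by parts is set up through $\frac12(\zeta_a^2\eta f^2)_r$.

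\textbf{Step 2: $k\ge2$.} I apply Step 1 to $F=D_\eta f$ and to $F=f/\eta$. The key algebraic relation is
\begin{equation*}
D_\eta g=D_\eta^2 f+2D_\eta\big(\tfrac f\eta\big),\qquad D_\eta\big(\tfrac f\eta\big)=\tfrac1\eta\big(D_\eta f-\tfrac f\eta\big),
\end{equation*}
so $\sD_\eta^2 f$ is recovered from $D_\eta g$ together with the combination $D_\eta f-f/\eta$. That combination is handled by a 1-D ODE-type Hardy argument, via the identity $D_\eta(\eta^{-1}\cdot)$ applied to $\eta^3 D_\eta(f/\eta)$, which is exactly the traceless part of $\nabla^2$ of a radial field. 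In 3-D language this is the elliptic estimate $\|\nabla^2\boldsymbol f\|\lesssim\|\nabla\dive\boldsymbol f\|$ for curl-free $\boldsymbol f$, again localized with $\zeta_a$.

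For $k=3,4$ I apply $\sD_\eta^{k-2}$ and commute. The commutators involve only $\eta_{rr}$ and higher derivatives of $\eta$, and these are bounded by the regularity of $\eta$ in Theorem \ref{local-Theorem1.1}. Lower-order terms are absorbed by induction together with Lemma \ref{hardy-inequality}.

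\textbf{Main obstacle.} I expect the main difficulty to be keeping the constant independent of $a$ and $\eta$. Cut-off derivatives grow like $(1-a)^{-1}$, and for $k\ge3$ the commutators bring in $\eta_{rr}$. I would handle this by performing every integration by parts on the combination $\zeta_a^2\times$(exact derivative), so that the $(\zeta_a)_r$ terms appear with a sign and can be dropped, or by stating the equivalence modulo lower-order norms, which is how the lemma is used in \eqref{1197} and \eqref{11112}.
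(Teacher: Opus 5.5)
Your treatment of $k=1,2$ uses the same mechanism as the paper: expand the square, integrate the cross term by parts, and let $(\zeta_a)_r\le0$ give the cut-off term a sign. However, the identities you write are off.

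\emph{Case $k=1$.} Pairing $g=D_\eta f+\frac{2f}{\eta}$ with $\zeta_a^2\eta^2\eta_rD_\eta f$ produces $\int\zeta_a^2\eta^2\eta_r(|D_\eta f|^2-\frac{f^2}{\eta^2})\,\mathrm{d}r-\int(\zeta_a^2)_r\eta f^2\,\mathrm{d}r$. Young's inequality cannot repair that minus sign. Expanding $|g|^2$ instead gives
\begin{equation*}
\int\zeta_a^2\eta^2\eta_r|g|^2\,\mathrm{d}r=\int\zeta_a^2\eta^2\eta_r\Big(|D_\eta f|^2+\frac{2f^2}{\eta^2}\Big)\,\mathrm{d}r-2\int(\zeta_a^2)_r\eta f^2\,\mathrm{d}r,
\end{equation*}
which closes at once.

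\emph{Case $k=2$.} Applying Step 1 to $F=D_\eta f$ does not reproduce $D_\eta g$, since $D_\eta F+\frac{2F}{\eta}=D_\eta g+\frac{2f}{\eta^2}$. The relevant identity is $D_\eta g=\eta D_\eta\Psi+4\Psi=\eta^{-3}D_\eta(\eta^4\Psi)$ with $\Psi=D_\eta(\frac f\eta)$.

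The genuine gap is the passage to $k=3,4$.

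\emph{No $\eta_{rr}$ terms arise.} $D_\eta$ is $\partial_x$ in the Eulerian radius $x=\eta(r)$, and $\eta^2\eta_r\,\mathrm{d}r=x^2\,\mathrm{d}x$. So commuting $\sD_\eta^{k-2}$ creates no $\eta_{rr}$-terms, and $\eta$ enters only through $\eta^2\eta_r\sim r^2$ and $D_\eta\zeta_a\le0$. Bounding $\eta_{rr}$ by the regularity from Theorem \ref{local-Theorem1.1} would make $C$ depend on $\eta$, contrary to the statement. It would also be unusable where the paper needs the lemma, namely for the iterate $\bar\eta$ in \S2--\S3 (e.g.\ \eqref{1197}, \eqref{11112}), before that theorem is available.

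\emph{Lower-order terms cannot be absorbed.} The estimate is homogeneous of fixed order, and on $\mathrm{supp}\,\zeta_a$ lower-order seminorms are not controlled by the top one. For $f=\eta^3$ one has $g=5\eta^2$, hence $\sD_\eta^2(D_\eta g)=0$ and $\sD_\eta^4f=0$, while $D_\eta^3f=6$ and $D_\eta^2g=10$. So any remainder such as $|\zeta_a r\sD_\eta^3f|_2$ on the right breaks the claimed bound. Retreating to an equivalence modulo lower-order norms proves a weaker statement.

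\emph{Missing idea: no remainder ever appears.} By Leibniz, $D_\eta^jf=\eta D_\eta^j(\frac f\eta)+jD_\eta^{j-1}(\frac f\eta)$. Together with \eqref{idenen}, every component of $\sD_\eta^{k-2}(D_\eta g)$ is exactly $\eta D_\eta\Psi+(k+2)\Psi$, where
\begin{equation*}
\Psi\in\Big\{D_\eta^2\big(\tfrac f\eta\big),\ \tfrac1\eta D_\eta\big(\tfrac f\eta\big)\Big\}\ \text{for } k=3,\qquad \Psi\in\Big\{D_\eta^3\big(\tfrac f\eta\big),\ D_\eta\big(\tfrac1\eta D_\eta(\tfrac f\eta)\big)\Big\}\ \text{for } k=4.
\end{equation*}
One integration by parts then gives, with $m=k+2$,
\begin{equation*}
\int\zeta_a^2\eta^2\eta_r|\eta D_\eta\Psi+m\Psi|^2\,\mathrm{d}r=\int\zeta_a^2\eta^2\eta_r\big(\eta^2|D_\eta\Psi|^2+m(m-3)\Psi^2\big)\,\mathrm{d}r-m\int(\zeta_a^2)_r\eta^3\Psi^2\,\mathrm{d}r.
\end{equation*}
Here $m(m-3)>0$ and the last term is nonnegative. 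Hence each $\Psi$ is controlled with a constant independent of $a$, depending on $\eta$ only through $\eta^2\eta_r\sim r^2$. The remaining components of $\sD_\eta^kf$ are linear combinations of these $\Psi$ and of the right-hand side, e.g.\ $D_\eta^4f=D_\eta^3g-2D_\eta^3(\frac f\eta)$ and \eqref{idenen}. This is, in substance, the paper's proof, which is written out for $k=4$.
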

\begin{proof}
For simplicity, we only give the proof for $a=\frac{1}{2}$ and show that 
\begin{equation*}
\big|\zeta r\mathscr{D}_\eta^4 f\big|_2\leq C \Big|\zeta r 
\mathscr{D}_\eta^2 \Big(D_\eta\big(D_\eta f+ \frac{2f}{\eta}\big)\Big) \Big|_2.
\end{equation*}
Besides, the following fact will be used frequently later:
\begin{equation*}
D_\eta\zeta\leq 0 \qquad \text{for all $(t,r)\in [0,T]\times\bar I$}. 
\end{equation*}

First, a direct calculation gives
\begin{equation}\label{L6}
\begin{aligned}
&\,\Big|\zeta (\eta^2\eta_r)^\frac{1}{2} D_\eta^3\big(D_\eta f+ \frac{2f}{\eta}\big)\Big|_2^2\\
&=\int_0^1 \zeta^2 \eta^2 \eta_r \big(|D_\eta^4 f|^2 +4 \big|D_\eta^3\big(\frac{f}{\eta}\big)\big|^2\big) \,\mathrm{d}r+\underline{4\int_0^1 \zeta^2\eta^2 \eta_r D_\eta^4f D_\eta^3\big(\frac{f}{\eta}\big)\,\mathrm{d}r}_{:={AL}_{1}}.
\end{aligned}
\end{equation}
Then, for ${AL}_{1}$, note that the following identity holds:
\begin{equation*}
D_\eta^4 f=\eta D_\eta^4\big(\frac{f}{\eta}\big)+ 4 D_\eta^3\big(\frac{f}{\eta}\big).
\end{equation*}
Hence, this, together with integration by parts, yields  
\begin{equation*}
\begin{aligned}
{AL}_{1}&=16\int_0^1 \zeta^2 \eta^2 \eta_r \big|D_\eta^3\big(\frac{f}{\eta}\big)\big|^2 \,\mathrm{d}r +4\int_0^1 \zeta^2 \eta^{3} \eta_r D_\eta^4\big(\frac{f}{\eta}\big) D_\eta^3\big(\frac{f}{\eta}\big)\,\mathrm{d}r\\
&=10\int_0^1 \zeta^2 \eta^2 \eta_r \big|D_\eta^3\big(\frac{f}{\eta}\big)\big|^2 \,\mathrm{d}r -4\int_0^1 \zeta D_\eta \zeta \eta^{3} \eta_r  \big|D_\eta^3 \big(\frac{f}{\eta}\big)\big|^2\, \mathrm{d}r\geq 0,    
\end{aligned}
\end{equation*}
which, along with \eqref{L6}, gives
\begin{equation}\label{L6'}
\Big|\zeta (\eta^2\eta_r)^\frac{1}{2} D_\eta^3\big(D_\eta f+ \frac{2f}{\eta}\big)\Big|_2^2\geq  \int_0^1 \zeta^2 \eta^2 \eta_r |D_\eta^2(\mathscr{D}_r^2 f)|^2  \,\mathrm{d}r.
\end{equation}

Next, notice that
\begin{equation}\label{idenen}  
D_\eta\big(\frac{D_\eta^2 f}{\eta}\big)=D_\eta^3\big(\frac{f}{\eta}\big)+2D_\eta\Big(\frac{1}{\eta}D_\eta\big(\frac{f}{\eta}\big)\Big)=\eta D_\eta^2\Big(\frac{1}{\eta}D_\eta\big(\frac{f}{\eta}\big)\Big)+4D_\eta\Big(\frac{1}{\eta}D_\eta\big(\frac{f}{\eta}\big)\Big).
\end{equation}
Hence, this, together with a direct calculations, implies
\begin{align}
&\,\Big|\zeta (\eta^2\eta_r)^\frac{1}{2} D_\eta\Big(\frac{1}{\eta}D_\eta\big(D_\eta f+ \frac{2 f}{\eta}\big)\Big)\Big|_2^2\nonumber\\
&=\Big|\zeta(\eta^2\eta_r)^\frac{1}{2}\Big(\eta D_\eta^2\big(\frac{1}{\eta}D_\eta\big(\frac{f}{\eta}\big)\big)+6D_\eta\big(\frac{1}{\eta}D_\eta\big(\frac{f}{\eta}\big)\big)\Big)\Big|_2^2\label{9.6}\\
&=\int_0^1 \zeta^2 \eta^{2} \eta_r \Big(\Big|\eta D_\eta^2\Big(\frac{1}{\eta}D_\eta\big(\frac{f}{\eta}\big)\Big)\Big|^2 +36 \Big|D_\eta\Big(\frac{1}{\eta}D_\eta\big(\frac{f}{\eta}\big)\Big)\Big|^2\Big)\,\mathrm{d}r\nonumber\\
&\quad +\underline{12\int_0^1 \zeta^2\eta^{3}\eta_r D_\eta^2\Big(\frac{1}{\eta}D_\eta\big(\frac{f}{\eta}\big)\Big)D_\eta\Big(\frac{1}{\eta}D_\eta\big(\frac{f}{\eta}\big)\Big)\,\mathrm{d}r}_{:={AL}_{2}},\nonumber
\end{align}
where $AL_{2}$ can be handled by integration by parts:
\begin{equation*}
\begin{aligned}
AL_{2}&=-18\int_0^1 \zeta^2\eta^2\eta_r\Big|D_\eta\Big(\frac{1}{\eta}D_\eta\big(\frac{f}{\eta}\big)\Big)\Big|^2\,\mathrm{d}r-12\int_0^1 \zeta D_\eta\zeta \eta^{3} \eta_r \Big|D_\eta\Big(\frac{1}{\eta}D_\eta\big(\frac{f}{\eta}\big)\Big)\Big|^2 \,\mathrm{d}r\\
&\geq -18 \int_0^1 \zeta^2\eta^2\eta_r\Big|D_\eta\Big(\frac{1}{\eta}D_\eta\big(\frac{f}{\eta}\big)\Big)\Big|^2\,\mathrm{d}r.    
\end{aligned}
\end{equation*}
Therefore, \eqref{9.6}, combined with the above, implies 
\begin{equation}\label{l6"}
\Big|\zeta (\eta^2\eta_r)^\frac{1}{2} D_\eta\Big(\frac{1}{\eta}D_\eta\big(D_\eta f+ \frac{2 f}{\eta}\big)\Big)\Big|_2^2\geq 18\int_0^1 \zeta^2 \eta^{2} \eta_r   \Big|D_\eta\Big(\frac{1}{\eta}D_\eta\big(\frac{f}{\eta}\big)\Big)\Big|^2 \,\mathrm{d}r.
\end{equation}

Finally, it follows from the above and \eqref{L6'}--\eqref{idenen} that
\begin{equation}\label{l6""}
\int_0^1 \zeta^2 \eta^{2} \eta_r \Big|D_\eta\big(\frac{D_\eta^2 f}{\eta}\big)\Big|^2 \,\mathrm{d}r\leq C\Big|\zeta (\eta^2\eta_r)^\frac{1}{2} \sD_\eta^2 \Big(D_\eta \big(D_\eta f+ \frac{2f}{\eta}\big)\Big)\Big|_2^2.
\end{equation}
Hence, by \eqref{L6'}, \eqref{l6"}--\eqref{l6""}, and the lower and upper bounds of $(\frac{\eta}{r},\eta_r)$, we obtain the desired estimates.
\end{proof}

\section{Coordinate Transformations}\label{appb}

Generally, it is desirable to consider the Lagrangian formulation, so that we can pullback  \eqref{eq:1.1-vfbp} on the moving domain $\Omega(t)$ to a problem on a fixed domain $\Omega$. To this end, denote by $\boldsymbol{x}=\boldsymbol{\eta}(t,\boldsymbol{y})$ the position of the fluid particle $\boldsymbol{x} \in \Omega(t)$ at time $t\geq 0$ so that
\begin{equation}\label{flow-map-md}
\boldsymbol{\eta}_t(t,\boldsymbol{y})=\boldsymbol{u}(t,\boldsymbol{\eta}(t,\boldsymbol{y})) \ \ \text{for $t>0$},\qquad \text{with $\boldsymbol{\eta}(0,\boldsymbol{y})=\boldsymbol{y}$},
\end{equation}
and $(t,\boldsymbol{y})$ are the 3-D Lagrangian coordinates.
This appendix is devoted to showing the conversion of some Sobolev spaces between the 3-D Lagrangian coordinates $\boldsymbol{y}$ and the corresponding spherical coordinate $r=|\boldsymbol{y}|$ for spherically symmetric functions.

Let $0\leq a<b$, $\cJ:=\{\boldsymbol{y}\in \mathbb{R}^3: \, a\leq |\boldsymbol{y}|< b\}$, and $r=|\boldsymbol{y}|\in J:=[a,b)$. Consider a coordinate transformation $\boldsymbol{\xi}=\boldsymbol{\xi}(\boldsymbol{y})\in C^\infty(\bar\cJ)$ such that
\begin{equation*}
\boldsymbol{\xi}(\boldsymbol{y})=\xi(r)\frac{\boldsymbol{y}}{r}\quad\text{with $\xi(r)\geq 0$},\qquad\boldsymbol{\xi}:\cJ\to \cG:=\boldsymbol{\xi}(\cJ),\qquad \boldsymbol{y}\mapsto \boldsymbol{x}=\boldsymbol{\xi}(\boldsymbol{y}). 
\end{equation*} 
Assume that $\nabla_{\boldsymbol{y}}\boldsymbol{\xi}$ is a non-singular matrix, and define
\begin{equation*}
\begin{gathered}
D_\xi f=\frac{f_r}{\xi_r},\qquad\cB=(\cB_{ij})_{1\leq i,j\leq 3} \qquad \text{with $\cB_{ij}:=((\nabla_{\boldsymbol{y}}\boldsymbol{\xi})^{-1})_{ij}$},\\[-2pt]
\nabla_\cB f=((\nabla_\cB f)_1,(\nabla_\cB f)_2,(\nabla_\cB f)_3)^\top \qquad \text{with $(\nabla_\cB f)_i=\sum_{k=1}^3\cB_{ki}\partial_{y_k}f$},\\[-3pt]
\boldsymbol{f}=(f_1,f_2, f_3)^\top,\qquad\nabla_\cB \boldsymbol{f}=((\nabla_\cB \boldsymbol{f})_{ij})_{1\leq i,j\leq 3} \qquad \text{with $(\nabla_\cB \boldsymbol{f})_{ij}=\sum_{k=1}^3 \cB_{kj}\partial_{y_k}f_i$},    
\end{gathered}
\end{equation*}
where $f(\boldsymbol{y})=f(r)$ and $\boldsymbol{f}(\boldsymbol{y})=f(r)\frac{\boldsymbol{y}}{r}$ are sufficiently smooth functions. 

Then we have the following coordinate transformations.
\begin{Lemma}\label{lemma-initial}
Assume that $(g,\boldsymbol{g})(\boldsymbol{x})$ are spherically symmetric functions defined on $\cG$ and $(f,\boldsymbol{f})(\boldsymbol{y})$ satisfy
\begin{equation*}
f(\boldsymbol{y})=g(\boldsymbol{\xi}(\boldsymbol{y}))=g(\boldsymbol{x}),\qquad\boldsymbol{f}(\boldsymbol{y})=\boldsymbol{g}(\boldsymbol{\xi}(\boldsymbol{y}))=\boldsymbol{g}(\boldsymbol{x}).
\end{equation*}
Then, for any $q\in [1,\infty]$, the following statements hold{\rm :}
\begin{enumerate}
\item[{\rm(i)}] Transformations for $(g,f)${\rm:} for $j=1,2,3$, 
\begin{equation*}
\begin{aligned}
\|g\|_{L^q(\cG)}&\sim\|(\det \nabla_{\boldsymbol{y}}\boldsymbol{\xi})^\frac{1}{q}f\|_{L^q(\cJ)}\sim 
\|(\xi^2\xi_r)^\frac{1}{q}f\|_{L^q(J)},\\[2pt]
\|\nabla^jg\|_{L^q(\cG)}&\sim\|(\det \nabla_{\boldsymbol{y}}\boldsymbol{\xi})^\frac{1}{q}\nabla_\cB^j f\|_{L^q(\cJ)}\sim  \|(\xi^2\xi_r)^\frac{1}{q}\sD_\xi^{j-1}(D_\xi f)\|_{L^q(J)};
\end{aligned}
\end{equation*}
\item[{\rm(ii)}] Transformations for $(\boldsymbol{g},\boldsymbol{f})${\rm:} for $j=0,1,2,3,4$,
\begin{equation*}
\|\nabla^{j}\boldsymbol{g}\|_{L^q(\cG)}\sim\|(\det \nabla_{\boldsymbol{y}}\boldsymbol{\xi})^\frac{1}{q}\nabla_\cB^{j} \boldsymbol{f}\|_{L^q(\cJ)}\sim \|(\xi^2\xi_r)^\frac{1}{q}\sD_\xi^j f\|_{L^q(J)}.
\end{equation*}
\end{enumerate}
Here, $E\sim F$ denotes $C^{-1}E\leq F\leq CE$ for some universal constant $C\geq 1$.
\end{Lemma}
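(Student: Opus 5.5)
The plan is to reduce everything to two elementary facts: the change of variables $\boldsymbol{x}=\boldsymbol{\xi}(\boldsymbol{y})$ combined with polar coordinates, and a pointwise algebraic description of $\nabla^j$ of a radial scalar or radial vector field in terms of radial derivatives and divisions by the radius.

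\textbf{Step 1 (measures and the middle quantities).} Since $\boldsymbol{\xi}(\boldsymbol{y})=\xi(r)\frac{\boldsymbol{y}}{r}$, the Jacobian $\nabla_{\boldsymbol{y}}\boldsymbol{\xi}$ has eigenvalue $\xi_r$ in the radial direction and $\xi/r$ (double) tangentially. Hence $\det\nabla_{\boldsymbol{y}}\boldsymbol{\xi}=\xi^2\xi_r/r^2$. By the change of variables and $\mathrm{d}\boldsymbol{y}=4\pi r^2\,\mathrm{d}r$,
\begin{equation*}
\int_{\cG}|g|^q\,\mathrm{d}\boldsymbol{x}=\int_{\cJ}\det\nabla_{\boldsymbol{y}}\boldsymbol{\xi}\,|f|^q\,\mathrm{d}\boldsymbol{y}=4\pi\int_J\xi^2\xi_r|f|^q\,\mathrm{d}r .
\end{equation*}
For derivatives, the chain rule gives the exact identities $(\nabla^j_{\boldsymbol{x}}g)(\boldsymbol{\xi}(\boldsymbol{y}))=\nabla_\cB^jf(\boldsymbol{y})$ and $(\nabla^j_{\boldsymbol{x}}\boldsymbol{g})(\boldsymbol{\xi}(\boldsymbol{y}))=\nabla_\cB^j\boldsymbol{f}(\boldsymbol{y})$, since $\cB^\top$ is the inverse Jacobian. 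So the first equivalence in each chain is in fact an equality, and the second one reduces to a pointwise statement on $\cG$ pulled back by $\rho=|\boldsymbol{x}|=\xi(r)$. Under this identification $\frac{\mathrm{d}}{\mathrm{d}\rho}$ becomes $D_\xi$ and $\rho$ becomes $\xi$, so it suffices to prove the following with $\xi(r)=r$: for a radial scalar $G(\rho)$ and $1\le j\le 3$,
\begin{equation*}
|\nabla^jG|\sim|\sD^{j-1}_\rho(G')|,
\end{equation*}
and for $F(\rho)\frac{\boldsymbol{x}}{\rho}$ with $0\le j\le 4$,
\begin{equation*}
|\nabla^j(F\boldsymbol{x}/\rho)|\sim|\sD^j_\rho F|,
\end{equation*}
pointwise and with universal constants.

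\textbf{Step 2 (tensor structure).} I treat the vector case via a potential: $F\boldsymbol{x}/\rho=\nabla\phi$ with $\phi'=F$, so $\nabla^j\boldsymbol{g}=\nabla^{j+1}\phi$ and both cases reduce to $\nabla^k$ of a radial scalar. Iterating $\partial_i\phi=x_i\,(\rho^{-1}\partial_\rho)\phi$, each derivative either hits the scalar coefficient (producing another factor $x_\cdot\,\rho^{-1}\partial_\rho$) or hits a factor $x_i$ (producing $\delta_{ij}$). Consequently $\nabla^k\phi$ is a finite sum of symmetrized tensors $\omega^{\otimes(k-2l)}\otimes\delta^{\otimes l}$, with $\omega=\boldsymbol{x}/\rho$, whose coefficients are fixed linear combinations of
\begin{equation*}
\rho^{k-2l}\big(\rho^{-1}\partial_\rho\big)^{k-l}\phi .
\end{equation*}
For $k\le5$ I would compute these explicitly. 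In orthonormal frames adapted to $\omega$, the components reproduce exactly the entries of $\sD^j$, for example $F'$ and $F/\rho$ for $j=1$, $F''$ and $(F/\rho)'$ for $j=2$, and up to $D(\frac1\rho D(\frac F\rho))$ for $j=4$. This uses identities such as $F''=\rho(F/\rho)''+2(F/\rho)'$, which is the analogue of the identity used in Lemma \ref{im-1}.

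\textbf{Step 3 (two-sided bound), which I expect to be the main obstacle.} The upper bound $|\nabla^j\cdot|\le C|\sD^j\cdot|$ is immediate from the finite expansion. The lower bound requires recovering each entry of $\sD^j F$ from $\nabla^{j+1}\phi$ with a universal constant. To do this I contract $\nabla^{j+1}\phi$ with $\omega$ in $m$ slots and with a unit tangent vector $e\perp\omega$ in the remaining slots. Each such contraction isolates a triangular combination of the coefficients, so the coefficients are obtained by inverting a fixed constant invertible matrix that depends only on $j$. Since these matrices are independent of the point, the constants are universal. Finally I pull back by $\rho=\xi(r)$, multiply by $(\xi^2\xi_r)^{1/q}$ and integrate using Step 1, including the case $q=\infty$, where no weight appears. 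Matching precisely the listed components of $\sD^3$ and $\sD^4$ (for example $\frac{1}{\eta}D_\eta(\frac f\eta)$) to the contractions is the tedious part.
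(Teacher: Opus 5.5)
Your proposal is correct and follows the paper's route. You reduce to $\xi(r)=r$ via $(\nabla^j_{\boldsymbol{x}}\boldsymbol{g})\circ\boldsymbol{\xi}=\nabla^j_{\cB}\boldsymbol{f}$ and $\det\nabla_{\boldsymbol{y}}\boldsymbol{\xi}=\xi^2\xi_r/r^2$ with polar coordinates, and then prove a pointwise equivalence for radial fields, exactly as the paper does. The only difference is bookkeeping: the paper writes $(f_k)_{y_iy_jy_\ell y_p}$ out in index form to get $|\nabla^4_{\boldsymbol{y}}\boldsymbol{f}|^2=|f_{rrrr}|^2+20|(\frac{f}{r})_{rrr}|^2+120|(\frac{1}{r}(\frac{f}{r})_r)_r|^2$, whereas your radial potential and adapted-frame contractions produce the same components ($F''''$, $(F/\rho)'''$ and $(\frac{1}{\rho}(F/\rho)')'$, up to constants) systematically for every $j$.
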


\begin{proof}
It suffices to prove the transformations for $(\boldsymbol{g},\boldsymbol{f})$, since $\nabla_{\boldsymbol{y}} f=f_r\frac{\boldsymbol{y}}{r}$ can be regarded as a vector  function $\boldsymbol{h}=h\frac{\boldsymbol{y}}{r}$ with $h=f_r$. Moreover, for simplicity, we only sketch the proof for the highest-order estimates. We divide the proof into two steps.

\textbf{1.} We first prove the case when $\xi(r)=r$. In this case, $\boldsymbol{\xi}(\boldsymbol{y})=\boldsymbol{y}$ and $\cB$ is the $n\times n$ identity matrix. It follows from direct calculations that
\begin{align*}
&\begin{aligned}
(f_k)_{y_iy_jy_\ell y_p}
&=\frac{y_i y_j y_ky_\ell y_p}{r^5}f_{rrrr}\\
&\quad +\Big(\frac{\delta_{ip} y_j y_ky_\ell+\delta_{jp} y_i y_ky_\ell+\delta_{kp} y_i y_jy_\ell+\delta_{\ell p} y_i y_jy_k}{r^3}\\
&\quad\quad \ \ + \frac{\delta_{i\ell}y_jy_ky_p+\delta_{j\ell}y_iy_ky_p+\delta_{k\ell}y_iy_jy_p}{r^3}\\
&\quad\quad \ \ +\frac{\delta_{ij}y_ky_\ell y_p+\delta_{ik}y_jy_\ell y_p+\delta_{jk}y_iy_\ell y_p}{r^3}-\frac{10y_i y_j y_ky_\ell y_p}{r^5}\Big)\big(\frac{f}{r}\big)_{rrr}\\
&\quad +\Big(\frac{\delta_{i\ell}\delta_{jp}y_k+\delta_{i\ell}\delta_{kp}y_j+\delta_{j\ell}\delta_{ip}y_k+\delta_{j\ell}\delta_{kp}y_i+\delta_{k\ell}\delta_{ip}y_j+\delta_{k\ell}\delta_{jp}y_i}{r}\\
&\quad\quad \ \ +\frac{\delta_{ij}\delta_{kp}y_\ell+\delta_{ij}\delta_{\ell p}y_k+\delta_{ik}\delta_{jp}y_\ell+\delta_{ik}\delta_{\ell p} y_j+\delta_{jk}\delta_{ip}y_\ell+\delta_{jk}\delta_{\ell p}y_i}{r}\\
\end{aligned}\\
&\qquad\quad\quad \ \ \begin{aligned}
&\quad\quad \ \ +\frac{\delta_{ij}\delta_{k\ell}y_p+\delta_{ik}\delta_{j\ell}y_p+\delta_{jk}\delta_{i\ell}y_p}{r}\\
&\quad\quad \ \ -\frac{3(\delta_{ip} y_j y_ky_\ell+\delta_{jp} y_i y_ky_\ell+\delta_{kp} y_i y_jy_\ell+\delta_{\ell p}y_i y_j y_k)}{r^3}\\
&\quad\quad \ \ -\frac{3(\delta_{i\ell}y_jy_ky_p+\delta_{j\ell}y_iy_ky_p+\delta_{k\ell}y_iy_jy_p)}{r^3}\\
&\quad\quad \ \ -\frac{3(\delta_{ij}y_ky_\ell y_p+\delta_{ik}y_jy_\ell y_p+\delta_{jk}y_iy_\ell y_p)}{r^3}+\frac{15y_i y_j y_ky_\ell y_p}{r^5}\Big)\Big(\frac{1}{r}\big(\frac{f}{r}\big)_{r}\Big)_r,   
\end{aligned}
\end{align*}
where $\delta_{ij}$ denotes the Kronecker symbol with indices $(i,j)$: $\delta_{ij}= 1$ if $i=j$, $\delta_{ij}=0$ if $i\neq j$. Then the above expressions yield
\begin{equation*}
|\nabla_{\boldsymbol{y}}^4 \boldsymbol{f}|^2=|f_{rrrr}|^2+20\Big|\big(\frac{f}{r}\big)_{rrr}\Big|^2+120 \Big|\Big(\frac{1}{r}\big(\frac{f}{r}\big)_r\Big)_r\Big|^2.
\end{equation*}
Moreover, since $(\frac{f_{rr}}{r})_r=(\frac{f}{r})_{rrr}+2(\frac{1}{r}(\frac{f}{r})_r)_r$, we obtain from the above that
\begin{equation}\label{BB}
|\nabla^4_{\boldsymbol{y}} \boldsymbol{f}|\sim |\sD_r^4 f|.
\end{equation}

Finally, denote $\omega_3$ as the surface area of the $3$-sphere,
due  to the integral identity:  
\begin{equation*}
\int_\cJ f(\boldsymbol{y})\,\mathrm{d}\boldsymbol{y}= \omega_3\int_J f(r)r^2\,\mathrm{d}r,
\end{equation*}
 we thus obtain the desired conclusions  of this lemma when $\boldsymbol{\xi}(\boldsymbol{y})=\boldsymbol{y}$ from \eqref{BB}.

\textbf{2.} For general $\boldsymbol{\xi}(\boldsymbol{y})$, we can first repeat the calculations in Step 1 with the coordinate $\boldsymbol{x}=\boldsymbol{\xi}(\boldsymbol{y})$ and the function $\boldsymbol{g}(\boldsymbol{x})$. Specifically, if we let $x:=|\boldsymbol{x}|$, then $x=\xi(r)\in W$ and $W:=[\xi(a),\xi(b))$, and we can obtain from \eqref{BB} that
\begin{equation*} 
\|\nabla^4 \boldsymbol{g}\|_{L^q(\cG)}\sim \big\|x^\frac{2}{q}\sD_x^4 g\big\|_{L^q(W)}.
\end{equation*}

Next, using the coordinate transformations $\boldsymbol{x}=\boldsymbol{\xi}(\boldsymbol{y})$ and $x=\xi(r)$, we have $\nabla^k\boldsymbol{g}=\nabla_{\cB}^k\boldsymbol{f}$ and $\partial_x^k g=D_\xi^k f$ for $k=0,1,2,3,4$. Therefore, the following integral identities
\begin{equation*}
\int_\cG g(\boldsymbol{x})\,\mathrm{d}\boldsymbol{x}=\int_\cJ f(\boldsymbol{y})(\det \nabla_{\boldsymbol{y}}\boldsymbol{\xi}) \,\mathrm{d}\boldsymbol{y},\qquad \int_W g(x)\,\mathrm{d}x=  \int_J f(r) \xi_r\,\mathrm{d}r,
\end{equation*}
lead to the desired results of this lemma.
\end{proof}

\section{Remarks on the Energy Functionals}\label{AppB}
This appendix is devoted to giving some equivalent forms of the energy functionals. In what follows, we always let $(\rho_0,u_0)$ be the initial data of {\rm\bf IBVP} \eqref{eq:VFBP-La-eta},  $\rho_0$ satisfy \eqref{distance-la}, and $u_0$ satisfy \eqref{a2-lo}.

Define 
\begin{equation}\label{E-1a}
\begin{aligned}
\mathring\cE (t,f)&=\mathring\cE_{\mathrm{in}}(t,f)+\mathring\cE_{\mathrm{ex}}(t,f),\\
\mathring\cE_{\mathrm{in}}(t,f)&:=\big|\zeta r (f,\sD_rf,f_t,\sD_rf_t)(t)\big|_{2}^2+\big|\zeta r (\sD_r^2f,\sD_r^3f)(t)\big|_{2}^2,\\
\mathring\cE_{\mathrm{ex}}(t,f)&:=\big|\chi^\sharp\rho_0^\frac{1}{2}(f,f_t)(t)\big|_{2}^2+\big|\chi^\sharp\rho_0^\frac{\delta}{2}(f_r,f_{tr})(t)\big|_{2}^2+\big|\chi^\sharp\rho_0^{(\gamma-1)(\frac{3}{2}-\varepsilon_0)}(f_{rr},f_{rrr})(t)\big|_{2}^2,
\end{aligned}
\end{equation}
and
\begin{equation}\label{D-1a}
\begin{aligned}
\mathring\cD(t,U)&=\mathring\cD_{\mathrm{in}}(t,U)+\mathring\cD_{\mathrm{ex}}(t,U),\\
\mathring\cD_{\mathrm{in}}(t,f)&:=\big|\zeta r (f_{tt}, \sD_r^2f_{t},\sD_r^4f)(t)\big|_{2}^2,\\
\mathring\cD_{\mathrm{ex}}(t,f)&:=\big|\chi^\sharp\rho_0^\frac{1}{2}f_{tt}(t)\big|_{2}^2+\big|\chi^\sharp\rho_0^{(\gamma-1)(\frac{3}{2}-\varepsilon_0)}(f_{trr},f_{rrrr})(t)\big|_{2}^2.
\end{aligned}
\end{equation}

Clearly, we have
\begin{equation*}
\mathring\cX (0,f)=\cX(0,f) \qquad\text{for $\cX=\cE,\cE_{\mathrm{in}},\cE_{\mathrm{ex}},\cD,\cD_{\mathrm{in}},\cD_{\mathrm{ex}}$}.
\end{equation*}

\begin{Lemma}\label{lemma-gaowei}
Let $T>0$, and let $\eta$ be defined by $\eqref{eq:VFBP-La-eta}_2$ and $(\eta,U)$ satisfy
\begin{equation}\label{C333}
\begin{gathered}
(\eta_r,\frac{\eta}{r})\in [\delta_*,\delta^*] ,\qquad\sup_{t\in[0,T]}\cE (t,U)=\mathrm{E}_1<\infty,\\
\sup_{t\in[0,T]}(\cE (t,U)+t\cD (t,U))+\int_0^T \cD(s,U)\,\ds=\mathrm{E}_2<\infty 
\end{gathered}
\end{equation}
for some given positive constants $(\delta_*,\delta^*,\mathrm{E}_1,\mathrm{E}_2)$. Assume that $f=f(t,r)$ is a function defined on $[0,T]\times I$ such that
\begin{equation}
\sup_{t\in[0,T]}(\cE (t,f)+t\cD (t,f))+\int_0^T \cD(s,f)\,\ds<\infty.
\end{equation}
Then 
\begin{equation*}
\begin{aligned}
&\cE_{\mathrm{in}} (t,f)\sim \mathring\cE_{\mathrm{in}} (t,f),\qquad \cE_{\mathrm{ex}} (t,f)\sim \mathring\cE_{\mathrm{ex}} (t,f),\\
&\cE_{\mathrm{in}} (t,f)+t\cD_{\mathrm{in}} (t,f)+\int_0^t \cD_{\mathrm{in}}(s,f)\,\ds\sim \mathring\cE_{\mathrm{in}} (t,f)+t\mathring\cD_{\mathrm{in}} (t,f)+\int_0^t \mathring\cD_{\mathrm{in}}(s,f)\,\ds,\\
&\cE_{\mathrm{ex}} (t,f)+t\cD_{\mathrm{ex}} (t,f)+\int_0^t \cD_{\mathrm{ex}}(s,f)\,\ds\sim \mathring\cE_{\mathrm{ex}} (t,f)+t\mathring\cD_{\mathrm{ex}} (t,f)+\int_0^t \mathring\cD_{\mathrm{ex}}(s,f)\,\ds,
\end{aligned}
\end{equation*}
where $F_1\sim F_2$ denotes
\begin{equation*}
C^{-1}e^{-CT}F_1\leq F_2\leq Ce^{CT}F_1
\end{equation*}
for some positive finite constant $C$, which depends only on $(\delta_*,\delta^*,\mathrm{E}_1,\mathrm{E}_2,\rho_0,\gamma,\delta,\varepsilon_0)$.
\end{Lemma}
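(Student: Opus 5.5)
We will compare each weighted quantity in $\cE,\cD$ (built from $D_\eta$, $\sD_\eta^k$) with its counterpart in $\mathring\cE,\mathring\cD$ (built from $\partial_r$, $\sD_r^k$) by expanding one in terms of the other with the chain rule. The expansion coefficients involve $\eta_r$, $\eta/r$ and higher derivatives of $\eta$, and we control these through $\eta_t=U$ together with the bounds \eqref{C333} on $U$.

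\textbf{Step 1: identities and coefficient bounds.} Write $D_\eta f=f_r/\eta_r$. Then
\begin{equation*}
D_\eta^2 f=\frac{f_{rr}}{\eta_r^2}-\frac{\eta_{rr}f_r}{\eta_r^3},
\end{equation*}
and similar formulas hold for $D_\eta^3,D_\eta^4$ and for the $\tfrac{1}{\eta}$-components of $\sD_\eta^k$. Each of these equals the corresponding $\sD_r^k f$ times powers of $\eta_r^{-1}$ and $r/\eta$, plus lower-order terms multiplied by $\sD_r^j\eta$ with $j\ge 2$. The inverse expansion has the same structure. From $\eta_t=U$ we get
\begin{equation*}
\sD_r^j\eta(t)=\sD_r^j r+\int_0^t\sD_r^jU\,\ds .
\end{equation*}
This gives $|\sD_r^2\eta|_\infty\le C+C\int_0^t|\sD_r^2U|_\infty\,\ds$. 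Lemma \ref{sobolev-embedding} and Lemma \ref{hardy-inequality} bound the integrand by $C(1+\cD(s,U)^{1/2})$, and hence by $C(\mathrm{E}_1,\mathrm{E}_2)(1+t)^{1/2}$. The third- and fourth-order derivatives of $\eta$ are controlled only in weighted $L^2$, with the weights $\zeta r$ and $\chi^\sharp\rho_0^{(\gamma-1)(\frac32-\varepsilon_0)}$, and in $L^2$ in time. The fact that \eqref{C333} is stated with $\cE,\cD$ in terms of $D_\eta$ is harmless: we first prove the bounds on $\eta$ using the $D_\eta$-quantities and closing by Gr\"onwall, and this is where the factor $e^{CT}$ enters.

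\textbf{Step 2: interior part.} On $\mathrm{supp}\,\zeta$ we treat products such as $\sD_r^2\eta\cdot\sD_r^2 f$ with $r^{1/2}L^4\times r^{1/2}L^4$ bounds, as in \eqref{11123}. Products such as $\sD_r^3\eta\cdot\sD_r f$ are handled by $L^2\times L^\infty$, and the top term $\sD_r^4\eta\cdot\sD_rf$ also by $L^2\times L^\infty$, using $\zeta r\sD_r^4\eta\in L^2_t L^2$. The time-derivative terms $f_t$, $\sD_\eta f_t$ and $\sD_\eta^2 f_t$ pick up commutators $\partial_tD_\eta=D_\eta\partial_t-D_\eta U\,D_\eta$. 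These are again lower order and are absorbed in the same way. This yields both directions of the interior equivalences, the energy one pointwise in $t$ and the dissipation one after multiplying by $t$ or integrating in time.

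\textbf{Step 3: exterior part (main obstacle).} Here the weights differ between terms: $\rho_0^{\delta/2}$ on first derivatives, $\rho_0^{(\gamma-1)(\frac32-\varepsilon_0)}$ on second through fourth derivatives. So a product like $\eta_{rr}f_r$ must be bounded in the heavier weight by the lighter norm of $f_r$ times an $L^\infty$ norm of $\eta_{rr}$. This works because $(\gamma-1)(\frac32-\varepsilon_0)>\frac{\delta}{2}$. For the fourth-order term $\eta_{rrrr}f_r$ we use $|f_r|_{L^\infty(\frac12,1)}$, which follows from the weighted $H^3$ control together with Lemma \ref{hardy-inequality}. For $\eta_{rrr}f_{rr}$ we need $\rho_0^{\gamma-1}\eta_{rrr}\in L^\infty$, which comes from the weighted $H^4$-type bound and Hardy. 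We expect the delicate point to be verifying that every Hardy exponent stays admissible, i.e.\ greater than $-\tfrac12$. In particular $\eta_{rrrr}$ is only in weighted $L^2_t$, so the energy-level equivalence must avoid it. It does: $\cE_{\mathrm{ex}}$ involves only up to $D_\eta^3$, while $D^4$ appears only in $\cD$, where the $L^2_t$ control of $\eta_{rrrr}$ suffices after multiplying by $t$ or integrating.
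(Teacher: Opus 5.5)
Your route is the paper's. You convert between $\sD_\eta^k f$ and $\sD_r^k f$ by the chain rule, and you control $\sD_r^k\eta$ through $\eta_t=U$ as a linear ODE closed by Gr\"onwall, which is where $e^{CT}$ comes from. In the exterior you bound the products in $L^2\times L^\infty$ using $(\gamma-1)(\frac32-\varepsilon_0)>\frac{\delta}{2}$. The paper writes this out only for the energy-level inequalities $\mathring\cE_{\mathrm{in}}\lesssim\cE_{\mathrm{in}}$ and $\mathring\cE_{\mathrm{ex}}\lesssim\cE_{\mathrm{ex}}$.

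\textbf{The flawed step.} You assert that $\eta_{rrrr}$ (and $\zeta r\sD_r^4\eta$) is controlled only in $L^2$ in time, and that this ``suffices after multiplying by $t$''. It does not. The bound on $t\,\mathring\cD(t,f)$, or on $t\,\cD(t,f)$, must hold at each fixed $t$. So you need $t\,\big|\chi^\sharp\rho_0^{(\gamma-1)(\frac32-\varepsilon_0)}\eta_{rrrr}(t)\big|_2^2\,\big|\chi^\sharp D_\eta f(t)\big|_\infty^2$ bounded pointwise in $t$. Likewise you need $\big|\rho_0^{\gamma-1}\eta_{rrr}(t)\big|_\infty$ at fixed $t$, which you get from $\eta_{rrrr}(t)$ via Hardy. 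An $L^2_t$ bound gives neither.

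\textbf{The fix.} It comes from your own Step 1: $\eta_{rrrr}(t)$ is a time integral. Write
\[
\eta_{trrrr}=\eta_{rrrr}D_\eta U+\big(4\eta_r\eta_{rrr}+3\eta_{rr}^2\big)D_\eta^2U+6\eta_r^2\eta_{rr}D_\eta^3U+\eta_r^4D_\eta^4U ,
\]
and use the analogous identities for the other components of $\sD_r^4\eta$. Apply Gr\"onwall pointwise in $r$, then Minkowski in the weighted $L^2_r$ norms. This gives
\[
\big|\chi^\sharp\rho_0^{(\gamma-1)(\frac32-\varepsilon_0)}\eta_{rrrr}(t)\big|_2+\big|\zeta r\sD_r^4\eta(t)\big|_2\le Ce^{Ct}\int_0^t\big(1+\cD(s,U)^{\frac12}\big)\,\mathrm{d}s\le Ce^{Ct}\big(t+\sqrt{t\,\mathrm{E}_2}\big).
\]
So the fourth derivatives of $\eta$ are in $L^\infty_t$, in fact $O(\sqrt t)$, exactly parallel to \eqref{bbb3}--\eqref{bbb4}. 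The same applies to $\sD_r^3\eta$, which is $O(t)$; you need it pointwise in $t$ already for $\cE\sim\mathring\cE$, so your Step 1 phrase ``in $L^2$ in time'' should not cover it. With these pointwise bounds, your Steps 2--3 close.

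\textbf{A smaller point.} No commutators $\partial_tD_\eta$ arise. In $\cE$ and $\cD$, the quantities $\sD_\eta f_t$ and $\sD_\eta^2 f_t$ mean $\sD_\eta$ applied to $f_t$ (as in $f_{tr}=\eta_rD_\eta f_t$). They therefore convert exactly like $\sD_\eta f$ and $\sD_\eta^2 f$, with $f_t$ in place of $f$.
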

\begin{proof}
For simplicity, we only show that
\begin{equation}
\mathring\cE_{\mathrm{in}} (t,f)\leq Ce^{Ct}\cE_{\mathrm{in}} (t,f),\qquad \mathring\cE_{\mathrm{ex}} (t,f)\leq Ce^{Ct}\cE_{\mathrm{ex}} (t,f),
\end{equation}
and the rest of this lemma can be proved analogously. Moreover, note that 
\begin{equation}
f_r=\eta_rD_\eta f,\qquad f_{tr}=\eta_rD_\eta f_t,
\end{equation}
we directly obtain 
\begin{equation*}
\big|\zeta r (f,\sD_r f,f_t,\sD_r f_t)\big|_{2}^2\leq C \cE_{\mathrm{in}} (t,f),\qquad
\big|\chi^\sharp\rho_0^\frac{1}{2}(f,f_t)\big|_{2}^2+\big|\chi^\sharp\rho_0^\frac{\delta}{2}(f_r,f_{tr})\big|_{2}^2\leq C \cE_{\mathrm{ex}} (t,f),    
\end{equation*}
and thus we only need to focus on the estimates for the higher spatial derivatives of $f$. 

We divide the proof into two steps.

\smallskip
\textbf{1. Estimates for $\eta$.} First, in view of $\eqref{eq:VFBP-La-eta}_2$, we have 
\begin{equation}\label{bbb1}
\begin{aligned}
\eta_{trr}&=\eta_{rr} D_{\eta}U+\eta_r^2 D_{\eta}^2U,\qquad \eta_{trrr}=\eta_{rrr} D_{\eta}U+3\eta_r\eta_{rr} D_{\eta}^2U+\eta_r^3 D_{\eta}^3U,\\
\big(\frac{\eta}{r}\big)_{tr}&=\big(\frac{\eta}{r}\big)_r\frac{U}{\eta}+\eta_r\frac{\eta}{r}D_{\eta}\big(\frac{U}{\eta}\big),\qquad \frac{1}{r}\big(\frac{\eta}{r}\big)_{tr} =\frac{1}{r}\big(\frac{\eta}{r}\big)_r\frac{U}{\eta}+\eta_r\big(\frac{\eta}{r}\big)^2\frac{1}{\eta}    D_{\eta}\big(\frac{U}{\eta}\big),\\
\big(\frac{\eta}{r}\big)_{trr}&=\big(\frac{\eta}{r}\big)_{rr}\frac{U}{\eta}+2\eta_r\big(\frac{\eta}{r}\big)_rD_\eta\big(\frac{U}{\eta}\big)+\eta_{rr}\frac{\eta}{r}D_{\eta}\big(\frac{U}{\eta}\big)+\eta_r^2\frac{\eta}{r} D_{\eta}^2\big(\frac{U}{\eta}\big).
\end{aligned}
\end{equation}
On the other hand, it follows from \eqref{C333} and Lemmas \ref{sobolev-embedding}--\ref{hardy-inequality} that
\begin{align}
&|\sD_\eta U|_\infty \leq C\big| (\sD_\eta U,\sD_\eta^2 U)\big|_1 \leq C\sum_{j=1}^3\Big(\big|\chi r\sD_\eta^j U\big|_2+\big|\chi^\sharp \rho_0^{(\gamma-1)(\frac{3}{2}-\varepsilon_0)}D_\eta^j U\big|_2\Big)\leq C,\label{bbb2-0}\\
&|\sD_\eta^2 U|_\infty 
\leq C\sum_{j=2}^4\Big(\big|\chi r\sD_\eta^j U\big|_2+\big|\chi^\sharp \rho_0^{(\gamma-1)(\frac{3}{2}-\varepsilon_0)}D_\eta^j U\big|_2\Big)\leq C\cD(t,U)^\frac{1}{2}.\label{bbb2}
\end{align}
As a consequence, \eqref{bbb1}, together with \eqref{C333}, \eqref{bbb2}, and the H\"older inequality, implies that
\begin{equation}\label{bbb3}
|\sD_r^2\eta|_\infty\leq Ce^{Ct}\int_0^t |\sD_\eta^2 U|_\infty\,\mathrm{d}s\leq C\sqrt{t}e^{Ct},
\end{equation}
and
\begin{equation*}
|\sD_r^3\eta|\leq Ce^{Ct}\Big(|\sD_r^2\eta|_\infty\int_0^t |\sD_\eta^2 U|\,\mathrm{d}s + \int_0^t |\sD_\eta^3 U|\,\mathrm{d}s\Big) \leq Ce^{Ct}\Big(t+ \int_0^t |\sD_\eta^3 U|\,\mathrm{d}s\Big),
\end{equation*}
which, along with the Minkowski inequality, leads to
\begin{equation}\label{bbb4}
|\zeta r\sD_r^3\eta|_2 \leq Cte^{Ct}\big(1+ \mathrm{E}_1^\frac{1}{2}\big)\leq Cte^{Ct},\quad
\big|\chi^\sharp \rho_0^{(\gamma-1)(\frac{3}{2}-\varepsilon_0)} \eta_{rrr}\big|_2 \leq Cte^{Ct}\big(1+ \mathrm{E}_1^\frac{1}{2}\big)\leq Cte^{Ct}.
\end{equation}

\smallskip
\textbf{2. Estimates for $f$.} Now, a direct calculation gives 
\begin{equation*}
\begin{aligned}
f_{rr}&=\eta_{rr}D_\eta f+\eta_r^2 D_\eta^2 f,\qquad f_{rrr} =\eta_{rrr}D_\eta f+3\eta_r\eta_{rr}D_\eta^2 f+\eta_r^3 D_\eta^3 f,\\
\big(\frac{f}{r}\big)_r&=\eta_r\frac{\eta}{r} D_{\eta}\big(\frac{f}{\eta}\big) +\frac{f}{\eta}\big(\frac{\eta}{r}\big)_r,\qquad \frac{1}{r}\big(\frac{f}{r}\big)_r=\eta_r\big(\frac{\eta}{r}\big)^2 \frac{1}{\eta}D_{\eta}\big(\frac{f}{\eta}\big) +\frac{f}{\eta}\frac{1}{r}\big(\frac{\eta}{r}\big)_r,\\
\big(\frac{f}{r}\big)_{rr}&=\eta_{rr}\frac{\eta}{r} D_{\eta}\big(\frac{f}{\eta}\big)+2\eta_r\big(\frac{\eta}{r}\big)_r D_{\eta}\big(\frac{f}{\eta}\big)+\eta_r^2\frac{\eta}{r} D_{\eta}^2\big(\frac{f}{\eta}\big)+\frac{f}{\eta}\big(\frac{\eta}{r}\big)_{rr},
\end{aligned}
\end{equation*}
which, combined with \eqref{bbb3}, yields
\begin{equation}\label{bbb5}
|\sD_r^2 f|\leq Ce^{Ct} |\sD_\eta^2 f|,\qquad 
|\sD_r^3 f|\leq  Ce^{Ct} |\sD_\eta^3 f|+ |\sD_r^3 \eta| |\sD_\eta f|.
\end{equation}

Finally, repeating the same calculation \eqref{bbb2-0}, we have $|\sD_\eta f|_\infty\leq C(\cE(t,f))^\frac{1}{2}$. Hence, this, together with \eqref{bbb4}--\eqref{bbb5}, implies
\begin{equation*}
|\zeta r\sD_r^2 f|_2^2\leq Ce^{Ct}\cE_{\mathrm{in}} (t,f),\qquad
\big|\chi^\sharp \rho_0^{(\gamma-1)(\frac{3}{2}-\varepsilon_0)}(f_{rr},f_{rrr})\big|_2^2\leq Ce^{Ct}\cE_{\mathrm{ex}} (t,f).
\end{equation*}

This completes the proof of Lemma \ref{lemma-gaowei}. 
\end{proof}

\section{Cross-Derivatives Embedding}\label{subsection2.2}

The following embedding theorem is used to obtain the higher-order elliptic estimates.
\begin{Lemma}\label{prop2.1}
Assume that $\varphi=\varphi(r)$ is a function defined on $I$ and  satisfies
\begin{equation}\label{con2.9-pre}
\varphi\in C^1(\bar I)\cap C^2((0,1]),\qquad \frac{1}{K}(1-r)\leq \varphi \leq K(1-r) \ \ \text{for some $K>1$}.
\end{equation}
Let $(b,c)$ be two parameters such that
\begin{equation}\label{con2.9}
\frac{1}{2}<b\leq \frac{c+1}{2}, 
\end{equation}
and let $f=f(r)\in L^1_{\mathrm{loc}}$ satisfy both $f\in H^1_{\varphi^{2q}}(\frac{1}{2},1)$ for some $q\in [b, \frac{c+1}{2}]$ and 
\begin{equation}\label{con2.10}
\big|\zeta^\sharp(\varphi^{b}f_r+ c\varphi^{b-1}\varphi_rf)\big|_2+|\chi^\sharp\varphi^{b} f|_2<\infty.
\end{equation}
Then, for any $\varphi$ satisfying \eqref{con2.9-pre} and $(b,c)$ satisfying \eqref{con2.9},  there exists a constant $C>0$, which depends only on $(\varphi,b,c)$, such that, for all $f$ satisfying \eqref{con2.10},
\begin{equation}\label{con2.11}
|\zeta^\sharp\varphi^{b}f_r|_2\leq C\big(\big|\zeta^\sharp(\varphi^{b}f_r+c\varphi^{b-1}\varphi_rf)\big|_2+|\chi^\sharp\varphi^{b} f|_2\big).
\end{equation}
\end{Lemma}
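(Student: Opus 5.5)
The plan is to treat the left-hand side combination as a first-order ODE operator with an integrating factor. Set $g:=\varphi^{b}f_r+c\varphi^{b-1}\varphi_r f$ on $(\frac12,1)$. Since $\varphi^{c}$ is the natural integrating factor, write
\begin{equation*}
g=\varphi^{b-c}(\varphi^{c}f)_r ,
\end{equation*}
so that $(\varphi^{c}f)_r=\varphi^{c-b}g$. By the triangle inequality, $|\zeta^\sharp\varphi^b f_r|_2\le |\zeta^\sharp g|_2+c|\zeta^\sharp\varphi^{b-1}\varphi_r f|_2$. Because $\varphi_r$ is bounded on $[\frac12,1]$ by \eqref{con2.9-pre}, the whole matter reduces to a weighted Hardy-type bound
\begin{equation*}
|\zeta^\sharp\varphi^{b-1}f|_2\le C\big(|\zeta^\sharp g|_2+|\chi^\sharp\varphi^b f|_2\big).
\end{equation*}

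To prove this bound, I will first localize. Choose a point $r_0\in(\frac12,\frac58)$ where $\zeta^\sharp$ has not yet become large, or better, work on $[r_1,1)$ with $r_1\in(\frac12,\frac58)$ fixed. On the region where $\zeta^\sharp$ is supported but away from $r=1$, $\varphi\sim 1$, so $\varphi^{b-1}f$ is controlled by $|\chi^\sharp\varphi^b f|_2$.

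Near $r=1$, I integrate from the boundary inward: $\varphi^c f(r)=\lim_{s\to1}\varphi^c f(s)-\int_r^1\varphi^{c-b}g\,\mathrm{d}s$. The key point is that this boundary limit is $0$. I will justify this from $f\in H^1_{\varphi^{2q}}(\frac12,1)$ with $q\le\frac{c+1}{2}$. Lemma \ref{hardy-inequality} (or a direct argument) gives $|\varphi^{q-\frac12}f|\lesssim$ finite near $1$, so $\varphi^c f=\varphi^{c-q+\frac12}\cdot\varphi^{q-\frac12}f$. Since $c-q+\frac12\ge0$ this product is bounded, but vanishing needs a little more. I would instead use that $\varphi^{q}f\in H^1$-type control yields $\varphi^{q}f\to$ a limit, and then $\varphi^c f=\varphi^{c-q}\varphi^q f\to0$ if $c>q$. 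In the edge case $q=\frac{c+1}{2}$, I would use an $L^2$ argument: if the limit $L$ were nonzero, then $\varphi^{b}f\sim L\varphi^{b-c}$, whose square integrates near $1$ only if $2(b-c)>-1$, i.e.\ $b>c-\frac12$. That can fail, so I will handle this by combining with the $\chi^\sharp\varphi^bf\in L^2$ hypothesis together with $b\le\frac{c+1}{2}$. I expect this boundary-term step to be the main obstacle, and the fallback is to integrate from an interior point $r_1$ instead and accept a term $\varphi^{c}(r)^{-1}\varphi^c f(r_1)$.

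With the representation in hand, $\varphi^{b-1}f(r)=-\varphi^{b-1-c}(r)\int_r^1\varphi^{c-b}g\,\mathrm{d}s$ (possibly plus an interior constant term). Using $\varphi\sim 1-r$, this is the Hardy operator $\mathcal{H}g(r)=(1-r)^{b-1-c}\int_r^1(1-s)^{c-b}g(s)\,\mathrm{d}s$. By the classical weighted Hardy inequality (Muckenhoupt condition, equivalently Lemma \ref{hardy-inequality} applied to the primitive), $\mathcal{H}$ is bounded on $L^2(r_1,1)$ exactly when the exponent $2(b-1-c)+1<0$ matches the appropriate dual condition, which reduces to $b>\frac12$. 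This is where \eqref{con2.9} enters.

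If the fallback with an interior base point is used, the extra term is $\varphi^{b-1-c}(r)\varphi^{c}f(r_1)$. Its $L^2$ norm near $1$ is finite when $2(b-1)+1<0$, i.e.\ $b<\frac12$. That is the wrong direction, which confirms that integrating from the boundary with vanishing boundary term is required. Therefore the proof of $\lim_{r\to1}\varphi^cf=0$, using $q\le\frac{c+1}{2}$ and $b>\frac12$, is the crux. Once it holds, collecting the estimates gives \eqref{con2.11}, with $C$ depending only on $(\varphi,b,c)$ through $K$, $|\varphi_r|_\infty$, and the Hardy constant.
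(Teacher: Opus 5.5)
Your integrating-factor reduction is correct and genuinely different from the paper's proof. However, the proposal leaves unproved exactly the step you call the crux, $L:=\lim_{r\to1}\varphi^cf(r)=0$, and the arguments you end up relying on for it do not work. (The limit exists, since $(\varphi^cf)_r=\varphi^{c-b}g$ and $\varphi^{c-b}\in L^2(\frac58,1)$ because $c-b\ge b-1>-\frac12$.)

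The $L^2$ argument via $\chi^\sharp\varphi^bf\in L^2$ and $b\le\frac{c+1}{2}$ cannot succeed. Take $\varphi=1-r$ and $f=\varphi^{-c}$. Then $g\equiv0$, and $\chi^\sharp\varphi^bf\in L^2$ whenever $b>c-\frac12$ (e.g. $b=\frac34$, $c=1$, compatible with \eqref{con2.9}). Yet $\varphi^bf_r=c\varphi^{b-c-1}\notin L^2(\frac12,1)$, so \eqref{con2.11} fails. Hence the hypothesis $f\in H^1_{\varphi^{2q}}(\frac12,1)$ is indispensable, and it is precisely what excludes $L\neq0$. Your variant $\varphi^cf=\varphi^{c-q}\varphi^qf$ needs $c>q$, which fails for $c\le1$ and $q=\frac{c+1}{2}$.

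Your first attempt, which you discarded, already closes the gap. Lemma \ref{hardy-inequality} with $p=\infty$ and $\vartheta=q-\frac12>0$ gives $\varphi^{q-\frac12}f\in L^\infty(\frac12,1)$, and
\begin{equation*}
c-q+\frac12\ \geq\ c-\frac{c+1}{2}+\frac12=\frac{c}{2}>0,
\end{equation*}
since $b>\frac12$ and $2b\le c+1$ force $c\ge 2b-1>0$. Hence $\varphi^cf=\varphi^{c-q+\frac12}\cdot\varphi^{q-\frac12}f\to0$ as $r\to1$. What you missed is only that this exponent is strictly positive.

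A minor correction: take $r_1=\frac58$, where $\zeta^\sharp\equiv1$. For $r_1<\frac58$ the norm $|\zeta^\sharp g|_2$ does not control $g$ on $(r_1,\frac58)$. On $[\frac12,\frac58]$, simply use $\varphi\ge\frac{3}{8K}$ and $|\chi^\sharp\varphi^bf|_2$.

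With $L=0$ your proof is complete. On $[\frac58,1)$ you have $\varphi^{b-1}f=-\varphi^{b-1-c}\int_r^1\varphi^{c-b}g\,\mathrm{d}s$. After comparing $\varphi$ with $x=1-r$, apply the weighted Hardy inequality $\int_0^{\epsilon}x^{\beta}\big(\int_0^x h\big)^2\mathrm{d}x\le\frac{4}{(\beta+1)^2}\int_0^{\epsilon}x^{\beta+2}h^2\,\mathrm{d}x$ with $h=x^{c-b}|g|$ and $\beta=2(b-1-c)$. It applies because $\beta<-1\iff c>b-\frac12$, which follows from $c\ge 2b-1$ and $b>\frac12$. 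This yields $|\varphi^{b-1}f|_{L^2(\frac58,1)}\le C|g|_{L^2(\frac58,1)}$.

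By contrast, the paper expands $|\zeta^\sharp g|_2^2$, integrates the cross term by parts, and exploits the sign $(2b-1-c)c\le0$. That requires $\varphi^{2b-1}\varphi_r ff_r\in L^1$ and a vanishing boundary term. A density argument provides these when $q=b$, but they can fail when $q=\frac{c+1}{2}$. In that case the paper must regularize the weight by $\varphi_j=\varphi+\frac1j$, prove a $j$-uniform interpolation inequality, and pass to a weak limit.

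Your ODE/Hardy route has several advantages. It treats all $q\in[b,\frac{c+1}{2}]$ at once and needs no approximation. It uses the $H^1_{\varphi^{2q}}$ hypothesis only qualitatively, to kill $L$. It also gives a bound near $r=1$ that does not involve $|\chi^\sharp\varphi^bf|_2$. The paper's identity, for its part, is a pure $L^2$ computation that never needs the explicit integrating factor.
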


\begin{proof}
We divide the proof into three steps.

\smallskip
\textbf{1. Case $f\in C^\infty([\frac{1}{2},1])$.}
It follows from integration by parts, \eqref{con2.9-pre}--\eqref{con2.10}, Lemma \ref{GNinequality}, and the Young inequality that
\begin{equation}\label{2004}
\!\!\!\!\begin{aligned}
|\zeta^\sharp\varphi^b f_r|_2^2&=\big|\zeta^\sharp(\varphi^b f_r+c\varphi^{b-1}\varphi_r f)\big|_2^2-c^2|\zeta^\sharp\varphi^{b-1}\varphi_rf|_2^2 - 2c\int_0^1 (\zeta^\sharp)^2\varphi^{2b-1}\varphi_r  f f_r\,\mathrm{d}r\\
&=\big|\zeta^\sharp(\varphi^b f_r+c\varphi^{b-1}\varphi_r f)\big|_2^2+ c\int_0^1 \big(2\zeta^\sharp(\zeta^\sharp)_r\varphi_{r}+(\zeta^\sharp)^2\varphi_{rr}\big) \varphi^{2b-1} f^2\,\mathrm{d}r\\
& \quad +\underline{ (2b-1-c)c\int_0^1 (\zeta^\sharp)^2\varphi^{2b-2} (\varphi_r)^2 f^2\,\mathrm{d}r}_{\,\leq 0}- \underline{c(\zeta^\sharp)^2\varphi^{2b-1}\varphi_r f^2\Big|_{r=0}^{r=1}}_{\, =0}\\
&\leq \big|\zeta^\sharp(\varphi^b f_r+c\varphi^{b-1}\varphi_r f)\big|_2^2+C\big(|(\zeta^\sharp)_r\varphi^{-1} \varphi_{r}|_\infty|\chi^\sharp\varphi^{b}f|_2^2+ |\chi^\sharp\varphi_{rr}|_\infty\big|\zeta^\sharp\varphi^{b-\frac{1}{2}}f\big|_2^2\big)\\
&\leq  \big|\zeta^\sharp(\varphi^b f_r+c\varphi^{b-1}\varphi_r f)\big|_2^2+C|\chi^\sharp\varphi^{b}f|_2^2+\frac{1}{2}|\zeta^\sharp\varphi^b f_r|_2^2,
\end{aligned}
\end{equation}
which yields \eqref{con2.11}.

\smallskip
\textbf{2. Case  $f\in H^1_{\varphi^{2b}}(\frac{1}{2},1)$.} In this case, we can repeat the calculation in \eqref{2004} to derive \eqref{con2.11}, except for justifying the following integral equality: 
\begin{equation}\label{fenbujifen}
\begin{aligned}
-2\int_0^1 (\zeta^\sharp)^2\varphi^{2b-1}\varphi_r  f f_r\,\mathrm{d}r
&=  \int_0^1 \big(2\zeta^\sharp(\zeta^\sharp)_r\varphi_{r}+(\zeta^\sharp)^2\varphi_{rr}\big) \varphi^{2b-1} f^2\,\mathrm{d}r\\
&\quad\,\, +(2b-1)\int_0^1 (\zeta^\sharp)^2\varphi^{2b-2}(\varphi_r)^2 f^2\,\mathrm{d}r.
\end{aligned}
\end{equation}
Indeed, thanks to Lemma \ref{W-space}, there exists a sequence $\{f^\varepsilon\}_{\varepsilon>0}\subset C^\infty([\frac{1}{2},1])$ such that
\begin{equation}\label{b6b}
|\chi^\sharp\varphi^b (f^\varepsilon- f)|_2+|\chi^\sharp\varphi^b(f^\varepsilon_r- f_r)|_2\to 0 \qquad \text{as $\varepsilon\to 0$},
\end{equation}
which, along with Lemma \ref{hardy-inequality}, yields
\begin{equation}\label{b7b}
|\chi^\sharp\varphi^{b-1}(f^\varepsilon-f)|_2+\big|\chi^\sharp\varphi^{b-\frac{1}{2}}(f^\varepsilon-f)\big|_\infty \to 0 \qquad \text{as $\varepsilon\to 0$}.
\end{equation}
Hence, according to \eqref{b6b}--\eqref{b7b}  and integration by parts for $f^\varepsilon$, we have
\begin{equation*}
\begin{aligned}
-2\int_0^1 (\zeta^\sharp)^2\varphi^{2b-1}\varphi_r f^\varepsilon f^\varepsilon_r\,\mathrm{d}r
&=  \int_0^1 \big(2\zeta^\sharp(\zeta^\sharp)_r\varphi_{r}+(\zeta^\sharp)^2\varphi_{rr}\big) \varphi^{2b-1} (f^\varepsilon)^2\,\mathrm{d}r\\
&\quad +(2b-1)\int_0^1 (\zeta^\sharp)^2\varphi^{2b-2}(\varphi_r)^2 (f^\varepsilon)^2\,\mathrm{d}r.
\end{aligned}
\end{equation*}
Letting $\varepsilon\to 0$ implies that \eqref{fenbujifen} holds for all $f\in H^1_{\varphi^{2b}}(\frac{1}{2},1)$. 

This completes the proof of \eqref{con2.11} when $q=b$.

\smallskip
\textbf{3. General case.}
It suffices to establish \eqref{con2.11} when \eqref{con2.10} holds and
\begin{equation*}
b<\frac{c+1}{2}, \qquad  q=\frac{c+1}{2},\qquad \ f\in H^1_{\varphi^{c+1}}\big(\frac{1}{2},1\big),
\end{equation*}
due to the fact that $H^1_{\varphi^{2q}}(\frac{1}{2},1)\subset H^1_{\varphi^{c+1}}(\frac{1}{2},1)$ if $q\leq \frac{c+1}{2}$. Note that, in this case, integration by parts in \eqref{fenbujifen} fails owing to $(\zeta^\sharp)^2\varphi^{2b-1}\varphi_r ff_r\notin L^1$. 

To overcome this difficulty, set
\begin{equation*}
\vartheta:= \frac{c+1}{2}-b,\qquad \varphi_j:=\varphi+\frac{1}{j} \quad \text{for $j\in \NN^*$}.
\end{equation*}
We first show a variant of \eqref{G-N1} in Lemma \ref{GNinequality}, that is, for all $f\in H^1_{\varphi^{c+1}}(\frac{1}{2},1)$,
\begin{equation}\label{2..8}
\big|\zeta^\sharp\varphi^\frac{c}{2}\varphi_j^{-\vartheta}f\big|_2^2\leq C\big(\big|\chi^\sharp\varphi^\frac{c+1}{2}\varphi_j^{-\vartheta}f\big|_2^2+\big|\zeta^\sharp\varphi^\frac{c+1}{2}\varphi_j^{-\vartheta}f\big|_2\big|\zeta^\sharp\varphi^\frac{c+1}{2}\varphi_j^{-\vartheta}f_r\big|_2\big).
\end{equation}

Based on Lemma \ref{W-space} and the proof in Lemma \ref{GNinequality}, it suffices to show that \eqref{2..8} holds for $f\in C^\infty(\frac{1}{2},1)$. Clearly, we can further let $(\varphi,\varphi_j)=(d,d_j)$ with $d=d(r):=1-r$ and $d_j:=d+\frac{1}{j}$, due to
\begin{equation*}
\frac{d}{K}\leq \varphi \leq K d,\qquad \frac{d_j}{K}\leq \varphi_j \leq K d_j.
\end{equation*} 
It follows from the above reductions, integration by parts, and $\mathrm{d}(d^{c+1})_r=(c+1)d^c\mathrm{d}r$ that
\begin{equation*}
\begin{aligned}
\int_0^1 (\zeta^\sharp)^2 d^c d_j^{-2\vartheta} f^2\,\mathrm{d}r&= \frac{2}{c+1} \Big( \int_0^1 \zeta^\sharp(\zeta^\sharp)_r d^{c+1} d_j^{-2\vartheta} f^2\,\mathrm{d}r+\int_0^1 (\zeta^\sharp)^2 d^{c+1} d_j^{-2\vartheta} f f_r\,\mathrm{d}r\Big)\\
&\quad +\frac{2\vartheta}{c+1} \int_0^1 (\zeta^\sharp)^2 d^{c+1} d_j^{-2\vartheta-1} f^2 \,\mathrm{d}r\\
&\leq  \frac{2}{c+1} \Big( \int_0^1 \zeta^\sharp(\zeta^\sharp)_r d^{c+1} d_j^{-2\vartheta} f^2\,\mathrm{d}r+\int_0^1 (\zeta^\sharp)^2 d^{c+1} d_j^{-2\vartheta} f f_r\,\mathrm{d}r\Big)\\
&\quad +\frac{2\vartheta}{c+1} \int_0^1 (\zeta^\sharp)^2 d^{c} d_j^{-2\vartheta} f^2 \,\mathrm{d}r,
\end{aligned}
\end{equation*}
which implies
\begin{equation}\label{2..9}
\begin{aligned}
\int_0^1 (\zeta^\sharp)^2 d^c d_j^{-2\vartheta} f^2\,\mathrm{d}r& \leq  \frac{1}{b} \Big( \int_0^1 \zeta^\sharp(\zeta^\sharp)_r d^{c+1} d_j^{-2\vartheta} f^2\,\mathrm{d}r+\int_0^1 (\zeta^\sharp)^2 d^{c+1} d_j^{-2\vartheta} f f_r\,\mathrm{d}r\Big)\\
&\leq C\big(\big|\chi^\sharp d^\frac{c+1}{2} d_j^{-\vartheta}f\big|_2^2+\big|\zeta^\sharp d^\frac{c+1}{2} d_j^{-\vartheta}f\big|_2\big|\zeta^\sharp d^\frac{c+1}{2} d_j^{-\vartheta}f_r\big|_2\big).
\end{aligned}
\end{equation}

This completes the proof of \eqref{2..8}.

\smallskip
Now, we continue to prove \eqref{con2.11}. It follows from \eqref{con2.10} and $0\leq \varphi/\varphi_j\leq 1$ that
\begin{equation}\label{con2.15}
Q_j:=\zeta^\sharp\varphi_j^{-\vartheta}(\varphi^\frac{c+1}{2}f_r+c\varphi^\frac{c-1}{2}\varphi_{r}f)\in L^2 \qquad \text{for any $j\in \NN^*$}.
\end{equation}
Using a density argument similar to that in Step 2, we can show that the following integral equality still holds, {\it i.e.}, for all $f\in H^1_{\varphi^{c+1}}(\frac{1}{2},1)$ and $j\in \NN^*$,
\begin{equation*}
\begin{aligned}
-2\int_0^1 (\zeta^\sharp)^2\varphi^{c}\varphi_j^{-2\vartheta}\varphi_r f f_r\,\mathrm{d}r&=  \int_0^1 \big(2\zeta^\sharp(\zeta^\sharp)_r\varphi_{r}+(\zeta^\sharp)^2\varphi_{rr}\big) \varphi^{c}\varphi_j^{-2\vartheta} f^2\,\mathrm{d}r\\
&\quad +\int_0^1 (\zeta^\sharp)^2\big(c\varphi^{c-1}\varphi_j^{-2\vartheta} -2\vartheta \varphi^{c}\varphi_j^{-2\vartheta-1}\big)(\varphi_r)^2 f^2\,\mathrm{d}r.
\end{aligned}
\end{equation*}
Hence, based on the above equality and the calculation similar to \eqref{2004}, we deduce from \eqref{con2.15} and the Young inequality that
\begin{equation*}
\begin{aligned}
\big|\zeta^\sharp\varphi^\frac{c+1}{2}\varphi_j^{-\vartheta} f_r\big|_2^2&=|Q_j|_2^2-c^2\big|\zeta^\sharp\varphi^{\frac{c-1}{2}}\varphi_j^{-\vartheta}\varphi_rf\big|_2^2 - 2c\int_0^1 (\zeta^\sharp)^2\varphi^{c}\varphi_j^{-2\vartheta}\varphi_r  f f_r\,\mathrm{d}r\\
&=|Q_j|_2^2+ c\int_0^1 \big(2\zeta^\sharp(\zeta^\sharp)_r\varphi_{r}+(\zeta^\sharp)^2\varphi_{rr}\big) \varphi^{c}\varphi_j^{-2\vartheta} f^2\,\mathrm{d}r\notag\\
&\quad \underline{-2\vartheta c\int_0^1 (\zeta^\sharp)^2 \varphi^{c-1}\varphi_j^{-2\vartheta-1} (\varphi_r)^2 f^2\,\mathrm{d}r}_{\,\leq 0}\\
&\leq |Q_j|_2^2+C\big(|(\zeta^\sharp)_r\varphi^{-1} \varphi_{r}|_\infty\big|\chi^\sharp\varphi^{\frac{c+1}{2}}\varphi_j^{-\vartheta}f\big|_2^2+ |\chi^\sharp\varphi_{rr}|_\infty\big|\zeta^\sharp\varphi^{\frac{c}{2}}\varphi_j^{-\vartheta}f\big|_2^2\big),
\end{aligned} 
\end{equation*}
which, along with \eqref{con2.9-pre} and \eqref{2..8}, gives
\begin{equation*}
\begin{aligned}
\big|\zeta^\sharp\varphi^{\frac{c+1}{2}}\varphi_j^{-\vartheta}f\big|_2^2&\leq C\big(|Q_j|_2^2+\big|\chi^\sharp\varphi^{\frac{c+1}{2}}\varphi_j^{-\vartheta}f\big|_2^2\big)\\
&\leq C\big(\big|\zeta^\sharp(\varphi^{b}f_r+ c\varphi^{b-1}\varphi_rf)\big|_2+|\chi^\sharp\varphi^{b} f|_2\big).
\end{aligned}
\end{equation*}
Since $C$ is independent of $j$, we can extract a subsequence (still denoted by $j$)  such that
\begin{equation}\label{equ213}
\zeta^\sharp\varphi^{\frac{c+1}{2}}\varphi_j^{-\vartheta}f\to g\qquad \text{weakly in }L^2 \quad \text{as }j\to \infty,
\end{equation}
for some limit function $g\in L^2$, and 
\begin{equation*}
|g|_2\leq \liminf_{j\to \infty}\big|\zeta^\sharp\varphi^{\frac{c+1}{2}}\varphi_j^{-\vartheta}f\big|_2\leq C\big(\big|\zeta^\sharp(\varphi^{b}f_r+ c\varphi^{b-1}\varphi_rf)\big|_2+|\chi^\sharp\varphi^{b} f|_2\big). 
\end{equation*}

Note that $f_r\in L^1_{\mathrm{loc}}$, the Lebesgue dominated convergence theorem also gives
\begin{equation}\label{equ214}
\zeta^\sharp\varphi^{\frac{c+1}{2}}\varphi_j^{-\vartheta}f\to \zeta^\sharp\varphi^b f_r\qquad \text{in }L^1_{\mathrm{loc}} \ \ \text{as }j\to\infty.
\end{equation}
Hence, by \eqref{equ213}--\eqref{equ214} and the uniqueness of the limits, we have $g=\zeta^\sharp\varphi^b f_r$. 

This completes the proof of Lemma \ref{prop2.1}.
\end{proof}

\bigskip
\noindent{\bf Acknowledgments:}   This research is partially supported by National Key R$\&$D Program of China (No. 2022YFA1007300),  National Natural Science Foundation of China under the Grant 12471212, and The Royal Society (UK)-Newton International Fellowships NF170015.

\bigskip
\noindent{\bf Conflict of Interest:} The author declares that they have no conflict of interest. The author also declares that this manuscript has not been previously published, 
and will not be submitted elsewhere before your decision.

\bigskip
\noindent{\bf Data availability:} Data sharing is not applicable to this article as no datasets were generated or analyzed during the current study.

\end{document}